\newtheorem{theorem}{Theorem}[subsection]
\newtheorem{proposition}[theorem]{Proposition}
\newtheorem{corollary}[theorem]{Corollary}
\newtheorem{lemma}[theorem]{Lemma}
\newtheorem{itheorem}{Theorem}
\newtheorem{icorollary}[itheorem]{Corollary}
\theoremstyle{definition}
\newtheorem{remark}[theorem]{Remark}
\newtheorem{problem}{Question}
\newcommand{\begpf}{\noindent{\bf Proof.}\enspace}
\newcommand{\epf}{{\ifhmode\unskip\nobreak\hfil\penalty50 \hskip1em
\else\nobreak\fi \nobreak\mbox{}\hfil\mbox{$\square$} \parfillskip=0pt
\finalhyphendemerits=0 \par\vskip5pt}}
\newcommand{\ol}{\overline}
\newcommand{\wt}{\widetilde}
\newcommand{\A}{\mathbf{A}}
\newcommand{\C}{\mathbf{C}}
\newcommand{\D}{\mathbf{D}}
\newcommand{\F}{\mathbf{F}}
\newcommand{\G}{\mathbf{G}}
\newcommand{\Q}{\mathbf{Q}}
\newcommand{\R}{\mathbf{R}}
\newcommand{\BS}{\mathbf{S}}
\newcommand{\Z}{\mathbf{Z}}
\newcommand{\ints}{\mathbf{Z}}
\renewcommand{\P}{\mathbf{P}}
\newcommand{\f}{\mathbf{f}}
\DeclareMathOperator{\GL}{GL}
\DeclareMathOperator{\Spec}{Spec}
\newcommand{\Ind}{\mathrm{Ind}}
\newcommand{\gr}{\mathrm{gr}}
\newcommand{\fil}{\mathrm{Fil}}
\newcommand{\crys}{\mathrm{crys}}
\newcommand{\Ext}{\mathrm{Ext}}
\newcommand{\Aut}{\mathrm{Aut}}
\renewcommand{\hom}{\mathrm{Hom}}
\newcommand{\End}{\mathrm{End}}
\newcommand{\sym}{\mathrm{Sym}}
\newcommand{\Frob}{\mathrm{Frob}}
\newcommand{\Ver}{\mathrm{Ver}}
\newcommand{\Res}{\mathrm{Res}}
\newcommand{\Shom}{{\mathcal{H}om}}
\newcommand{\im}{\mathrm{im}}
\newcommand{\Lie}{\mathrm{Lie}}
\newcommand{\Nm}{\mathrm{Nm}}
\newcommand{\Tr}{\mathrm{Tr}}
\newcommand{\dr}{\mathrm{dR}}
\newcommand{\gal}{\mathrm{Gal}}
\newcommand{\tr}{\mathrm{tr}}
\renewcommand{\th}{\mathrm{th}}
\newcommand{\univ}{\mathrm{univ}}
\renewcommand{\det}{\mathrm{det}}
\newcommand{\SPEC}{\mathbf{Spec}}
\newcommand{\st}{\mathrm{st}}
\newcommand{\red}{\mathrm{red}}
\newcommand{\coker}{\mathrm{coker}}
\newcommand{\cris}{\mathrm{cris}}
\newcommand{\CO}{\mathcal{O}}
\newcommand{\CF}{\mathcal{F}}
\newcommand{\CV}{\mathcal{V}}
\newcommand{\CG}{\mathcal{G}}
\newcommand{\CL}{\mathcal{L}}
\newcommand{\CI}{\mathcal{I}}
\newcommand{\CJ}{\mathcal{J}}
\newcommand{\CM}{\mathcal{M}}
\newcommand{\CH}{\mathcal{H}}
\newcommand{\CK}{\mathcal{K}}
\newcommand{\CA}{\mathcal{A}}
\newcommand{\CC}{\mathcal{C}}
\newcommand{\CE}{\mathcal{E}}
\newcommand{\CR}{\mathcal{R}}
\newcommand{\CP}{\mathcal{P}}
\newcommand{\ga}{\mathfrak{a}}
\newcommand{\gb}{\mathfrak{b}}
\newcommand{\gm}{\mathfrak{m}}
\newcommand{\uhp}{\mathfrak{H}}
\newcommand{\Qbar}{\overline{\Q}}
\newcommand{\Fpbar}{\overline{\F}_p}
\newcommand{\Qpbar}{\overline{\Q}_p}
\newcommand{\Ybar}{{\overline{Y}}}
\newenvironment{psmallmatriks}
  {\left(\begin{smallmatrix}}
  {\end{smallmatrix}\right)}
\begin{document}

\title[A mod $p$ Jacquet--Langlands relation]{A mod $p$ Jacquet--Langlands relation and Serre filtration via the geometry of Hilbert modular varieties: Splicing and dicing}

\author{Fred Diamond}
\email{fred.diamond@kcl.ac.uk}
\address{Department of Mathematics,
King's College London, WC2R 2LS, UK}
\author{Payman Kassaei}
\email{payman.kassaei@kcl.ac.uk}
\address{Department of Mathematics,
King's College London, WC2R 2LS, UK}
\author{Shu Sasaki}
\email{s.sasaki.03@cantab.net}
\address{School of Mathematical Sciences, Queen Mary University of London, E1 4NS, UK}

\thanks{F.D.~was partially supported by EPSRC Grant EP/L025302/1. S.S.~was partially supported by the DFG, SFB/TR45 and Leverhulme Trust Research Project Grant RPG-2018-401.}
%\subjclass[2010]{11G18 (primary), 11F33, 11F41, 14G35  (secondary).}
%
\date{November 2019}

\begin{abstract}
We consider Hilbert modular varieties in characteristic~$p$ with Iwahori level at~$p$
and construct a geometric Jacquet--Langlands relation showing that the irreducible components 
are isomorphic to products of projective bundles over quaternionic Shimura varieties of level prime to~$p$.
We use this to establish a relation between mod~$p$ Hilbert and quaternionic modular forms that 
reflects the representation theory of $\GL_2$ in characteristic~$p$ and generalizes a result of
Serre for classical modular forms.  Finally we study the fibres of the degeneracy map to level prime
to~$p$ and prove a  cohomological vanishing result that is used to associate Galois representations
to mod~$p$ Hilbert modular forms.
%\end{abstract}

\medskip

%\begin{altabstract}
\noindent {\sc R\'esum\'e.} {\em Relation de Jacquet--Langlands et filtration de Serre modulo $p$ via la g\'eometrie des 
vari\'et\'es modulaires de Hilbert : \'Epissage et d\'ecoupage.}\,
On consid\`ere les vari\'et\'es modulaires de Hilbert en caract\'eristique~$p$ de niveau Iwahori en~$p$,
et on construit une relation g\'eom\'etrique de Jacquet--Langlands qui montre que
les composantes irr\'eductibles sont isomorphes \`a des produits de fibr\'es projectifs sur des vari\'et\'es de
Shimura quaternioniques de niveau premier \`a~$p$.  On l'utilise pour \'etablir une relation entre les formes
modulaires de Hilbert et quaternioniques modulo~$p$ qui refl\`ete la th\'eorie des repr\'esentations de $\GL_2$
en caract\'eristique~$p$, et qui g\'en\'eralise un r\'esultat de Serre pour les formes modulaires classiques.
Enfin, on \'etudie les fibres de l'application naturelle vers la vari\'et\'e de niveau premier \`a~$p$, et
on d\'emontre un r\'esultat d'annulation de cohomologie qui est utilis\'e dans la construction des
repr\'esentations galoisiennes associ\'ees aux formes modulaires de Hilbert modulo~$p$.
%\end{altabstract}

\end{abstract}

\maketitle

\section{Introduction}
\subsection{Overview} Suppose that $N\ge 4$ is an integer and $p$ is a prime not dividing $N$, and let
$X_1(N;p)$ denote the modular curve associated to the group $\Gamma_1(N)\cap \Gamma_0(p)$.
According to a fundamental result of Deligne and Rapoport~\cite[V.1]{DR},
the curve $X_1(N;p)$ has a semistable model
over $\Z[1/N]$ whose reduction mod $p$ is a union of two irreducible components,
each isomorphic to the reduction of $X_1(N)$, the modular curve associated to $\Gamma_1(N)$ .  The model is defined by viewing
$X_1(N;p)$ as parametrizing isogenies of degree $p$ between elliptic curves
(with a point of order $N$), the components in characteristic $p$ are described by whether the isogeny
or its dual has connected kernel, and they cross at the points corresponding
to supersingular elliptic curves. In a similar vein, they describe (in~\cite[V.2]{DR}) a semistable model
over $\Z[\zeta_p,1/N]$ for the modular curve $X_1(Np)$ associated to $\Gamma_1(Np)$; its structure
in characteristic $p$ underpins an elegant relation, due to Serre (see \cite[Thm.~12.8.8]{KM} and \cite[\S8]{Gr}),
between mod $p$ modular forms of weight $2$ with respect to $\Gamma_1(Np)$ and those of weights ranging
from $2$ to $p+1$ with respect to $\Gamma_1(N)$.

The analogous situation becomes more complicated for Hilbert modular varieties, i.e., the
Shimura varieties associated to $\Res_{F/\Q}\GL_2$ where $F$ is a totally real number field.
Recall that these varieties have dimension $d = [F:\Q]$, and can be viewed as (coarse)
moduli spaces for certain $d$-dimensional abelian varieties with additional structure.
We restrict our attention to the case where $p$ is unramified in $F$, and consider the
setting, analogous to the one above, of Hilbert modular varieties with Iwahori level at $p$.
In \cite{P}, Pappas defined models for these varieties over $\Z_{(p)}$, which he proved were flat
local complete intersections of relative dimension $d$.  The geometry of their reduction
mod $p$ has been studied by various authors, as will be discussed below. 
In brief, some collections
of components can be identified with reductions of Hilbert modular varieties of level prime to $p$,
as in the classical case, but there are also ``intermediate'' components with no such
description.   Our first main result is a geometric Jacquet--Langlands relation that describes them
as products of projective bundles over quaternionic Shimura varieties of level prime to $p$.

In \cite{HU2} Helm proves results of a similar nature in the case of unitary Shimura varieties which imply the following:  collections of components in a Hilbert modular variety with Iwahori level at $p$ are related via  {\it Frobenius factors} to a product of projective bundles over  quaternionic Shimura varieties of level prime to $p$. While some of our geometric methods are inspired by Helm's work, we would like to point out an essential difference in the results:  the quaternion algebras appearing in our work are {\it different}, leading to the existence of isomorphisms on the nose (as opposed to Frobenius factors). Apart from facilitating new applications, we consider the merit of these results to be the naturality of the relationships they establish between the mod $p$  geometry of Shimura varieties associated to different reductive groups.

There is also an essential difference between our method and the one in \cite{HU2}: our construction of the 
above-mentioned isomorphisms is more direct in that it does not involve the degeneracy (or forgetful) map to the prime-to-$p$ level.  Indeed, the image of a point 
corresponding to an isogeny of abelian varieties is constructed by ``splicing"  the Dieudonn\'e modules of the isogeny's source and target.
As a result, we believe our method to be more amenable to generalization to Shimura varieties associated to higher rank groups.

Our second main set of results concerns a generalization of Serre's relation between mod $p$ modular forms of weight $2$ and level $Np$ and those of weight $k \in [2,p+1]$ and level prime to $p$.  More precisely, we obtain a filtration on the space of mod $p$ Hilbert modular forms of parallel weight $2$ and pro-$p$ Iwahori level at $p$, and identify the graded pieces with spaces of quaternionic modular forms of level prime to $p$ and weight (components) in $[2,p+1]$.  We accomplish this by combining our geometric Jacquet-Langlands relation with an analysis of dualizing sheaves by a method we call ``dicing".  The motivation for the result, discussed further below, comes from the relation between algebraic and geometric Serre weights explored in \cite{DS}.   In fact, this relation was a critical clue to our understanding of the more canonical  choice of quaternion algebras.

Our final set of geometric results centers on the degeneracy map from the Hilbert modular variety of Iwahori level at $p$ to level prime to $p$ (i.e., the one intervening in Helm's approach as opposed to ours).  In particular, we apply techniques from crystalline Dieudonn\'e theory to determine the precise structure of its fibres (restricted to irreducible components). These results complement those given in the Key Lemma in \cite{GK}, and are expected to have applications in the context of that work. We give a different application in this paper: combining this with our method of dicing dualizing sheaves, we prove a cohomological vanishing result which is a key ingredient in the construction in  \cite{DS} of Galois representations associated to (non-paritious) Hilbert modular eigenforms in characteristic $p$.

\subsection{Geometric Jacquet--Langlands}
We first introduce the notation for our main objects of study: certain mod $p$ Shimura varieties and automorphic bundles.
Their precise definitions are given in \S\S2--4, along with various technical results that will be needed later in the paper.

We fix a totally real field $F$ and a prime $p$ unramified in $F$.
We also fix embeddings $\Qbar \to \Qpbar$ and $\Qbar \to \C$,
and let $\Theta$ denote the set of embeddings $F \to \Qbar$.
We can thus identify $\Theta$ with the sets of homomorphisms:
$$\Theta_\infty := \{ F \to \R\},\quad \Theta_p := 
\{F \to \Qpbar\}\quad \mbox{and}\quad \overline{\Theta}_p 
 := \{O_F/p \to \Fpbar\}.$$
We let $\phi$ denote the Frobenius automorphism of $\Fpbar$.
Furthermore, we have a natural decomposition $\overline{\Theta}_p
 = \coprod_{v|p} \Theta_v$, where $\theta \in \Theta_v$ if and only
 if $\theta$ factors through $\CO_F/v$, and each $\Theta_v$ is an
 orbit in $\overline{\Theta}_p$ under $\theta \mapsto \phi \circ \theta$.

Let $G = \Res_{F/\Q}(\GL_2)$, and let $U$ be a sufficiently small open compact
subgroup of $G(\A_\f) =  \GL_2(\A_{F,\f})$ containing $\GL_2(\CO_{F,p})$.
The Hilbert modular variety with complex points
$$\GL_2(F) \backslash ((\C - \R)^\Theta \times \GL_2(\A_{F,\f})) / U$$
has a canonical integral model over $\Z_{(p)}$, which we denote $Y_U(G)$, and
write $\Ybar = Y_U(G) \times_{\Z_p} \Fpbar$ for its geometric special fibre.
Similarly for any non-empty $\Sigma \subset \Theta$ of even cardinality, consider
the quaternion algebra $B = B_\Sigma$ over $F$ ramified at precisely the
set of infinite places corresponding to $\Sigma$, and let $G_\Sigma$
denote the algebraic group over $\Q$ defined by $G_\Sigma(R) = (B\otimes_\Q R)^\times$.
Choosing a maximal order $\CO_B$ of $B$ and an isomorphism 
$\widehat{\CO}_B \cong M_2(\widehat{\CO}_F)$, we can identify $U$ with an
open compact subgroup of $G_\Sigma(\A_\f) \cong G(\A_\f)$ and consider
the quaternionic Shimura variety with complex points
$$B^\times \backslash ((\C - \R)^{\Theta - \Sigma} \times (B \otimes \A_{\f})^\times) / U.$$
We let $Y_U(G_\Sigma)$ denote its canonical integral model, defined over the localization
at a prime over $p$ in its reflex field; denote its geometric special fibre by
$\Ybar_\Sigma$, and  let $\Ybar_\emptyset = \Ybar$.  Thus $\Ybar_\Sigma$
is a smooth variety over $\Fpbar$ of dimension $|\Theta - \Sigma|$,
which is proper if and only if $\Sigma \neq \emptyset$.

For each $\theta \in \Theta$, one can also define a rank two automorphic vector bundle
$\CV_\theta$ on $\Ybar_\Sigma$, together with a line bundle $\omega_\theta \subset \CV_\theta$
whenever $\theta \not\in \Sigma$.  We let $\delta_\theta$ denote the line
bundle $\wedge^2_{\CO_{\Ybar_\Sigma}}\!\!\CV_\theta$ on $\Ybar_\Sigma$.
For $(k,\ell) \in \ints^\Theta \times \ints^\Theta$ with $k_\theta \ge 2$ for all
$\theta \in \Sigma$, we define the automorphic bundle of weight $(k,\ell)$ on $\Ybar_\Sigma$
as
$$\CA_{k,\ell} = \left(\bigotimes_{\theta \not\in \Sigma} 
\delta_\theta^{\ell_\theta}\omega_\theta^{k_\theta} \right)
\bigotimes \left(\bigotimes_{\theta \in \Sigma} 
\delta_\theta^{\ell_\theta}\sym^{k_\theta - 2}\CV_\theta\right).
$$
The space of mod $p$ modular forms of weight $(k,\ell)$ and level $U$ with respect to $G_\Sigma$
is then defined as $H^0(\Ybar_\Sigma,\CA_{k,\ell})$, under the assumption $F \neq \Q$.
(For $F = \Q$, one has to extend the line bundle to the cusps in order to recover the
usual notion.)  The bundles are equipped with a natural action of $G_\Sigma(\A_\f^{(p)})$
 defined compatibly with its action on the varieties $\Ybar_\Sigma$ for varying $U$, yielding a Hecke action
on the spaces of forms.

Restrict attention now to the case $G = \Res_{F/\Q}(\GL_2)$ and consider the open compact
subgroup
$$U_0(p) = \{g\in U\,|\, g_p \equiv \begin{psmallmatriks} * & * \\ 0 & * \end{psmallmatriks}\bmod p\CO_{F,p}\}.$$
We let $Y_{U_0(p)}(G)$ denote the model over $\ints_{(p)}$ defined by Pappas in~\cite{P}
for the Hilbert modular variety of level $U_0(p)$, and let $\Ybar_0(p)$ denote its geometric
special fibre.  Pappas studied the local structure of $\Ybar_0(p)$ and proved that it is a flat local complete intersection of relative dimension $d=[F:\Q]$. 
The global geometry of $\Ybar_0(p)$ and its degeneracy map to $\Ybar$ were studied in \cite{Stamm} in the case $d=2$, and for general $d$ in 
\cite{GK}, through the introduction of a stratification.
A similar stratification was considered in \cite{HU2}  in the context of related unitary Shimura varieties, and
a generalization to the context of Hilbert modular varieties with $p$ ramified in $F$ was studied in \cite{ERX}.

The stratification  on $\Ybar_0(p)$ is given by closed subvarieties indexed by pairs $(I,J)$ of
subsets of $\Theta = \overline{\Theta}_p$ satisfying $(\phi^{-1} I) \cup J= \Theta$. The $d$-dimensional strata have the form $\Ybar_0(p)_{I,J}$ where 
$I  = \{\phi\circ\theta\,|\,\theta\not\in J\}$, which we denote simply by $\Ybar_0(p)_J$.
Thus $\Ybar_0(p) = \bigcup_{J \subset \Theta} \Ybar_0(p)_J$ where each $\Ybar_0(p)_J$
is a smooth $d$-dimensional variety over $\Fpbar$, and each irreducible component of
$\Ybar_0(p)$ lies in $\Ybar_0(p)_J$ for a unique $J \subset \Theta$.
We can then state our geometric Jacquet--Langlands relation (proved in \S\ref{sec:JL}) as follows:
 
\begin{itheorem}  \label{thm:JLiso} For each $J \subset \overline{\Theta}_p$
and sufficiently small open compact subgroup $U$
of  $\GL_2(\A_{F,\f})$ containing $\GL_2(O_{F,p})$,
there is a Hecke-equivariant isomorphism
$$\Ybar_0(p)_J \stackrel{\sim}{\longrightarrow} 
\prod_{\theta\in\Sigma} \P_{\Ybar_\Sigma}(\CV_\theta),$$
where  the product is a fibre product over $\Ybar_\Sigma$,
and $\Sigma = \Sigma_J \subset \Theta_\infty$ corresponds under the identification
$\Theta_\infty = \overline{\Theta}_p$ to
$\{\theta \in J \,|\, \phi\circ\theta \not\in  J\} \cup \{\theta\not\in J \,|\,\phi\circ\theta \in J\}.$
\end{itheorem}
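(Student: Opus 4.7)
The plan is to construct the isomorphism directly on the moduli-theoretic level, following the ``splicing'' strategy emphasized in the introduction, and thereby bypass the degeneracy map to $\Ybar$. A point of $\Ybar_0(p)_J$ is represented by an $\CO_F$-linear isogeny $\pi\colon A \to A'$ of degree $p^d$ between HBAVs whose shape at each embedding is controlled by $J$: for each place $v\mid p$ and each $\theta \in \Theta_v$, the $\theta$-isotypic pieces $D_\theta$, $D'_\theta$ of the contravariant Dieudonn\'e modules of $A$, $A'$ have rank two, and the defining conditions of the stratum prescribe, for each $\theta$, whether the induced map $\pi^*_\theta \colon D'_\theta \to D_\theta$ is an isomorphism, has $1$-dimensional kernel, or has $1$-dimensional image, with a dual condition on $F$, $V$ encoded by $\phi^{-1}I \cup J = \Theta$.

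The key construction is the map $\Ybar_0(p)_J \to \prod_{\theta\in\Sigma}\P_{\Ybar_\Sigma}(\CV_\theta)$ obtained by splicing. I build a new $p$-divisible group $H''$ with $\CO_F$-action whose $\theta$-isotypic Dieudonn\'e module $D''_\theta$ equals $D_\theta$ when $\theta \in J$ and $D'_\theta$ otherwise. Along each $\phi$-orbit in $\Theta_v$, the maps $F$ and $V$ between consecutive $\theta$-components are patched using the identity where both $\theta$ and $\phi\theta$ lie on the same side of $J$, and using $\pi^*$ (or its transpose, for $V$) precisely at the transitions between the two sides---which are, by definition, indexed by $\Sigma$. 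The compatibility $\phi^{-1}I \cup J = \Theta$ is exactly what ensures the patched data assemble into an honest Dieudonn\'e module with $\CO_F$-action. Serre--Tate theory then algebraizes $H''$ to an abelian variety $A''$, and a calculation of the Brauer invariants of $\End(A'')^0$ at the places above $p$---forced by the splitting/twisting pattern along each orbit---upgrades this canonically to an $\CO_B$-action for $B = B_\Sigma$, furnishing a point of $\Ybar_\Sigma$. For each $\theta \in \Sigma$, the ``discarded'' component $D_\theta$ or $D'_\theta$ either embeds into $D''_\theta$ or is a quotient of it, and the corresponding line modulo $p$ gives a canonical element of $\P(\CV_\theta(A''))$, providing the $\P(\CV_\theta)$-coordinate.

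The inverse is the natural cutting operation: given $A''$ together with lines $L_\theta \subset \CV_\theta(A'')$ for $\theta \in \Sigma$, reverse the splicing by cutting $D''$ at each $\Sigma$-transition and reinserting $L_\theta$ as the prescribed kernel or image, yielding on each $\phi$-orbit the pair $(D, D')$ and the map $\pi^*$. The tame level data, living away from $p$, transfers canonically via the chosen identification $\widehat{\CO}_B \cong M_2(\widehat{\CO}_F)$. That the two constructions are mutually inverse at the level of points is essentially formal, and Hecke-equivariance at prime-to-$p$ level follows because the entire procedure is functorial in prime-to-$p$ quasi-isogenies.

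The main obstacle is promoting these pointwise constructions to morphisms of schemes working in families. This requires reformulating the splicing in terms of the crystalline or de Rham formalism, so that the input is a filtered Dieudonn\'e-style module over the base rather than a single Dieudonn\'e module, and verifying compatibility with deformations---the most delicate point being over the superspecial locus, where several $\Sigma$-transitions can occur in a single $\phi$-orbit. Here Pappas's local model description of $\Ybar_0(p)$, together with the Kodaira--Spencer identifications for the $\CV_\theta$, should provide the necessary rigidifying structure. Once the morphism is defined, showing it is an isomorphism reduces, by smoothness of both sides and the dimension count $(d - |\Sigma|) + |\Sigma| = d$, to bijectivity on $\Fpbar$-points, which is immediate from the explicit construction.
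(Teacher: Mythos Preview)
Your high-level strategy—splice the Dieudonn\'e modules of $A$ and $A'$ to produce a new abelian variety, record the discarded lines as points of $\P(\CV_\theta)$—is exactly the paper's idea. But the execution has two genuine gaps.

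First, and most seriously, the target $\Ybar_\Sigma$ has no direct moduli interpretation in terms of abelian varieties with $\CO_{B_\Sigma}$-action: for $\Sigma$ neither empty nor all of $\Theta$, the quaternionic Shimura variety is not of PEL type. Your plan to ``upgrade'' the spliced $A''$ to carry an $\CO_B$-action via a Brauer-invariant calculation does not produce a point of $\Ybar_\Sigma$, because there is no such moduli problem to land in. The paper's solution is to work instead with the auxiliary unitary group $G'_\Sigma = (G_\Sigma \times T_E)/T_F$, whose Shimura varieties $\Ybar'_\Sigma$ \emph{are} moduli spaces of abelian varieties with $\CO_{D_\Sigma}$-action (for $D_\Sigma = B_\Sigma \otimes_F E$). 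The splice is constructed there, giving an isomorphism $\Ybar_0(p)'_J \cong \prod_\theta \P_{\Ybar'_\Sigma}(\CV^0_{\tilde\theta})$, and then one descends to $\Ybar_\Sigma$ by identifying it with an explicit union of components of $\Ybar'_\Sigma$. This passage through the unitary group is not a technical convenience but an essential ingredient. Relatedly, the paper does not invoke Serre--Tate to algebraize a bare $p$-divisible group: the splice $A_J$ is built concretely as a quotient $A_1/(C'_J)^\perp$ for a specific finite flat subgroup scheme $C'_J$ extracted from the Raynaud structure of $\ker f$, and one then \emph{verifies} that its Dieudonn\'e module is the expected splice. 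This is what makes the construction work in families without further crystalline input.

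Second, your final reduction is wrong: smoothness, equal dimension, and bijectivity on $\Fpbar$-points do \emph{not} imply that a morphism is an isomorphism—relative Frobenius is the standard counterexample. The paper checks both bijectivity on $\Fpbar$-points (by writing down an explicit inverse on the level of Dieudonn\'e lattices) and injectivity on tangent spaces (via Grothendieck--Messing), and both are needed.
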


As we mentioned earlier, a similar result was proved in the context of related unitary Shimura varieties by Helm (\cite[Thm.~5.10]{HU2})
though the morphism constructed by Helm is only proved to be bijective on points, i.e., a  ``Frobenius factor'' in the terminology of \cite{HU2}.  
 Using $'$ to denote the analogous unitary Shimura varieties, Helm's approach is to relate the $\Ybar'_0(p)_J$ to products of $\P^1$-bundles
 over lower-dimensional strata of $\Ybar'$, 
and to relate those in turn to products of $\P^1$-bundles over lower-dimensional Shimura varieties.
We had initially hoped to use Helm's result  for the application we had in mind, despite the presence of Frobenius factors in his construction. 
Indeed we were encouraged by the fact that results of Tian--Xiao~\cite{TX} remove the Frobenius factor from the latter step.
While the Frobenius factor is intrinsic to the former step (see  Theorem~\ref{thm:fibre} below), a more serious problem
was that  the set of ramified places for the quaternion algebra provided by the results in \cite{HU2} does not match the set
$\Sigma_J$ determining the vector bundles $\CV_\theta$.   
This led us to the consideration of different quaternion algebras: in fact the ones that emerge
naturally from our method of ``splicing" described below, which is more direct and 
bypasses the projections to strata of $\Ybar'$.

To prove Theorem~\ref{thm:JLiso}, we first prove the analogous result
in the context of related unitary Shimura varieties (as in \cite{TX}), so that the quaternionic
Shimura varieties are replaced by ones which are moduli spaces for abelian varieties.
Denoting the special fibres of the corresponding unitary Shimura varieties by  $\Ybar'_0(p)$
and $\Ybar_\Sigma'$, the stratum $\Ybar'_0(p)_J$ parametrizes
$p$-isogenies $A \to B$ such that the induced morphism on Dieudonn\'e modules
satisfies conditions determined by $J$.  The idea is to define
morphisms to (projective bundles over) $\Ybar_\Sigma'$ by splicing
the Dieudonn\'e modules of $A$ and $B$ to obtain an abelian variety $C$
corresponding to a point of $\Ybar_\Sigma'$.  We then prove this yields
an isomorphism analogous to the one we want, and
explain how to transfer the result to the Hilbert/quaternionic setting to
obtain Theorem~\ref{thm:JLiso} using results in \S\ref{sec:ShimVar}.  We remark that 
a key idea that enables us to obtain such clean results
in comparison to \cite{HU2} and \cite{TX}  lies in exploiting the possibility
of allowing $A$ and $B$ to play symmetric roles.

\subsection{The Serre filtration} \label{subsec:i.fil}
We now describe in more detail the application of Theorem~\ref{thm:JLiso} we had in mind,
and carried out in \S\ref{sec:fil}.
Recall that we wish to generalize a result of Serre
relating mod $p$ modular forms of weight $2$ and level $\Gamma_1(Np)$
to mod $p$ modular forms of weights $k \in [2,p+1]$ and level $\Gamma_1(N)$
(see~\cite[Thm.~12.8.8]{KM}).  More precisely, let $X_1(Np)$ denote the semistable model
over $R = \ints_{(p)}[\mu_p]$ for the compact modular curve $X_1(Np)$ (as in \cite[\S7]{Gr})
and let $\CK$ denote its dualizing sheaf.  Then $H^0(X_1(Np), \CK)$ is a lattice
over $R$ in the space of weight two cusp forms with respect to $\Gamma_1(Np)$, and
tensoring over $R$ with $\Fpbar$ yields $H^0(\overline{X}_1(Np), \overline{\CK})$, where 
$\overline{X}_1(Np)$ is the special fibre of $X_1(Np)$ and
its dualizing sheaf $\overline{\CK}$ is its sheaf of regular (Rosenlicht) differentials.
The space $H^0(\overline{X}_1(Np), \overline{\CK})$
decomposes as a direct sum of eigenspaces with respect to the natural
action of $(\ints/p\ints)^\times$.  Writing the characters $(\ints/p\ints)^\times \to
\F_p^\times$ as $\chi_m: a \mapsto a^m$ for $m = 1,2,\ldots,p-1$, Serre's result,
as refined by Gross (see Propositions~8.13 and~8.18 of~\cite{Gr}),
gives a Hecke-equivariant exact sequence
$$0 \to H^0(\overline{X} , \delta^m\omega^{p+1-m}(-C))
        \to H^0(\overline{X}_1(Np),\overline{\CK})^{\chi_m}
        \to H^0(\overline{X}, \omega^{m+2}(-C)) \to 0,$$
 where $\overline{X}$ is the reduction of $X_1(N)$ and $\omega^k(-C)$ is
 the line bundle whose sections are (mod $p$)
 cusp forms of weight $k$ with respect to $\Gamma_1(N)$, and $\delta$
 is a trivial bundle whose presence has the effect of twisting the action of the Hecke operator
 $T_q$ by $q$.
 
The above exact sequence can be viewed as a geometric counterpart to the
one arising in the cohomology of the modular curve $\Gamma_1(N)\backslash\uhp$
with coefficients in local systems associated to the right $\GL_2(\F_p)$-modules
in the exact sequence:
$$0 \to \det^m \otimes \sym^{p-1-m}\F_p^2
        \to \Ind_P^{\GL_2(\F_p)} (1\otimes \chi_m)
        \to \sym^{m}\F_p^2 \to 0,$$
where $P$ is the subgroup of upper-triangular matrices and $1 \otimes \chi_m$
is the character sending $\begin{psmallmatriks} a & b \\ 0 & d \end{psmallmatriks}$
to $\chi_m(a) = d^m$.
From the point of view of Serre weight conjectures for
Galois representations, and in particular the relation between algebraic and geometric
notions of Serre weights as explored in \cite{DS}, it is natural to seek a generalization
of Serre's result to the context of Hilbert modular forms.  The desired result should
describe the space of mod $p$ forms of parallel weight $2$ and character $\chi$ with
respect to pro-$p$-Iwahori level at $p$ in terms of spaces of mod $p$ forms of level
prime to $p$ and weights corresponding to the Jordan--Holder factors of the right representation
$\Ind_P^{\GL_2(\CO_F/p\CO_p)} \chi$, where $P$ is again the subgroup of upper-triangular
matrices and $\chi:P \to \Fpbar^\times$ is any character, which by twisting easily reduces
to the case of characters the form
$\begin{psmallmatriks} a & b \\ 0 & d \end{psmallmatriks} \mapsto \chi(d)$ where
$\chi$ is a character on $(\CO_F/p\CO_F)^\times$.

To that end, we maintain the notation from the discussion before Theorem~\ref{thm:JLiso} and now consider
$$U_1(p) = \{g\in U\,|\, g_p \equiv \begin{psmallmatriks} * & * \\ 0 & 1 \end{psmallmatriks}\bmod p\CO_{F,p}\}.$$
Then Pappas provides us with a model for $Y_{U_1(p)}(G)$ which is finite and flat over 
$Y_{U_0(p)}(G)$, and hence Cohen--Macaulay over $\Z_{(p)}$.  Let $\Ybar_1(p)$ denote its
geometric special fibre, and let $\CK$ (resp.~$\overline{\CK}$) denote the dualizing sheaf
on $Y_{U_1(p)}(G)$ (resp.~$\Ybar_1(p)$).  Via the Kodaira--Spencer isomorphism
 $H^0(Y_{U_1(p)}(G),\CK)$ can be identified with a lattice over $\Z_{(p)}$ in the space of Hilbert modular
forms of parallel weight $2$ and level $U_1(p)$ over $\Q$, and we view  $H^0(\overline{Y}_1(p),\overline{\CK})$
as the space of mod $p$ forms of parallel weight $2$ and level $U_1(p)$.
 The natural action of the group
$U_0(p)/U_1(p) \cong (\CO_F/p\CO_F)^\times$ on $H^0(\Ybar_1(p),\overline{\CK})$ yields a 
decomposition
$$H^0(\Ybar_1(p),\overline{\CK}) = \bigoplus_{\chi} H^0(\Ybar_1(p),\overline{\CK})^\chi$$
into eigenspaces for the characters $\chi:  (\CO_F/p\CO_F)^\times \to \Fpbar^\times$.

Before stating the second main result, we recall two basic facts from the representation
theory of $\GL_2(\CO_F/p\CO_F)$.  Firstly, its irreducible 
(right) representations over $\Fpbar$ are precisely those of the form:
$$V_{m,n} = \bigotimes_{\theta \in \overline{\Theta}_p}  
\det^{m_\theta} \otimes \sym^{n_\theta} V_\theta $$
for $(m,n) \in \ints^\Theta \times \ints^\Theta$ with
$0 \le m_\theta, n_\theta \le p-1$ for all $\theta$
and $m_\theta < p-1$ for some $\theta$ in each $\Theta_v$, and
$V_\theta = \Fpbar^2$ with $\GL_2(\CO_F/p\CO_F)$ acting via $\theta$.
Secondly  (see \cite[\S2]{BP}), there is a decreasing filtration
$$ 0 \subset \fil^dV_\chi
        \subset \fil^{d-1}V_\chi \subset \cdots \subset
         \fil^1 V_\chi \subset \fil^0V_\chi = V_\chi $$
on the representation $V_\chi = \Ind_P^{\GL_2(\CO_F/p\CO_p)} \chi$ 
such that the graded part $\gr^j V_\chi$ has the form $\bigoplus_{|J| = j} V_{\chi,J}$, where each $V_{\chi,J}$ 
is either irreducible or zero.  We then prove the following theorem in \S\ref{sss:fun.proof}
(see \S\ref{sss:fun.Hecke} for elaboration on the meaning of Hecke-equivariance):

\begin{itheorem} \label{thm:JLss}
For each sufficiently small open compact subgroup $U$
of  $\GL_2(\A_{F,\f})$ containing $\GL_2(O_{F,p})$,
there is a Hecke-equivariant spectral sequence
$$ E_1^{j,i} = \bigoplus_{|J| = j} H^{i+j}(\Ybar_{\Sigma_J} , \CA_{\chi,J})
         \Longrightarrow H^{i+j}(\Ybar_1(p), \overline{\CK})^\chi,$$
where $\CA_{\chi,J} = \CA_{n+2,m}$ (resp.~$0$)
if $V_{\chi,J} \cong V_{m,n}$ (resp.~$0$).
\end{itheorem}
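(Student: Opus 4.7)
The plan is to deduce Theorem~\ref{thm:JLss} from Theorem~\ref{thm:JLiso} by analyzing the dualizing sheaf of $\Ybar_1(p)$ through the Pappas stratification of $\Ybar_0(p)$, combined with a ``dicing'' procedure that computes direct images of line bundles on the projective bundles provided by Theorem~\ref{thm:JLiso}.

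Let $\pi\colon\Ybar_1(p)\to\Ybar_0(p)$ denote the degeneracy map. It is finite and flat, and $\Ybar_0(p)$ is Cohen--Macaulay, so $\pi_*\overline{\CK}$ is a coherent sheaf on $\Ybar_0(p)$ whose cohomology computes $H^*(\Ybar_1(p),\overline{\CK})$. The natural action of $(\CO_F/p\CO_F)^\times = U_0(p)/U_1(p)$ splits $\pi_*\overline{\CK}$ into eigenspaces $\bigoplus_\chi\overline{\CK}_\chi$, and it suffices to produce the spectral sequence for a single character $\chi$.

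The heart of the argument is a decreasing filtration
$$ 0=\fil^{d+1}\overline{\CK}_\chi\subset\fil^d\overline{\CK}_\chi\subset\cdots\subset\fil^0\overline{\CK}_\chi=\overline{\CK}_\chi $$
whose graded pieces split as $\gr^j\overline{\CK}_\chi\cong\bigoplus_{|J|=j}\CL_{\chi,J}$, with each $\CL_{\chi,J}$ a line bundle supported on the $d$-dimensional stratum $\Ybar_0(p)_J$. The hypercohomology spectral sequence of this filtered sheaf then has $E_1^{j,i}=\bigoplus_{|J|=j}H^{i+j}(\Ybar_0(p)_J,\CL_{\chi,J})$, converging to $H^{i+j}(\Ybar_0(p),\overline{\CK}_\chi)$. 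I expect the filtration itself to come from a local computation at the generic points of the lower-dimensional substrata, using Pappas' explicit equations for $\Ybar_0(p)$ and for its pro-$p$-Iwahori cover, and matched term by term with the filtration of $V_\chi=\Ind_P^{\GL_2(\CO_F/p\CO_F)}\chi$ recalled from~\cite{BP}. Under the isomorphism $\Ybar_0(p)_J\stackrel{\sim}{\longrightarrow}\prod_{\theta\in\Sigma_J}\P_{\Ybar_{\Sigma_J}}(\CV_\theta)$ of Theorem~\ref{thm:JLiso}, the line bundle $\CL_{\chi,J}$ should become identified with a tensor product of pullbacks of $\delta_\theta^{\ell_\theta}\omega_\theta^{k_\theta}$ from $\Ybar_{\Sigma_J}$ and relative twists $\CO(n_\theta)$ on the $\P^1$-factors, where the exponents are read off from $V_{\chi,J}\cong V_{m,n}$. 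Iterating the projection formula along the rank-two bundles $\rho_\theta\colon\P(\CV_\theta)\to\Ybar_{\Sigma_J}$, using $\rho_{\theta,*}\CO(n_\theta)\cong\sym^{n_\theta}\CV_\theta$ (up to a line bundle twist) and vanishing of higher direct images, then identifies $H^{i+j}(\Ybar_0(p)_J,\CL_{\chi,J})$ with $H^{i+j}(\Ybar_{\Sigma_J},\CA_{\chi,J})$; this is the dicing step. Hecke equivariance is automatic from the prime-to-$p$ functoriality of every object in sight.

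The main obstacle is the construction of the filtration on $\overline{\CK}_\chi$ together with the proof that its graded pieces match those of $V_\chi$. This requires a delicate local analysis of Pappas' (singular) model at the lower-dimensional loci where the $d$-dimensional strata meet, together with a careful bookkeeping tying the $(\CO_F/p\CO_F)^\times$-action on the pro-$p$-Iwahori cover to the combinatorics of the filtration of the induced representation in~\cite{BP}. Once this match is pinned down, the remaining steps reduce to standard projection-formula computations on products of $\P^1$-bundles over the $\Ybar_{\Sigma_J}$.
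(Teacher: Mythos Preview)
Your outline is correct and matches the paper's approach almost exactly: decompose $\pi_*\overline{\CK}$ into $\chi$-eigenspaces, filter each summand so that the graded pieces are line bundles supported on the $\Ybar_0(p)_J$, transport these via Theorem~\ref{thm:JLiso} to twisted line bundles on $\prod_{\theta\in\Sigma_J}\P_{\Ybar_{\Sigma_J}}(\CV_\theta)$, and push down to $\Ybar_{\Sigma_J}$ using $R\rho_{\theta,*}\CO(n)=\sym^n\CV_\theta$ for $n\ge -1$.

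Two clarifications on emphasis and terminology. First, the paper reserves the word ``dicing'' for the filtration step, not the projection-formula step: the filtration is defined globally and explicitly by the ideal sheaves $\CI_j=\langle\,\prod_{\theta\in J}\Lie(f)_\theta\,\rangle_{|J|=j}$ on $\Ybar_0(p)$, and since $\CK_{\Ybar_0(p)}$ is invertible (the scheme being a local complete intersection), one simply tensors. The graded pieces are then computed from Pappas' local coordinates, yielding $\gr^j\CK_{\Ybar_0(p)}\cong\bigoplus_{|J|=j}i_{J,*}(\CJ_J^{-1}\CK_{\Ybar_0(p)_J})$ where $\CJ_J$ is the ideal of the locus where $\prod_{\theta\notin J}\Lie(f^\vee)_\theta$ vanishes. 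Second, the match with the Jordan--H\"older constituents $V_{\chi,J}$ is an \emph{output} of the computation, not an input: one computes $\CJ_J^{-1}\CK_{\Ybar_0(p)_J}\otimes i_J^*\CL_{\chi^{-1}}^{-1}$ entirely in terms of the Raynaud bundles $\CL_\theta$ (using that $\CL_\theta$ is the cotangent or tangent bundle of the universal Raynaud group scheme according to whether $\theta\notin J$ or $\theta\in J$), transports via the comparison $\CL_\theta\cong\Xi_J^*\CM_\theta$ established alongside Theorem~\ref{thm:JLiso}, applies Kodaira--Spencer on $\Ybar_{\Sigma_J}$, and then observes that the resulting exponents $(\ell_{J,\theta},n_{J,\theta})$ coincide with the Bardoe--Sin formula for $V_{\chi,J}$. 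No a priori comparison with the filtration on $V_\chi$ is needed to construct the spectral sequence.
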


We thus obtain the following generalization of Serre's filtration,
where the graded pieces take the same form as in the classical case,
except that spaces of Hilbert modular forms are in general replaced
by the quaternionic ones to which they correspond via Jacquet--Langlands.
\begin{icorollary} \label{cor:JLfil}
For each sufficiently small open compact subgroup $U$
of  $\GL_2(\A_{F,\f})$ containing $\GL_2(O_{F,p})$,
there is a Hecke-equivariant decreasing filtration of length $d+1$ on
$H^0(\Ybar_1(p),\overline{\CK})^\chi$, together with a Hecke-equivariant
inclusion:
$$ \gr^j \left(H^0(\Ybar_1(p), \overline{\CK})^\chi\right)  \hookrightarrow
     \bigoplus_{|J|=j}  H^0(\Ybar_{\Sigma_J}, \CA_{\chi,J})$$
for $j=0,1,\ldots, d$.
\end{icorollary}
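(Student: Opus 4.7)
The plan is to deduce the corollary directly from the spectral sequence of Theorem~\ref{thm:JLss} by specializing to cohomological degree zero.

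First I would invoke Theorem~\ref{thm:JLss} to obtain the Hecke-equivariant spectral sequence
\[
E_1^{j,i} = \bigoplus_{|J|=j} H^{i+j}(\Ybar_{\Sigma_J}, \CA_{\chi,J}) \Longrightarrow H^{i+j}(\Ybar_1(p), \overline{\CK})^\chi.
\]
The abutment in degree zero carries the standard decreasing filtration
\[
H^0(\Ybar_1(p), \overline{\CK})^\chi = F^0 \supseteq F^1 \supseteq \cdots \supseteq F^d \supseteq F^{d+1} = 0,
\]
indexed by $j \in \{0, 1, \ldots, d\}$, since subsets $J \subseteq \overline{\Theta}_p$ have cardinality in this range and $E_1^{j,i} = 0$ outside it. Its graded pieces are $\gr^j = F^j/F^{j+1} = E_\infty^{j,-j}$, and Hecke-equivariance is inherited from the spectral sequence. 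This already exhibits the Hecke-equivariant decreasing filtration of length $d+1$ claimed in the statement.

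Next I would observe the cohomological vanishing that promotes each graded piece to an actual subobject of the $E_1$-page. Since $E_1^{p,q}$ is a direct sum of coherent cohomology groups $H^{p+q}$, it vanishes whenever $p+q < 0$. The differential $d_r$ has bidegree $(r, 1-r)$, so every incoming differential to $E_r^{j,-j}$ originates in bidegree $(j-r, -j+r-1)$, whose total cohomological index is $-1$, and hence vanishes. Consequently $E_{r+1}^{j,-j}$ is simply the kernel of the outgoing $d_r$ inside $E_r^{j,-j}$, and iterating presents
\[
\gr^j\!\bigl(H^0(\Ybar_1(p), \overline{\CK})^\chi\bigr) = E_\infty^{j,-j} \hookrightarrow E_1^{j,-j} = \bigoplus_{|J|=j} H^0(\Ybar_{\Sigma_J}, \CA_{\chi,J}),
\]
which is the desired Hecke-equivariant inclusion.

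There is essentially no obstacle in this deduction: all the geometric content and the Hecke structure are already packaged in Theorem~\ref{thm:JLss}, and Corollary~\ref{cor:JLfil} is the formal shadow of that theorem in cohomological degree zero. The only subtlety worth flagging is that one obtains a genuine inclusion of graded pieces into the $E_1$-term (rather than merely a subquotient relation), and this is specific to degree zero, relying precisely on the vanishing of coherent cohomology in negative degrees to kill all incoming differentials to the antidiagonal $i+j=0$.
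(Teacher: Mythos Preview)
Your proposal is correct and takes essentially the same approach as the paper: the paper states that Corollary~\ref{cor:JLfil} follows immediately by specializing Theorem~\ref{thm:JLss} to $i+j=0$, and your argument simply spells out the standard reason why this specialization yields an inclusion (vanishing of incoming differentials from negative total degree).
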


In order to prove Theorem~\ref{thm:JLss}, we consider the direct image of $\CK$
under the projection $\Ybar_1(p) \to \Ybar_0(p)$, decompose it
into line bundles $\CK_\chi$ on $\Ybar_0(p)$ under the action of
$U_0(p)/U_1(p)$, and define a filtration on $\CK_\chi$
by restricting to the strata.  Using the fact that the $\Ybar_0(p)_J$
are obtained by successively bisecting $\Ybar_0(p)$ into local complete
intersections, we obtain a description of the graded
pieces of the filtration in terms of line bundles $\CK_{\chi,J}$
on the $\Ybar_0(p)_J$.  Our method of ``dicing" the dualizing sheaf may be of independent interest, and is used again
in the proof of Theorem~\ref{thm:H1} below.
The proof of Theorem~\ref{thm:JLss}
is then completed by determining the line bundles
to which the $\CK_{\chi,J}$ correspond under the isomorphism of
Theorem~\ref{thm:JLiso}. 

\subsection{Degeneracy fibres} \label{sec:i.df}
Our final set of results, the subject of \S\ref{sec:fibre}, concerns
the degeneracy map $\overline{Y}_0(p) \to \overline{Y}$, or
more precisely, its restriction to the stratum 
$\overline{Y}_0(p)_J$ (maintaining the above notation).  This restriction is known (see \cite{GK}) to factor through a pointwise bijective
morphism $\xi_J$ from $\overline{Y}_0(p)_J$ to a product of $\P^1$-bundles
over a lower-dimensional stratum in $\overline{Y}$; thus $\xi_J$ is a Frobenius
factor in the sense of \cite{HU2}.
We make this more precise by showing that $\xi_J$ is a factor of the Frobenius
itself (rather than a power), and we go on to determine the precise structure
of the fibres of  $\overline{Y}_0(p)_J  \to \overline{Y}$.  In particular we prove
the following (see Theorem~\ref{thm:fibres} and the subsequent discussion
for an even more precise version):

\begin{itheorem}  \label{thm:fibre}  If $Z$ is a non-empty fibre of the morphism
$\overline{Y}_0(p)_J  \to \overline{Y}$ over a closed point of $\overline{Y}$, then
$Z$ is isomorphic to $(\P_{\Fpbar}^1)^r \times (\Spec(\Fpbar[t]/(t^p)))^s$, where 
$r = |\Sigma_J|/2$ and $s = |J| - r$.
\end{itheorem}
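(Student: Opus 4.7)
The plan is to exploit the factorization $\Ybar_0(p)_J \xrightarrow{\xi_J} W \to \Ybar$ in which $W$ is a product of $\P^1$-bundles over a Goren--Oort-type stratum of $\Ybar$ and $\xi_J$ is the ``Frobenius factor'' of \cite{GK, HU2}. The theorem then splits into two components: the $r = |\Sigma_J|/2$ copies of $\P^1$ arise from the $\P^1$-bundle fibres of $W \to \Ybar$, while the $s = |J|-r$ nilpotent factors $\Spec \Fpbar[t]/(t^p)$ come from $\xi_J$ being a factor of the relative Frobenius of $\Ybar_0(p)_J$ over $\Ybar$, of degree exactly $p$ in each of $s$ directions.

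First I would make the factorization through $W$ precise in the Hilbert setting, using the same splicing ideas as in the proof of Theorem~\ref{thm:JLiso} applied to the source $A$ of the isogeny parametrized by $\Ybar_0(p)_J$; the fibre of $W \to \Ybar$ over a closed point $\bar{y} \in \Ybar$ in the image is then $(\P^1_{\Fpbar})^r$, one $\P^1$ per pair of transitions in $\Sigma_J$ along a $\phi$-orbit of $\overline{\Theta}_p$. Next I would show that $\xi_J$ is a factor of the relative Frobenius with complement of degree exactly $p$ in each $\theta$-direction for which $\theta \in J$ and $\phi\theta \in J$; the key local computation, via crystalline Dieudonné theory, would be carried out in Pappas's local model of $\Ybar_0(p)$ as a product over embeddings $\theta$ of nodal factors $\Spec \Fpbar[x_\theta,y_\theta]/(x_\theta y_\theta)$, where the $J$-stratum selects one branch at each $\theta$ and $\xi_J$ acts in the $\theta$-direction either as the identity (if $\theta \notin J$), as a $\P^1$-coordinate projection (if $\theta$ contributes to $\Sigma_J$), or as Frobenius (if $\theta \in J$ with $\phi\theta \in J$). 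Combining these inputs, the fibre of $\Ybar_0(p)_J \to \Ybar$ over $\bar{y}$ is the scheme-theoretic preimage under $\xi_J$ of $(\P^1_{\Fpbar})^r$, which acquires a nilpotent thickening by $\Spec \Fpbar[t]/(t^p)$ in each of the $s$ Frobenius directions, yielding exactly $(\P^1_{\Fpbar})^r \times (\Spec \Fpbar[t]/(t^p))^s$.

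The main obstacle is verifying that the defining ideal in each nilpotent direction is exactly $(t^p)$ rather than some $(t^{p^k})$ with $k \ge 2$ that could a priori arise from iterating Frobenius-compatibility along a long $J$-run within a single $\phi$-orbit of $\overline{\Theta}_p$. This is precisely the refinement, emphasized in \S\ref{sec:i.df}, that $\xi_J$ is a factor of the Frobenius itself rather than of a higher power; to establish it I would carry out a careful local analysis of the Dieudonné module data along each $\phi$-orbit, showing that on the $J$-stratum the Frobenius-compatibility relations become one independent relation per embedding $\theta \in J$ with $\phi\theta \in J$, while the $\xi_J$-contraction in each $\P^1$-direction absorbs one transition per pair in $\Sigma_J$, so that the nilpotent and $\P^1$ factors neither interact nor iterate across the orbit.
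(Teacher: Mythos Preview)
Your outline correctly identifies the factorization $\overline{Y}_0(p)_J \xrightarrow{\xi_J} P_J \to \overline{Y}_{J'}$ through a product of $\P^1$-bundles, and the fact that $\xi_J$ is a factor of the Frobenius on $S_J$ (Lemma~\ref{lem:frobfactor} in the paper).  The local computation you sketch also matches Lemma~\ref{lem:fibrecompletions}.  Two remarks, however.

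First, a minor point: the factorization $\xi_J$ is not built from the splicing of \S\ref{sec:JL}; it is the much simpler map recording, for $\theta\in J''$, the line $f^*\CH^1_\dr(A_2)_\theta\subset \CH^1_\dr(A_1)_\theta$.  The paper's proof of Theorem~\ref{thm:fibre} is in fact independent of the Jacquet--Langlands isomorphism.

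Second, and this is the genuine gap: your description of $\xi_J$ as acting by a ``$\P^1$-coordinate projection'' in the $\Sigma_J$-directions and by Frobenius only in the $s$ remaining ones is incorrect.  The local analysis shows that on the fibre $Z$, after a suitable choice of parameters $w_\theta$ ($\theta\in J$), the map $\xi_J|_Z$ sends each $u_\theta$ (for $\theta\in J''$) and each $t_\theta$ (for $\theta\in J-J'$) to $w_{\phi^{-1}\circ\theta}^p$.  Thus $\xi_J|_Z:Z\to(\P^1)^r$ has degree $p^{|J|}$, not $p^s$; in particular $\xi_J|_{Z_\red}:Z_\red\to(\P^1)^r$ is already purely inseparable of degree $p^r$, i.e.\ is itself (relative) Frobenius rather than an isomorphism.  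So $\xi_J$ cannot realize the product projection $(\P^1)^r\times(\Spec\F[t]/(t^p))^s\to(\P^1)^r$, and knowing only that $Z_\red\to(\P^1)^r$ is finite, bijective and purely inseparable does not force $Z_\red\cong(\P^1)^r$.  The ``main obstacle'' you name (ruling out $t^{p^k}$ with $k\ge2$) is a local issue, handled via the Frobenius factorization and~\cite{KN}; the obstacle you miss is \emph{global}.

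The paper addresses this by constructing a \emph{different} morphism $\psi:Z_\red\to\prod_{\theta\in J,\,\phi\circ\theta\notin J}\P(H^1_\dr(A_0^{(p^{-1})}/\F)_\theta)$, related to $\xi_J|_{Z_\red}$ by a Frobenius untwist on the target, and showing directly that $\psi$ is an isomorphism (Theorem~\ref{thm:reducedfibres}).  The key ingredient is a crystallization lemma (Lemma~\ref{lem:crys}): over the smooth base $Z_\red$, the pointwise equalities $f^*\D(B_z[p^\infty])_\theta=\Ver^*\D(A_0^{(p^{-1})}[p^\infty])_\theta$ (for $\theta\in J$) globalize to isomorphisms $\CH^1_\dr(B/Z_\red)_\theta\cong H^1_\dr(A_0^{(p^{-1})}/\F)_\theta\otimes_\F\CO_{Z_\red}$ of sheaves, under which the Hodge line defines $\psi$.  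The thickening to all of $Z$ then uses Grothendieck--Messing over the nilpotent divided-power thickening $Z_\red\hookrightarrow Z_\red\times X_1$ (Lemma~\ref{lem:nilfibres}).  Your proposal is missing this crystallization step, which is exactly what converts the ``Frobenius factor'' information into the honest product decomposition.
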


Our approach to proving Theorem~\ref{thm:fibre}  relies heavily on crystalline Dieudonn\'e theory.
In particular, we use the full faithfulness of the Dieudonn\'e crystal functor over smooth
bases, due to Berthelot--Messing~\cite{BM3}, in order to obtain the Frobenius factorization, which
we then use to determine the local structure of  the fibre $Z$.
In order to determine the global structure of $Z$, we show that certain pointwise relations between the
Dieudonn\'e modules of $A$ and $B$ (where $A \to B$ is a universal isogeny) in fact
arise from isomorphisms of crystals over $Z$.

We remark that the problem of describing the fibres of $\overline{Y}_0(p)_J  \to \overline{Y}$
is also considered in \cite[\S4.9]{ERX}, where a weaker result than Theorem~\ref{thm:fibre} is used in their approach to constructing
Hecke operators at primes dividing $p$.  Related results, complementary to ones in this paper, 
are obtained in \cite{GK} where the degeneracy morphism is studied before restriction to the strata.
Further motivation for such analysis of the degeneracy map is provided by its applications to 
$p$-adic analytic continuation of Hilbert modular forms via the dynamics of Hecke operators at primes over $p$,
as in~\cite{payman} and~\cite{Shu}.

In this paper, we combine (the more precise version of) Theorem~\ref{thm:fibre} with the method of dicing introduced in 
\S\ref{sec:fil}\footnote{The results of \S\ref{sec:fibre} are, however, independent of those in \S\ref{sec:JL}.} in order to prove
the following cohomological vanishing result. 

\begin{itheorem} \label{thm:H1}
If $\pi:Y_{U_1(p)}(G) \to Y_U(G)$ is the natural projection
(of models over $\Z_{(p)}$) and $\CK$ is the dualizing sheaf on $Y_{U_1(p)}(G)$,
then $R^i\pi_*\CK = 0$ for all $i > 0$.
\end{itheorem}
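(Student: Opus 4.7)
The plan is to factor $\pi$ through $Y_{U_0(p)}(G)$ and combine the dicing filtration used in the proof of Theorem~\ref{thm:JLss} with the fibre structure of Theorem~\ref{thm:fibre}. Write $\pi = \pi_0 \circ \pi_1$, where $\pi_1 : Y_{U_1(p)}(G) \to Y_{U_0(p)}(G)$ is finite flat and $\pi_0 : Y_{U_0(p)}(G) \to Y_U(G)$. Since $\pi_1$ is finite, $R\pi_*\CK \simeq R(\pi_0)_*(\pi_1)_*\CK$; the $(\CO_F/p\CO_F)^\times$-action decomposes $(\pi_1)_*\CK$ into eigensheaves $\CK_\chi$, reducing the statement to $R^i(\pi_0)_*\CK_\chi = 0$ for each character $\chi$. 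Over $\Q$ the map $\pi_0$ is finite \'etale, hence $R^i(\pi_0)_*\CK_\chi$ is supported on the special fibre; since $\pi_0$ is proper and $Y_U(G)$ is smooth over $\Z_{(p)}$, cohomology-and-base-change together with Nakayama reduces the problem to showing that for each closed point $y \in \overline{Y}$ the cohomology $H^i$ of $\overline{\CK}_\chi$ restricted to the scheme-theoretic fibre $\bar\pi_0^{-1}(y)$ vanishes in positive degree.

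Next invoke the dicing filtration of $\overline{\CK}_\chi$ from \S\ref{sec:fil}: a finite filtration whose graded pieces are pushforwards from the closed strata $\overline{Y}_0(p)_J$ of line bundles $\overline{\CK}_{\chi,J}$. The associated long exact sequences of direct images reduce the problem further to showing, for each $J$, that $H^i(Z, \overline{\CK}_{\chi,J}|_Z) = 0$ for $i > 0$, where $Z$ is a non-empty fibre of the restricted map $\bar\pi_{0,J} : \overline{Y}_0(p)_J \to \overline{Y}$.

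By the precise form of Theorem~\ref{thm:fibre}, any such $Z$ splits as $(\P^1)^r \times T$ with $T$ an affine Artinian $\Fpbar$-scheme. Cohomology of any coherent sheaf on $T$ vanishes in positive degree, so by K\"unneth the task is to show that $\overline{\CK}_{\chi,J}|_Z$ has degree $\ge -1$ along each $\P^1$-factor. From the proof of Theorem~\ref{thm:JLss}, $\overline{\CK}_{\chi,J}$ corresponds under the isomorphism of Theorem~\ref{thm:JLiso} to a tensor product of tautological bundles $\bigotimes_{\theta \in \Sigma_J} \CO_{\P(\CV_\theta)}(k_\theta - 2)$ twisted by the pullback of a bundle from $\overline{Y}_{\Sigma_J}$, with $k_\theta \ge 2$ by the convention in the definition of $\CA_{k,\ell}$. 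Comparing the two natural morphisms out of $\overline{Y}_0(p)_J$---the splicing projection to $\overline{Y}_{\Sigma_J}$ of \S\ref{sec:JL} and the degeneracy map to $\overline{Y}$ analyzed via $\xi_J$ in \S\ref{sec:fibre}---each $\P^1$-factor of $Z$ can be identified with a fibre of one of the $\P(\CV_\theta)$-factors, on which the restriction is a line bundle of non-negative degree.

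The main obstacle is precisely this last matching step: reconciling the fibre decomposition of Theorem~\ref{thm:fibre}, which has $r = |\Sigma_J|/2$ projective factors and $|J| - r$ nilpotent directions, against the full projective bundle structure of Theorem~\ref{thm:JLiso}, which carries $|\Sigma_J|$ factors. This calls for Dieudonn\'e-theoretic bookkeeping to track how the splicing construction and the Frobenius factorization $\xi_J$ interact, and to confirm that the nilpotent directions contribute no negative-degree twists along the $\P^1$-factors.
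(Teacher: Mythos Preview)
Your broad strategy matches the paper's: factor through $Y_{U_0(p)}(G)$, reduce to the special fibre, apply the dicing filtration to break $\overline{h}_*\CK_{\overline{Y}_1(p)}$ into line bundles supported on the strata $\overline{Y}_0(p)_J$, and then reduce to cohomology on fibres of the degeneracy map. The paper does precisely this in \S\ref{sss:cv.red}, with one refinement: rather than fibres of $\overline{Y}_0(p)_J\to\overline{Y}$, it uses that $\psi_J:\overline{Y}_0(p)_J\to\overline{Y}_{J'}$ is flat, projective and Cohen--Macaulay, so that $j^*\CK_{\overline{Y}_0(p)_J}\cong\CK_Z$ and base-change to closed points is clean.

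The gap is in your final step. You try to compute the restriction of $\overline{\CK}_{\chi,J}$ to the degeneracy fibre $Z$ by passing through Theorem~\ref{thm:JLiso}, identifying the bundle on $\prod_{\theta\in\Sigma_J}\P_{\overline{Y}_{\Sigma_J}}(\CV_\theta)$, and then matching the $\P^1$-factors of $Z$ with fibres of the $\P(\CV_\theta)$. But these two $\P^1$-bundle structures live over \emph{different} base varieties ($\overline{Y}_{J'}$ versus $\overline{Y}_{\Sigma_J}$), and the $\P^1$-factors of $Z$ are indexed by $J\cap\Sigma_J=\{\theta\in J:\phi\circ\theta\notin J\}$ (half of $\Sigma_J$), not by $\Sigma_J$. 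There is no reason the composite $Z\hookrightarrow\overline{Y}_0(p)_J\to\overline{Y}_{\Sigma_J}$ should be constant, so $Z$ need not sit inside a single fibre of $\Psi_J$. Your appeal to ``$k_\theta\ge 2$ by convention'' is also a misreading: the twists $n_{J,\theta}$ in (\ref{eqn:JKL}) can equal $-1$, and in any case those are degrees along fibres of $\Psi_J$, not along the $\P^1$-factors of $Z$. The obstacle you flag at the end is real and unresolved.

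The paper avoids Theorem~\ref{thm:JLiso} entirely (as noted in the footnote in \S\ref{sec:i.df}). The key is that Theorem~\ref{thm:fibres} already identifies $j^*\CO(1)_\theta$ on $Z$ with $(R^1s_*\CO_{\widetilde{B}})_\theta$, which is exactly the Raynaud bundle $\CL_\theta$ for $\theta\in J$. Thus $\CL_\theta|_Z\cong\CO(1)_\theta$ for $\theta\in J\cap\Sigma$ and is trivial for all other $\theta$. Feeding this directly into the formulas (\ref{eqn:Lfactor}) and (\ref{eqn:Jfactor}) for $\CL_{\chi^{-1}}^{-1}$ and $\CJ_J^{-1}$ gives
\[
j^*\bigl(\CJ_J^{-1}\,i_J^*\CL_{\chi^{-1}}^{-1}\bigr)\;\cong\;\bigotimes_{\theta\in J\cap\Sigma}\CO(m_\theta+1)_\theta,
\]
and since $\CK_Z\cong\bigotimes_{\theta\in J\cap\Sigma}\CO(-2)_\theta$, the relevant degrees are $m_\theta-1\ge -1$, forcing the desired vanishing. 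The point is that the dicing already expresses everything in Raynaud bundles, and Theorem~\ref{thm:fibres} tells you exactly how those restrict to $Z$; the detour through the splicing isomorphism is neither needed nor helpful here.
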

We note that Theorem~\ref{thm:H1} (for $i=1$) is a crucial ingredient in the proof of Theorem~6.1.1
of~\cite{DS}, which associates Galois representations to mod $p$
Hilbert modular eigenforms of arbitrary weight.

\subsection{Questions} \label{sec:i.q}
We close the Introduction by listing several questions and directions
for further research that are suggested by our work.

\begin{problem} \label{q:quaternionic}
Is there a more general framework for Theorems~\ref{thm:JLiso} and~\ref{thm:JLss}
where the group $G = \Res_{F/\Q}\GL_2$ is replaced by the one associated to a
quaternion algebra over $F$ (or an even more general reductive group), and the 
representation $\Ind\,\chi$ is replaced by any tamely ramified (or even more general) type?
For a totally definite quaternion algebra, where the associated Shimura
varieties are zero-dimensional, the analogues of the theorems are essentially tautologies, while
the case of Shimura curves is related to the work of Newton--Yoshida~\cite{NY}.  
\end{problem}

\begin{problem}   The flipping and twisting of weights that appear in the computation
of the line bundles in the proof of Theorem~\ref{thm:JLss} perfectly reflect the same
phenomena in the computation of the Jordan--H\"older factors of the principal series types.
Can one give a more conceptual explanation for this synchronized gymnastics?
\end{problem}

\begin{problem}  Note that Corollary~\ref{cor:JLfil} only produces an injection.
The obstruction to proving that it is an isomorphism comes from terms of the form
$H^1(\Ybar_{\Sigma_J} , \CA_{\chi,J})$ in the spectral sequence, and one can
construct examples where these do not vanish.  If there is in fact a non-trivial cokernel,
can one at least prove that the Hecke action on it is ``Eisenstein''?
\end{problem}

\begin{problem}  A Hilbert modular eigenform $f$ of parallel weight $2$ and level $U_1(p)$, with coefficients
in a finite extension $\CO$ of $\Z_p$, determines a rank one submodule of the space of sections of the dualizing sheaf
on $Y_{U_1(p)}(G)$ over $\CO$, and hence a one-dimensional subspace of $H^0(\overline{Y}_1(p),\CK)^\chi$
for some $\chi$.  Motivated by the conjectures of \cite{DS}, one can ask if its position in the filtration and inclusion
of Corollary~\ref{cor:JLfil} is determined by the local Galois 
representations\footnote{Together with the $U_v$-eigenvalue if $f$ is old at $v$.} 
$\rho_f|_{G_{F_v}}$ for $v|p$, or more precisely the invariants $v_\theta$ for $\theta \in \Theta_v$ associated
to $\rho_f|_{G_{F_v}}$ as in the formulation of Breuil's Lattice Conjecture~\cite[Conj.~1.2]{B}
(proved by Emerton--Gee--Savitt~\cite{EGS}).
\end{problem}

\subsection{Acknowledgments}
The authors would like to thank D.\ Helm for several useful conversations.
We have already described how our results on geometric Jacquet--Langlands are inspired
by those in his paper~\cite{HU2}, but we should also remark that this circle of ideas has deeper roots
in Zink's work~\cite{Z}, Serre's letters\footnote{Written in 1987 and 1989.}~\cite{serre}, Ribet's seminal paper~\cite{ribet},
Pappas's thesis~\cite{PappasPhD} and work of Ghitza~\cite{Gh} and Nicole~\cite{Nic}.
It is a pleasure to acknowledge that some of the seeds for this paper were in fact planted by Pappas's description
of the results in his thesis to one of the authors (F.D.) at Columbia in the 1990's; they were only germinated
in the last few years by ideas diffusing from the geometric Serre weight conjectures in \cite{DS}.  

We also learned much from Y.~Tian and L.~Xiao's paper \cite{TX} and are grateful to the authors
for responding to several questions about it.

We are also grateful to C.~Breuil for instructive comments on a preliminary version of this paper,
and to the referees and to G.~Micolet for their careful reading of parts of the paper, leading to several minor
corrections and improvements to the exposition.

Finally one of the authors (S.S.) would like to thank V. Pa\v sk\=unas for moral support and DFG/SFB for financial support
 whilst research pertaining to this paper was carried out at Universit\"at Duisburg-Essen.

%\newpage

\section{Shimura varieties}\label{sec:ShimVar}
\subsection{Hilbert modular varieties}\label{sec:hmv}
We begin with a slight variant, based on \cite[\S2]{DS},
of the standard construction (e.g., in \cite[\S1]{rap})
of integral canonical models for Hilbert modular varieties.
The approach presented here to defining the moduli problem provides the
most natural and convenient framework for our results.

\subsubsection{The Shimura datum}\label{sss:hmv.sd}
We maintain the basic notation from the Introduction, so $F$ is a totally real field
of degree $d = [F:\Q]$ in which $p$ is unramified, and we identify 
 $\Theta = \{\theta: F \to \Qbar\}$ with $\Theta_\infty = \{F\to \R\}$, 
 $\Theta_p = \{F \to \Qpbar\}$ and $\overline{\Theta}_p = \{\CO_F \to \Fpbar\}$
 via  fixed embeddings $\Qbar \to \C$ and $\Qbar \to \Qpbar$.
  We let $\A_F$ denote the adeles of $F$ (omitting the subscript for $F = \Q$),
 $\A_{F,\f} = F \otimes \widehat{\Z}$ the finite adeles, and $\A_{F,\f}^{(p)} = F \otimes \widehat{\Z}^{(p)}$
 the prime-to-$p$ finite adeles.
 
 Let $G = \Res_{F/\Q}\GL_2$ and $\BS = \Res_{\C/\R}\G_m$,
 and consider the Shimura datum $(G, [h] )$, where
 $[h]$ is the $G(\R)$-conjugacy class of the homomorphism $h:\BS \to G_\R$
 defined on $\R$-points by the map $\C^\times \to \GL_2(\R)^\Theta$
sending $x+iy \mapsto \begin{psmallmatrix}x&y\\ -y &x\end{psmallmatrix}_{\theta \in \Theta}$.
Let $U$ be an open compact subgroup of $G(\A_\f) = \GL_2(\A_{F,\f})$
containing $\GL_2(\CO_{F,p})$, which we assume is sufficiently small
(as in \cite{DS}).  Write $U = U^p U_p$, where $U^p \subset \GL_2(\A_{F,\f}^{(p)})$
and $U_p = \GL_2(\CO_{F,p})$.  The associated Shimura variety (of level $U$)
is the Hilbert modular variety with complex points
\begin{equation} \label{eqn:HMVC}  \GL_2(F) \backslash ((\C - \R)^\Theta \times \GL_2(\A_{F,\f})) / U
= \GL_2(F)_+ \backslash (\uhp^\Theta \times \GL_2(\A_{F,\f})) / U, \end{equation}
where $\uhp$ is the complex upper-half plane and $\GL_2(F)_+$ denotes
the subgroup of elements of $\GL_2(F)$ with totally positive determinant.

\subsubsection{The moduli problem}\label{sss:hmv.mp}
Consider the functor $\widetilde{Y}_U(G)$ that associates
to a $\Z_{(p)}$-scheme $S$ the set of isomorphism classes of
tuples $\underline{A} = (A,\iota,\lambda,\eta)$, where
\begin{itemize}
\item $A$ is an abelian scheme over $S$ of dimension $d$;
\item $\iota$ is an embedding $\CO_F \to \End_S(A)$ such that $\Lie(A/S)$ is,
locally on $S$, free of rank one over $\CO_F \otimes \CO_S$;
\item $\lambda$ is a prime-to-$p$ quasi-polarization\footnote{By a {\em prime-to-$p$
quasi-polarization}, we mean a quasi-isogeny $\lambda$ such that, locally on $S$, $n\lambda$
is a polarization of degree prime to $p$ for some positive integer $n$ prime to $p$.}
such that $\iota$ is invariant under the associated Rosati involution;
\item $\eta$ is a level $U^p$-structure on $A$, 
i.e., for a choice of geometric point $\overline{s}_i$ on each connected component $S_i$ of $S$,
the data of a $\pi_1(S_i,\overline{s}_i)$-invariant $U^p$-orbit of
$\widehat{\CO}_F^{(p)} = \CO_F\otimes \widehat{\Z}^{(p)}$-linear isomorphisms
$$\eta_i :(\widehat{\CO}_F^{(p)})^2 \to T^{(p)}(A_{\overline{s}_i}),$$
where $T^{(p)}$ denotes the product over $\ell \neq p$ of the $\ell$-adic Tate modules,
and $g \in U^p$ acts on $\eta_i$ by pre-composing with right multiplication by $g^{-1}$.
\end{itemize}

Note that for any connected $S$ and $(A,\iota,\lambda,\eta)$ as above, there is a unique 
$\epsilon \in  (\A_{F,\f}^{(p)})^\times/(\det U^p)(\widehat{\Z}^{(p)})^\times$ such that
the following diagram commutes
$$\xymatrix{ (\widehat{\CO}_F^{(p)})^2  \times (\widehat{\CO}_F^{(p)})^2
\ar[r]^-{\wedge^2}  \ar[d]_{(\eta,\eta)}  &
\A_{F,\f}^{(p)} 
\ar[d]^{\epsilon\otimes\zeta}  \\
T^{(p)}(A_{\overline{s}})  \times T^{(p)}(A_{\overline{s}}) 
\ar[d]_{(1,\lambda)}  &
\A_{F,\f}^{(p)} (1)
\ar[d]^{\Tr_{F/\Q}} \\
T^{(p)}(A_{\overline{s}})  \times (\Q \otimes T^{(p)}(A^\vee_{\overline{s}}) )
\ar[r]^-{\mathrm{Weil}} &
\A_{\f}^{(p)} (1)}$$
for some compatible system of prime-to-$p$ roots of unity, i.e., some
isomorphism $\zeta: \widehat{\Z}^{(p)} \to \widehat{\Z}^{(p)}(1)$,
where the top arrow is the standard $\CO_F$-bilinear alternating pairing sending $((a,b),(c,d))$ to $ad-bc$.
For each $\epsilon  \in (\A_{F,\f}^{(p)})^\times/(\det U^p)(\widehat{\Z}^{(p)})^\times$,
the corresponding subfunctor $\widetilde{Y}^\epsilon_U(G)$ is 
representable by a smooth quasi-projective scheme over $\Z_{(p)}$ (in fact
a PEL Shimura variety associated to the preimage of $\G_m$ under $\det: G \to \Res_{F/\Q}\G_m$).
It follows that $\widetilde{Y}_U(G)$ is representable by an infinite disjoint union of 
smooth quasi-projective schemes over $\Z_{(p)}$, which we also denote by $\widetilde{Y}_U(G)$.
Thus $\widetilde{Y}_U(G)$ is smooth over $\Z_{(p)}$; in particular it is locally of finite type.
We remark that the complex points of $\widetilde{Y}_U(G)$ are in canonical (holomorphic)
bijection with the double quotient
$$\mathrm{SL}_2(\CO_{F,(p)}) \backslash (\uhp^\Theta \times \GL_2(\A_{F,\f}^{(p)})) / U^p.$$

\subsubsection{Descent and Hecke action}\label{sss:hmv.dha}
We have an action of $\CO_{F,(p),+}^\times$ on $\widetilde{Y}_{U}(G)$ defined
by $\theta_\mu(A,\iota,\lambda,\eta) = (A,\iota,\mu\lambda,\eta)$ for 
$\mu \in \CO_{F,(p),+}^\times$.  Note that $(U\cap \CO_F^\times)^2$ acts
trivially, and \cite[Lemma~2.4.1]{DS} shows that the resulting action by
$\CO_{F,(p),+}^\times/(U\cap \CO_F^\times)^2$ is free (for sufficiently small $U$).
Furthermore
$\widetilde{Y}_{U}(G)$ is the union of the orbits of finitely many $\widetilde{Y}^\epsilon_U(G)$
and the stabilizer of each $\widetilde{Y}^\epsilon_U(G)$ is the finite group
$(\CO_{F,+}^\times \cap \det(U))/(U\cap \CO_F^\times)^2$, so the quotient of $\widetilde{Y}_{U}(G)$
by this action is a smooth quasi-projective scheme over $\Z_{(p)}$, which we 
denote by $Y_{U}(G)$.

We now define an action\footnote{We shall, throughout the paper, only consider locally Noetherian schemes,
and here we simply mean an action on the projective system, without concern for whether the system is
representable by a (locally Noetherian) scheme.}
 of $G(\A_{\f}^{(p)})$ on $\varprojlim_{U} Y_{U}(G)$.
Suppose that $U_1$ and $U_2$ are open compact subgroups as above, and
that $g \in \GL_2(\A_\f^{(p)})$ is such that $g^{-1}U_1g \subset U_2$.
Let $\underline{A}_1 = (A_1,\iota_1,\lambda_1,\eta_1)$ denote the universal
abelian variety over $\widetilde{Y}_{U_1}(G)$.
Let $A'$ denote the abelian variety over $S = \widetilde{Y}_{U_1}(G)$ which is (prime-to-$p$) isogenous
to $A$ and satisfies
$$T^{(p)}(A'_{\overline{s}_i})  = \eta_{1,i}((\widehat{\CO}_F^{(p)})^2 g^{-1})  $$
for all $i$ (indexing the connected components of $S$).
Then $A'$ inherits an $\CO_F$-action $\iota'$ from the identification
$\End_S(A_1)\otimes \Q = \End_S(A') \otimes \Q$ induced by the
canonical quasi-isogeny $\pi \in \hom_S(A_1,A') \otimes \Z_{(p)}$,
which furthermore induces an isomorphism
$\Lie(A_1/S) \to \Lie(A'/S)$ compatible with the $\CO_F$-actions.
Note also that the quasi-polarization $\lambda'$ on $A'$ defined by
$\lambda_1 = \pi^\vee \circ \lambda' \circ \pi$ induces the same
Rosati involution as $\lambda_1$.  Moreover
$\eta' = \eta_1\circ r_{g^{-1}}$ (where $r_{g^{-1}}$ denotes right multiplication by $g^{-1}$),
defines a level $U_2$-structure
on $A'$.  We then define $\tilde{\rho}_g: \widetilde{Y}_{U_1}(G) \to \widetilde{Y}_{U_2}(G)$
by $\underline{A}_1 \mapsto \underline{A}' = (A',\iota',\lambda',\eta')$, i.e., if $\underline{A}_2$
is the universal abelian variety over $\widetilde{Y}_{U_2}(G)$, then
$\tilde{\rho}_g^*(\underline{A}_2) = \underline{A}'$.
Since $\tilde{\rho}_g$ commutes with the action of $\CO_{F,(p),+}^\times$, it descends to
a morphism $\rho_g: Y_{U_1}(G) \to Y_{U_2}(G)$.  Furthermore the morphisms
$\tilde{\rho}_g$ and $\rho_g$ are finite and \'etale.
Finally if $h$ and $U_3$ (again sufficiently small) are such that
$h^{-1}U_2 h\subset U_3$, then $\tilde{\rho}_g^*\tilde{\rho}_h^*(\underline{A}_3) \cong
\tilde{\rho}_{gh}^*(\underline{A}_3)$, so that $\tilde{\rho}_h\circ \tilde{\rho}_g = \tilde{\rho}_{gh}$
and $\rho_h\circ \rho_g = \rho_{gh}$, giving the desired action of
$G(\A_{\f}^{(p)})$ on $\varprojlim_{U} Y_{U}(G)$ (the limit being taken
under the maps $\rho_1$ for $U_1 \subset U_2$).

Proceeding as in the case of the related PEL Shimura varieties (or alternatively deducing
the analogous results from the PEL setting), one finds that $Y_U(G)$ defines the canonical model over $\Q$
for the Hilbert modular variety in (\ref{eqn:HMVC}) in the sense of \cite{deligne} (see also \cite[\S12]{milne}),
and hence that the $Y_U(G)$ define a system of integral canonical models in the sense of 
\cite{kisin}.

\subsection{Unitary Shimura varieties} \label{sec:usv}
In this section, we recall the construction of integral canonical models for certain unitary
Shimura varieties, partly following~\cite[\S3]{TX}, but treating the non-PEL setting as we
did in \S\ref{sec:hmv}.

\subsubsection{The Shimura data}\label{sss:usv.sd}
Let $\Sigma$ be any subset of $\Theta_\infty$ of even cardinality, and let $B = B_\Sigma$ denote
the quaternion algebra over $F$ ramified at precisely the places in $\Sigma$; thus
$B$ is split at all finite places, and $B = \GL_2(F)$ if $\Sigma$ is empty.  We let $G_\Sigma$
denote the algebraic group over $\Q$ defined by $G_\Sigma(R) = (B\otimes R)^\times$.
For $\Sigma \neq \emptyset$, we choose isomorphisms $B\otimes_{F,\theta}\R \cong M_2(\R)$
for each $\theta \not\in \Sigma$ and define $h_\Sigma:\BS \to G_{\Sigma,\R}$ by
sending $x+iy \mapsto \begin{psmallmatrix}x&y\\ -y &x\end{psmallmatrix}_{\theta \not\in \Sigma}$.
In \S\ref{sec:qsv} we will recall how canonical models for the Shimura varieties associated to
the Shimura datum $(G_\Sigma,[h_\Sigma])$ can be described in terms of the ones for
certain related unitary groups, which we now define.

Choose a quadratic CM extension $E$ of $F$ such that all primes $v$ of $F$ dividing $p$
split in $E$, and let $c$ denote the non-trivial element of $\gal(E/F)$.  We let
$\Theta_E = \{\tau: E \to \Qbar\}$, which we identify with 
$\Theta_{E,\infty} = \{E\to \C\}$, $\Theta_{E,p} = \{E \to \Qpbar\}$ and
$\overline{\Theta}_{E,p} = \{\CO_F \to \Fpbar\}$.  Choose a subset $\widetilde{\Sigma}$
of $\Theta_E$ mapping bijectively to $\Sigma$ under $\tau \mapsto \tau|_F$, and
  define $s_{\tilde{\theta}} \in \{0,1,2\}$ for $\theta \in \Theta_E$ by:
\begin{itemize}
\item $s_{\tau} = 1$ if $\tau|_F \not\in\Sigma$;
\item $s_{\tau} = 0$ and $s_{\tau^c} = 2$ if $\tau \in\widetilde{\Sigma}$.
\end{itemize}

We let $T_F$ denote the torus $\Res_{F/\Q}\G_m$, and similarly let 
$T_E = \Res_{E/\Q}\G_m$.  We define $G_\Sigma'$ to be the quotient $(G_\Sigma \times T_E)/Z$,
where $Z = T_F$ is embedded in $G_\Sigma \times T_E$ via $x \mapsto (x,x^{-1})$.
We define $i_{\widetilde{\Sigma}}: \BS \to T_{E,\R}$ as the homomorphism which on
$\R$-points is the composite
$$\C^\times \to \prod_{\theta\in \Sigma} \C^\times  \cong
\prod_{\theta\in \Sigma} (E\otimes_{F,\theta} \R)^\times  \hookrightarrow
\prod_{\theta \in \Theta} (E\otimes_{F,\theta} \R)^\times,$$ 
where the first map is diagonal, the second is defined on the $\theta$-component
by the embedding $\tau^c$ for the $\tau \in \widetilde{\Sigma}$ such that $\tau|_F = \theta$,
and the third is the natural inclusion.  We define
$h_{\widetilde{\Sigma}}':\BS \to G'_{\Sigma,\R}$ as the composite of $(h_\Sigma,i_{\widetilde{\Sigma}})$
with the projection $(G_{\Sigma} \times T_{E})_{\R} \to G'_{\Sigma,\R}$.  
We let $L_\Sigma$ (resp.~$L_{\widetilde{\Sigma}}$) denote the reflex field of 
the Shimura datum $(G_\Sigma,[h_\Sigma])$ (resp.~$(G'_\Sigma,[h'_{\widetilde{\Sigma}}])$,
i.e., the fixed field in $\Qbar$ of the stabilizer in $\Aut\Qbar$ of $\Sigma$ (resp.~$\widetilde{\Sigma}$).

\subsubsection{The moduli problem}\label{sss:usv.mp}
Let $D = D_\Sigma = E \otimes_F B_\Sigma$, and let $u \mapsto \overline{u}$
denote the anti-involution on $D$
defined by $c \otimes \iota$, where $\iota$ is the standard anti-involution.
We can then identify $G_\Sigma'$ with the algebraic group defined by
$$G_\Sigma'(R) = \{\, g \in D\otimes R \,|\, g\overline{g} \in (F\otimes R)^\times \,\}.$$

Let $\CO_D$ be an order of $D$ such that  $\CO_{D,p} = \CO_E \otimes_{\CO_F} \CO_{B,p}$ 
for a maximal order $\CO_{B,p}$ of $B_p$.   We choose an element $\delta \in D^\times$ such
that 
\begin{itemize}
\item $\delta \in \CO_{D,p}^\times$,
\item $\overline{\delta} = - \delta$,
\item the bilinear form on $D \otimes \R$ defined by
  $$(v,w) \mapsto \Tr_{E/\Q}(\tr_{D/E}(v\overline{h}'_{\widetilde{\Sigma}}(i)  \overline{w} \delta))$$
is positive definite.
\end{itemize}
(Thus our $\delta$ is $\delta \sqrt{\mathfrak{d}}$ in the notation of \cite[Lemma~3.8]{TX}.)

We define an anti-involution $u \mapsto u^*$ on $D$ by $u^* = \delta^{-1}\overline{u}\delta$,
and a pairing $\psi_E:D\times D \to E$ by
$$\psi_E(v,w) = \tr_{D/E}(v\overline{w}\delta) =  \tr_{D/E}(v\delta w^*),$$
so $\psi_E$ satisfies $\psi_E(w,v) = -\psi_E(v,w)^c$ and $\psi_E(u v,w) = \psi_E(v,u^*w)$
for all $u,v,w\in D$.  We also
define $\psi_F:D \times D \to F$ by $\psi_F  = \Tr_{E/F}\circ\psi_E$, so $\psi_F$ is alternating
and satisfies $\psi_F(uv,w) = \psi_F(v,u^*w)$; in particular $\psi_F$ is $F$-bilinear.

Now suppose that $U'$ is a sufficiently small open compact subgroup of $G_\Sigma'(\A_\f)$ containing
the image $U'_p$ of $\CO_{B,p}^\times \times \CO_{E,p}^\times$ (under the natural map
$G_\Sigma \times T_E \to G_{\Sigma}'$); as usual write $U' = (U')^pU'_p$ 
where $(U')^p \subset G'(\A_{\f}^{(p)})$. We will define an integral canonical model for
the Shimura variety $Y_{U'}(G'_{\Sigma})$ as a quotient of a representable moduli problem
in a manner similar to that for Hilbert modular varieties in \S\ref{sec:hmv}.

Let $\CO$ denote the localization of $\CO_{L_{\widetilde{\Sigma}}}$ at the prime over $p$
determined by the embedding $\Qbar \to \Qpbar$, and consider the functor that associates to
an $\CO$-scheme $S$ the set of isomorphism classes of tuples
$\underline{A} = (A,\iota,\lambda,\eta,\epsilon)$, where
\begin{itemize}
\item $A$ is an abelian scheme over $S$ of dimension $4d$ (where $d = [F:\Q]$);
\item $\iota$ is an embedding $\CO_D \to \End_S(A)$ such that for all $\alpha \in \CO_E$,
the characteristic polynomial of $\iota(\alpha)$ on $\Lie(A/S)$ is 
$$\prod_{{\tau} \in \Theta_{E}}  (x - {\tau}(\alpha))^{2s_{{\tau}}}
 \quad\in \quad \CO[x].$$
\item $\lambda$ is a prime-to-$p$ quasi-polarization  whose associated
Rosati involution is compatible with $l \mapsto l^*$ on $D$;
\item $(\eta,\epsilon)$ is a level $U'$-structure on $(A,\iota,\lambda)$, i.e., a $\pi_1(S_i,\overline{s}_i)$-invariant
$(U')^p$-orbit of pairs $(\eta_i,\epsilon_i)$ for a geometric point $\overline{s}_i$ on each connected component $S_i$ of $S$, where $\eta_i$ is an 
$\widehat{\CO}_D^{(p)} = \CO_D\otimes \widehat{\Z}^{(p)}$-linear isomorphism
$\widehat{\CO}_D^{(p)} \to T^{(p)}(A_{\overline{s}_i})$ and $\epsilon_i$ is an
$\A_{F,\f}^{(p)}$-linear isomorphism $\A_{F,\f}^{(p)} \to \A_{F,\f}^{(p)}(1)$ such
that the following diagram commutes:
$$\xymatrix{ \widehat{\CO}_D^{(p)} \times \widehat{\CO}_D^{(p)}
\ar[r]^{\psi_F}  \ar[d]_{(\eta_i,\eta_i)}  &
\A_{F,\f}^{(p)}
\ar[d]^{\epsilon_i}  \\
T^{(p)}(A_{\overline{s}_i})  \times T^{(p)}(A_{\overline{s}_i}) 
\ar[d]_{(1,\lambda)}  &
\A_{F,\f}^{(p)}(1)
\ar[d]^{\Tr_{F/\Q}} \\
T^{(p)}(A_{\overline{s}_i})  \times (\Q \otimes T^{(p)}(A^\vee_{\overline{s}_i}) )
\ar[r]^-{\mathrm{Weil}} &
\A_{\f}^{(p)}(1),}$$
where $g \in (U')^p$ on $\eta_i$ is by precomposing with right multiplication by $g^{-1}$,
and on $\epsilon_i$ by multiplication by $\nu(g)^{-1}$ where $\nu: G_\Sigma' \to T_F$
is defined by $g \mapsto g\overline{g}$.
\end{itemize}

As in \S\ref{sss:hmv.mp} for $U'$ sufficiently small, the functor is representable by an
infinite disjoint union of smooth quasi-projective schemes over $\CO$, but these are now
Shimura varieties for the preimage of $\G_m$ under $\nu$, indexed by the classes of
$\epsilon \bmod  \nu((U')^p)(\widehat{\Z}^{(p)})^\times$.  We denote the representing
$\CO$-scheme by $\widetilde{Y}_{U'}(G'_{\Sigma})$

\subsubsection{Descent and Hecke action}\label{sss:usv.dha}
In order to define the descent data to obtain the Shimura variety from $\widetilde{Y}_{U'}(G'_{\Sigma})$,
we need the following rigidity lemma.
\begin{lemma}  \label{lem:automorphisms}
Suppose that $k$ is an algebraically closed field,
$\underline{A} = (A,\iota,\lambda,\eta,\epsilon)$ is a $k$-point of
$\widetilde{Y}_{U'}(G')$, $\mu \in \CO_{F,(p),+}^\times$, and $\alpha \in \Aut_k(A)$ induces
an isomorphism from $(A,\iota,\mu\lambda,\eta,\mu\epsilon)$ to $\underline{A}$.
Then $\alpha = \iota(\gamma)$ for some $\gamma \in U' \cap E^\times \subset \CO_E^\times$
such that $\mu = \Nm_{E/F}(\gamma)$ (assuming $U'$ is sufficiently small).
\end{lemma}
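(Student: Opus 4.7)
The plan is to pin down $\alpha$ via its action on a cohomology realization of $A$, bringing in each piece of the moduli datum successively, in a manner analogous to the proof of \cite[Lemma~2.4.1]{DS} for Hilbert modular varieties. First, I would exploit the commutation of $\alpha$ with $\iota(\CO_D)$. The determinant condition on $\Lie(A/S)$ forces $H_1(A,\Q_\ell)$ to be free of rank one over $D \otimes \Q_\ell$ for each $\ell \neq p$ (with an analogous statement for the Dieudonn\'e module at $p$). Consequently the centralizer of $\iota(D)$ in $\End^0(A)$ embeds into $D^{\mathrm{op}}$, and $\alpha$ corresponds to right multiplication by some element $u \in D^\times$.

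Next I would bring in the polarization. Under the identification above, the Rosati involution on the centralizer corresponds to the $*$-involution $u \mapsto u^* = \delta^{-1} \bar u \delta$ on $D$. The relation $\alpha^\vee \circ \lambda \circ \alpha = \mu\lambda$ then translates to the equation $u\,u^* = \mu$ in $D^\times$, placing $u$ in the unitary similitude group of $(D,*)$. The positivity hypothesis built into the choice of $\delta$ guarantees that this group is compact modulo centre at the archimedean place.

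I would then invoke the level structure. Preservation of the $(U')^p$-orbit of $\eta$ forces the prime-to-$p$ component of $u$ to lie in $(U')^p$, while $\iota(\CO_D)$-linearity together with the integrality of $\alpha$ and the description of $U'_p$ as the image of $\CO_{B,p}^\times \times \CO_{E,p}^\times$ constrains the $p$-component compatibly; collectively $u \in U' \cap D^\times$.

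The main obstacle is the final step: showing that $u$ must in fact lie in the centre $E$ of $D$. The smallness hypothesis on $U'$ is essential here. The compact-modulo-centre unitary similitude group of $(D,*)$, intersected with a sufficiently small open compact subgroup, reduces to its centre, so the only elements of $U' \cap D^\times$ satisfying $u u^* \in F^\times$ are those of the form $u = \gamma \in E^\times$. Once this is established, the equation $u u^* = \mu$ becomes $\gamma\gamma^c = \Nm_{E/F}(\gamma) = \mu$, and $\gamma \in U' \cap E^\times$ follows from the level structure, completing the proof.
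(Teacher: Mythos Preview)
Your steps 1--3 are essentially sound, but step 4 contains a genuine gap: the unitary similitude group $\{u \in D^\times : u u^* \in F^\times\}$ is precisely $G'_\Sigma(\Q)$, and $G'_\Sigma(\R)$ is compact modulo centre only when $\Sigma = \Theta$. The positivity condition on $\delta$ concerns a pairing twisted by $h'_{\widetilde\Sigma}(i)$, not the Hermitian form for $*$ itself; it does not make the $*$-similitude group compact. When $\Sigma \neq \Theta$, the adjoint group $G'_\Sigma/Z(G'_\Sigma)$ is non-compact at infinity, so its $\Q$-points are not discrete in the adelic quotient and no ``sufficiently small'' $U'$ forces $G'_\Sigma(\Q) \cap U' \subset E^\times$ on group-theoretic grounds alone. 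What your argument discards after step 1 is the information that $\alpha$ lies not merely in $D^{\mathrm{op}}$ but in the (possibly much smaller) centralizer of $\iota(D)$ inside $\End^0(A)$ for the \emph{specific} abelian variety $A$.

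The paper exploits exactly this. Writing $A \sim B^2$ via $D \cong M_2(E)$, one has $E' := E(\alpha) \subset \End^0(B)$; since $\dim B = [E:\Q]$, the Albert classification forces $E'/E$ to be at most quadratic and CM. The polarization yields $\Nm_{E'/F'}(\alpha) = \mu \in \CO_{F,(p),+}^\times$ (with $F'$ the maximal totally real subfield of $E'$), and a unit-rank comparison shows that $\CO_E^\times$ has finite index in $\{\beta \in \CO_{E'}^\times : \Nm_{E'/F'}(\beta) \in \CO_F^\times\}$, whence $\alpha^r \in \CO_E^\times$ for some $r$; one then concludes as in \cite[Lemma~2.4.1]{DS}. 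The essential ingredient missing from your proposal is this appeal to the endomorphism algebra of $A$ (via Albert and Dirichlet) rather than to the ambient group $G'_\Sigma$.
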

\begpf  This is presumably standard, but here is a sketch.
We wish to prove that $\alpha = \iota(\gamma)$ for some $\gamma \in E$, from which
 the rest follows easily, so suppose this is not the case.
Since $\alpha$ is compatible with the $\CO_D$-action on $A$
and $D \cong M_2(E)$, $A$ is isogenous to $B \times B$ for some
$B$ with $E' = E(\alpha) \subset \End(B) \otimes \Q$ (identifying $E$
with its image in $\End(A)$ and $\End(B)$).  It then follows from
the classification of endomorphism algebras of abelian varieties
(since $\dim(B) = [E:\Q]$) that $E'$ is a quadratic CM
extension of $E$.  Moreover since $\alpha$ is an automorphism, it is
a unit in an order in $E'$, so $\alpha \in \CO_{E'}^\times$.

Next note that the commutativity of
$$\xymatrix{ A
\ar[r]^{\mu\lambda}  \ar[d]_{\alpha}  &
A^\vee\\
A\ar[r]^{\lambda}  &
A^\vee  \ar[u]_{\alpha^\vee}  }$$
shows that the image of $\alpha$ under the $\lambda$-Rosati involution
is $\mu \alpha^{-1} \in E'$, and hence that 
$\Nm_{E'/F'}(\alpha) = \alpha\overline{\alpha} = \mu$, where
$F'$ is the totally real subfield of $E'$.  Since
$\CO_E^\times$ is a finite index subgroup of 
$\{\,\beta \in \CO_{E'}^\times \,|\, \Nm_{E'/F'}(\beta) \in \CO_F^\times\,\}$,
it follows that $\alpha^r \in \CO_E^\times$ for some $r > 0$.  
One can then proceed exactly as in the proof of \cite[Lemma~2.4.1]{DS} for
example (with $E'$ and $E$ replacing $K$ and $F$).
\epf

Now consider the action of $\CO_{F,(p),+}^\times$ on $\widetilde{Y}_{U'}(G_{\Sigma}')$ defined
by $\theta_\mu(A,\iota,\lambda,\eta, \epsilon) = (A,\iota,\mu\lambda,\eta,\mu\epsilon)$ for 
$\mu \in \CO_{F,(p),+}^\times$.  Note that $\Nm_{E/F}(U' \cap E^\times)$ acts
trivially, and the preceding lemma shows that the resulting action by
$\CO_{F,(p),+}^\times/\Nm_{E/F}(U' \cap E^\times)$ is free.  Arguing as in \S\ref{sss:hmv.dha}
we see that the quotient of $\widetilde{Y}_{U'}(G_{\Sigma}')$
by this action is a smooth quasi-projective scheme over $\CO$, which we 
denote by $Y_{U'}(G_{\Sigma}')$.

We define an action of $G_\Sigma'(\A_{\f}^{(p)})$ on $\varprojlim_{U'}Y_{U'}(G'_\Sigma)$
as in \S\ref{sss:hmv.dha}.
Suppose that $g \in G_\Sigma'(\A_{\f}^{(p)})$ and that $U_1'$ and $U_2'$ are sufficiently small open
compact subgroups as above satisfying $g^{-1}U_1' g\subset U_2'$.
Let $\underline{A}_1 = (A_1,\iota_1,\lambda_1,\eta_1,\epsilon_1)$ denote the universal abelian variety over $\widetilde{Y}_{U_1'}(G_\Sigma')$.
Let $A'$ denote the abelian variety over $\widetilde{Y}_{U_1'}(G_\Sigma')$ which is (prime-to-$p$) isogenous
to $A$ and satisfies
$$T^{(p)}(A'_{\overline{s}_i})  = \eta_{1,i}(\widehat{\CO}_D^{(p)} g^{-1})  $$
for all $i$ (indexing connected components).
Then $A'$ inherits an $\CO_D$-action $\iota'$ from the identification
$\End_S(A_1)\otimes \Q = \End_S(A') \otimes \Q$ induced by the
canonical quasi-isogeny $\pi \in \hom_S(A_1,A') \otimes \Z_{(p)}$,
which furthermore induces an isomorphism
$\Lie(A_1/S) \to \Lie(A'/S)$ compatible with the $\CO_D$-actions.
Moreover the quasi-polarization $\lambda'$ on $A'$ defined by
$\lambda_1 = \pi^\vee \circ \lambda' \circ \pi$ induces the same
Rosati involution as $\lambda_1$.
Letting $\eta' = \eta_1\circ r_{g^{-1}}$ (where $r_{g^{-1}}$ denotes right multiplication by $g^{-1}$)
and $\epsilon' = \nu(g)^{-1}\epsilon_1$,  we find that $(\eta',\epsilon')$ defines a level $U_2'$-structure
on $(A',\iota',\lambda')$.  We then define $\tilde{\rho}_g: \widetilde{Y}_{U_1'}(G_\Sigma') \to \widetilde{Y}_{U_2'}(G_\Sigma')$
by $\underline{A}_1 \mapsto \underline{A}' = (A',\iota',\lambda',\eta',\epsilon')$, i.e., if $\underline{A}_2$
is the universal abelian variety over $ \widetilde{Y}_{U_2'}(G_\Sigma')$, then
$\tilde{\rho}_g^*(\underline{A}_2) = \underline{A}'$.
Since $\tilde{\rho}_g$ commutes with the action of $\CO_{F,(p),+}^\times$, it descends to
a morphism $\rho_g: Y_{U_1'}(G_\Sigma') \to Y_{U_2'}(G_\Sigma')$.  Furthermore the morphisms
$\tilde{\rho}_g$ and $\rho_g$ are finite and \'etale.
Finally if $h$ and $U_3'$ (again sufficiently small) are such that
$h^{-1}U_2' h\subset U_3'$, then $\tilde{\rho}_g^*\tilde{\rho}_h^*(\underline{A}_3) \cong
\tilde{\rho}_{gh}^*(\underline{A}_3)$, so that $\tilde{\rho}_h\circ \tilde{\rho}_g = \tilde{\rho}_{gh}$
and $\rho_h\circ \rho_g = \rho_{gh}$, giving the desired action of
$G_\Sigma'(\A_{\f}^{(p)})$ on $\varprojlim_{U'} Y_{U'}(G_\Sigma')$ (the limit being taken
under the maps $\rho_1$ for $U_1' \subset U_2'$).

Proceeding again as in (or deducing from) the PEL case, one finds that the
$Y_{U'}(G_\Sigma')$ define a system of canonical models over $\CO$ for the
Shimura varieties associated to  $(G_\Sigma',[h_{\widetilde{\Sigma}}'])$.
In particular the complex manifold associated to $Y_{U'}(G_\Sigma')$ is identified with
$$G_\Sigma'(\Q) \backslash ((\C - \R)^{\Theta - \Sigma} \times G_\Sigma'(\A_{\f})) / U'
= G_\Sigma'(\Q)_+ \backslash (\uhp^{\Theta - \Sigma} \times G_\Sigma'(\A_{\f})) / U',$$
where $G_\Sigma'(\Q)_+$ denotes the subgroup of $G_\Sigma'(\Q)$ consisting of
$\gamma$ such that $\nu(\gamma)$ is totally positive.  For clarity, we remark that
$$\widetilde{Y}_{U'}(G_\Sigma')(\C) = G_\Sigma^1(\Z_{(p)}) \backslash (\uhp^{\Theta - \Sigma} \times G_\Sigma'(\A_{\f}^{(p)})) / (U')^p,$$
where $G_\Sigma^1 = \ker(\nu:G_\Sigma' \to T_F)$.

\subsection{Quaternionic Shimura varieties}  \label{sec:qsv}
We now recall and make explicit the way in which canonical models for the Shimura
varieties associated to $G_\Sigma$ can be described in terms of those for $G'_\Sigma$.
In particular, for suitably chosen levels, a Shimura variety associated
to $G_\Sigma \times T_E$ provides a common finite \'etale cover such that the projection
maps induce isomorphisms on geometric connected components.

\subsubsection{Tori} \label{sss:qsv.tori}
We first consider various tori that intervene in the relation.

Let $T'$ denote the abelian quotient of $G_\Sigma'$, which we may identify with the
quotient $(T_F\times T_E)/T_F$, where $T_F$ is embedded in the product via
$x \mapsto (x^2,x^{-1})$.  Let $\nu':G_\Sigma' \to T'$ denote
the natural projection, so that $\nu'$ is induced by the map $G \times T_E \to T_F \times T_E$
defined  by $(g,y) \mapsto (\det(g),y)$ where $\det$ is the reduced norm.
Note that the inclusion $T_F \to T'$ induced by 
$x \mapsto (x,1)$ is split by the projection $(x,y) \mapsto x \Nm_{E/F}y$, so that
$$T' \cong T_F \times (T_E/T_F)  \cong T_F \times T_E^1,$$
where $T_E^1$ is the kernel of $\Nm_{E/F}:T_E \to T_F$, the isomorphisms being defined by
$$ (x,y)T_F \leftrightarrow (xyy^c, yT_F) \leftrightarrow (xyy^c, y/y^c).$$
 
We recall the description of canonical models over $\CO$ for zero-dimensional Shimura varieties
associated to the tori $T_E$, $T_F$ and $T'$. More precisely, consider the Shimura data
$(T_E,i_{\widetilde{\Sigma}})$, $(T_F,i_\Sigma)$ and $(T',i'_{\widetilde{\Sigma}})$, where
$i_{\widetilde{\Sigma}}$ was defined in \S\ref{sss:usv.sd}, $i_\Sigma =  \det\circ h_\Sigma$ 
sends $z \in \BS(\R) = \C^\times$ to $(z\overline{z})_{\theta \not\in\Sigma} \in T_F(\R) = 
\prod_{\theta\in \Theta} \R^\times$, and $i'_{\widetilde{\Sigma}}$ is the composite of
$(i_\Sigma,i_{\widetilde{\Sigma}})$ with the natural projection. We continue to suppress the
Deligne homomorphism from the notation for the associated Shimura varieties, denoting them
simply $Y_{V_E}(T_E)$ (for open compact $V_E \subset \A_{E,\f}^\times$), etc.

Assuming $\CO_{E,p}^\times \subset V_E$ and writing $V_E = V_E^p \CO_{E,p}^\times$
with $V_E^p \subset ( \A_{E,\f}^{(p)})^\times$,
the geometric points of $Y_{V_E}(T_E)$ are canonically identified with the finite set
$$C_{V_E} = (\A_{E,\f}^{(p)})^\times/\CO_{E,(p)}^{\times} V^p_E = 
E^\times \backslash\A_{E,\f}^\times / V_E.$$
As a scheme over $\CO$, it can be characterized by descent from the canonical isomorphism
$$ Y_{V_E}(T_E) \times_{\CO} \CO_M \cong \coprod_{c \in C_{V_E}} \Spec \CO_M,$$
where $M$ is an abelian extension of the reflex field $L_{\widetilde{\Sigma}}$ and the action of 
$\gal(M/L_{\widetilde{\Sigma}})$ is given by the Shimura reciprocity law for $i_{\widetilde{\Sigma}}$
(see \cite[2.7]{TX}).  In particular,
since $\CO_{E,p}^\times \subset V_E$, we can assume $M$ is unramified at the primes
of $L_{\widetilde{\Sigma}}$ dividing $p$, so  $Y_{V_E}(T_E)$ is \'etale over $\CO$.

Similarly for $V_F = V_F^p \CO_{F,p}^\times$ with $V_F^p \subset ( \A_{F,\f}^{(p)})^\times$,
the geometric points of $Y_{V_F}(T_F)$ are canonically identified with the finite set
$$C_{V_F} = (\A_{F,\f}^{(p)})^\times/\CO_{F,(p),+}^{\times} V^p_F 
= F_+^\times\backslash \A_{F,\f}^\times / V_F$$
with Galois action determined by Shimura reciprocity for $i_\Sigma$.  Note however that we will view
$Y_{V_F}(T_F)$ as a finite \'etale scheme over $\CO$ rather than a localization of the
ring of integers of its reflex field $L_\Sigma$.
Likewise for $V' \subset T'( \A_\f^{(p)})$ containing the image of
$\CO_{F,p}^\times \times \CO_{E,p}^\times$,
$Y_{V'}(T')$ is a finite \'etale $\CO$-scheme whose geometric points are identified with
$C_{V'} = T'(\Q)_+\backslash T'(\A_{\f})/ V'$, where $T'(\Q)_+ =( F_+^\times \times E^\times)/F^\times$.

Suppose now that $U$ is an open compact subgroup of $G_\Sigma(\A_{F,\f})$ containing $\CO_{B,p}^\times$.
We say that $V_E$ is {\em sufficiently small relative to} $U$ if the following hold:
\begin{itemize}
\item $E^\times \cap V_E^1 = \{1\}$, where $V_E^1 :=  \{\,y/y^c\,|\,y\in V_E\,\}  \subset \A_{E,\f}^\times$;
\item $V_E \cap \A_{F,\f}^\times \subset U$;
\item $\Nm_{E/F}(V_E) \subset \det(U)$.
\end{itemize}
Note that $V_E$ can be chosen sufficiently small to satisfy the first condition
(independently of $U$ and of level prime to $p$) since $\Nm_{E/F}: \CO_E^\times \to \CO_F^\times$
has finite kernel.  Indeed this finiteness ensures we can choose an open subgroup
$U_E \subset \A_{E,\f}^\times$ such that if $\alpha \in E^\times \cap U_E$
and $\Nm_{E/F}(\alpha) = 1$, then $\alpha = 1$, and then
choose $V_E$ contained in the preimage of $U_E$ under $y \mapsto y/y^c$.
It follows that we can choose $V_E$ (of level prime to $p$) satisfying all three conditions.

\subsubsection{Complex points}  \label{sss:qsv.c}
We next describe some explicit relations among the complex points
of the relevant Shimura varieties.    Note that the commutative diagram
$$ \begin{array}{ccc}
G_\Sigma \times T_E  & \longrightarrow & G'_\Sigma \\
\downarrow && \downarrow \\
T_F \times T_E & \longrightarrow & T' \end{array}$$
is compatible with the Deligne homomorphisms $(h_\Sigma,i_{\widetilde{\Sigma}})$, 
$h'_{\widetilde{\Sigma}}$, $(i_\Sigma,i_{\widetilde{\Sigma}})$ and $i'_{\widetilde{\Sigma}}$, and
therefore gives rise to commutative diagrams of Shimura varieties whenever
the corresponding open compact subgroups satisfy the evident compatibilities.
In particular for sufficiently small open compact subgroups $U \subset G_\Sigma(\A_\f)$, $V_E \subset \A_{E,\f}^\times$
and $U' \subset G_\Sigma'(\A_\f)$ such that $U'$ contains the image of $U \times V_E$
under the natural projection, we get the commutative diagram
\begin{equation} \label{eqn:complex}
\begin{array}{ccc}  (G_\Sigma(\Q)_+ \backslash 
 (\uhp^{\Theta - \Sigma} \times G_\Sigma(\A_{\f}) ) / U) \times C_{V_E}
 &\longrightarrow& G'_\Sigma(\Q)_+ \backslash (\uhp^{\Theta-\Sigma} \times G'_\Sigma(\A_\f))/U'\\
\downarrow&&\downarrow \\
C_{\det(U)} \times C_{V_E}  &\longrightarrow& C_{\nu'(U')}
\end{array}
\end{equation}
on complex points of the associated Shimura varieties,
where $G_\Sigma(\Q)_+$ is the subgroup of $G_\Sigma(\Q)$ consisting of elements
whose reduced norm is totally positive.

\begin{lemma}  \label{lem:cartesian}
If $V_E$ is sufficiently small relative to $U$ and $U'$ is the image
of $U \times V_E$, then the preceding diagram is Cartesian.
\end{lemma}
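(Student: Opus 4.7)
The plan is to analyze the two vertical maps in (\ref{eqn:complex}) through the surjection $G_\Sigma(\A_\f) \times \A_{E,\f}^\times \to G'_\Sigma(\A_\f)$, whose kernel is $\A_{F,\f}^\times$ embedded diagonally as $z \mapsto (z, z^{-1})$; surjectivity and the analogous statement on $\Q$-points both follow from Hilbert 90 applied to $T_F = \Res_{F/\Q}\G_m$. Because $E/F$ is CM we have $\Nm_{E/F}(E^\times) \subset F_+^\times$, and I would use this to see that a lift $(\gamma, \beta) \in G_\Sigma(\Q) \times E^\times$ of an element of $G'_\Sigma(\Q)_+$ necessarily has $\gamma \in G_\Sigma(\Q)_+$. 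The resulting splitting $T'(\Q)_+ \cong F_+^\times \times T_E^1(\Q)$ is compatible with the bottom horizontal map $C_{\det(U)} \times C_{V_E} \to C_{\nu'(U')}$, namely $(x, y) \mapsto (x\Nm(y), y/y^c)$. The diagram commutes by direct inspection, so it suffices to show that the natural map from the top-left to the fibre product of the top-right and bottom-left over the bottom-right is a bijection.

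For surjectivity, I would start with representatives $(\tau', g')$ of an element of the top-right and $(x, y)$ of an element of the bottom-left whose images agree in $C_{\nu'(U')}$, and lift $g'$ to a pair $(\tilde g, \tilde y_0) \in G_\Sigma(\A_\f) \times \A_{E,\f}^\times$. The candidate preimage $([\tau', \tilde g], [\tilde y_0])$ lands above $(\det(\tilde g), [\tilde y_0])$, which a priori differs from $(x, [y])$. The agreement in $C_{\nu'(U')}$, combined with Hilbert 90 in the form $T_E^1(\Q) = \{\beta/\beta^c : \beta \in E^\times\}$, produces $\beta \in E^\times$ and $v_0 \in V_E$ such that $w := y/(\tilde y_0 \beta v_0) \in \A_{F,\f}^\times$. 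Replacing the pair by $(\tilde g w^{-1}, \tilde y_0 w)$, which differs from the original by the kernel element $(w^{-1}, w)$ and hence projects to the same $g'$, yields a preimage with $[\tilde y_0 w] = [y]$ in $C_{V_E}$; and the identity $\det(\tilde g w^{-1}) = \det(\tilde g) w^{-2}$ together with condition (c), $\Nm_{E/F}(V_E) \subset \det(U)$, and $\Nm(\beta) \in F_+^\times$ will ensure that $\det(\tilde g w^{-1}) \equiv x$ modulo $F_+^\times \det(U)$.

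For injectivity, I would suppose that two top-left classes $([\tau_i, g_i], [y_i])$, $i = 1, 2$, have the same images in both the top-right and the bottom-left. Lifting the top-right equivalence to $G_\Sigma(\A_\f) \times \A_{E,\f}^\times$ and absorbing the kernel produces $\gamma \in G_\Sigma(\Q)_+$, $\beta \in E^\times$, $u \in U$, $v \in V_E$, and $z \in \A_{F,\f}^\times$ with $g_2 = \gamma g_1 u z^{-1}$ and $y_2 = \beta y_1 v z$; simultaneously the equality $[y_1] = [y_2]$ in $C_{V_E}$ produces $\beta_1 \in E^\times$ and $v_1 \in V_E$ with $y_2 = \beta_1 y_1 v_1$. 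Equating gives $z = (\beta_1/\beta)(v_1/v)$, and applying the involution $c$ together with $z = z^c$ yields
$$\frac{\beta_1/\beta}{(\beta_1/\beta)^c} \;=\; \frac{(v_1/v)^c}{v_1/v} \;\in\; E^\times \cap V_E^1,$$
which equals $\{1\}$ by condition (a). Hence $\beta_1/\beta \in F^\times$ and $v_1/v \in V_E \cap \A_{F,\f}^\times$, the latter being contained in $U$ by condition (b). Writing $z = fw$ with $f \in F^\times$ and $w \in U \cap \A_{F,\f}^\times$, the identity $g_2 = (\gamma f^{-1}) g_1 (u w^{-1})$ together with $\gamma f^{-1} \in G_\Sigma(\Q)_+$ and $u w^{-1} \in U$ gives $[\tau_1, g_1] = [\tau_2, g_2]$ in $Y_U(G_\Sigma)(\C)$. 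The main obstacle is the careful bookkeeping needed to activate each of the three conditions (a), (b), (c) on $V_E$ at precisely the right step; once in place, the argument reduces to Hilbert 90 together with the CM positivity of $\Nm_{E/F}$.
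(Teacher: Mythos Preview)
Your proof is correct and takes a genuinely different route from the paper. The paper first reduces to a simpler diagram by dropping the $C_{V_E}$ factor (using condition~(c) to show the bottom map is injective), and then splits into two cases according to whether $B_\Sigma$ is indefinite or totally definite; in the indefinite case it appeals to the fact that the vertical maps are bijections on connected components and reduces to showing $\Gamma_g = \Gamma'_g$, while in the definite case it argues directly on double cosets. Your argument instead works uniformly with the full diagram via the exact sequence
\[
1 \longrightarrow \A_{F,\f}^\times \longrightarrow G_\Sigma(\A_\f) \times \A_{E,\f}^\times \longrightarrow G'_\Sigma(\A_\f) \longrightarrow 1
\]
and its rational-points analogue, proving surjectivity and injectivity onto the fibre product by direct lifting and adjustment. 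The key computation in both proofs is the same: an element of $E^\times \cap V_E^1$ is forced to be trivial by condition~(a), whence the obstruction lies in $F^\times \cdot (V_E \cap \A_{F,\f}^\times) \subset F^\times \cdot U$ by condition~(b). Your approach is more elementary in that it avoids any appeal to connected components or the indefinite/definite dichotomy, and it makes transparent exactly where each of the three conditions on $V_E$ enters; the paper's approach has the advantage of isolating the group-theoretic content as the equality $\Gamma_g = \Gamma'_g$, which is perhaps conceptually cleaner in the indefinite case.
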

\begpf 
The lemma immediately reduces to the claim that the following diagram is Cartesian:
$$\begin{array}{ccc}  G_\Sigma(\Q)_+ \backslash 
 (\uhp^{\Theta - \Sigma} \times G_\Sigma(\A_{\f}) ) / U
 &\longrightarrow& G'_\Sigma(\Q)_+ \backslash (\uhp^{\Theta-\Sigma} \times G'_\Sigma(\A_\f))/U'\\
\downarrow&&\downarrow \\
C_{\det(U)}  &\longrightarrow& C_{\nu'(U')}
\end{array}
$$
Recall that $C_{\det(U)} = F_+^\times\backslash \A_{F,\f}^\times / \det(U)$
and $C_{\nu'(U')} = T'(\Q)_+ \backslash T'(\A_\f) / \nu'(U')$ where 
$T'(\Q)_+ = (F_+^\times \times E^\times)/F^\times$.    Note in particular
that the inclusion $F^\times_+ \to T'(\Q)_+$ is split  by $(x,y) \mapsto x\Nm_{E/F}y$,
as is the inclusion $\det(U) \to \nu'(U')$ under our assumptions that
$U'$ is generated by $U \times V_E$ and $\Nm_{E/F}(V_E)\subset \det(U)$.
Therefore the bottom row of the diagram is injective.

Identifying $G_\Sigma(\A_\f)$ and $T_E(\A_\f) = \A_{E,\f}^\times$ with their
images in $G'_\Sigma(\A_\f)$, we remark that
$$G'_\Sigma(\A_\f) = \A_{E,\f}^\times G_\Sigma(\A_\f) \quad\mbox{and} \quad
    G_\Sigma'(\Q)_+ = E^\times G_\Sigma(\Q)_+.$$
Indeed viewing $G'_\Sigma(\Q)_+ \subset D^\times$, note that if $\gamma \in G'_\Sigma(\Q)_+$,
then $\gamma^c\gamma^\iota \in F_+^\times$ and $\gamma\gamma^\iota \in E^\times$,
so $\alpha:= \gamma^{-1}\gamma^c\in E^\times$.  Moreover
$\Nm_{E/F}(\alpha) = 1$, so $\alpha = \beta^{-1} \beta^c$ for some $\beta \in E^\times$, and
we can write $\gamma = \beta\delta$ where $\delta = \beta^{-1}\gamma \in G_\Sigma(\Q)_+$.
Similarly we have $G_\Sigma'(\Q_q) = E_q^\times G_\Sigma(\Q_q)$ for all primes $q$ and 
$\{g \in \CO_{D,q}^\times\,|\,g\overline{g} \in \CO_{F,q}^\times\}
   = \CO_{E,q}^\times\CO_{B,q}^\times$ for all but finitely many $q$, 
so that $G_\Sigma'(\A_\f) = \A_{E,\f}^\times G_\Sigma(\A_\f)$.

We consider separately the cases where $B$ is indefinite and $B$ is totally definite.
Recall that if $B$ is indefinite, then the vertical arrows induce bijections
on sets of connected components.  So to prove the lemma, it suffices to
prove the top row restricts to an isomorphism on each connected component.
We must therefore show that if $g \in G_\Sigma(\A_\f)$, then the map
$$\Gamma_g \backslash \uhp^{\Theta-\Sigma}  \to \Gamma'_g\backslash \uhp^{\Theta-\Sigma}$$
is an isomorphism, where $\Gamma_g = G_\Sigma(\Q)_+ \cap gUg^{-1}$ and
$\Gamma'_g := G_\Sigma'(\Q)_+ \cap gU'g^{-1}$.
We show that  under our hypotheses, the inclusion 
$\Gamma_g \subset \Gamma'_g$ is in fact an equality.
Suppose then that $\gamma \in \Gamma'_g$, and write $\gamma = \beta\delta$
for some $\beta \in E^\times$, $\delta \in G_\Sigma(\Q)_+$.  Then $\beta\delta = hy$
for some $h \in U$, $y \in V_E$, which implies that 
$\beta/\beta^c = y/y^c\in E^\times \cap V_E^1 = \{1\}$.
We therefore have $\beta = \beta^c\in F^\times$ and 
$y = y^c\in \A_{F,\f}^\times \cap V_E \subset U$, so that $\gamma \in \Gamma_g$.

Suppose now that $B$ is totally definite.  Now $\Sigma = \Theta$,
$G_\Sigma(\Q)_+ = G_\Sigma(\Q)$ and $G'_\Sigma(\Q)_+ = G_\Sigma'(\Q)$, making the top arrow
$$G_\Sigma(\Q) \backslash G_\Sigma(\A_\f)/U  \to G'_\Sigma(\Q)\backslash G'_\Sigma(\A_\f)/U'.$$
We must show that this map is injective, and that its image consists precisely of
the classes $G_\Sigma'(\Q)gU'$ such that $g \in G'_\Sigma(\A_\f)$ and
$T'(\Q)_+ \nu'(g) \nu'(U')$ is in the image of the bottom arrow.  

Suppose then that $h,h' \in G_\Sigma(\A_\f)$ and $h' \in G'_\Sigma(\Q)hU'$.  Since
$G_\Sigma'(\Q) = E^\times G_\Sigma(\Q)$ and $U' = UV_E$, we can write
$h' = \beta\delta h gy$ for some $\beta \in E^\times$, $\delta \in G_\Sigma(\Q)$,
$g\in U$ and $y \in V_E$, so $\beta y = \beta^c y^c$.
As in the indefinite case, we deduce that $\beta = \beta^c \in F^\times$
and $y = y^c \in U$, so that $h' \in G_\Sigma(\Q) hU$, proving the injectivity.

Finally suppose that $g \in G'_\Sigma(\A_\f)$ is such that $T'(\Q)_+ \nu'(g) \nu'(U')$
is in the image of the bottom arrow.   Since
$G_\Sigma'(\A_\f) = \A_{E,\f}^\times G_\Sigma(\A_\f)$, we can write $g = yh$ for
some $y \in \A_{E,\f}^\times$ and $h \in G_\Sigma(\A_\f)$.  The condition on
$\nu'(g)$ implies $y \in E^\times \A_{F,\f}^\times V_E$, so we can
write $y \in E^\times x V_E$ for some $x \in \A_{F,\f}^\times$, and conclude
that $G_\Sigma'(\Q) g U' = G_\Sigma'(\Q) xh U'$ is in the image of the top arrow.
\epf

\subsubsection{Construction of models}\label{sss:qsv.mod}
Under the hypotheses of Lemma~\ref {lem:cartesian}, we define 
$$Y_{U\times V_E}(G_\Sigma \times T_E) = 
Y_{U'}(G_\Sigma') \times_{Y_{\nu'(U')}(T')}(Y_{\det(U)}(T_F) \times_{\CO} Y_{V_E}(T_E)).$$

Suppose now that
$g \in G_\Sigma(\A_{\f}^{(p)})$ and that $U_1$ and $U_2$ are sufficiently small open
compact subgroups (of level prime to $p$) in
$G_\Sigma(\A_{\f})$ satisfying $g^{-1}U_1g \subset U_2$.  
For $y \in (\A_{E,\f}^{(p)})^\times$ and $V_E$ (of level prime to $p$)
sufficiently small relative to $U_1$ (hence also $U_2$), we have the
commutative diagram
$$\xymatrix{ Y_{U_1'}(G'_{\Sigma})  \ar[rd]  \ar[dd]_{\rho_{gy}} &&
Y_{\det(U_1)}(T_F) \times_\CO Y_{V_E}(T_E)
  \ar[ld]  \ar[dd]^{(\det g, y)} \\
  & Y_{\nu'(U_1')}(T') \ar[dd]^{\nu'(gy)}& \\
Y_{U_2'}(G'_{\Sigma})  \ar[rd]  &&
Y_{\det(U_2)}(T_F) \times_\CO Y_{V_E}(T_E)
  \ar[ld]  \\
  & Y_{\nu'(U_2')}(T')&}$$
yielding a morphism $\rho_{(g,y)}: Y_{U_1\times V_E}(G_\Sigma \times T_E) 
\to Y_{U_2\times V_E}(G_\Sigma \times T_E)$.

The morphisms $\rho_{(g,y)}$ satisfy the usual compatibilities.
In particular for $U = U_1 = U_2$, the automorphisms $\rho_{(1,y)}$
define a free action of the group $C_{V_E} =  (\A_{E,\f}^{(p)})^\times/\CO_{E,(p)}^{\times} V^p_E$
on $Y_{U\times V_E}(G_\Sigma \times T_E)$.
We define $Y_U(G_\Sigma)$ as the quotient by this action.  Note that this quotient is independent
of the chosen $V_E$.  Indeed if $V_E$ and $V_E'$ are of level prime to $p$ and sufficiently small
relative to $U$, then so is $V_E \cap V_E'$, so we can assume $V_E' \subset V_E$, in which
case $Y_{U\times V_E}(G_\Sigma \times T_E)$ can be identified with the quotient of 
$Y_{U\times V'_E}(G_\Sigma \times T_E)$ by the action of $\ker(C_{V_E'} \to C_{V_E})$,
compatibly with the action of $C_{V_E}$.  Note also that
$Y_{U\times V_E}(G_\Sigma \times T_E)$ is smooth and quasi-projective over $\CO$
and hence so is $Y_U(G_\Sigma)$.  Moreover the natural projections induce
an isomorphism 
\begin{equation} \label{eqn:product}
Y_{U\times V_E}(G_\Sigma \times T_E) \stackrel{\sim}{\longrightarrow}
 Y_U(G_\Sigma) \times_{\CO} Y_{V_E}(T_E).
\end{equation}

Returning to the general case of $g$, $U_1$, $U_2$ 
satisfying $g^{-1}U_1g \subset U_2$, the morphism $\rho_{(g,1)}$ is compatible
with the actions of $C_{V_E}$ on the $Y_{U_i\times V_E}(G_\Sigma \times T_E)$,
hence descends to a morphism
$$\rho_g : Y_{U_1} (G_\Sigma) \longrightarrow Y_{U_2} (G_\Sigma)$$
(independent of the choice of $V_E$).    Furthermore if $h$ and $U_3$ 
(again sufficiently small) are such that
$h^{-1}U_2 h\subset U_3$, then $\rho_h\circ \rho_g = \rho_{gh}$, 
so we obtain an action of
$G_\Sigma(\A_{\f}^{(p)})$ on $\varprojlim_{U} Y_{U}(G_\Sigma)$.
Note that under the isomorphism of
(\ref{eqn:product}), the morphism $\rho_{(g,y)}$ corresponds to
$(\rho_g,y)$ on the product.

From the fact that the $Y_{U'}(G'_\Sigma)$ define canonical models over $L_{\widetilde{\Sigma}}$
for the Shimura varieties associated to $(G_\Sigma', [h_{\widetilde{\Sigma}}'])$,
it follows from their construction that so do the $Y_{U\times V_E}(G_\Sigma \times T_E)$
with respect to the Shimura data $(G_\Sigma \times T_E, [(h_\Sigma,i_{\widetilde{\Sigma}})])$,
and hence that the $Y_{U}(G_\Sigma)$ define a system of integral canonical models
with respect to $(G_\Sigma,[h_\Sigma])$.  Note that we view $Y_U(G_\Sigma)$ as a scheme over $\CO$,
a localization of $L_{\widetilde{\Sigma}}$ rather than the reflex field $L_\Sigma$.   We shall not consider
its descent to $L_\Sigma$; on the contrary we need to extend scalars further in order to obtain our main results later.
To that end it will be convenient to work over $W = W(\Fpbar)$, and write $S_W$ for the base-change to $W$
of an $\CO$-scheme $S$.  The identity element of $C_{V_E}$ then defines a section
$\Spec W \to Y_{V_E}(T_E)_W \cong \coprod_{c\in C_{V_E}} \Spec W$.  We thus obtain
a morphism
$$Y_U(G_\Sigma)_W  \to Y_U(G_\Sigma)_W \times_W Y_{V_E}(T_E)_W
      \to Y_{U \times V_E}(G_\Sigma \times T_E)_W \to Y_{U'}(G_\Sigma')_W$$
for any $U$, $V_E$, $U'$ as in Lemma~\ref{lem:cartesian}.  We have the
following immediate consequence of the lemma and discussion above:

\begin{lemma}  \label{lem:restriction}  For $U$, $V_E$ and $U'$ as in Lemma~\ref{lem:cartesian},
the diagram
$$\begin{array}{ccc}
Y_U(G_\Sigma)_W &\longrightarrow& Y_{U'}(G_\Sigma')_W \\
\downarrow&&\downarrow \\
C_{\det(U)} & \longrightarrow & C_{\nu'(U')}
\end{array}
$$
is Cartesian (where $C_{\det(U)}$ and $C_{\nu'(U')}$ are viewed as schemes over $W$),
and identifies $Y_U(G_\Sigma)_W$ with an open and closed subscheme 
of $Y_{U'}(G_\Sigma')_W$.  Moreover the inclusion is compatible with the Hecke action in
the sense that the diagram
$$\begin{array}{ccc}
Y_{U_1}(G_\Sigma)_W &\longrightarrow& Y_{U_1'}(G_\Sigma')_W \\
{\scriptstyle\rho_g}\downarrow&&\downarrow{\scriptstyle \rho_g} \\
Y_{U_2}(G_\Sigma)_W &\longrightarrow& Y_{U_2'}(G_\Sigma')_W \\
\end{array}
$$
commutes for any $g$, $U_1$, $U_2$ satisfying $g^{-1}U_1g \subset U_2$
(and sufficiently small $V_E$ relative to $U_1$, with $U_i'$ the image  of
$U_i\times V_E$ for $i=1,2$).
\end{lemma}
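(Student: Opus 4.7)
The plan is to deduce the lemma from the fiber-product definition of $Y_{U\times V_E}(G_\Sigma\times T_E)$ in \S\ref{sss:qsv.mod}, the product decomposition (\ref{eqn:product}), and the fact that the zero-dimensional Shimura varieties for $T_F$, $T_E$ and $T'$ split into disjoint unions of copies of $\Spec W$ after base change to $W$, thanks to the hypothesis $\CO_{E,p}^\times\subset V_E$ (and the analogous condition built into $\nu'(U')$, since $\CO_{F,p}^\times\subset \det(U)$).

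Concretely, I would pull back the isomorphism of (\ref{eqn:product}), base-changed to $W$, along the identity section $s\colon \Spec W\to Y_{V_E}(T_E)_W \cong \coprod_{c\in C_{V_E}}\Spec W$. On the right, $(Y_U(G_\Sigma)_W\times_W Y_{V_E}(T_E)_W)\times_{Y_{V_E}(T_E)_W}\Spec W$ is simply $Y_U(G_\Sigma)_W$. Expanding the left using the fiber-product definition, the pullback along $s$ becomes $Y_{U'}(G'_\Sigma)_W\times_{Y_{\nu'(U')}(T')_W} Y_{\det(U)}(T_F)_W$, where $Y_{\det(U)}(T_F)_W\to Y_{\nu'(U')}(T')_W$ is the map obtained by pairing with $s$ in the second factor of the natural morphism $Y_{\det(U)}(T_F)_W\times_W Y_{V_E}(T_E)_W\to Y_{\nu'(U')}(T')_W$. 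Since all three of these tori-quotients become constant schemes over $W$, this fiber product is identified with $Y_{U'}(G'_\Sigma)_W\times_{C_{\nu'(U')}} C_{\det(U)}$, and the resulting map $C_{\det(U)}\to C_{\nu'(U')}$ is the canonical one induced by $T_F\hookrightarrow T'$ (from \S\ref{sss:qsv.tori}), injective thanks to the splitting $(x,y)T_F\mapsto xyy^c$ already exploited in the proof of Lemma~\ref{lem:cartesian}. This establishes the Cartesian square, and the injectivity of $C_{\det(U)}\to C_{\nu'(U')}$ immediately identifies $Y_U(G_\Sigma)_W$ with an open and closed subscheme of $Y_{U'}(G'_\Sigma)_W$.

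For the Hecke compatibility, recall that $\rho_g$ on $Y_{U_1}(G_\Sigma)$ was defined as the descent of $\rho_{(g,1)}$ on $Y_{U_1\times V_E}(G_\Sigma\times T_E)$ along the $C_{V_E}$-action. Under (\ref{eqn:product}) this descent corresponds to $\rho_g \times \mathrm{id}$ on $Y_{U_1}(G_\Sigma)_W\times_W Y_{V_E}(T_E)_W$, which therefore preserves the identity section $s$; on the other hand $\rho_{(g,1)}$ is constructed from $\rho_g$ on $Y_{U_1'}(G'_\Sigma)$ via the fiber product. Chasing these identifications through the Cartesian square of the previous paragraph yields the required commutativity. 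The only subtle bookkeeping is to check that the map $C_{\det(U)}\to C_{\nu'(U')}$ produced by the fiber-product construction coincides with the canonical splitting inclusion of \S\ref{sss:qsv.tori}; this is immediate from the Shimura reciprocity laws defining the tori models, so no serious obstacle arises.
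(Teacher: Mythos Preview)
Your proposal is correct and is precisely the argument the paper intends: the lemma is stated there as ``an immediate consequence of the lemma and discussion above,'' and what you have written is exactly that discussion made explicit—pulling back the product decomposition (\ref{eqn:product}) along the identity section of $Y_{V_E}(T_E)_W$, using the fiber-product definition of $Y_{U\times V_E}(G_\Sigma\times T_E)$ together with the splitting of the tori schemes over $W$, and then invoking the injectivity of $C_{\det(U)}\to C_{\nu'(U')}$ already established in the proof of Lemma~\ref{lem:cartesian}. The Hecke compatibility likewise unwinds directly from the definition of $\rho_g$ as the descent of $\rho_{(g,1)}$, just as you describe.
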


\subsubsection{Relation of Hilbert and unitary definitions} \label{sec:comp}
Note that we have now given two definitions of the canonical model $Y_U(G)$ for 
$G = \Res_{F/\Q}\GL_2$ over $\CO = \Z_{(p)}$: the first as a Hilbert modular variety 
in \S\ref{sss:hmv.dha}, the second by taking $\Theta = \emptyset$ in the construction in \S\ref{sss:qsv.mod}. 
These are necessarily isomorphic by uniqueness
of canonical models, but we will need to describe the isomorphism in terms of the
moduli problems appearing in the two definitions.  

For clarity we write $Y_U(G_\emptyset)$ for the model defined using $G'_\emptyset$,
for which we choose $M_2(\CO_E)$ for $\CO_D$
and $\delta = \begin{psmallmatrix}0&1 \\ -1 & 0 \end{psmallmatrix}$.  We first define a morphism
\begin{equation}\label{eqn:comp}
\tilde{i}: \widetilde{Y}_U(G) \longrightarrow \widetilde{Y}_{U'}(G_\emptyset')\end{equation}
for any sufficiently small $U \subset G(\A_\f)$ and $U' \subset G_\emptyset'(\A_\f)$ of level prime
to $p$ such that $U'$  contains the image of $U$.

Let $(A,\iota,\lambda,\eta)$ be the universal object over $\widetilde{Y}_U(G)$.
Define $A' = A\otimes_{\CO_F} \CO_E^2$ with $M_2(\CO_E)$-action
$\iota'$ defined by left-multiplication on $\CO_E^2$.
Then $(A')^\vee$ is canonically isomorphic to 
$ A^\vee \otimes_{\CO_F} \hom_{\CO_F}(\CO_E^2 , \CO_F)$
(with $\alpha \in \CO_F$ acting on $A^\vee$ as $\iota(\alpha)^\vee$).
Define the quasi-polarization $\lambda'$ on $A'$ as the tensor product
of $\lambda$ with the $\CO_F$-linear homomorphism
$\CO_E^2 \to \hom_{\CO_F}(\CO_E^2 , \CO_F)$ induced by
the pairing $(\alpha,\beta) \mapsto \Tr_{E/F}(\alpha\overline{\beta})$.
We define  the level structure on $A'$ so that for each of the geometric points
$\overline{s}_i$, $\eta'_i$ is the unique isomorphism
$$M_2(\widehat{\CO}_E^{(p)}) \longrightarrow
T^{(p)}(A'_{\overline{s}_i}) 
 = T^{(p)}(A_{\overline{s}_i}) \otimes_{\widehat{\CO}_F^{(p)}} (\widehat{\CO}_E^{(p)})^2$$
such that $\eta_i'\left((a,b)\left(\begin{array}{c}c\\d\end{array}\right)\right)
    = (\eta_i(a,b))\otimes \left(\begin{array}{c}c\\d\end{array}\right)$
for all $a,b\in \widehat{\CO}_F^{(p)}$, $c,d\in \widehat{\CO}_E^{(p)}$.
Finally we let $\epsilon' = \zeta\otimes \epsilon$, where $\zeta$ and $\epsilon$
are defined in the discussion following the definition of the functor $\widetilde{Y}_U(G)$.
It is straightforward to check that $(A',\iota',\lambda',\eta',\epsilon')$ defines a tuple
over $\widetilde{Y}_{U}(G)$ represented by $\widetilde{Y}_{U'}(G_\emptyset')$, i.e.,
a morphism $\tilde{i}: \widetilde{Y}_U(G) \longrightarrow \widetilde{Y}_{U'}(G_\emptyset')$.

The morphism $\tilde{i}$ is clearly compatible with action of $\CO_{F,(p),+}^\times$,
hence descends to a morphism $i:{Y}_U(G) \longrightarrow {Y}_{U'}(G_\emptyset')$.
Suppose now that $V_E$ is an open compact subgroup of $\A_{E,\f}^\times$
contained in $U'$ and of level prime to $p$.  Since $\Sigma = \emptyset$,
the Galois action on $C_{V_E}$ defined by Shimura reciprocity is trivial, so
we may identify $Y_{V_E}(T_E)$ with $\coprod_{C_{V_E}} \Spec (\Z_{(p)})$
as a scheme over $\Z_{(p)}$ and extend $i$ to a morphism
$$i': Y_U(G) \times Y_{V_E}(T_E)=  \coprod_{C_{V_E}} Y_U(G)
\longrightarrow Y_{U'}(G'_\emptyset)$$
by setting $i' = \rho_y \circ i$ on the component represented by $y \in (\A_{E,\f}^{(p)})^\times$.

It is straightforward to check that the morphism induced by $i'$ 
on complex points is the same as the top line of (\ref{eqn:complex}).
In particular the diagram
$$\begin{array}{ccc}
Y_U(G) \times Y_{V_E}(T_E)& \longrightarrow& Y_{U'}(G'_\emptyset)\\
\downarrow && \downarrow \\
Y_{\det{U}}(T_F) \times Y_{V_E}(T_E) & \longrightarrow & Y_{\nu'(U')}
\end{array}$$
commutes, since it does so on complex points.  Thus if $V_E$ is sufficiently small
relative to $U$, we obtain a morphism
\begin{equation} \label{eqn:product2}
Y_U(G) \times Y_{V_E}(T_E) \longrightarrow Y_{U\times V_E}(G_\emptyset \times T_E)
\end{equation}
where the target is the fibre product defined after Lemma~\ref{lem:cartesian}.
Furthermore the morphisms of (\ref{eqn:product2}) for varying $U$ and $V_E$
are compatible with the action of 
$G(\A_\f^{(p)}) \times (\A_{E,\f}^{(p)})^\times$ in the usual sense,
as can be deduced either from the definitions or the corresponding assertion
on complex points.  Since (\ref{eqn:product2}) is an isomorphism on complex points,
it follows from uniqueness of canonical models that it is in fact an isomorphism.
Combined with (\ref{eqn:product}), we obtain isomorphisms
$$Y_U(G) \times Y_{V_E}(T_E)  \longrightarrow Y_U(G_\emptyset) \times Y_{V_E}(T_E)$$
compatible with the action of $G(\A_\f^{(p)}) \times (\A_{E,\f}^{(p)})^\times$.
Taking quotients by the action of $C_{V_E}$ we obtain the desired isomorphism
$Y_U(G) \to Y_U(G_\emptyset)$.  Furthermore the isomorphisms are compatible
with the Hecke action in the usual sense, and its composite with the inclusion of Lemma~\ref{lem:restriction}
is precisely the (base-change to $W$) of the morphism $i: Y_U(G) \to Y_{U'}(G'_\emptyset)$
defined above.

\section{Automorphic vector bundles}

\subsection{Construction of automorphic vector bundles} \label{sec:autobun}
In this section we will define automorphic vector bundles on the special fibres of the Shimura
varieties $Y_U(G_\Sigma)$ (resp.~$Y_{U'}(G_\Sigma')$) for sufficiently small $U$ (resp.~$U'$)
of level prime to $p$.  

\subsubsection{The Hilbert modular setting}\label{sss:avb.hmf}
We begin with the case of $G = G_\emptyset = \Res_{F/\Q}\GL_2$,
proceeding as in \cite{DS}.
We assume $\F \subset \Fpbar$ is sufficiently large to contain the image of 
$\overline{\theta}:\CO_F \to \Fpbar$ for all $\theta \in \Theta$
(for example take $\F = \Fpbar$), and set $\Ybar = Y_U(G)_{\F}$ and $S = \widetilde{Y}_U(G)_\F$.
We assume that $U$ is sufficiently small that, in addition to the usual sense,
we have $\alpha - 1 \in p\CO_F$ for all $\alpha \in U \cap \CO_F^\times$., i.e. {\em $p$-neat}
in the terminology of \cite[Def.~3.2.3]{DS}

Suppose that $\underline{A} = (A,\iota,\lambda,\eta) \in \widetilde{Y}_U(G)(S)$ and
let $s:A \to S$ denote the structure morphism.  Let $\widetilde{\CV} = \CH^1_{\dr}(A/S)$
and consider the exact sequence
$$0 \to s_*\Omega^1_{A/S}  \to \wt{\CV} \to R^1s_*\CO_A \to 0$$
of vector bundles on $S$, of rank $d$, $2d$ and $d$ respectively.
The action of $\CO_F$ on $A$ induces one on each of the vector bundles,
making them sheaves of $\CO_F \otimes \CO_S$-modules.
The assumption on $\iota$ implies that $s_*\Omega^1_{A/S}$ is locally
(on $S$) free of rank one, the hypotheses on $\lambda$ ensure
the same is true of $R^1s_*\CO_A \cong \Lie(A^\vee/S)$, and it follows that
$\CV$ is locally free of rank two.  Decomposing 
$\CO_F \otimes \CO_S \cong  \bigoplus_{\theta \in \Theta} \CO_S$,
we obtain a corresponding decompositions
$\widetilde{\CV} = \bigoplus_{\theta} \widetilde{\CV}_{\theta}$,
$s_*\Omega^1_{A/S} =  \bigoplus_{\theta} \widetilde{\omega}_{\theta}$ and
$R^1s_*\CO_A =  \bigoplus_{\theta} \widetilde{\upsilon}_{\theta}$,
where each $\widetilde{\omega}_\theta$ and $\widetilde{\upsilon}_\theta$ 
is a line bundle on $S$, $\widetilde{\CV}_\theta$ is a vector bundle of rank two,
and we have an exact sequence
$$0\; \to\; \widetilde{\omega}_{\theta} \;\to \;
\widetilde{\CV}_{\theta} \;\to\; \widetilde{\upsilon}_{\theta} \;\to\; 0.$$

We now define descent data on $\widetilde{\CV}$ with respect to the covering $S \to \Ybar$.
Recall that the covering group is $\CO_{F,(p),+}^\times/(U \cap \CO_F^\times)^2$,
and that for $\mu \in \CO_{F,(p),+}^\times$, the action of $\mu$ on $S$ is
defined by $\theta_\mu(A,\iota,\lambda,\eta) = (A,\iota,\mu\lambda,\eta)$,
so $\theta_\mu^*A$ is canonically $\CO_F$-linearly isomorphic to $A$.  
We thus obtain a canonical $\CO_F\otimes \CO_S$-linear isomorphism
$\theta_\mu^*\widetilde{\CV} \to \widetilde{\CV}$, 
hence $\CO_S$-linear isomorphisms 
$\theta_\mu^*\widetilde{\CV}_{\theta} \to \widetilde{\CV}_{\theta}$
for all $\theta$, inducing isomorphisms
$\theta_\mu^*\widetilde{\omega}_{\theta} \to \widetilde{\omega}_{\theta}$
and $\theta_\mu^*\widetilde{\upsilon}_{\theta} \to \widetilde{\upsilon}_{\theta}$.
Furthermore if $\mu = \alpha^2$ for some $\alpha \in U \cap \CO_F^\times$ 
(i.e., $\theta_\mu$ is the identity map), then the canonical isomorphism
$A \to \theta_\mu^*A = A$ is $\iota(\alpha^{-1})$,
so the resulting automorphism of $\widetilde{\CV}$ is $\iota(\alpha^{-1})^*$, which is
the identity since our hypotheses on $U$ imply that $\alpha \equiv 1 \bmod p\CO_F$.
We thus obtain an action of the covering group on $\widetilde{\CV}$ over its
action on $S$, inducing actions on the rank two vector bundles $\widetilde{\CV}_{\theta}$
as well as the line bundles $\widetilde{\omega}_{\theta}$ and $\widetilde{\upsilon}_{\theta}$.
Since $S$ is a disjoint union of finite \'etale schemes over $\Ybar$, the actions define effective
descent data and hence vector bundles on $\Ybar$ fitting in an exact sequence
$$0\; \to\; {\omega}_{\theta} \;\to \;
{\CV}_{\theta} \;\to\; {\upsilon}_{\theta} \;\to\; 0.$$

Suppose now that $U_1$, $U_2 \subset G(\A_{\f})$ are sufficiently small 
(in the sense above) and of level prime to $p$,
and $g \in G(\A_{\f}^{(p)})$ is such that $g^{-1}U_1g \subset U_2$.
Let $\Ybar_i = Y_{U_i}(G)_{\F}$, $S_i = \widetilde{Y}_U(G)_\F$, etc., for $i=1,2$.
We thus have the morphism 
$\tilde{\rho}_g:S_1 \to S_2$ lying over $\rho_g:\Ybar_1 \to \Ybar_2$,
and the canonical quasi-isogeny 
$\pi_g \in \hom_{S_1}(A_1,\tilde{\rho}_g^*A_2) \otimes \Z_{(p)}$
induces an $\CO_F\otimes \CO_{S_1}$-linear
isomorphism $\pi_g^*: \tilde{\rho}_g^*\widetilde{\CV}_2 \to \widetilde{\CV}_1$, and
hence isomorphisms $\pi_g^*: \tilde{\rho}_g^*\widetilde{\CV}_{2,\theta} \to 
\widetilde{\CV}_{1,\theta}$ for each $\theta$.
It is straightforward to check that $\pi_g^*$ is compatible with the descent data,
hence defines an isomorphism $\rho_g^*\CV_{2,\theta} \to 
\CV_{1,\theta}$, and that these induce isomorphisms $\rho_g^*\omega_{2,\theta} \to 
\omega_{1,\theta}$ and $\rho_g^*\upsilon_{2,\theta} \to 
\upsilon_{1,\theta}$, all of which we also denote by $\pi_g^*$.
Finally if $h \in G(\A_{\f}^{(p)})$ and $U_3$ is as above
with $h^{-1}U_2h \subset U_3$, then
the relation $\pi_{gh} = \tilde{\rho}_g^*(\pi_h)\circ \pi_g$ 
ensures that $\pi_{gh}^* = \pi_g^*\circ \rho_g^*(\pi_h^*)$ for the sheaves
on $S_1$, and hence on $\Ybar_1$.  

\subsubsection{The unitary setting}\label{sss:avb.umf}
We now proceed similarly to define automorphic bundles on special fibres of
Shimura varieties for $G'_\Sigma$.  We assume $\F$ is sufficiently large as before,
and now let $\Ybar = Y_{U'}(G'_\Sigma)_{\F}$ and $S= \widetilde{Y}_{U'}(G'_\Sigma)_\F$
where $U'$ (of level prime to $p$) is sufficiently small that Lemma~\ref{lem:automorphisms} 
holds and $\alpha - 1 \in p\CO_F$ for all $\alpha \in U' \cap \CO_E^\times$.
Now let $s:A \to S$ denote the pull-back of the universal abelian variety
over $\widetilde{Y}_{U'}(G_\Sigma')$, let $\widetilde{\CV} = \CH^1_{\dr}(A/S)$ and
consider the exact sequence
$$0 \to s_*\Omega^1_{A/S}  \to \wt{\CV} \to R^1s_*\CO_A \to 0$$
of vector bundles on $S$, of rank $4d$, $8d$ and $4d$ respectively.

The left action of $\CO_D$ on $A$ induces a right action on each of
the bundles, making them sheaves of right $\CO_D \otimes \CO_S$-modules.
Fix an isomorphism $\CO_{B,p} \cong M_2(\CO_{F,p})$.  
Recall that $\CO_{D,p} = \CO_E \otimes \CO_{B,p}$, so we obtain an isomorphism
$\CO_{D,p} \cong M_2(\CO_{E,p})$, and hence 
$$\CO_D \otimes \CO_S \cong M_2(\CO_E \otimes \CO_S)  = 
\bigoplus_{\tau \in \Theta_{E}} M_2(\CO_S).$$
We thus obtain corresponding decompositions
$\widetilde{\CV} = \bigoplus_{\tau} \widetilde{\CV}_{\tau}$,
$s_*\Omega^1_{A/S} =  \bigoplus_{\tau} \widetilde{\omega}_{\tau}$ and
$R^1s_*\CO_A =  \bigoplus_{\tau} \widetilde{\upsilon}_{\tau}$,
with each factor inheriting the structure of a sheaf of right $M_2(\CO_S)$-modules.
The assumption on $\iota$ implies that $\widetilde{\omega}_\tau$ is locally
free of rank $2s_\tau$.  Since $R^1s_*\CO_A \cong \Lie(A^\vee/S)$ as
$\CO_E\otimes \CO_S$-modules with $\alpha \in \CO_E$ acting as 
$\Lie(\iota(\alpha)^\vee)$, it follows from the hypotheses on $\lambda$ that 
$\widetilde{\upsilon}_\tau$ is locally free of rank $2s_{\tau^c}$, and so
$\widetilde{\CV}_\tau$ is locally free of rank $2s_\tau + 2s_{\tau^c} = 4$. 
Let $e_0$ denote the idempotent
$\begin{psmallmatrix} 1&0\\0&0\end{psmallmatrix} \in M_2(\CO_S)$, and let
$\widetilde{\CV}_{{\tau}}^0  = \widetilde{\CV}_{{\tau}} e_0$, a rank two vector
bundle on $S$ since 
$\widetilde{\CV}_\tau \cong \widetilde{\CV}_{{\tau}}^0 \oplus \widetilde{\CV}_{{\tau}}^0$.
Similarly if $\tau  \in \Theta_E$ is such that $\tau|_F \not\in \Sigma$,
then $s_\tau = s_{\tau^c} = 1$, so we have the line bundles 
$\widetilde{\omega}_\tau^0 = \widetilde{\omega}_{{\tau}} e_0$,
$\widetilde{\upsilon}_\tau^0 = \widetilde{\upsilon}_{{\tau}} e_0$, and
an exact sequence
$$0\; \to\; \widetilde{\omega}^0_{\tau} \;\to \;
\widetilde{\CV}^0_{\tau} \;\to\; \widetilde{\upsilon}^0_{\tau} \;\to\; 0.$$
We also define the line bundles
$\widetilde{\delta}_{\tau} = \wedge^2_{\CO_S}\widetilde{\CV}^0_\tau$
for all $\tau \in \Theta_E$, canonically isomorphic to 
$\widetilde{\omega}^0_{\tau} \otimes_{\CO_S}\widetilde{\upsilon}^0_{\tau}$
if $\tau|_F\not\in\Sigma$.

We define descent data on $\widetilde{\CV}$ with respect to the covering $S \to \Ybar$
similarly to the case $G = G_\emptyset$.  Under our assumptions on $U'$, the covering
group is again $\CO_{F,(p),+}^\times/(U' \cap \CO_F^\times)^2$.
Now $\mu \in \CO_{F,(p),+}^\times$ acts via
$\theta_\mu(A,\iota,\lambda,\eta,\epsilon) = (A,\iota,\mu\lambda,\eta,\mu\epsilon)$.
so $\theta_\mu^*A$ is canonically $\CO_D$-linearly isomorphic to $A$ over $S$,
yielding $\CO_S$-linear isomorphisms 
$\theta_\mu^*\widetilde{\CV}^0_{\tau} \to \widetilde{\CV}^0_{\tau}$ for all $\tau$.
Again our assumptions imply this isomorphism is the identity if $\mu \in (U'\cap \CO_F)^2$,
so we obtain vector bundles $\CV^0_\tau$ on $\Ybar$ by descent, and we let 
$\delta_\tau = \wedge^2_{\CO_\Ybar}\CV^0_\tau$.  Similarly if
$\tau|_F \not\in \Sigma$, then we obtain line bundles $\omega^0_\tau$, $\upsilon^0_\tau$
on $\Ybar$, an exact sequence
$$0\; \to\; {\omega}^0_{\tau} \;\to \;
{\CV}^0_{\tau} \;\to\; {\upsilon}^0_{\tau} \;\to\; 0,$$
and an isomorphism 
$\delta_\tau \cong 
{\omega}^0_{\tau} \otimes_{\CO_\Ybar}{\upsilon}^0_{\tau}$.

Suppose now that $U'_1$, $U'_2 \subset G_\Sigma'(\A_{\f})$ are sufficiently small 
(in the sense above) and of level prime to $p$,
and $g \in G_\Sigma'(\A_{\f}^{(p)})$ is such that $g^{-1}U'_1g \subset U'_2$.  
Letting $\Ybar_i = Y_{U_i'}(G_\Sigma')_\F$, $S_i = \widetilde{Y}_{U_i'}(G_\Sigma')_\F$, etc
as in the case of $G_\emptyset$, we have the morphism $\tilde{\rho}_g:S_1 \to S_2$
over $\rho_g:\Ybar_1 \to \Ybar_2$, and the quasi-isogeny $\pi_g$ induces isomorphisms
$\pi_g^*: \tilde{\rho}_g^*\widetilde{\CV}^0_{2,\tau} \to \widetilde{\CV}^0_{1,\tau}$ descending
to isomorphisms $\rho_g^*\CV^0_{2,\tau} \to \CV^0_{1,\tau}$.  Similarly we obtain
isomorphisms $\rho_g^*\omega^0_{2,\tau} \to \omega^0_{1,\tau}$,
$\rho_g^*\upsilon^0_{2,\tau} \to \upsilon^0_{1,\tau}$ if $\tau|_F \not\in \Sigma$,
and $\rho_g^*\delta_{2,\tau} \to \delta_{1,\tau}$ for all $\tau$,
all of which we denote $\pi_g^*$.
Finally if $h \in G'_\Sigma(\A_{\f}^{(p)})$ and $U'_3$ is as above
with $h^{-1}U'_2h \subset U'_3$, then we have $\pi_{gh}^* = \pi_g^*\circ \rho_g^*(\pi_h^*)$.

\subsubsection{The quaternionic setting}\label{sss:avb.qmf}
We now turn to the definition of the automorphic bundles in the case of $G_\Sigma$ for
arbitrary $\Sigma$ (of even cardinality). Now we assume $\F = \Fpbar$, and that 
$U \subset G_\Sigma(\A_\f^{(p)}) $ is sufficiently small that we can choose $V_E$
sufficiently small relative to $U$ (see \S\ref{sss:qsv.tori}
so that $U' = UV_E \subset G'_\Sigma(\A_\f^{(p)})$
is sufficiently small in the sense of \S\ref{sss:avb.umf}.

We decompose $\Theta =  \overline{\Theta}_p = \coprod_{v|p} \Theta_v$, where $\theta \in \Theta_v$
if and only if $\theta$ factors through $\CO_F/v$. For each prime $v$ of $F$ dividing $p$, we 
choose a prime $\tilde{v}$ of $E$ dividing $v$, and let 
$$\widetilde{\Theta} = \coprod_{v|p} \Theta_{E,\tilde{v}} \subset 
\overline{\Theta}_{E,p} = \Theta_E$$
(where $\tau \in \Theta_{E,\tilde{v}}$ if and only if $\tau$ factors through $\CO_E/\tilde{v}$).
Note that $\widetilde{\Theta}$ maps bijectively to $\Theta$ under $\tau \mapsto \tau|_F$;
we do not assume $\widetilde{\Sigma} \subset \widetilde{\Theta}$.  For each $\theta \in \Theta$,
we let $\tilde{\theta}$ denote the extension of $\theta$ in $\widetilde{\Theta}$. 

Let $\Ybar = Y_U(G_\Sigma)_{\F}$, and consider the inclusion $i:\Ybar \to \Ybar' =  Y_{U'}(G_\Sigma')_{\F}$
obtained from the one in Lemma~\ref{lem:restriction}.  For each $\theta \in \Theta$, we define
the rank two vector bundle $\CV_\theta$ on $\Ybar$ to be $i^*\CV^0_{\tilde{\theta}}$.
Similarly we define the line bundle $\delta_\theta =  i^*\omega^0_{\tilde{\theta}}
= \wedge^2_{\CO_\Ybar}\CV_\theta$, and if $\theta \not\in\Sigma$, we have  
$\omega_\theta = i^*\omega^0_{\tilde{\theta}}$, $\upsilon_\theta = i^*\upsilon^0_{\tilde{\theta}}$
on $\Ybar$, the exact sequence
$$0\; \to\; {\omega}_{\theta} \;\to \;
{\CV}_{\theta} \;\to\; {\upsilon}_{\theta} \;\to\; 0,$$
and the isomorphism $\delta_\theta \cong 
{\omega}_{\theta} \otimes_{\CO_\Ybar}{\upsilon}_{\theta}$.

The vector bundles $\CV_\theta$, $\omega_\theta$ and $\upsilon_\theta$
are independent of the choice of $V_E$.  Moreover if $U_1,U_2 \subset G_\Sigma(\A_\f^{(p)})$
are as above and $g \in G_\Sigma(\A_\f^{(p)})$ is such that $g^{-1}U_1g \subset U_2$, then
we can choose $V_E$ as above for $U_1$ and $U_2$, let $U_i' = U_i V_E$,
$\Ybar_i = Y_{U_i}(G_\Sigma)_\F$, etc. for $i=1,2$,
and define the isomorphism $\pi_g^*: \rho_g^*\CV_{\theta,2} \to \CV_{\theta,1}$ to be the pull-back
via $i_1:\Ybar_1 \to \Ybar_1'$ of the one so denoted on $\Ybar_1'$.  
We similarly define isomorphisms $\rho_g^*\omega_{\theta,2} \to \omega_{\theta,1}$ 
and $\rho_g^*\upsilon_{\theta,2} \to \upsilon_{\theta,1}$  for $\theta\not\in\Sigma$,
and $\rho_g^*\delta_{\theta,2} \to \delta_{\theta,1}$ for all $\theta\in \Theta$.
As usual, it is straightforward to check that these are independent on the choice of $V_E$,
and that $\pi_{gh}^* = \pi_g^*\circ \rho_g^*(\pi_h^*)$ for $h$ and $U_3$ satisfying
$h^{-1}U_2h \subset U_3$. 

Note that we have given two definitions of the bundles $\CV_\theta$, $\delta_\theta$,
$\omega_\theta$ and $\upsilon_\theta$ on $\Ybar = Y_U(G_\Sigma)_{\Fpbar}$ for sufficiently
small $U$ in the case $\Sigma = \emptyset$, one by descent from $\widetilde{\CV} = \CH^1_\dr(A/S)$
where $A$ is the universal abelian variety on $S = \widetilde{Y}_U(G)_{\Fpbar}$,
the other by restriction and descent from $\widetilde{\CV}'=\CH^1_\dr(A'/S')$ where $A'$ is
the universal abelian variety on $S' = \widetilde{Y}_U(G'_\emptyset)_{\Fpbar}$.
Writing $\tilde{i}:S \to S'$ for the base-change of the morphism in (\ref{eqn:comp}),
we see from its construction that we have a canonical $\CO_E \otimes \CO_S$-linear
isomorphism
$$\tilde{i}^*(\widetilde{\CV}'e_0) \cong \widetilde{\CV}\otimes_{\CO_F}\CO_E,$$
compatible with the actions of $\CO_{F,(p),+}^\times$ on $\widetilde{\CV}$ and
$\widetilde{\CV}'e_0$ over its actions on $S$ and $S'$. It follows that the two vector
bundles on $\Ybar$ denoted $\CV_\theta$ are canonically isomorphic, hence also
for those denoted $\delta_\theta$, and we see similarly the same holds for
$\omega_\theta$ and $\upsilon_\theta$.  Furthermore, the isomorphisms are
compatible with those denoted $\pi_g^*$.

\begin{remark}  Note that choices of the field $E$, set of places $\widetilde{\Sigma}$ and isomorphisms
$\CO_{B,p} \cong M_2(\CO_{F,p})$ are implicit in our definition of the automorphic bundles
on $Y_U(G_\Sigma)_{\Fpbar}$ (for $\Sigma \neq \emptyset$).  We do not consider here the question
of the bundles' dependence on these choices or descent to a suitably defined reflex field.
\end{remark}

\subsection{Relations among vector bundles}
In this section we recall certain canonical relations among the automorphic bundles and
cotangent bundles.

\subsubsection{The Kodaira--Spencer isomorphism} \label{sec:KS}
We begin with the construction of the Kodaira--Spencer isomorphism, adapted to our setting.

Let $s:A \to S$ denote the universal abelian variety over $S = \widetilde{Y}_{U'}(G_\Sigma')_{\F}$,
where we assume $\F$ is sufficiently large and $U'$ is sufficiently small and of level prime to $p$
as in \S\ref{sss:avb.umf}.   The smooth morphisms $A \to S \to \Spec \F$ yield the exact sequence
$$0 \to s^*\Omega^1_{S/\F} \to \Omega^1_{A/\F} \to \Omega^1_{A/S} \to 0,$$
to which we apply $R^\bullet s_*$ to obtain the connecting morphism:
\begin{equation}\label{eqn:connect}
s_* \Omega^1_{A/S} \longrightarrow  R^1s_*(s^*\Omega^1_{S/\F}) \cong
   \Omega^1_{S/\F} \otimes_{\CO_S} R^1s_* \CO_A.
\end{equation}
Furthermore the morphism is right $\CO_D \otimes \CO_S$-linear (where $\CO_D$ acts
on the target via its action on $R^1s_*\CO_A$).
We may therefore apply idempotents to obtain an $\CO_S$-linear morphism
$$\widetilde{\omega}_{\tau}^0 \longrightarrow
  \Omega^1_{S/\F} \otimes_{\CO_S} \widetilde{\upsilon}_{{\tau}}^0,$$
and hence $\Shom_{\CO_S} (\widetilde{\upsilon}_{{\tau}}^0,\widetilde{\omega}_{{\tau}}^0) \to \Omega^1_{S/\F}$,
for each ${\tau}$ such that ${\tau}|_F \not\in \Sigma$.
By a standard argument, the resulting morphism
\begin{equation}\label{eqn:PELKS} \bigoplus_{\theta \in \Theta - \Sigma}  
\Shom_{\CO_S} (\widetilde{\upsilon}_{\tilde{\theta}}^0,\widetilde{\omega}_{\tilde{\theta}}^0) \longrightarrow \Omega^1_{S/\Fpbar}\end{equation}
is an isomorphism.  See in particular \cite[\S2.1.7]{KWL}, where (\ref{eqn:connect}) is rewritten in such a way that
the argument in the proof of \cite[Cor.~3.17]{TX} shows that (\ref{eqn:PELKS}) is an isomorphism.

Furthermore the isomorphism is compatible with the action of $\CO_{F,(p),+}^\times$, where the action
is defined on $\Shom_{\CO_S}(\widetilde{\upsilon}_{\tilde{\tau}}^0,\widetilde{\omega}_{\tilde{\tau}}^0)$ in
\S\ref{sss:avb.umf}, and on $\Omega^1_{S/\Fpbar}$ via the canonical isomorphism 
$\theta_\mu^*\Omega^1_{S/\Fpbar} \to \Omega^1_{S/\Fpbar}$. This compatibility follows
from the general fact that if 
$$\begin{array}{ccc}
A_1  & \stackrel{s_1}{\longrightarrow}& S_1  \\ {\scriptstyle\pi }\downarrow && \downarrow {\scriptstyle\rho}
 \\A_2 & \stackrel{s_2}{\longrightarrow} & S_2 \end{array}$$
is a commutative diagram of smooth morphisms of smooth schemes over $\F$ with 
$\pi$ and $\rho$ finite, then the resulting diagram
\begin{equation}\label{eqn:KScomm} \begin{array}{ccc}
\rho^*s_{2,*}\Omega^1_{A_2/S_2}  & \longrightarrow & \rho^*\Omega^1_{S_2/\F} \otimes_{\CO_{S_1}}  \rho^*R^1s_{2,*}\CO_{A_2} \\  \downarrow &&\downarrow \\
s_{1,*}\Omega^1_{A_1/S_1}   & \longrightarrow & 
\Omega^1_{S_1/\F} \otimes_{\CO_{S_1}} R^1s_{1,*}\CO_{A,1} \end{array}\end{equation}
commutes, where the top arrow is
$\rho^*(s_{2,*}\Omega^1_{A_2/S_2} \to \Omega^1_{S_2/\F} \otimes_{\CO_{S_2}} R^1s_{2,*}\CO_{S_2})$ and
the vertical arrows are obtained from the natural maps 
$\pi^*\Omega^1_{A_2/S_2} \to \Omega^1_{A_1/S_1}$ and $\rho^*\Omega^1_{S_2/\F} \to \Omega^1_{S_1/\F}$ by adjunction.
Therefore (\ref{eqn:PELKS}) descends to an isomorphism 
$$\bigoplus_{\theta \in \Theta - \Sigma}  
\Shom_{\CO_\Ybar} ({\upsilon}_{\tilde{\theta}}^0,{\omega}_{\tilde{\theta}}^0) 
\stackrel{\sim}{\longrightarrow} \Omega^1_{\Ybar/\Fpbar}$$
on $\Ybar = Y_{U'}(G_\Sigma')_\F$, called the Kodaira--Spencer isomorphism.
\footnote{We actually have such an isomorphism on the integral model over a sufficiently
large $\CO'$.  Note that $(F^\times \cap K)^2$ acts trivially on 
$\Shom_{\CO_{S'}} (\widetilde{\upsilon}_{\tau}^0,\widetilde{\omega}_{{\tau}}^0)$ where 
$\widetilde{\upsilon}_\tau^0$ and  $\widetilde{\omega}_\tau^0$ are defined as above, but
on $S' = \widetilde{Y}_{U'}(G_\Sigma')_{\CO'}$, so it descends to a sheaf
on $Y_{U'}(G_\Sigma')_{\CO'}$, even though $\widetilde{\upsilon}_{{\tau}}^0$ 
and $\widetilde{\omega}_{\tilde{\tau}}^0$ do not.
Also note that \cite{TX} use the analogue of $\widetilde{\omega}_{{\tau}^c}$ instead of 
$(\widetilde{\upsilon}_{{\tau}}^0)^{-1}$;
these are isomorphic, but the isomorphism depends on the polarization, so does not descend
to the Shimura varieties associated to $G_\Sigma'$.} 

Suppose now that $U \subset G_\Sigma(\A_\f^{(p)})$ is sufficiently small and
$U' = UV_E \subset G'_\Sigma(\A_\f^{(p)})$ is chosen as in \S\ref{sss:avb.qmf}.
Let $\Ybar = Y_U(G_\Sigma)_{\F}$ where $\F = \Fpbar$, and consider the inclusion 
$i:\Ybar \to \Ybar' =  Y_{U'}(G_\Sigma')_{\F}$.  The Kodaira--Spencer isomorphism defined
above on $\Ybar'$ then restricts via $i$ to give the Kodaira--Spencer isomorphism
$$\bigoplus_{\theta \not\in\Sigma}  
\Shom_{\CO_\Ybar} (\upsilon_\theta,\omega_\theta) \stackrel{\sim}{\longrightarrow} \Omega^1_{\Ybar/\F}$$
on  $\Ybar$.  Since $\delta_{\theta} = \omega_{\theta} \upsilon_{\theta}$, this can
also be written as
$$\bigoplus_{\theta \not\in\Sigma} 
\delta_{\theta}^{-1}\omega_{\theta}^2 
 \stackrel{\sim}{\longrightarrow}  \Omega^1_{\Ybar/\F},$$
 and taking determinants yields\footnote{We implicitly choose an ordering of factors when writing indexed tensor products;
 the resulting canonical isomorphisms with wedge products are only compatible up to sign with the canonical reordering
 isomorphisms on the tensor products, but we view the ordering as fixed.}
 $$\bigotimes_{\theta \not\in \Sigma}  
\delta_{\theta}^{-1}\omega_{\theta}^2 
 \stackrel{\sim}{\longrightarrow}  \CK_{\Ybar/\F},$$
 where  $\CK_{\Ybar/\F}$ is the dualizing sheaf.
Finally, using the commutativity of (\ref{eqn:KScomm}), it is straightforward to check that the above isomorphisms are independent of
the choice of $V_E$ and Hecke-equivariant in the usual sense, i.e.,
if $U_1$ and $U_2$ are sufficiently small, and 
$g \in G_\Sigma(\A_\f^{(p)})$ is such that $g^{-1}U_1 g \subset U_2$, then the diagram
$$\begin{array}{ccc}
\displaystyle\bigotimes_{\theta\not\in \Sigma}  
\rho_g^*\left(\delta_{\theta,2}^{-1}\omega_{\theta,2}^2\right)&
 \stackrel{\sim}{\longrightarrow}&
  \rho_g^* \CK_{\Ybar_2/\F}\\
  \downarrow&&\downarrow \\
\displaystyle\bigotimes_{\theta\not\in \Sigma}  
\delta_{\theta,1}^{-1}\omega_{\theta,1}^2&
 \stackrel{\sim}{\longrightarrow}&
   \CK_{\Ybar_1/\F}\end{array}$$
 commutes, where $\Ybar_i = Y_{U_i}(G_\Sigma)_\F$, etc., 
 the left vertical arrow is the tensor product of the maps $\pi_g^*$
 defined in \S\ref{sss:avb.qmf}, and the right vertical arrow is the canonical map.

\subsubsection{Frobenius relations} \label{sec:frob}
Let $s:A \to S$ denote the universal abelian variety over $S = \widetilde{Y}_{U'}(G_\Sigma')_{\F}$
as above, and consider the relative Frobenius and Verschiebung morphisms:
$$\Frob: A \to A^{(p)} \quad\mbox{and}\quad \Ver:A^{(p)} \to A$$
over $S$, where $A^{(p)}$ denotes the pull-back of $A$ along the absolute Frobenius on $S$.
We then have the canonical exact sequences
$$ 0 \to \CH \to \CH^1_\dr(A/S) \to \CI \to 0  \quad\mbox{and}\quad 
 0 \to \CI \to \CH^1_\dr(A^{(p)}/S) \to \CH \to 0$$
of vector bundles on $S$, where 
 $$\CH = \ker(\CH^1_\dr(A/S) \stackrel{\Ver^*}{\longrightarrow} \CH^1_\dr(A^{(p)}/S)
\;\;\mbox{and}\;\;
  \CI = \ker(\CH^1_\dr(A^{(p)}/S) \stackrel{\Frob^*}{\longrightarrow} \CH^1_\dr(A/S)).$$
Moreover the above morphisms are $\CO_D \otimes \CO_S$-linear, so
we can apply idempotents to obtain exact sequences
$$ 0 \to \CH^0_{{\tau}} \to \CH^1_\dr(A/S)^0_{{\tau}} \to \CI^0_{{\tau}} \to 0  \quad\mbox{and}\quad 
 0 \to \CI^0_{{\tau}} \to \CH^1_\dr(A^{(p)}/S)^0_{{\tau}} \to \CH^0_{{\tau}} \to 0$$
 for all $\tau\in \Theta_E$.

Note that $\CH^1_\dr(A^{(p)}/S)^0_{{\tau}}$ is canonically isomorphic to 
$(\CH^1_\dr(A/S)^0_{\phi^{-1}\circ{\tau}})^{(p)}$ (where $\cdot^{(p)}$ again denotes
pull-back by the absolute Frobenius on $S$), so that $\CI_\tau^0$ is dual to
$(\Lie(A/S)_{\phi^{-1}\circ\tau}^0)^{(p)}$.  It follows that 
 $\CI^0_{{\tau}}$ (resp.~$\CH_{{\tau}}^0$) has rank $s_{\phi^{-1}\circ{\tau}}$
(resp.~$2- s_{\phi^{-1}{\tau}}$).  In particular if $\phi^{-1}\circ{\tau}|_F \in \Sigma$,
then we obtain an isomorphism
$$\CH^1_\dr(A/S)^0_{{\tau}} \cong  \CH^1_\dr(A^{(p)}/S)^0_{{\tau}}$$
induced by either $\Frob^*$ or $\Ver^*$ according to whether or not
 $\phi^{-1}\circ\tau \in \widetilde{\Sigma}$.
On the other hand if $\phi^{-1}\circ{\tau}|_F \not\in \Sigma$, then we still have 
$$\wedge^2_{\CO_S} \CH^1_\dr(A/S)^0_{{\tau}} 
\cong \CH^0_{{\tau}} \otimes_{\CO_S} \CI^0_{{\tau}}
\cong \wedge^2_{\CO_S} \CH^1_\dr(A^{(p)}/S)^0_{{\tau}},$$
so we have
$$\widetilde{\delta}_\tau = 
\wedge^2_{\CO_S} \CH^1_\dr(A/S)^0_{{\tau}} 
\cong \left(\wedge^2_{\CO_S} \CH^1_\dr(A/S)^0_{\phi^{-1}\circ{\tau}}\right)^{\otimes p}
= \widetilde{\delta}_{\phi^{-1}\circ\tau}^{\otimes p}
$$
for all ${\tau} \in \Theta_{E}$.
Furthermore the isomorphism is compatible with the action of $\CO^\times_{F,(p),+}$,  so it descends to
an isomorphism $\delta_{{\tau}} \cong \delta_{\phi^{-1}\circ {\tau}}^{\otimes p}$
on $Y_{U'}(G_\Sigma')_{\F}$, which in turn restricts to give an isomorphism
$\delta_{\theta}\cong \delta_{\phi^{-1}\circ \theta}^{\otimes p}$ on 
 $Y_{U}(G_\Sigma)_{\Fpbar}$ for all $\theta \in \Theta$ and sufficiently small $U$
 (choosing suitable $U' = UV_E$).  Finally, it is straightforward to check that
 the isomorphism is independent of the choice of $V_E$ and 
 Hecke equivariant in the sense that it is compatible with the maps $\pi_g^*$
 (for $g \in G_\Sigma(\A_\f^{(p)})$ and sufficiently small $U_1$ and $U_2$
 such that $g^{-1}U_1 g \subset U_2$).

\section{Iwahori level structures}
\subsection{The Hilbert modular setting}  \label{sec:HMVIw}
In this section we recall Pappas' definition of integral models of Hilbert modular varieties
of level $U_0(p)$ and level $U_1(p)$, where $U$ is a sufficiently small open compact subgroup
of $G(\A_\f) = \GL_2(\A_{F,\f})$ of level prime to $p$, and
$$U_1(p) = \{g\in U\,|\, g_p \equiv \begin{psmallmatriks} * & * \\ 0 & 1
 \end{psmallmatriks}\bmod p\CO_{F,p}\}\;\subset \;
 U_0(p) = \{g\in U\,|\, g_p \equiv \begin{psmallmatriks} * & * \\ 0 & *
 \end{psmallmatriks}\bmod p\CO_{F,p}\}.$$
Thus writing $U = U^pU_p$ with $U_p = \GL_2(\CO_{F,p})$, we have
$U_0(p) = U^pU_{0,p}$ and $U_1(p) = U^p U_{1,p}$ where
$U_{0,p}$ is an Iwahori subgroup of $U_p$ and
$U_0(p)/U_1(p) \cong U_{0,p}/U_{1,p} \cong (\CO_F/p)^\times$.

\subsubsection{Level $U_0(p)$} \label{sss:hmv.U0p}
For $U_0(p)$, we consider the functor which associates to a $\Z_{(p)}$-scheme $S$
the set of isomorphism classes of triples $(\underline{A}_1,\underline{A}_2,f)$, where
\begin{itemize}
\item $\underline{A}_j = (A_j,\iota_j,\lambda_j,\eta_j)$ for $j=1,2$ represent $S$-points of $\widetilde{Y}_U(G)$, and
\item $f:A_1 \to A_2$ is an isogeny of degree $p^d$ such that $f\circ \iota_1(\alpha) = \iota_2(\alpha)\circ f$ for all $\alpha \in \CO_F$, $p\lambda_1 = f^\vee \circ \lambda_2 \circ f$, $\eta_2 = f \circ \eta_1$
(as $U^p$-orbits for each $\overline{s}_i$), and $H = \ker f$ decomposes as $\prod_{v|p} H_v$ where $H_v \subset A_1[v]$ has rank $p^{[F_v:\Q_p]}$ for each prime $v$ of $\CO_F$ dividing $p$.
\end{itemize}
The results of \cite{P} (see in particular Theorem~2.2.2) show that this functor is representable
by a flat local complete intersection over $\Z_{(p)}$
of constant relative dimension $d = [F:\mathbb{Q}]$.  Moreover, letting 
$\widetilde{Y}_{U_0(p)}(G)$ denote the representing
scheme, the forgetful morphism $\widetilde{Y}_{U_0(p)}(G) \to \widetilde{Y}_U(G)$
sending $(\underline{A}_1,\underline{A}_2,f)$ to $\underline{A}_1$
is projective.  

By the proof of Lemma~2.1.2 of \cite{GK}, the scheme $\widetilde{Y}_{U_0(p)}(G)$ 
also represents the functor sending a $\Z_{(p)}$-scheme $S$
the set of isomorphism classes of pairs $(\underline{A},H)$, where
\begin{itemize}
\item $\underline{A} = (A,\iota,\lambda,\eta)$ represents an $S$-point of $\widetilde{Y}_U(G)$, and
\item $H$ is a finite flat $(\CO_F/p)$-submodule scheme of $A[p]$ over $S$ which is
{\em totally isotropic} in the sense that the $\lambda$-Weil pairing $A[p] \stackrel{\sim}{\to}
A^\vee[p] = (A[p])^\vee$ induces an isomorphism
$$H \stackrel{\sim}{\longrightarrow} \ker((A[p])^\vee \to H^\vee)$$
\end{itemize}
(where we use $\cdot^\vee$ to denote Cartier duals as well as dual abelian varieties).
The natural isomorphism between the functors is defined by sending 
$(\underline{A}_1,\underline{A}_2,f)$ to $(\underline{A}_1,\ker f)$, and
we use the two descriptions of $\widetilde{Y}_{U_0(p)}(G)$ interchangeably.

The group $\CO_{F,(p),+}^\times$ acts on $\widetilde{Y}_{U_0(p)}(G)$ via its action on the
quasi-polarization $\lambda$;
 as usual the action factors through a free action of $\CO_{F,(p),+}^\times/(U\cap F^\times)^2$ 
 (note that $U\cap F^\times = U_0(p) \cap F^\times$), and we denote the quotient scheme
by $Y_{U_0(p)}(G)$.  It follows that $Y_{U_0(p)}(G)$
is also a flat local complete intersection of relative dimension $d$
over $\Z_{(p)}$ and the morphism ${Y}_{U_0(p)}(G) \to {Y}_U(G)$ is projective,
so ${Y}_{U_0(p)}(G)$ is quasi-projective over $\Z_{(p)}$.  

For open compact subgroups $U$, $V \subset G(\A_\f)$ as above, and $g \in G(\A_\f^{(p)})$
such that $g^{-1}Ug \subset V$, the usual construction affords a finite \'etale
morphism $\tilde{\rho}_g:\widetilde{Y}_{U_0(p)}(G) \to \widetilde{Y}_{V_0(p)}(G)$ descending
to ${\rho}_g:{Y}_{U_0(p)}(G) \to{Y}_{V_0(p)}(G)$, and satisfying $\tilde{\rho}_h\circ \tilde{\rho}_g
 = \tilde{\rho}_{gh}$ and hence $\rho_h\circ\rho_g  = \rho_{gh}$ for $h \in G(\A_\f^{(p)})$ and 
 sufficiently small $W$ containing $h^{-1}Vh$.  Moreover the schemes $Y_{U_0(p)}(G)$ and
 morphisms $\rho_g$ are integral models for the corresponding morphisms and varieties
 of canonical models over $\Q$ of level $U_0(p)$.
 
 \subsubsection{Level $U_1(p)$ and Raynaud group schemes}\label{sss:U1p}
For $U_1(p)$, we consider the functor which associates to an $\Z_{(p)}$-scheme $S$
the set of isomorphism classes of triples $(\underline{A},H,P)$ where $\underline{A}$ and $H$ are as above and
\begin{itemize}
\item $P \in H(S)$ is an $(\CO_F/p)$-generator of $H$ in the sense of Drinfeld--Katz--Mazur~\cite[1.10]{KM}.
\end{itemize}
The functor is represented by a $\Z_{(p)}$-scheme which we denote $\widetilde{Y}_{U_1(p)}(G)$,
and the forgetful morphism $\widetilde{Y}_{U_1(p)}(G) \to \widetilde{Y}_{U_0(p)}(G)$ is finite flat
(\cite[Thm.~2.3.3]{P}).  

Following \cite{P}, the scheme $\widetilde{Y}_{U_1(p)}(G)$ can be described explicitly in terms of the universal
$H$ over $\widetilde{Y}_{U_0(p)}(G)$.  We first recall the classification of certain finite flat group schemes
considered by Raynaud in \cite{raynaud}.  Suppose that $L$ is a number field containing the $(q-1)$-roots of unity,
where $q-1$ is a divisible by the exponent of $(O_F/p)^\times$, let $\CO$ be the localization of $\CO_L$ at the 
prime over $p$ determined by our choice of embeddings of $\Qbar$, and let $S$ be a scheme over $\CO$.
We say that a finite flat $(\CO_F/p)$-module scheme $H$ over $S$
is {\em Raynaud} if condition  ($\smallblackstar\smallblackstar$)
of \cite{raynaud} is satisfied by the $(\CO_F/v)$-vector space scheme $H[v]$ for each $v|p$.
By \cite[Thm.~1.4.1]{raynaud}, to give a Raynaud $(\CO_F/p)$-module scheme $H$ is equivalent to giving
invertible $\CO_S$-modules $\CL_\theta$ for each $\theta \in \Theta$ and
morphisms $s_\theta:\CL_\theta^{p} \to \CL_{\phi\circ\theta}$,
$t_\theta: \CL_{\phi\circ\theta}\to \CL_\theta^{ p} $ such that
$s_\theta \circ t_\theta = w_v$ for all $v|p$ and $\theta \in \Theta_v$, where
$w_v$ is a certain fixed element of $p\CO^\times$.
The $(\CO_F/p)$-module scheme corresponding
to this data is given by
$$H = \SPEC ((\sym_{\CO_S}\CL)/\CI),$$
where $\CL = \bigoplus_{\theta\in \Theta} \CL_\theta$,
$\alpha \in \CO_F$ acts on $\CL_\theta$
as the Teichmuller lift of $\overline{\theta}(\alpha)$,
and $\CI$ is the sheaf of ideals generated by the $\CO_{S}$-submodules 
$(s_\theta - 1)\CL_\theta^{ p}$ for $\theta \in \Theta$.
The comultiplication $\CO_H \to \CO_H\otimes_{\CO_S}\CO_H$
defining the group law on $H$ is given via duality by the scheme
structure on the Cartier dual $H^\vee$, which is identified with
$$ \SPEC ((\sym_{\CO_S}\CM)/\CJ),$$
where $\CM = \Shom_{\CO_S}(\CL,\CO_S) = \bigoplus_{\theta\in \Theta}\CL_\theta^{-1}$
and $\CJ$ is generated by
$(t_\theta - 1)\CL_\theta^{-p}$ for $\theta \in \Theta$.
Furthermore if $p\CO_S = 0$, then the $s_\theta$ can be identified with the morphisms
induced by $\Frob:H \to H^{(p)}$, and the $t_\theta$ with those induced by $\Ver: H^{(p)} \to H$.

We write $\CR_H$ for the direct image of $\CO_H$
under the structure morphism $H \to S$, so that $\CR_H$ is the sheaf of
$\CO_S$-algebras $(\sym_{\CO_S}\CL)/\CI$, identified as $\CO_S$-modules with
$$\bigoplus \left(\bigotimes_{\theta \in \Theta} \CL_\theta^{ m_\theta}\right),$$
where the direct sum is over $(m_\theta)_{\theta\in \Theta}$ with $0 \le m_\theta \le p-1$
for all $\theta$, and similarly let 
$$\CR_{H^\vee} = (\sym_{\CO_S}\CM)/\CJ  = \bigoplus \left(\bigotimes_{\theta \in \Theta} \CL_\theta^{-m_\theta}\right)$$
denote the direct image of $\CO_{H^\vee}$.

For later reference, we record a general result which we will use to filter
Raynaud $(\CO_F/p)$-module schemes in characteristic $p$.  
\begin{lemma}  \label{lem:slicing}
Suppose that $p\CO_S = 0$ and
$I \subset \Theta$ has the property that $t_{\theta} = 0$ for all $\theta\in I$ 
and $s_{\theta}=0$ for all $\theta\not\in I$.
Let $\CA_I$ denote the sheaf of $\CR_H$-ideals generated by the $\CL_{\theta}$
for $\theta\not\in I$, and let $C_I$ denote the
closed subscheme of $H$ defined by $\CA_I$.  Then
the scheme $C_I$ is a finite flat $(\CO_F/p)$-submodule scheme of $H$
of rank $p^{|I|}$ over $S$, and
$\Lie(C_I^\vee/S) = \bigoplus_{\theta\in I} \CL_{\theta}$ as
an $\CO_F\otimes\CO_S$-module.
\end{lemma}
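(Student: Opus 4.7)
The plan is to verify the four claims---rank, flatness, $(\CO_F/p)$-subgroup scheme structure, and the Lie algebra identification---by working directly with Raynaud's presentation. First I compute $\CR_{C_I}=\CR_H/\CA_I$ as an $\CO_S$-module: using the direct sum decomposition $\CR_H = \bigoplus_{0\le m_\theta\le p-1} \bigotimes_\theta \CL_\theta^{m_\theta}$, the ideal $\CA_I$ equals the sum of those summands with $m_\theta>0$ for some $\theta\not\in I$, so the quotient is $\bigoplus \bigotimes_{\theta\in I}\CL_\theta^{m_\theta}$, which is locally free of rank $p^{|I|}$. For consistency with the algebra relations $\ell^p = s_\theta(\ell^p)$, the only delicate case is $\theta\not\in I$ with $\phi\circ\theta\in I$: there $\ell\in\CA_I$ forces $\ell^p=0$ in the quotient, and the hypothesis $s_\theta = 0$ ensures $s_\theta(\ell^p)$ vanishes as well. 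The $\CO_F$-stability of $\CA_I$ is automatic since each $\CL_\theta$ is the $\overline{\theta}$-eigenspace for the $\CO_F$-action.

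Next, the subgroup scheme structure amounts to showing $\CA_I$ is a Hopf ideal, and I would argue this via Cartier duality: a closed subgroup of $H$ corresponds to a quotient of $H^\vee$, which in turn corresponds to a sub-Hopf-algebra of $\CR_{H^\vee}$. The natural candidate for $\CR_{C_I^\vee}$ is the sub-$\CO_S$-algebra of $\CR_{H^\vee}$ generated by $\bigoplus_{\theta\in I}\CL_\theta^{-1}$; it is closed under the ring structure precisely because the relation $m^p = t_\theta(m^p)$ for $m\in\CL_\theta^{-1}$ with $\theta\in I$ reduces to $m^p=0$ under the hypothesis $t_\theta = 0$, so does not drag $\CL_{\phi\circ\theta}^{-1}$ into the sub-algebra when $\phi\circ\theta\not\in I$. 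One then checks that this sub-algebra coincides with $\Shom_{\CO_S}(\CR_{C_I},\CO_S)$ and is stable under the comultiplication of $\CR_{H^\vee}$, giving the Hopf ideal property of $\CA_I$. This last compatibility is the main obstacle: Raynaud's classification does not provide a single clean comultiplication formula for $\CR_{H^\vee}$ in the notation of the paper, so one either unpacks the comultiplication locally (after trivializing the $\CL_\theta$ and using $s_\theta\circ t_\theta = w_v = 0$ in characteristic $p$) or, more cleanly, identifies $C_I^\vee$ abstractly as a quotient of $H^\vee$ built from Frobenius and Verschiebung morphisms whose vanishing patterns are prescribed by the hypotheses on $s$ and $t$.

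Finally, $\Lie(C_I^\vee/S)$ is read off directly from $\CR_{C_I^\vee}$: its augmentation ideal modulo the square is $\bigoplus_{\theta\in I}\CL_\theta^{-1}$, since the only low-degree relations are of the form $m^p = 0$ for $m\in\CL_\theta^{-1}$, which lie in the square of the augmentation. Dualizing gives $\Lie(C_I^\vee) = \bigoplus_{\theta\in I}\CL_\theta$ with $\alpha\in\CO_F$ acting on the $\theta$-th summand via $\overline{\theta}(\alpha)$, as claimed.
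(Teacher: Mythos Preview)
Your approach is the same as the paper's in spirit: both compute $\CR_H/\CA_I$ as $\bigoplus\bigotimes_{\theta\in I}\CL_\theta^{m_\theta}$ using $s_\theta=0$ for $\theta\notin I$, and both reduce the subgroup-scheme claim to checking that $\CA_I^\perp\subset\CR_{H^\vee}$ is a subalgebra, using $t_\theta=0$ for $\theta\in I$. The Lie algebra computation is also the same.

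However, you have misidentified the obstacle. You flag ``stable under the comultiplication of $\CR_{H^\vee}$'' as the main difficulty, but this is automatic: since $\CA_I$ is an ideal of $\CR_H$ and an $\CO_S$-module direct summand, dualizing the ideal property $\mu_H(\CA_I\otimes\CR_H+\CR_H\otimes\CA_I)\subset\CA_I$ gives $\Delta_{H^\vee}(\CA_I^\perp)\subset(\CA_I^\perp\otimes\CR_{H^\vee})\cap(\CR_{H^\vee}\otimes\CA_I^\perp)=\CA_I^\perp\otimes\CA_I^\perp$. So the sub-coalgebra condition comes for free.

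The paper's framing avoids this detour entirely. Rather than passing through the sub-Hopf-algebra correspondence, it observes directly that closure of $C_I$ under the group law of $H$ amounts to the comultiplication $\CR_H\to\CR_H\otimes_{\CO_S}\CR_H$ descending to $\CR_H/\CA_I$, and since this comultiplication is the $\CO_S$-linear dual of multiplication on $\CR_{H^\vee}$, the descent is \emph{equivalent} to $\CA_I^\perp$ being closed under multiplication. So the subalgebra check is the entire content; the identity section is handled because $\CA_I$ lies in the augmentation ideal, and inversion because $H$ is $p$-torsion. There is no second step, and no need to unpack Raynaud's comultiplication formula.
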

\begpf   Since $s_{\theta} = 0$ for $\theta\not\in I$, it follows that
$\CA_I$ is the direct sum of the invertible $\CO_S$-submodules
$\bigotimes_{\theta\in \Theta} \CL_\theta^{m_\theta}$
of $\CR_H$ for which $m_\theta > 0$ for some $\theta\not\in I$.
In particular $\CR_H/\CA_I$ is locally free over $\CO_S$ of rank $p^{|I|}$,
and is stable under the action of $\CO_F$.

The assertion that $C_I$ is a subgroup scheme amounts to the claim that the morphism
$C_I \times_S C_I \to H$ defined by the group law on $H$ factors through $C_I$:
$$\xymatrix{ \CR_H  \ar[r]\ar@{->>}[d]  & \CR_H \otimes_{\CO_S} \CR_H \ar@{->>}[d] \\
\CR_H/\CA_I \ar@{-->}[r] & (\CR_H/\CA_I) \otimes_{\CO_S}  (\CR_H/\CA_I).}$$
Applying $\Shom_{\CO_S}(\cdot,\CO_S)$ to the diagram of morphisms of
flat $\CO_S$-modules renders the factorization equivalent to that of
$$\xymatrix{ \CA_I^\perp \otimes_{\CO_S} \CA_I^\perp
  \ar@{-->}[r]\ar@{^{(}->}[d]  & \CA_I^\perp \ar@{^{(}->}[d]  \\
 \CR_{H^\vee} \otimes_{\CO_S} \CR_{H^\vee} \ar[r] & \CR_{H^\vee},}$$
 where the bottom arrow is multiplication and $\CA_I^\perp$ is the kernel of the morphism
 $$\CR_{H^\vee}  = \Shom_{\CO_S}(\CR_H,\CO_S)  
    \to \Shom_{\CO_S}(\CA_I,\CO_S).$$
We must therefore prove that $\CA_I^\perp$ is a subalgebra of $\CR_{H^\vee}$, but
$$\CA_I^\perp = \bigoplus \left(\bigotimes_{\theta \in I}  
\CL_{\theta}^{-m_{\theta}}\right),$$
where the direct sum is over $(m_{\theta})_{\theta\in I}$
with $0 \le m_{\theta} \le p-1$ for all $\theta \in I$,
so this is immediate from the vanishing of the 
$t_{\theta}$ for $\theta \in I$.
Furthermore since $C_I^\vee = \SPEC(\CA_I^\perp)$ and the augmentation
ideal sheaf of $\CA_I^\perp$ is generated by the $\CL_{\theta}^{-1}$
for $\theta \in I$, the assertion about $\Lie(C_I^\vee/S)$ is also immediate
from this vanishing.
\epf

We now return to the discussion of integral models of Hilbert modular varieties of Iwahori level,
and let $S = \widetilde{Y}_{U_0(p)}(G)_\CO$ where $\CO$ is as above.
As in \cite[Lemma~4.2.2]{P}, the universal $(\CO_F/p)$-module scheme $H$ over $S$
is Raynaud.  Furthermore, the argument of \cite[5.1]{P} shows that $\widetilde{Y}_{U_1(p)}(G)_\CO$
can be identified with the closed
subscheme  of $H$ defined by the sheaf of ideals generated by the
$(s_v - 1)\left(\bigotimes _{\theta\in \Theta_v}
 \CL_\theta^{(p-1)}\right)$ for $v|p$,
 where $s_v = \bigotimes_{\theta\in \Theta_v} s_\theta$ is viewed as a morphism
 $\bigotimes_{\theta\in \Theta_v}\CL_\theta^{(p-1)} \to \CO_S$.

We assume as usual that $U$ is sufficiently small, and indeed $p$-neat, i.e. $\alpha - 1 \in p\CO_F$
for all $\alpha \in U \cap F^\times$ as in \S\ref{sss:avb.hmf}, or equivalently $U_1(p) \cap F^\times = U \cap F^\times$,
so the action of the group $\CO_{F,(p),+}^\times$ on $\widetilde{Y}_{U_1(p)}(G)$ via multiplication on $\lambda$ again
factors through a free action of  $\CO_{F,(p),+}^\times/(U \cap F^\times)^2$.
We denote the quotient scheme by $Y_{U_1(p)}(G)$;
thus $Y_{U_1(p)}(G)$ is finite flat over $Y_{U_0(p)}(G)$, so it is 
Cohen--Macaulay and quasi-projective of relative dimension $d$ over $\Z_{(p)}$.

Note furthermore that the hypothesis on $U$ ensures that the
canonical isomorphism $\theta_\mu^*\underline{A} \cong \underline{A}$ over $\widetilde{Y}_0(p)(G)_\CO$
is the identity on $H$ for all $\mu \in (U\cap F^\times)^2$, so $H$ descends to a finite flat
group scheme on $Y_{U_0(p)}(G)_\CO$, which we also denote by $H$.  It is also Raynaud
in the above sense; in fact the line bundles $\CL_\theta$ and sections $s_\theta$, $t_\theta$
all descend to yield the same descriptions of $H$, $H^\vee$ and $Y_{U_1(p)}(G)_\CO$
as spectra of finite flat $\CO_S$-algebras over $S = Y_{U_0(p)}(G)_\CO$.
We remark also that under the same hypothesis on $U$, the natural  action
of $(\CO_F/p\CO_F)^\times$ on $\widetilde{Y}_{U_1(p)}(G)$ defined
by $c\cdot(\underline{A},H,P) = (\underline{A},H,c\cdot P)$ (for which
$\widetilde{Y}_{U_0(p)}(G)$ is the quotient) descends to an action
of $(\CO_F/p\CO_F)^\times$ on $Y_{U_1(p)}(G)$ (with $Y_{U_0(p)}(G)$
as quotient).

As usual, for sufficiently small $U$, $V$ of level prime to $p$ and $g \in G(\A_\f^{(p)})$
such that $g^{-1}Ug \subset V$, we obtain a finite \'etale
morphism $\tilde{\rho}_g:\widetilde{Y}_{U_1(p)}(G) \to \widetilde{Y}_{V_1(p)}(G)$, descending
to ${\rho}_g:{Y}_{U_1(p)}(G) \to{Y}_{V_1(p)}(G)$ and satisfying
$\tilde{\rho}_h\circ \tilde{\rho}_g = \tilde{\rho}_{gh}$ and $\rho_h\circ\rho_g  = \rho_{gh}$
for suitable $h$ and $W$.  Furthermore the schemes $Y_{U_1(p)}(G)$ and
 morphisms $\rho_g$ are integral models for the corresponding morphisms and varieties
 of canonical models over $\Q$ of level $U_1(p)$.  

Finally, writing $(\underline{A}_U,H_U)$ (resp.~$(\underline{A}_V,H_V)$) for the universal data
over $\widetilde{Y}_{U_0(p)}(G)_\CO$ (resp.~$\widetilde{Y}_{V_0(p)}(G)_\CO$) and
$\CL_{U,\theta}$ (resp.~$\CL_{V,\theta}$) for the associated Raynaud bundles, note 
that the canonical quasi-isogeny $\pi_g$ from $A_U$ to $\tilde{\rho}_g^* A_V$
induces an isomorphism $H_U \to \tilde{\rho}_g^* H_V$ and hence, by the functoriality
of Raynaud's construction, isomorphisms $\pi_g^*:\tilde{\rho}_g^*\CL_{V,\theta} \to \CL_{U,\theta}$
satisfying the usual compatibility $\pi_{gh}^* = \pi_g^*\circ\tilde{\rho}_g^*(\pi_h^*)$.
Moreover if $V$ (and hence $U$) satisfies the additional assumption that $\alpha - 1 \in p\CO_F$ for all
$\alpha \in V \cap F^\times$, then the isomorphism $H_U \to \tilde{\rho}_g^* H_V$ descends to
one over $Y_{U_0(p)}(G)_\CO$, as do the isomorphisms $\pi_g^*$ of Raynaud bundles, now
satisfying $\pi_{gh}^* = \pi_g^*\circ{\rho}_g^*(\pi_h^*)$.

\subsection{The unitary setting} \label{sec:unitaryIw}
We now define integral models for unitary Shimura varieties  of Iwahori level
proceeding similarly to the Hilbert case.   We shall only need this for the analogue
of $U_0(p)$, and only in the case of $\Sigma = \emptyset$.

\subsubsection{The moduli problem}\label{sss:usv.U0p}
Let $G' = G'_\emptyset$, suppose as usual that $U'$ is a sufficiently small
open compact subgroup of $G'(\A_\f)$ of level prime to $p$, and define
$$ U'_0(p) = \{g\in U'\,|\, g_p \equiv \begin{psmallmatriks} * & * \\ 0 & *
 \end{psmallmatriks}\bmod p\CO_{E,p}\}.$$
Here $G'(\Q_p)$ is viewed as the subgroup $\GL_2(F_p)E_p^\times$ of $\GL_2(E_p)$,
 so if $U' = (U')^pU'_p$, then $U'_0(p) = (U')^pU'_{0,p}$ where $U'_{0,p}$ is the image of 
 $U_{0,p} \times \CO_{E,p}^\times$ in $\GL_2(E_p)$. 

Now consider the functor which associates to a $\Z_{(p)}$-scheme $S$
the set of isomorphism classes of triples $(\underline{A}_1,\underline{A}_2,f)$, where
\begin{itemize}
\item $\underline{A}_j = (A_j,\iota_j,\lambda_j,\eta_j,\epsilon_j)$ for $j=1,2$ represent $S$-points of 
$\widetilde{Y}_{U'}(G')$, and
\item $f:A_1 \to A_2$ is an isogeny of degree $p^{4d}$ such that 
$f\circ \iota_1(\alpha) = \iota_2(\alpha)\circ f$ for all $\alpha \in \CO_D$, $p\lambda_1 = f^\vee \circ \lambda_2 \circ f$, $\eta_2 = f \circ \eta_1$
(as $(U')^p$-orbits for each $\overline{s}_i$), $\epsilon_2 = p\epsilon_1$ and $H = \ker f$ decomposes as $\prod_{w|p} H_w$ 
where $H_w \subset A_1[w]$ has rank $p^{2[E_w:\Q_p]}$ for each prime $w$ of $\CO_E$ dividing $p$.
\end{itemize}

Again standard arguments show that the functor is representable by a scheme,
which we denote by $\widetilde{Y}_{U'_0(p)}(G')$, and
the forgetful morphism to $\widetilde{Y}_{U'}(G')$ sending $(\underline{A}_1,\underline{A}_2,f)$
to $\underline{A}_1$ is projective.

\subsubsection{Dieudonn\'e theory over $\Fpbar$}\label{sss:D1}
We shall need to relate $\widetilde{Y}_{U_0(p)}(G)$ and $\widetilde{Y}_{U'_0(p)}(G')$, but first we recall
some general facts from Dieudonn\'e theory.\footnote{We follow the conventions of \cite{oda}, which
differ from those of \cite{fontaine} by a Frobenius twist.  More precisely, our $\D(G)$ is the
$D(G)$ of \cite[(4.2.8.1)]{BBM}, which is canonically isomorphic to $\mathbf{M}(G)
 = \underline{M}(G)\otimes_{W,\phi} W$ where $\mathbf{M}$ (resp.~$\underline{M}$) 
 is the functor defined in \cite{oda} (resp.~\cite{fontaine}); see \cite[Thm.~4.2.14]{BBM}.
Here however we use the semilinear versions of $\Phi$ and $V$.}
If $A$ is an abelian variety over $\Fpbar$ of dimension $g$,
then the (contravariant) Dieudonn\'e module $D = \D(A[p^\infty])$ is a free $W$-module
of rank $2g$ equipped with $\phi$-semilinear endomorphism $\Phi$, induced by the
absolute Frobenius on $A$, such that $pD \subset \Phi(D)$. 
Furthermore there is a canonical (i.e., functorial in $A$) isomorphism
$D/pD = \D(A[p]) \cong H^1_\dr(A/\Fpbar)$ identifying $VD/pD$ with
$H^0(A,\Omega^1_{A/\Fpbar})$, where $V$ is the $\phi^{-1}$-semilinear endomorphism
of $D$ defined by $\Phi^{-1}p$.  Letting $D^\vee = \D(A^\vee[p^\infty])$,
we also have a canonical isomorphism
$\hom_W(D,W)  \cong D^\vee$ under which $\Phi$ and $V$ are semilinear dual, and
the induced perfect pairing $D \times D^\vee \to W$ is anti-symmetric under the
canonical isomorphism $A \cong (A^\vee)^\vee$. 

Recall also that the functor $\D$ defines a contravariant equivalence of categories between
finite (commutative) group schemes over $\Fpbar$ killed by $p$ and $\Fpbar$-vector spaces
equipped with a $\phi$-semilinear endomorphism $\Phi$ and $\phi^{-1}$-semilinear
endomorphism $V$ such that $\Phi V = V \Phi = 0$.   For any such group scheme $H$,
we have $\mathrm{rank}(H) = p^{\dim_{\Fpbar}(\D(H))}$.  Furthermore $\D$ is compatible
with formation of Cartier duals in the obvious sense, and there are canonical isomorphisms
$$\D(H)/\Phi\D(H) \cong \hom_{\Fpbar}(\Lie(H)^{(p)},\Fpbar)\quad\mbox{and}\quad
     \ker(\D(H) \stackrel{V}{\longrightarrow} \D(H))\cong \Lie(H^\vee)^{(p)}.$$
Applying $\D$ to the canonical
isomorphism $A^\vee[p]  \cong (A[p])^\vee$ gives the reduction mod $p$ of the
isomorphism $D^\vee \cong \hom_W(D,W)$ recalled above.  In particular, if
$\lambda: A \to A^\vee$ is a polarization of $A$, then the $\lambda$-Weil pairing
$A[p] \to (A[p])^\vee$ corresponds under $\D$ to the morphism
$\hom_{\Fpbar}(D/pD,\Fpbar) \to D/pD$ obtained as the reduction mod $p$
of the composite
$$ \hom_W(D,W) \stackrel{\sim}{\longrightarrow} D^\vee \longrightarrow  D.$$
Thus if $\lambda \in \hom(A,A^\vee) \otimes \Z_{(p)}$ is a prime-to-$p$
quasi-polarization, then the $\lambda$-Weil pairing on $A[p^\infty]$ induces an
isomorphism $\hom_W(D,W)\cong D$ corresponding to an alternating perfect
pairing on $D$, and whose reduction mod $p$ defines the isomorphism
$\hom_{\Fpbar}(D/pD,\Fpbar) \cong D/pD$.

\subsubsection{Relation between Hilbert and unitary settings}\label{sss:Iwcomp}
Recall from \S\ref{sec:comp} that for open compact
$U \subset G(\A_\f)$ and $U' \subset G'(\A_\f)$ containing the image of $U$,
we defined a morphism  $\tilde{i}:\widetilde{Y}_U(G) \to \widetilde{Y}_{U'}(G')$
sending $A$ to $A' = A\otimes_{\CO_F}\CO_E^2$.  Furthermore under the
hypotheses of Lemma~\ref{lem:cartesian}, the morphism $\tilde{i}$ induces
an isomorphism
$$\widetilde{Y}_U(G) \longrightarrow \widetilde{Y}_{U'}(G') \times_{C'} C$$
identifying $\widetilde{Y}_U(G)$ with a union of connected components of $\widetilde{Y}_{U'}(G')$,
where $C$ (resp.~$C'$) is the finite set $C_{\det (U)}$ (resp.~$C_{\nu'(U')}$)
viewed as a scheme over $\Z_{(p)}$.
The construction of $\tilde{i}$ clearly extends to define a morphism
$\widetilde{Y}_{U_0(p)}(G) \to \widetilde{Y}_{U'_0(p)}(G')$, which in turn induces a morphism
$$\widetilde{Y}_{U_0(p)}(G) \longrightarrow \widetilde{Y}_{U'_0(p)}(G') \times_{C'} C$$
where the maps $\widetilde{Y}_{U_0(p)}(G) \to C$ and $\widetilde{Y}_{U'_0(p)}(G) \to C'$ 
are defined by composing with the forgetful morphisms.

\begin{lemma} \label{lem:cart2}  Under the hypotheses of Lemma~\ref{lem:cartesian}, the morphism
$$\widetilde{Y}_{U_0(p)}(G) \longrightarrow \widetilde{Y}_{U'_0(p)}(G') \times_{C'} C$$
is an isomorphism.
\end{lemma}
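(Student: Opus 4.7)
The plan is to construct an explicit inverse to the morphism, exploiting the alternative description of $\widetilde{Y}_{U_0(p)}(G)$ from \S\ref{sss:hmv.U0p} as the moduli space of pairs $(\underline{A}, H)$ (with $H \subset A[p]$ a totally isotropic $(\CO_F/p)$-submodule of rank $p^d$) and the analogous description of $\widetilde{Y}_{U_0'(p)}(G')$ in terms of pairs $(\underline{A}', H')$ with $H' \subset A'[p]$ a totally isotropic $\CO_D$-stable finite flat subgroup scheme of rank $p^{4d}$ decomposing over primes of $E$ above $p$.

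Given an $S$-point $(\underline{A}_1', \underline{A}_2', f')$ of the fibre product, the extension to moduli of Lemma~\ref{lem:cartesian} (established in \S\ref{sec:comp}) identifies $\underline{A}_i'$ canonically with $\tilde i(\underline{A}_i)$ for unique $\underline{A}_i \in \widetilde{Y}_U(G)(S)$, so that $A_i' = A_i \otimes_{\CO_F} \CO_E^2$ with its induced $\CO_D$-action, polarization, and level structure. It therefore suffices to extract from $H' := \ker f'$, functorially in $S$, a canonical totally isotropic $(\CO_F/p)$-submodule $H \subset A_1[p]$ of rank $p^d$ such that $H'$ is recovered from $H$ via the identification $A_1' = A_1 \otimes_{\CO_F} \CO_E^2$; the triple $(\underline{A}_1, \underline{A}_2, f)$, with $f: A_1 \to A_2$ the quotient by $H$, will then be the desired preimage.

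To extract $H$, first apply the idempotent $e_0 = \begin{psmallmatriks} 1 & 0 \\ 0 & 0 \end{psmallmatriks} \in M_2(\CO_E) = \CO_D$: Morita equivalence reconstructs the $\CO_D$-stable $H'$ from $\widetilde H := e_0 H' \subset e_0 A_1' = A_1 \otimes_{\CO_F} \CO_E$, an $\CO_E$-stable finite flat subgroup of rank $p^{2d}$. Next invoke the standing assumption that every prime $v | p$ of $F$ splits in $E$: the induced decomposition $\CO_{E,p} = \CO_{F,p} \times \CO_{F,p}$ with orthogonal idempotents $e_+, e_-$ yields $\widetilde H = H_+ \oplus H_-$, where each $H_\pm \subset A_1[p]$ (under the canonical identification $e_\pm (A_1 \otimes_{\CO_F} \CO_{E,p}) \cong A_1[p^\infty]$) is a finite flat $(\CO_F/p)$-submodule of rank $p^d$, automatically decomposing over primes of $F$ above $p$. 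Finally use the polarization: since the Rosati involution attached to $\lambda_1'$ acts on $\CO_D$ by $u \mapsto \delta^{-1}\overline{u}\delta$ and complex conjugation swaps the two primes above each split $v$, the $\lambda_1'$-Weil pairing pairs the $e_+$- and $e_-$-components of $A_1'[p]$ perfectly, forcing $H_-$ to be the $\lambda_1$-Weil orthogonal of $H_+$ inside $A_1[p]$. In particular $H_+$ is itself totally isotropic and canonically determines both $\widetilde H$ and $H'$, so we may set $H := H_+$.

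The main obstacle is the careful bookkeeping of the polarization data: one must verify that the anti-involution $u \mapsto u^*$ on $\CO_D$, combined with the choice of $\delta$ from \S\ref{sss:usv.mp} and the CM structure of $E/F$, links $H_+$ and $H_-$ in precisely the way needed for the extracted $H \subset A_1[p]$ to be well-defined and totally isotropic with respect to the $\lambda_1$-Weil pairing. Once this is done, the remaining verifications---functoriality in $S$, compatibility with the level structures $\eta_i$ and $\epsilon_i$, the decomposition of $H$ over primes of $F$ above $p$, and mutual inversion with the morphism of the statement---follow from a direct inspection of definitions.
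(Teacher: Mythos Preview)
There is a genuine gap in your argument. You extract $H := H_+$ from $\widetilde{H} = H_+ \oplus H_-$, but for your proposed inverse actually to invert the given morphism you need $H \otimes_{\CO_F} \CO_E$ to recover $\widetilde{H}$; under the split decomposition $\CO_E/p \cong \CO_F/p \times \CO_F/p$ this amounts precisely to the equality $H_+ = H_-$ as subgroup schemes of $A_1[p]$. You never establish this. The claim ``$H_-$ is the $\lambda_1$-Weil orthogonal of $H_+$, so in particular $H_+$ is totally isotropic'' is not a valid deduction: even granting $H_- = H_+^\perp$ (which itself already requires the isotropy of $\widetilde{H}$ under the induced pairing, not yet available at this point in the paper), one cannot conclude $H_+ \subset H_+^\perp$ without first knowing $H_+ \subset H_-$, i.e.\ $H_+ = H_-$---which is what you are trying to prove.

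This equality is in fact the heart of the lemma, and the paper handles it quite differently. Rather than constructing a functorial inverse, the paper first observes that the morphism is a closed immersion (projective and a monomorphism), and then produces a section over each connected component. The section is built by verifying at a single geometric point $\overline{s}$ (over $\Fpbar$ or a field of characteristic zero) that $H'_{\overline{s}} = H \otimes_{\CO_F} \CO_E^2$ for some totally isotropic $H \subset A_{1,\overline{s}}[p]$. The key input is a Dieudonn\'e-module (resp.\ Tate-module) computation showing that $\D(I_w)$ is free of rank one over $(\CO_E/w) \otimes \Fpbar$ for every prime $w \mid p$ of $E$; this uses in an essential way the Lie-algebra condition imposed on \emph{both} $A_1'$ and $A_2'$ by the moduli problem for $G'$. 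From this rank-one freeness one deduces both that each $H_w$ is isotropic and that $H_w = H_{w^c}$. The passage from the pointwise statement to the whole connected component then proceeds via connectedness and a separate argument to descend $f'$ to an $\CO_F$-linear isogeny. Your purely group-scheme-theoretic bookkeeping with the polarization never sees the Lie-algebra condition on $A_2'$, and it is exactly that condition which rules out the possibility $H_+ \neq H_-$.
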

\begpf First note that since $\widetilde{Y}_{U_0(p)}(G) \to \widetilde{Y}_{U}(G)
\to \widetilde{Y}_{U'}(G')$ is projective, so is $\widetilde{Y}_{U_0(p)}(G) \to
\widetilde{Y}_{U'_0(p)}(G)$.  Furthermore since $\widetilde{Y}_U(G) \to
\widetilde{Y}_{U'}(G)$ is injective on $S$-points for all $S$, it follows from the
definitions of the corresponding functors that
so is $\widetilde{Y}_{U_0(p)}(G) \to \widetilde{Y}_{U'_0(p)}(G)$.  Being projective and
quasi-finite, it must be finite, so in fact a closed immersion, and hence so is the morphism
of the lemma.  To prove it is an isomorphism, it therefore suffices to construct a section
$S \to \widetilde{Y}_{U_0(p)}(G)$ for each connected component $S$ of
$\widetilde{Y}_{U'_0(p)}(G') \times_{C'} C$.

Let $S$ be such a connected component, and let $(\underline{A}_1',\underline{A}_2',f')$
be the universal triple over $S$.  By assumption, the $S$-point of $\widetilde{Y}_{U'}(G')$
defined by $\underline{A}_1'$ factors through $\tilde{i}$, meaning that $\underline{A}_1'$
is defined by $A_1 \otimes_{\CO_F}\CO_E^2$ (with its additional structure) 
for some $S$-point $\underline{A}_1$ of $\widetilde{Y}_U(G)$.  Since
$\widetilde{Y}_{U'_0(p)}(G')$ is locally of finite type over $\Z_{(p)}$, we may
choose a geometric point $\overline{s} : \Spec k \to S$ such that either $k = \Fpbar$
or $k$ has characteristic  zero.

Writing $B_j$ for
$e_0A'_j$ where $e_0=\begin{psmallmatrix} 1&0\\0&0\end{psmallmatrix} \in \CO_D =
M_2(\CO_E)$ and identifying $A_j' = B_j \otimes_{\CO_E} \CO_E^2$ for $j=1,2$,
the isogeny $f'$ induces an isogeny $g:B_1 \to B_2$ such that
$f' = g \otimes_{\CO_E} \CO_E^2$.  Similarly since $e_0^* = e_0$, we have
prime-to-$p$ quasi-polarizations $\mu_j$ on the $B_j$ such that 
$p\mu_1 = g^\vee \circ \mu_2 \circ g$.  Moreover the
quasi-polarization $\mu_1$ on $B_1 = A_1 \otimes_{\CO_F} \CO_E$
is induced by $\lambda_1$ and the $\CO_F$-linear pairing on $\CO_E$ defined by
$(\alpha,\beta) \mapsto \tr_{E/F}(\alpha\overline{\beta})$.

Let $H' = \ker (f'_{\overline{s}})$.  We claim that $H' = H \otimes_{\CO_F} \CO_E^2$ for some
totally isotropic subgroup $H \subset A_{1,\overline{s}}[p]$.  
First note that $H' = I \otimes_{\CO_E} \CO_E^2$ where $I = e_0H' = \ker g_{\overline{s}}$
is a finite $(\CO_E/p\CO_E)$-submodule scheme of 
$$B_{1,\overline{s}}[p] =    (A_{1,\overline{s}}\otimes_{\CO_F} \CO_E)[p]
     =   \prod_{w|p}  (A_{1,\overline{s}} \otimes_{\CO_F} \CO_E)[w] = \prod_{w|p}B_{1,\overline{s}}[w] ,$$
the products being over primes $w$ of $\CO_E$ dividing $p$.
Furthermore we may decompose  $I = \prod_{w|p} I_w$ where each
$I_w$ is an $(\CO_E/w)$-subspace scheme 
of $B_{1,\overline{s}}[w]$ of rank $p^{[E_w:\Q_p]}$.
Note that for each prime $w$ of $\CO_F$, the isomorphism
$\CO_F/v \cong \CO_E/w$ induces an isomorphism
$A_{1,\overline{s}}[v] \cong B_{1,\overline{s}}[w]$, where $v$ is the prime
of $\CO_F$ dividing $w$, and we define $H_w$ to be the subgroup of $A_{1,\overline{s}}[v]$
corresponding to $I_w$.  We will prove that $H_w$ is isotropic and
coincides with $H_{w^c}$ for each $w$, from which the claim follows with
$H = \prod_{v} H_{\widetilde{v}}$.

Suppose first that $k = \Fpbar$ and consider the Dieudonn\'e modules
$D = \D(A_{1,\overline{s}}[p^\infty])$ and
$D_j = \D(B_{j,\overline{s}}[p^\infty])$ for $j=1,2$. Thus $D$
is an $\CO_F \otimes W$-module, hence decomposes as 
$\oplus_{\theta\in \Theta} D_{\theta}$, with $\Phi$ restricting to define
$\phi$-semilinear homomorphisms $D_{\theta} \to D_{\phi\circ\theta}$.
Similarly $D_1$ and $D_2$ are
$\CO_E \otimes W$-module, hence decompose as 
$\oplus_{\tau\in \Theta_E} D_{j,\tau}$, with $\Phi$ restricting
$\phi$-semilinear homomorphisms $D_{j,\tau} \to D_{j,\phi\circ\tau}$.
Furthermore each $D_\theta$ and $D_{j,\tau}$ is free of rank two
over $W$, and  we may identify $D_1$ with $D \otimes_{\CO_F} \CO_E$,
giving canonical isomorphisms $D_{1,\tau} \cong D_\theta$ for each 
$\tau \in \Theta_E$ restricting to $\theta \in \Theta$.
Note also that since $\Lie(B_j)_\tau$ is one-dimensional for each $\tau \in \Theta_E$,
the successive quotients in the filtrations
$$ p D_{j,\phi\circ\tau} \subset \Phi(D_{j,\tau})  \subset D_{j,\phi\circ\tau}$$
are all one-dimensional.

Now consider the inclusion $g^*:D_2 \to D_1$ induced by the isogeny $g$;
it is $\CO_E \otimes W$-linear, compatible with $\Phi$, and has cokernel isomorphic
to $\D(I)$.  Note in particular that we have the inclusions
$$ p D_{1,\tau} \subset g^*(D_{2,\tau})  \subset D_{1,\tau}$$
for each $\tau \in \Theta_E$, and since $I_w$ has rank $p^{[E_w:\Q_p]}$,
it follows that 
$$ \sum_{\tau\in \Theta_{E,w}} \dim_{\Fpbar}  D_{1,\tau}/g^*(D_{2,\tau}) =
\dim_{\Fpbar}  \D(I_w) = [E_w:\Q_p].$$
Since $g^*(\Phi(D_{2,\tau})) = \Phi(g^*(D_{2,\tau}))$ for each $\tau$, we have
$$\begin{array}{l}
\dim_{\Fpbar}(D_{1,\phi\circ\tau}/\Phi(D_{1,\tau})) + \dim_{\Fpbar}(D_{1,\tau}/g^*(D_{2,\tau}))\\
\ \quad =\dim_{\Fpbar}(D_{1,\phi\circ\tau}/\Phi(D_{1,\tau})) + \dim_{\Fpbar}(\Phi D_{1,\tau}/\Phi(g^*(D_{2,\tau})))\\
\ \quad = \dim_{\Fpbar}(D_{1,\phi\circ\tau}/g^*(D_{2,\phi\circ\tau})) + \dim_{\Fpbar}(g^*D_{2,\phi\circ\tau}/g^*(\Phi(D_{2,\tau})))\\
\ \quad =  \dim_{\Fpbar}(D_{1,\phi\circ\tau}/g^*(D_{2,\phi\circ\tau})) + \dim_{\Fpbar}(D_{2,\phi\circ\tau}/\Phi(D_{2,\tau})).\end{array}$$
Since $\dim_{\Fpbar} (D_{j,\phi\circ\tau}/\Phi(D_{j,\tau})) = 1$ for $j=1,2$, it follows that
 $\dim_{\Fpbar}  D_{1,\tau}/g^*(D_{2,\tau}) = 1$ for all $\tau$.
 We have now shown that the $(\CO_E/w)$-vector space scheme $I_w$
 has the property that $\D(I_w)$ is free of rank one over $(\CO_E/w) \otimes \Fpbar$,
 and hence $\D(H_w)$ is free of rank one over $(\CO_F/v) \otimes \Fpbar$.
 Since the pairing on $D/pD = \D(A_1[p])$ induced by the $\lambda_1$-Weil pairing
 is alternating and $(\CO_F/p\CO_F)$-bilinear, it follows that the composite
 $$\D(H_w^\vee) = \hom_{\Fpbar}(\D(H_w),\Fpbar)
   \hookrightarrow  \hom_{\Fpbar}(\D(A_{1,\overline{s}}[w]),\Fpbar)
   \stackrel{\sim}{\longrightarrow} \D(A_{1,\overline{s}}[w])  \twoheadrightarrow \D(H_w)$$
 is trivial, and hence that $H_w$ is isotropic.
 
 We must now show that $H_w = H_{w^c}$, or equivalently that $I_{w^c} = (1\otimes c)I_w$.
 We will do this by proving that each of $I_{w^c}$ and $(1\otimes c)I_w$ is the kernel of
 $$B_{1,\overline{s}}[w^c] \to (B_{1,\overline{s}}[w])^\vee \to I_w^\vee,$$
 where the first morphism induced by the $\mu_1$-Weil pairing on $B_1$.
 Since the Rosati involution associated to $\mu_1$ restricts to $c$ on
 the image of $\CO_E$ in $\End(B_{1,\overline{s}})$, the map
$B_{1,\overline{s}}[w^c] \to (B_{1,\overline{s}}[w])^\vee$ is in fact an isomorphism,
so comparing ranks, we see that it suffices to prove that each of $I_{w^c}$ and
$(1\otimes c)I_w$ is orthogonal to $I_w$.  For $(1\otimes c)I_w$, this is immediate
from the isotropy of $H_w$ and the commutativity of the diagram
$$\begin{array}{ccc}
A_1[v]  &\stackrel{\sim}{\longrightarrow} & A_1^\vee[v] \\
\wreath\downarrow && \wreath\downarrow \\
B_1[w^c]   &\stackrel{\sim}{\longrightarrow} &  B_1^\vee[w]\end{array}$$
implied by the relation between and $\lambda_1$ and $\mu_1$.
For $I_{w^c}$, this follows from the isotropy of $I$ under the $\mu_1$-Weil pairing,
which is equivalent to the vanishing of the map $\D(I^\vee) \to \D(I)$ induced by $\mu_1$.
To see this vanishing, note that the image of $\D(I^\vee)$ in 
$$\D((B_{1,\overline{s}}[p])^\vee) = \hom_{\Fpbar}(D_1/pD_1,\Fpbar) = 
\hom_W(D_1,W)\otimes_W \Fpbar$$
is the image of $L = \{\,\psi \in \hom_W(D_1,W)\,|\, \psi(g^*(D_2)) \subset pW\,\}$.
If $\psi \in L$, then $\psi\circ g^* = p\xi$ for some $\xi \in \hom_W(D_2,W)$,
and the relation $p\mu_1 = g^\vee \circ \mu_2 \circ g$ implies that 
$\mu_1^*(\psi) = g^*\mu_2^*(\xi)$ is in the image of $g^*$, hence has trivial image in $\D(I)$.
 
 This completes the proof of the claim in the case $k = \Fpbar$.
 We omit the proof in the case $k$ has characteristic zero, which is similar, but simpler,
 since one can argue using $p$-adic Tate modules instead of Dieudonn\'e modules.
 
 We have now shown that $\ker f_{\overline{s}}' = H \otimes_{\CO_F} \CO_E^2$ for some 
 totally isotropic $H \subset A_{1,\overline{s}}[p]$.
Letting $\pi: A_{1,\overline{s}} \to A_{2,\overline{s}} : = A_{1,\overline{s}}/H$ denote the 
natural projection, we may 
endow $A_{2,\overline{s}}$ with additional structure making 
 $(\underline{A}_{1,\overline{s}},\underline{A}_{2,\overline{s}},\pi)$ a $k$-point of
 $\widetilde{Y}_{U_0(p)}(G)$.  Furthermore we obtain an isomorphism 
$A_{2,\overline{s}} \otimes_{\CO_F} \CO_E^2 \cong A'_{2,\overline{s}}$
whose compatibility with $f'_{\overline{s}}$ and $\pi \otimes 1$ implies that
$\underline{A}'_{2,\overline{s}}$ corresponds to the image under $\tilde{i}$ of
the $k$-point of $\widetilde{Y}_{U}(G)$ defined by $\underline{A}_{2,\overline{s}}$.

Since $S$ is connected, it follows that the $S$-point of $\widetilde{Y}_{U'}(G')$
defined by $\underline{A}'_2$ factors through $\tilde{i}$, and hence that
$A_2'$ is isomorphic to $A_2 \otimes_{\CO_F} \CO_E^2$ with its additional
structure for some $\underline{A}_2$ corresponding to an $S$-point of
$\widetilde{Y}_U(G)$.  Moreover the fibre of $\underline{A}_2$ at $\overline{s}$
is isomorphic to the abelian variety with additional structure already denoted
$\underline{A}_{2,\overline{s}}$, and $f'_{\overline{s}} = \pi \otimes 1$ for some
$\CO_F$-linear isogeny $\pi:A_{1,\overline{s}} \to A_{2,\overline{s}}$.

We will now show that $f'  = f \otimes 1$ for some $\CO_F$-linear isogeny 
$f:A_1 \to A_2$.  To that end, we first observe that the natural maps
$$\hom_{\CO_F}(A_1,A_2) \otimes_{\CO_F} \CO_E
  \to \hom_{\CO_E}(A_1\otimes_{\CO_F}\CO_E,A_2 \otimes_{\CO_F} \CO_E)
  \to \hom_{\CO_D}(A_1',A_2')$$
are isomorphisms (the first follows for example from the fact that it has torsion-free
cokernel and becomes an isomorphism after tensoring with $\Q$, and the second
is clear).  Next note that the image of the map
$$\hom_{\CO_F}(A_1, A_2) \to \hom_{\CO_F}(A_1,A_2) \otimes_{\CO_F} \CO_E$$
defined by $f \mapsto f\otimes 1$ is precisely the set of elements fixed by the
automorphism $1 \otimes c$.  Moreover the same holds after replacing the $A_j$
by their fibres $A_{j,\overline{s}}$.  Since the natural map 
$$\hom_{\CO_F}(A_1, A_2) \to \hom_{\CO_F}(A_{1,\overline{s}}, A_{2,\overline{s}})$$
is injective and $f'_{\overline{s}} = \pi \otimes 1$ for some 
$\pi \in  \hom_{\CO_F}(A_{1,\overline{s}}, A_{2,\overline{s}})$, it follows that
$f' = f \otimes 1$ for some $f \in \hom_{\CO_F}(A_1,A_2)$.

Finally, the compatibility of $f'$ with the additional structure on $\underline{A}_1'$
and $\underline{A}_2'$ implies that of $f$ with the additional structure on
$\underline{A}_1$ and $\underline{A}_2$.  Therefore $(\underline{A}_1,\underline{A}_2,f)$
defines an $S$-point of $\widetilde{Y}_{U_0(p)}(G)$ whose composite with $\tilde{i}$
is the inclusion of $S$ in $\widetilde{Y}_{U'_0(p)}(G')$.
\epf

\subsubsection{Descent, Hecke action, and Raynaud bundles}\label{sss:Iw.dha}
We now return to the setting of arbitrary sufficiently small $U'$ of level prime to $p$ (not necessarily
satisfying the hypotheses of Lemma~\ref{lem:cartesian}).  As in \S\ref{sss:hmv.U0p}, we have a natural
action of $\CO_{F,(p),+}^\times$ on $\widetilde{Y}_{U'_0(p)}(G')$, now factoring through a free action
of $\CO_{F,(p),+}^\times/\Nm_{E/F}(U'\cap E^\times)$, and we denote the quotient scheme
by $Y_{U'_0(p)}(G')$.  Thus ${Y}_{U'_0(p)}(G') \to {Y}_{U'}(G')$ is projective, and
${Y}_{U'_0(p)}(G')$ is quasi-projective over $\Z_{(p)}$.  As usual, for open compact
subgroups $U'$, $V' \subset G'(\A_\f)$ as above, and $g \in G'(\A_\f^{(p)})$
such that $g^{-1}U'g \subset V'$, we obtain a finite \'etale
morphism $\tilde{\rho}_g:\widetilde{Y}_{U'_0(p)}(G') \to \widetilde{Y}_{V'_0(p)}(G')$ descending
to ${\rho}_g:{Y}_{U'_0(p)}(G') \to{Y}_{V'_0(p)}(G')$.  These satisfy $\tilde{\rho}_h\circ \tilde{\rho}_g
 = \tilde{\rho}_{gh}$ and hence $\rho_h\circ\rho_g  = \rho_{gh}$ for $h \in G'(\A_\f^{(p)})$ and 
 sufficiently small $W'$ containing $h^{-1}V'h$, and the schemes $Y_{U'_0(p)}(G')$ and
 morphisms $\rho_g$ are integral models for the corresponding morphisms and varieties
 of canonical models over $\Q$ of level $U'_0(p)$.
  
 As in \S\ref{sss:U1p}, let $\CO$ be the localization of $\CO_L$ at the distinguished
 prime over $p$ for a sufficiently large $L$.  
  \begin{proposition}  \label{prop:raynaud} Let $S' = \widetilde{Y}_{U'_0(p)}(G')_\CO$,
 let $H'$ denote the kernel of the universal isogeny  $A_1' \to A_2'$ on $S'$,
 and let $I = e_0H'$.  Then
 \begin{enumerate}
 \item the $\CO_E/p\CO_E$-module scheme $I$ is Raynaud;
 \item $H'$ is totally isotropic with the respect to the $\lambda_1'$-Weil pairing on $A_1'$;
 \item for any $\overline{s}:\Spec(\Fpbar) \to S'$, the Dieudonn\'e module $\D(I_{\overline{s}})$
 (resp.~$D(H'_{\overline{s}})$) is free of rank one (resp.~two) over $\CO_E\otimes \Fpbar$.
\end{enumerate}
\end{proposition}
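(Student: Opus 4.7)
The plan is to prove the three assertions in the following order: part~(3), part~(2), and then part~(1).

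For part~(3), I would leverage the Dieudonn\'e module computation already carried out in the proof of Lemma~\ref{lem:cart2}. Namely, at any geometric point $\overline{s}:\Spec \Fpbar \to S'$, set $B_j = e_0 A_{j,\overline{s}}'$ and $g = e_0 f'_{\overline{s}}$, so that $I_{\overline{s}} = \ker g$. Each $D_j = \D(B_j[p^\infty])$ is free of rank two over $\CO_E \otimes W$ (with $\dim \Lie(B_j)_\tau = 1$ for all $\tau \in \Theta_E$), and the inclusion $g^*: D_2 \hookrightarrow D_1$ has cokernel $\D(I_{\overline{s}})$. The local dimension count in Lemma~\ref{lem:cart2}, which uses $p\mu_1 = g^\vee \mu_2 g$ (the restriction to $B_j$ of $p\lambda_1' = (f')^\vee \lambda_2' f'$), shows $\dim_{\Fpbar} D_{1,\tau}/g^* D_{2,\tau} = 1$ for every $\tau$, hence $\D(I_{\overline{s}})$ is free of rank one over $\CO_E \otimes \Fpbar$. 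Since $1 = e_0 + e_0^c$ gives $H'_{\overline{s}} = I_{\overline{s}} \oplus I_{\overline{s}}$, we also obtain that $\D(H'_{\overline{s}})$ is free of rank two.

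For part~(2), total isotropy is a direct consequence of the relation $p\lambda_1' = (f')^\vee \lambda_2' f'$ imposed by the moduli problem. I would argue as follows: given a section $u$ of $H' \subset A_1'[p]$, we may fppf-locally lift it to a section $y$ of $A_1'[p^2]$ with $py = u$. Then $pf'(y) = f'(u) = 0$, so $f'(y)$ lies in $A_2'[p]$, and applying $\lambda_1'$ gives
\[
\lambda_1'(u) = \lambda_1'(py) = p\lambda_1'(y) = (f')^\vee(\lambda_2'(f'(y))) \in (f')^\vee(A_2'^\vee[p]).
\]
The dual exact sequence $0 \to A_2'^\vee \to A_1'^\vee \to H'^\vee \to 0$ identifies $(f')^\vee(A_2'^\vee[p])$ with $\ker(A_1'^\vee[p] \to H'^\vee)$, so $\lambda_1'(H') \subset \ker(A_1'^\vee[p] \to H'^\vee)$; since both sides have rank $p^{4d}$, equality holds, which is precisely the total isotropy condition.

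For part~(1), the approach is to reduce to Pappas's analogous result in the Hilbert setting via Lemma~\ref{lem:cart2}. Choose sufficiently small $U \subset G(\A_\f)$ and $V_E \subset \A_{E,\f}^\times$ of level prime to $p$, with $V_E$ sufficiently small relative to $U$ and $UV_E \subset U'$; the degeneracy map $\widetilde{Y}_{(UV_E)_0(p)}(G')_\CO \to \widetilde{Y}_{U'_0(p)}(G')_\CO$ is finite \'etale, so by faithfully flat descent it suffices to prove the pullback of $I$ is Raynaud. By Lemma~\ref{lem:cart2}, $\widetilde{Y}_{U_0(p)}(G)_\CO$ sits inside $\widetilde{Y}_{(UV_E)_0(p)}(G')_\CO$ as a union of components, and on these components the universal $H'$ equals $H \otimes_{\CO_F} \CO_E^2$, so $I = H \otimes_{\CO_F} \CO_E$, where $H$ is Pappas's universal Raynaud $(\CO_F/p)$-module scheme. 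Since every prime of $F$ above $p$ splits in $E$, the algebra $\CO_E/p$ decomposes compatibly, and one checks directly that the Raynaud data $(\CL_\theta, s_\theta, t_\theta)_{\theta\in\Theta}$ for $H$ pulls back to Raynaud data $(\CL_\tau, s_\tau, t_\tau)_{\tau\in\Theta_E}$ for $I$ via the bijection $\tau \mapsto \tau|_F$. The remaining components of $\widetilde{Y}_{(UV_E)_0(p)}(G')_\CO$ are swept out by the Hecke translates under $C_{V_E}$, which act by prime-to-$p$ quasi-isogenies and so transport the Raynaud structure.

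The main obstacle will be in carrying out part~(1) rigorously: specifically, verifying that the reduction via Hecke translates covers every component of $\widetilde{Y}_{U'_0(p)}(G')_\CO$, and confirming that tensoring a Raynaud $(\CO_F/p)$-module scheme with $\CO_E$ under the splitting hypothesis on primes above $p$ indeed produces a Raynaud $(\CO_E/p)$-module scheme. Both steps are routine in principle, but the second requires some care in unpacking Raynaud's classification when the base ring has additional structure.
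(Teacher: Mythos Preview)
Your approach is essentially correct and aligns with the paper's. For part~(1) you reduce to the Hilbert setting via Lemma~\ref{lem:cart2} and Hecke translates, exactly as the paper does; for parts~(2) and~(3) you give the direct arguments that the paper explicitly endorses as an alternative (its primary stated proof deduces all three parts by the same reduction-to-Hilbert method used for~(1)).

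Two minor corrections. In part~(3), the decomposition $H' \cong I \oplus I$ comes from $1 = e_0 + e_1$ with $e_1 = \begin{psmallmatriks}0&0\\0&1\end{psmallmatriks}$, not ``$e_0^c$''; conjugation by $\begin{psmallmatriks}0&1\\1&0\end{psmallmatriks}$ then identifies $e_1 H'$ with $e_0 H' = I$. In part~(2), the dual exact sequence you write is backwards: since $\ker((f')^\vee)\cong(\ker f')^\vee$, the correct sequence is
\[
0 \longrightarrow H'^\vee \longrightarrow A_2'^\vee \stackrel{(f')^\vee}{\longrightarrow} A_1'^\vee \longrightarrow 0.
\]
The identification $(f')^\vee(A_2'^\vee[p]) = \ker\bigl((A_1'[p])^\vee \to H'^\vee\bigr)$ that you need is nonetheless true, but comes instead from the compatibility of Weil pairings $e_p(u,(f')^\vee v) = e_p(f'(u),v)$ together with a rank count, where the map $(A_1'[p])^\vee \to H'^\vee$ is the Cartier dual of the inclusion $H' \hookrightarrow A_1'[p]$. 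With these fixes your argument goes through; compare also the isotropy argument via Dieudonn\'e modules in the proof of Lemma~\ref{lem:cart2}, which is the same computation done pointwise.
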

 \begpf (1) Suppose first that we are in the setting of 
 Lemma~\ref{lem:cartesian}, so that $U' = UV_E$ for sufficiently small $U$ and $V_E$,
 and let $H$ be the kernel of the universal isogeny on $\widetilde{Y}_{U_0(p)}(G)_{\CO}$.
Since $\tilde{i}^*I = H \otimes_{\CO_F} \CO_E$, 
it follows from Lemma~\ref{lem:cart2} that the restriction of $I$ to 
$S' \times_{C'} C$ is Raynaud.  Furthermore, for any connected
component $S$ of $S'$, we may choose $h \in( \A_{E,\f}^{(p)})^\times$
so that $\widetilde{\rho}_h$ sends $S$ to $S'\times_{C'} C$,
and since the canonical quasi-isogeny $\pi_h$ from $A_1' \to \widetilde{\rho}_h^* A_1'$ induces
an isomorphism $I \to \widetilde{\rho}_h^*I$, it follows that $I$ is Raynaud on $S$.
Therefore $I$ is Raynaud on $S'$.  
Returning to the setting of arbitrary $U'$, note that we may
choose sufficiently small $U$ and $V_E$ as above so that $V' = UV_E$ is contained
in $U'$.  Considering the natural projection $\widetilde{\rho}_1:
\widetilde{Y}_{V'_0(p)}(G')_\CO \to S'$, we see that
since $\widetilde{\rho}_1^*I$ is Raynaud, so is $I$.

Parts (2) and (3) may be similarly deduced from the corresponding properties
of $H$ and $I$ on $S'\times_{C'} C$ in the setting of Lemma~\ref{lem:cart2}
(or indeed shown more directly using arguments in its proof).
\epf

From the $\CO_E/p\CO_E$-module scheme $I$ and embeddings $\tau \in \Theta_E$,
we obtain as in \S\ref{sss:U1p} the Raynaud line bundles $\CL_\tau$ on $S'$,
together with morphisms $s_\tau:\CL_\tau^{p} \to \CL_{\phi\circ\tau}$,
$t_\tau: \CL_{\phi\circ\tau}\to \CL_\tau^{p} $ such that
$s_\tau \circ t_\tau \in p\CO^\times$.   For sufficiently small $U'$ and $V'$, and $g \in G'(\A_\f^{(p)})$
such that $g^{-1}U'g \subset V'$, the canonical quasi-isogeny 
$\pi_g$ from $A'_{1,U'}$ to $\tilde{\rho}_g^* A'_{1,V'}$
induces isomorphisms $I_{U'} \to \tilde{\rho}_g^* I_{V'}$ on 
$\widetilde{Y}_{U'_0(p)}(G')$, hence isomorphisms
$\pi_g^*:\tilde{\rho}_g^*\CL_{V',\theta} \to \CL_{U',\theta}$ satisfying the
usual compatibility $\pi_{gh}^* = \pi_g^*\circ\tilde{\rho}_g^*(\pi_h^*)$.
Furthermore under the 
additional assumption that $\alpha - 1 \in p\CO_E$ for all $\alpha \in U' \cap E^\times$,
the $\CO_E/p\CO_E$-module scheme $I$, line bundles $\CL_\tau$ and morphisms
$s_\tau$, $t_\tau$ all descend to objects, which we denote by the same
symbol, on the quotient $Y_{U'_0(p)}(G')_\CO$.  If $V'$ also satisfies this
condition, then $\pi_g^*$ descends to an isomorphism
$\rho_g^*\CL_{V',\theta} \to \CL_{U',\theta}$, and these satisfy
$\pi_{gh}^* = \pi_g^*\circ\rho_g^*(\pi_h^*)$.

Finally we note that for sufficiently small 
$U \subset G(\A_\f)$ and $U' \subset G'(\A_\f)$ such that $U'$ contains the image of $U$,
the identification $\tilde{i}^*I = H \otimes_{\CO_F} \CO_E$ allows us to identify
$\tilde{i}^*\CL_\tau$ with $\CL_\theta$, where $\theta = \tau|_F$, under which
$\tilde{i}^*s_\tau = s_\theta$ and $\tilde{i}^*t_\tau = t_\theta$.  Furthermore the
identifications are compatible with the descent data to $Y_{U'_0(p)}$ defined
by the action of $\CO_{F,(p)}^\times$, and with the morphisms $\pi_g^*$ for suitable
$g \in G(\A_\f^{(p)})$, $V\subset G(\A_\f)$ and $V' \subset G'(\A_\f)$.

\subsection{Stratifications}  \label{sec:strata}
We now define a stratification on the special fibre of $Y_{U_0(p)}(G)$, as in \cite{GK}.
As usual, we first consider $\widetilde{Y}_{U_0(p)}(G)$ and then descend to $Y_{U_0(p)}(G)$.

\subsubsection{Closed strata} \label{sss:strata.def}
Let $U$ be a sufficiently small open compact subgroup of $G(\A_\f) = \GL_2(\A_{F,\f})$
of level prime to $p$, and let $\F$ be a sufficiently large extension of $\F_p$ as in \S\ref{sss:avb.hmf}.
Let $S = \widetilde{Y}_{U_0(p)}(G)_{\F}$, and let $(\underline{A}_1,\underline{A}_2,f)$ be the
universal object over $S$.  We thus have morphisms
of line bundles $\Lie(f)_\theta : \Lie(A_1/S)_\theta \to \Lie(A_2/S)_\theta$ and
$\Lie(f^\vee)_\theta : \Lie(A_2^\vee/S)_\theta \to \Lie(A_1^\vee/S)_\theta$.

For $I,J \subset \Theta$,
we let $S_{\phi(I),J}$ denote the subscheme of $S$ defined by the vanishing of the
sections\footnote{The reason for $\phi(I)$ ($=\{\,\theta\,|\,\phi^{-1}\circ\theta\in I\,\}$) instead of $I$ in the notation is for consistency
with the notation in \cite{GK}.}
$$\{ \,\Lie(f)_\theta\,|\,\theta \in I\,\} \cup \{\, \Lie(f^\vee)_\theta\,|\,\theta \in J\,\}.$$
We write simply $S_J$ for $S_{\phi(I),J}$ if $I$ is the complement of $J$ in $\Theta$.

Note that if $I \subset I'$ and $J \subset J'$, then $S_{\phi(I'),J'}$ is a closed
subscheme of $S_{\phi(I),J}$, and that for any $I,J,I',J'$, we have
$$ S_{\phi(I\cup I'),J\cup J'} = S_{\phi(I),J}  \cap S_{\phi(I'),J'}.$$
Furthermore since $\Lie(\lambda_2)$ is an isomorphism and 
$\Lie(f^\vee\circ\lambda_2\circ f) = 0$  on $S$,  we have
\begin{equation} \label{eqn:bisectS}
 S_{\phi(I),J} = S_{\phi(I\cup\{\theta\}),J}  \cup S_{\phi(I),J\cup\{\theta\}}.\end{equation}
for $\theta \not\in I \cup J$.

\subsubsection{Local structure} \label{sss:strata.loc}
For a closed point $Q \in S = \widetilde{Y}_{U_0(p)}(G)_{\F}$, define 
$$I_Q=\{\theta \in \Theta: \Lie(f)_{\theta}(Q)=0\}\quad\mbox{and}\quad
J_Q=\{\theta \in \Theta: \Lie(f^\vee)_{\theta}(Q)=0\}.$$
Note that $I_Q \cup J_Q=\Theta$. %
The following description of the completed local ring $\widehat{\CO}_{S,Q}$ is
an immediate consequence of the proof of Theorem 2.4.1 of \cite{GK}.
(Note that although the moduli problems considered here and in \cite{GK} are slightly different, the same
arguments apply since prime-to-$p$ polarizations, quasi-polarizations and level structures deform uniquely.)
In the statement, we view $\langle \Lie(f)_\theta\rangle$ and $\langle \Lie(f^\vee)_\theta\rangle$ via 
trivializations of the relevant line bundles in a neighborhood of $Q$.

\begin{theorem}\label{thm: coordinates}  Let $Q$ be a closed point of $S = \widetilde{Y}_{U_0(p)}(G)_\F$. 
There is an isomorphism 
$$\widehat{\mathcal{O}}_{S,Q} \cong  
\widehat{\bigotimes_{\theta\in \Theta}}
k_Q[[x_\theta,y_\theta]]/\langle c_\theta d_\theta \rangle,$$
where the completed tensor product is over the residue field $k_Q$ at $Q$,
$$\begin{array}{rcccl}
\langle c_\theta \rangle & = & \langle \Lie(f)_\theta \rangle & = &
  \left\{\begin{array}{cl} 
      \langle x_\theta \rangle, & \mbox{if $\theta \in I_Q$,} \\
      \langle 1 \rangle, & \mbox{if $\theta \not\in I_Q$,} \end{array}\right. \\
\mbox{and}\quad \langle d_\theta \rangle & = & \langle \Lie(f^\vee)_\theta \rangle & = &
  \left\{\begin{array}{cl} 
      \langle y_\theta \rangle, & \mbox{if $\theta \in J_Q$,} \\
      \langle 1 \rangle, & \mbox{if $\theta \not\in J_Q$.} \end{array}\right.
 \end{array}$$
\end{theorem}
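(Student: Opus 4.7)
The plan is to deduce this from Pappas's local model theorem for $\widetilde{Y}_{U_0(p)}(G)$ (\cite[\S2]{P}), following the analysis in the proof of \cite[Theorem~2.4.1]{GK}. The moduli problem studied in \cite{GK} differs from ours only in data (polarization, prime-to-$p$ level structure) that deforms uniquely, so the same local argument applies; what remains is to match the variables appearing in the local model to the sections $\Lie(f)_\theta$ and $\Lie(f^\vee)_\theta$.

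First, by Grothendieck--Messing applied to the pair $(A_{1,Q},A_{2,Q})$ linked by the isogeny $f_Q$, deformations to a local Artinian $k_Q$-algebra $R$ correspond to $\CO_F$-stable lifts of the Hodge filtrations $\omega_j\subset\D(A_{j,Q})$ to direct summands of $\D(A_{j,Q})\otimes_{k_Q}R$ that are compatible under the pull-back map $f^\ast:\D(A_{2,Q})\to\D(A_{1,Q})$. Since $\CO_F\otimes k_Q\cong\prod_{\theta\in\Theta} k_Q$, the deformation problem factors as a product over $\theta$ of analogous problems for the $\theta$-components, each of which concerns a pair of lines in free rank-two $R$-modules $\D(A_{j,Q})_\theta\otimes R$ compatible with $f^\ast_\theta$.

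Second, for each $\theta$ I would fix $R$-bases of the two rank-two modules adapted to the special fibre data. The lifts of $\omega_{j,\theta}$ are each parameterized by a single coordinate, giving two coordinates $x_\theta,y_\theta$; tracing through the identifications and the Kodaira--Spencer style comparison shows that $\Lie(f)_\theta$ is represented by a unit multiple of one of these coordinates when $\theta\in I_Q$ and is a unit otherwise, and symmetrically $\Lie(f^\vee)_\theta$ is represented by $y_\theta$ or a unit according to whether $\theta\in J_Q$. This matches the description of $\langle c_\theta\rangle$ and $\langle d_\theta\rangle$ in the statement.

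Finally, the only further relation comes from the polarization identity $p\lambda_1=f^\vee\circ\lambda_2\circ f$: reducing mod $p$ and using that $\Lie(\lambda_2)_\theta$ is an isomorphism forces $\Lie(f^\vee)_\theta\circ\Lie(f)_\theta=0$, i.e.\ the single relation $c_\theta d_\theta=0$ per $\theta$. The main obstacle, which is the content of the local model theorem in \cite{P}, is to show that this really is the \emph{only} relation and that the $\theta$-components deform independently, so that the completed local ring is exactly the completed tensor product displayed. This is ensured by the flatness and complete intersection properties of $\widetilde{Y}_{U_0(p)}(G)$ over $\Z_{(p)}$ together with the dimension count, which force the candidate ring on the right-hand side, being itself a complete intersection of the correct dimension, to agree with $\widehat{\CO}_{S,Q}$.
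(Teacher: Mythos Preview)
Your proposal is correct and takes essentially the same approach as the paper: the paper does not give an independent proof but simply records that the statement is an immediate consequence of the proof of \cite[Theorem~2.4.1]{GK}, noting (as you do) that the moduli problems differ only in prime-to-$p$ data that deforms uniquely. Your sketch via Grothendieck--Messing, the $\theta$-by-$\theta$ factorization, and the identification of the relation $c_\theta d_\theta=0$ with $\Lie(f^\vee)_\theta\circ\Lie(\lambda_2)_\theta\circ\Lie(f)_\theta=0$ is precisely the content of that cited argument, so there is nothing to add.
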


\begin{corollary}  \label{cor:lci}
The scheme $S_{\phi(I),J}$ over $\F$ is a reduced local complete intersection
of constant dimension $d - |I \cap J|$, and is smooth if $I \cup J = \Theta$.
\end{corollary}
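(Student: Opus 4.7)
The plan is to extract everything directly from Theorem~\ref{thm: coordinates}. For any closed point $Q \in S$, I first observe that $Q \in S_{\phi(I),J}$ if and only if $I\subseteq I_Q$ and $J\subseteq J_Q$, because $\Lie(f)_\theta$ (resp.\ $\Lie(f^\vee)_\theta$) is a unit at $Q$ whenever $\theta\notin I_Q$ (resp.\ $J_Q$). For such a $Q$, the completed local ring $\widehat{\CO}_{S_{\phi(I),J},Q}$ is obtained from $\widehat{\CO}_{S,Q}$ by setting $x_\theta = 0$ for $\theta\in I$ and $y_\theta = 0$ for $\theta\in J$.

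The remainder is a direct computation, factor by factor across the completed tensor product of Theorem~\ref{thm: coordinates}. For each $\theta \in \Theta$, after imposing the relations above, the $\theta$-factor $k_Q[[x_\theta,y_\theta]]/\langle c_\theta d_\theta\rangle$ becomes either (i) the field $k_Q$, if $\theta\in I\cap J$; (ii) a power series ring in one variable over $k_Q$, if $\theta$ lies in exactly one of $I$ and $J$, or if $\theta\notin I\cup J$ while $\theta\in I_Q \triangle J_Q$; or (iii) the nodal ring $k_Q[[x_\theta,y_\theta]]/(x_\theta y_\theta)$, if $\theta\notin I\cup J$ and $\theta\in I_Q\cap J_Q$. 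Summing the Krull dimensions of the factors gives $|\Theta| - |I\cap J| = d - |I\cap J|$, which settles the dimension statement.

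Next I would rewrite $\widehat{\CO}_{S_{\phi(I),J},Q}$ in the form $k_Q[[w_1,\ldots,w_n]]/(u_1v_1,\ldots,u_rv_r)$, where $u_i,v_i$ range over the coordinates that survive from factors of type (iii), one disjoint pair per such factor. Since the relations $u_iv_i$ involve pairwise disjoint pairs of variables, they form a regular sequence in the regular ring $k_Q[[w_1,\ldots,w_n]]$, so $S_{\phi(I),J}$ is an lci of the stated dimension at $Q$; moreover the ideal $(u_1v_1,\ldots,u_rv_r)$ is the intersection of the $2^r$ prime ideals obtained by setting $u_i = 0$ or $v_i = 0$ at each node, giving reducedness. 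If in addition $I\cup J = \Theta$, then case (iii) cannot arise for any $\theta$, so every surviving factor is a power series ring over $k_Q$, making $\widehat{\CO}_{S_{\phi(I),J},Q}$ regular and $S_{\phi(I),J}$ smooth over $\F$. The only real obstacle is organizing the case analysis cleanly; all the geometric content needed is already packaged in Theorem~\ref{thm: coordinates}.
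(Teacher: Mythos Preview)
Your proof is correct and follows essentially the same route as the paper's own proof: both derive the completed local ring $\widehat{\CO}_{S_{\phi(I),J},Q}$ directly from Theorem~\ref{thm: coordinates} as a completed tensor product of factors indexed by $\Theta$, and read off the lci, dimension, reducedness, and smoothness properties factor by factor. The only difference is that the paper leaves the $\theta\notin I\cup J$ factor in the uniform form $k_Q[[x_\theta,y_\theta]]/\langle c_\theta d_\theta\rangle$, whereas you further split that case according to whether $\theta\in I_Q\cap J_Q$ or $\theta\in I_Q\triangle J_Q$; this is a cosmetic refinement, not a different argument.
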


\begin{proof}  If $Q$ is a closed point of $S_{\phi(I),J}$, then $I \subset I_Q$ and $J\subset J_Q$. Theorem~\ref{thm: coordinates} and
the definition of $S_{\phi(I),J}$ therefore imply that
$$\widehat{\mathcal{O}}_{S_{\phi(I),J},Q} \cong \widehat{\bigotimes_{\theta\in \Theta}} R_\theta, \quad \mbox{where}\quad
R_\theta =
 \left\{\begin{array}{cl} 
      k_Q[[x_\theta,y_\theta]]/\langle c_\theta d_\theta \rangle, & \mbox{if $\theta \not\in I \cup J$,} \\
      k_Q[[x_\theta]], & \mbox{if $\theta \in J - I$,}\\
      k_Q[[y_\theta]], & \mbox{if $\theta \in I - J$,}\\
      k_Q,& \mbox{if $\theta \in I \cap J$.} \end{array}\right.$$
It follows that the completed local ring of $S_{\phi(I),J}$ at every closed point is a reduced complete intersection of dimension $d-|I \cap J|$,
and is regular if $I \cup J = \Theta$.
\end{proof}

\subsubsection{Descent and Hecke action}\label{sss:strata.dha}
Note that the natural action of $\CO_{F,(p),+}^\times$ on the line bundles 
$$\Shom_{\CO_S}(\Lie(A_1/S)_\theta,\Lie(A_2/S)_\theta) \quad\mbox{and}\quad
\Shom_{\CO_S}(\Lie(A_2^\vee/S)_\theta,\Lie(A_1^\vee/S)_\theta)$$
factors through the quotient $\CO_{F,(p),+}^\times/(F^\times \cap U)^2$,
so the line bundles descend to $\overline{Y}_0(p)$, as do the sections
$\Lie(f)_\theta$ and $\Lie(f^\vee)_\theta$.  We may thus similarly define
closed subschemes $\overline{Y}_0(p)_{\phi(I),J}$ of 
$\overline{Y}_0(p) := Y_{U_0(p)}(G)_\F$ by the vanishing of
$\Lie(f)_\theta$ for $\theta \in I$, and $\Lie(f^\vee)_\theta$ for $\theta \in J$,
or equivalently as the quotient of $S_{\phi(I),J}$ by the action of
$\CO_{F,(p),+}^\times$.  We again simply write $\overline{Y}_0(p)_J$ for
$\overline{Y}_0(p)_{\phi(I),J}$ where $I = \Theta - J$.  Analogous relations carry
over with the $S_{\phi(I),J}$ replaced by $\overline{Y}_0(p)_{\phi(I),J}$, as does
Corollary~\ref{cor:lci}.  In particular it follows that each irreducible component
of $\overline{Y}_0(p)$, with its reduced induced structure, is smooth of dimension
$d$ and is contained in $\overline{Y}_0(p)_J$ for a unique $J \subset \Theta$,
and that these irreducible subschemes are precisely the connected components
of $\overline{Y}_0(p)_J$.

The stratification\footnote{With respect to the obvious partial ordering on the set of pairs $(I,J)$, the locally closed subschemes 
$$\ol{Y}_0(p)_{\phi(I),J} - \bigcup_{(I',J') > (I,J)} \ol{Y}_0(p)_{\phi(I'),J'}$$
define a stratification of $\ol{Y}_0(p)$ in the usual sense, but we will make no direct use of this fact.
Our interest is mainly in the subschemes $\ol{Y}_0(p)_J$, but we will use the
term {\em stratification} as shorthand for the collections of closed subschemes indexed by $(I,J)$.}
 is compatible with the Hecke action for varying $U$ in the sense
that if $U$ and $V$ are sufficiently small of level prime to $p$ and $g \in \GL_2(\A_{F,\f}^{(p)})$
is such that $g^{-1}Ug \subset V$, then the morphisms denoted $\widetilde{\rho}_g$ and
$\rho_g$ restrict to ones on the corresponding closed subschemes for each $I,J \subset \Theta$.

\subsubsection{The unitary setting}\label{sss:strata.usv}
We may similarly define closed subschemes of $S' := \widetilde{Y}_{U_0'(p)}(G')_{\F}$
for each $I,J \subset \Theta$, where
$G' = G'_\emptyset$ and $U' \subset G'(\A_\f)$ are as in \S\ref{sss:usv.U0p}. 
Recall that we have fixed a subset $\widetilde{\Theta} \subset \Theta_E$
mapping bijectively to $\Theta$ under $\tau \mapsto \tau|_F$ and denoted the corresponding
extension of each $\theta \in \Theta$ by $\widetilde{\theta}$.  We then define $S'_{\phi(I),J}$
by the vanishing of the sections
$$\{ \,\Lie(f')^0_{\widetilde{\theta}}\,|\,\theta \in I\,\} 
\cup \{\, \Lie({f'}^\vee)^0_{\widetilde{\theta}^c}\,|\,\theta \in J\,\},$$
where $f'$ is the universal isogeny on $S'$, and as usual
${}^0$ denotes the restriction to the direct summand obtained by applying
the idempotent $e_0 = e_0^* \in \CO_D$.
We then obtain the same relations as above with $S$ replaced by $S'$, and
again the sections descend to the quotient $\overline{Y}'_0(p) : = Y_{U_0'(p)}(G')_{\F}$ 
to define closed subschemes $\overline{Y}'_0(p)_{\phi(I),J}$ satisfying
analogous relations.  Furthermore the morphisms $\widetilde{\rho}_g$ and
$\rho_g$ for $g \in G'(\A_{\f}^{(p)})$ induce ones on the corresponding
subschemes $S'_{\phi(I),J}$ and $\overline{Y}'_0(p)_{\phi(I),J}$ for $U$ and
$V$ satisfying the usual conditions.  Again we write simply $S_J'$ and 
$\overline{Y}'_0(p)_J$ when $I$ is the complement of $J$.

For sufficiently small $U \subset G(\A_\f)$ and $U' \subset G'(\A_\f)$ such that $U'$
contains the image of $U$, the morphism $\tilde{i}: S \to S'$ defined by applying 
$\otimes_{\CO_F}\CO_E^2$ yields an identification 
$\tilde{i}^*\Lie(A_j'/S')^0_{\widetilde{\tau}} =  \Lie(A_j/S)_\theta$ for $j =1,2$, 
under which $\Lie(f')^0_{\widetilde{\theta}}$ pulls back to $\Lie(f)_\theta$.
We have an analogous identification for the Lie algebras of the dual abelian varieties
(taking into account the isomorphism $\CO^2_E \otimes \F \to
\hom_{\CO_F}(\CO^2_E,\CO_F) \otimes \F$ induced by $(\alpha,\beta)
\mapsto \Tr_{E/F}(\alpha\overline{\beta})$) 
under which $\Lie({f'}^\vee)^0_{\widetilde{\theta}^c}$ pulls back to $\Lie(f^\vee)_\theta$.
It follows that $\tilde{i}$ respects the stratifications in the sense that
$S_{\phi(I),J} = S \times_{S'} S'_{\phi(I),J}$, and hence that we may
similarly identify 
$$\overline{Y}_0(p)_{\phi(I),J} = \overline{Y}_0(p) \times_{ \overline{Y}'_0(p)} \overline{Y}'_0(p)_{\phi(I),J}$$
under the morphism $i:  \overline{Y}_0(p) \to \overline{Y}'_0(p)$.
Furthermore under the hypotheses of Lemma~\ref{lem:cartesian}, it follows from
Lemma~\ref{lem:cart2} that $\tilde{i}$ and $i$ induce identifications
$S_{\phi(I),J} = S'_{\phi(I),J} \times_{C'} C$ and
$\overline{Y}_0(p)_{\phi(I),J} = \overline{Y}'_0(p)_{\phi(I),J} \times_{C'} C$.

Finally, we remark that the vanishing locus of $\Lie(f')^0_{\widetilde{\theta}}$ is the same
as that of $\Lie(f')^0_{\widetilde{\theta}^c}$, and the same holds with $f'$ replaced by $(f')^\vee$.
To see this, note that by descent we can replace $U'$ by a smaller open compact subgroup
to reduce to the setting of Lemma~\ref{lem:cart2}, where it is immediate that the claim
holds on $S = S' \times_{C'} C$.  It follows that it holds on $S'$ since for any connected
component, we may choose $h \in (\A_{E,\f}^{(p)})^\times$ such that $\rho_h$ maps
the component to $S' \times_{C'} C$.  Similarly the conclusions of Corollary~\ref{cor:lci}
carry over with $S_{\phi(I),J}$ replaced by $S'_{\phi(I),J}$, and hence by
$\overline{Y}'_0(p)_{\phi(I),J}$.

\section{A Jacquet--Langlands relation}  \label{sec:JL}

\subsection{Splicing}  The key new ingredient in obtaining our geometric relation will be the
introduction of abelian varieties whose Dieudonn\'e modules are described by ``splicing'' those
of the source and target of the universal isogeny at each closed point of the Iwahori level
moduli space.

\subsubsection{Preliminaries} \label{sss:splice.pre}
Fix a subset $J \subset \Theta$ and a sufficiently small level $U' \subset G'(\A_\f)$ of level
prime to $p$.  Recall that the scheme $S'_J$ is defined in \S\ref{sss:strata.usv}
as the closed subscheme of $\widetilde{Y}_{U'_0(p)}(G')_{\F}$ defined by the
vanishing of the sections
$$\{ \,\Lie(f')^0_{\widetilde{\theta}}\,|\,\theta \not\in J\,\} 
\cup \{\, \Lie({f'}^\vee)^0_{\widetilde{\theta}^c}\,|\,\theta \in J\,\}$$
where $f':A'_1 \to A'_2$ is the universal isogeny.  

Since $G$ will play no (direct) role in this section, we will write simply $S$ for $S_J'$
and $f:A_1 \to A_2$ for the universal isogeny on $S$, and let $H' = \ker(f)$ and
$H = e_0H'$.  Recall from part (1) of Proposition~\ref{prop:raynaud} that
$H$ is a Raynaud $\CO_E/p\CO_E$-module scheme on $S$.  This means that
we have invertible $\CO_S$-modules $\CL_\tau$ for $\tau \in \Theta_E$
and morphisms $s_\tau:\CL_\tau^{p} \to \CL_{\phi\circ\tau}$,
$t_\tau: \CL_{\phi\circ\tau}\to \CL_\tau^{ p} $ such that
$s_\tau \circ t_\tau = 0$, in terms of which we may write 
$H = \SPEC \CR_H$ where $\CR_H$ is the sheaf of
$\CO_S$-algebras
$$(\sym_{\CO_S}\CL)/\CI  \cong \bigoplus \left(\bigotimes_{\tau \in \Theta_E} \CL_\tau^{ m_\tau}\right)$$
on $S$, $\CI$ is the sheaf of ideals generated by the $\CO_{S}$-submodules 
$(s_\tau - 1)\CL_\tau^{ p}$ for $\tau \in \Theta_E$, and the action of
$\alpha \in \CO_E$ on $H$ is defined by
multiplication by $\tau(\alpha)$ on $\CL_\tau$
Similarly the Cartier dual $H^\vee$ is identified with
$$ \SPEC ((\sym_{\CO_S}\CM)/\CJ),$$
where $\CM = \Shom_{\CO_S}(\CL,\CO_S) = \bigoplus_{\tau\in \Theta_E}\CL_\tau^{-1}$
and $\CJ$ is generated by
$(t_\tau - 1)\CL_\tau^{-p}$ for $\tau \in \Theta_E$.

\begin{lemma}  \label{lem:vanishing}
Suppose that $\tau \in \Theta_E$.  If $\tau|_F \not\in J$, then $s_{\phi^{-1}\circ\tau} = 0$,
and if $\tau|_F \in J$, then $t_{\phi^{-1}\circ\tau} = 0$.
\end{lemma}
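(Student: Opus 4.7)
The plan is to compare two descriptions of the finite flat group scheme $H = e_0\ker(f)$ on $S = S'_J$: first, as the kernel of the $e_0$-part of the universal isogeny, whose Lie algebra is computed from $\Lie(f)^0_\tau$, and second, in its Raynaud presentation, which expresses $\omega_H = \CE/\CE^2$ as a cokernel involving $s_{\phi^{-1}\tau}$ (and dually $\omega_{H^\vee}$ as one involving $t_{\phi^{-1}\tau}$).

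The first step is to strengthen the defining vanishing conditions of $S = S'_J$. By the remark at the end of \S\ref{sss:strata.usv}, the vanishing loci of $\Lie(f')^0_{\widetilde\theta}$ and $\Lie(f')^0_{\widetilde\theta^c}$ coincide (and likewise for $(f')^\vee$), so on $S$ we have $\Lie(f)^0_\tau = 0$ for every $\tau \in \Theta_E$ with $\tau|_F \notin J$, and $\Lie(f^\vee)^0_\tau = 0$ for every $\tau$ with $\tau|_F \in J$.  Since $\Lie(f)^0_\tau\colon \Lie(A_1)^0_\tau \to \Lie(A_2)^0_\tau$ is a morphism of line bundles with kernel $\Lie(H)_\tau$, the first family of vanishings says that $\Lie(H)_\tau$ equals the full line bundle $\Lie(A_1)^0_\tau$ for every $\tau$ with $\tau|_F \notin J$; the analogous reasoning applied to $f^\vee$ shows that $\Lie(H^\vee)_\tau$ is a line bundle on $S$ for every $\tau$ with $\tau|_F \in J$.

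Next I would match this against the Raynaud side.  The generating relations $(s_\tau - 1)\CL_\tau^p$ of the ideal cutting out $\CR_H$ identify $X_{\phi^{-1}\tau}^p \in \CL_{\phi^{-1}\tau}^p \subset \CE^p \subset \CE^2$ with $s_{\phi^{-1}\tau}(X_{\phi^{-1}\tau}^p) \in \CL_\tau \subset \CE$, yielding a canonical isomorphism
\[
\omega_{H,\tau} \;\cong\; \coker\!\bigl(s_{\phi^{-1}\tau}\colon \CL_{\phi^{-1}\tau}^p \to \CL_\tau\bigr).
\]
Since $\Lie(H)_\tau = \Shom_{\CO_S}(\omega_{H,\tau},\CO_S)$ and the dual of a torsion sheaf on a reduced scheme is zero, $\Lie(H)_\tau$ being a line bundle forces $\omega_{H,\tau}$ itself to be a line bundle, which by the display above means $s_{\phi^{-1}\tau} = 0$. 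A symmetric analysis applied to $H^\vee$---whose Raynaud data is $(\CL_\tau^{-1},t_\tau,s_\tau)$, so that the role of ``$s$'' there is played by $t_\tau$---yields $t_{\phi^{-1}\tau} = 0$ whenever $\Lie(H^\vee)_\tau$ is a line bundle. Combined with the first step, this gives the lemma.

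The main technical obstacle is harmonizing the two descriptions of $\Lie(H)_\tau$---as $\ker\Lie(f)^0_\tau$ inside $\Lie(A_1)^0_\tau$ and as $\Shom(\CE/\CE^2,\CO_S)_\tau$---and carefully tracking the Cartier-duality identifications for $H^\vee$. A cleaner alternative, if one prefers to avoid sheaf-level bookkeeping, is to reduce to a pointwise check using the reducedness of $S$ from Corollary~\ref{cor:lci}: at each $\Fpbar$-point $\overline{s}$, Proposition~\ref{prop:raynaud}(3) gives $\D(H_{\overline{s}})_\tau$ one-dimensional for each $\tau$, and the formulas of \S\ref{sss:D1} show that $\Lie(H_{\overline{s}})_\tau \neq 0$ is equivalent to the Frobenius $\D(H_{\overline{s}})_{\phi^{-1}\tau} \to \D(H_{\overline{s}})_\tau$ being zero, which in turn is the vanishing of $s_{\phi^{-1}\tau}$ at $\overline{s}$.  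Vanishing at every closed point of the reduced scheme then forces vanishing as a section, and the analogous statement for Verschiebung handles $t_{\phi^{-1}\tau}$.
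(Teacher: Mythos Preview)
Your proof is correct and follows essentially the same approach as the paper's: both use the remark at the end of \S\ref{sss:strata.usv} to upgrade the defining vanishing to all $\tau$ with $\tau|_F\notin J$ (resp.\ $\in J$), deduce that $\Lie(H)_\tau$ (resp.\ $\Lie(H^\vee)_\tau$) is invertible, and then read off $s_{\phi^{-1}\circ\tau}=0$ (resp.\ $t_{\phi^{-1}\circ\tau}=0$) from the Raynaud description of the cotangent space. Your added justification via ``the dual of a torsion sheaf on a reduced scheme vanishes'' makes explicit the step the paper leaves to the reader, and the alternative pointwise argument at the end is a valid but unnecessary embellishment.
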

\begpf  Suppose first that $\tau|_F \not\in J$, so that $\tau = \widetilde{\theta}$ or $\widetilde{\theta}^c$
for some $\theta \not\in J$, and hence $\Lie(f)^0_\tau = 0$ (see the discussion
at the end of \S\ref{sss:strata.usv}).  Since $\Lie$ is left exact for morphisms of group schemes,
we have that $\Lie(H/S)_\tau = \Lie(A_1/S)^0_\tau$ is invertible.  In terms of the Raynaud data
for $H$, the sheaf $\CA$ of augmentation ideals is generated over $\CR_H$ by 
$\bigoplus_\tau \CL_\tau$, so that $\CA/\CA^2 = 
\bigoplus_\tau (\CL_\tau/s_{\phi^{-1}\circ\tau}\CL_{\phi^{-1}\circ\tau}^{p})$ as an
$\CO_E \otimes \CO_S$-module, and therefore
$$\Lie(H/S)_\tau = 
\Shom_{\CO_S}(\CL_\tau/s_{\phi^{-1}\circ\tau}\CL_{\phi^{-1}\circ\tau}^{p},\CO_S).$$
Since $\CL_\tau$ is invertible, it follows that $s_{\phi^{-1}\circ\tau} = 0$.

Suppose now that $\tau|_F \in J$, so that $\Lie(f^\vee)^0_\tau = 0$.
Since $H^\vee \cong \ker(f^\vee)^0$ as an $(\CO_E/p\CO_E)$-module scheme, we
see as above that $\Lie(H^\vee/S)_\tau \cong \Lie(A_2^\vee/S)^0_\tau$ is invertible,
and it similarly follows that $t_{\phi^{-1}\circ\tau} = 0$.
\epf

\subsubsection{Universal splices}\label{sss:splice.univ}
In view of Lemma~\ref{lem:vanishing}, we may apply Lemma~\ref{lem:slicing} to the Raynaud $(\CO_F/p)$-module scheme
$\prod_v H[\widetilde{v}]$ with $I = \phi^{-1}(J)$ to obtain a finite flat subgroup scheme
$C_J$ over $S$, which we view as a finite flat $\CO_E/p\CO_E$-submodule scheme of $H$.
Thus $C_J$ has rank $p^{|J|}$ over $S$, and
$\Lie(C_J^\vee/S) = \bigoplus_{\theta\in J} \CL_{\phi^{-1}\circ\widetilde{\theta}}$ as
an $\CO_E\otimes\CO_S$-module.
Having defined $C_J \subset H = e_0H'$, we let $C_J'$ denote the finite flat
$\CO_D$-submodule scheme $C_J \otimes_{\CO_E} \CO_E^2 \subset H'$.
Note that in fact $C_J' \subset \prod_v H'[\widetilde{v}] \subset A_1[\prod_v\widetilde{v}]$
and hence $\lambda_1(C_J') \subset A_1^\vee[\prod_v\widetilde{v}^c]$ (where $\lambda_1$ is the
quasi-polarization on $A_1$).
Taking the dual of the natural projection $A_1^\vee \to A_1^\vee/\lambda_1(C_J')$
yields an abelian scheme $A_J' := (A_1^\vee/\lambda_1(C_J'))^\vee$ with an action
of $\CO_D$, together with an $\CO_D$-linear isogeny $\pi:A_J' \to A_1$ whose
kernel is isomorphic to $(C_J')^\vee$ (the Cartier dual of $C_J'$), compatibly with the $\CO_D$-action under
the anti-involution $\alpha \mapsto \alpha^* = (\alpha^c)^t$ of $\CO_D$.
Equivalently we may define $A_J' = A_1/(C_J')^\perp$ with the isogeny $\pi$
induced by multiplication by $p$ on $A_1$.  It is immediate from either description
that $\pi$ restricts an isomorphism $A_J'[\prod_v\widetilde{v}]
 \stackrel{\sim}{\longrightarrow} A_1[\prod_v \widetilde{v}]$.
We then define the {\em universal $J$-splice} to be the abelian scheme 
 $A_J = A_J'/C_J''$, where $C_J'' = \pi^{-1}(C_J'))[\prod_v\widetilde{v}]$. 
 More generally for a triple $(\underline{A}_1,\underline{A}_2,f)$
 corresponding to an $S$-point of $S_J'$ for an arbitrary base $S$, we define the
 {\em $J$-splice} of $f:A_1 \to A_2$ to be the pull-back of the universal $J$-splice.
 
 Note that the universal $J$-splice $A_J$ inherits an $\CO_D$-action from
 the $\CO_D$-action on $A_J'$.  Furthermore the prime-to-$p$ quasi-polarization
 $\lambda_1$ induces one on $A_J$, which we denote $\lambda_J$.  To see this,
 consider (over each connected component of $S$) the commutative diagram:
 $$\xymatrix{A_1 \ar[d]_{n\lambda_1}
 & A_J'  \ar[l]_{\pi} \ar[r]^{\psi} \ar[d]_{n\lambda_J'} & A_J \ar@{-->}[d]^{n\lambda_J} \\
 A_1^\vee \ar[r]^{\pi^\vee} & (A_J')^\vee & A_J^\vee \ar[l]_{\psi^\vee}
 }$$
where $n$ is a positive integer prime to $p$ such that $n\lambda_1$
is a polarization of degree prime to $p$, $\lambda_J' = \pi^\vee\circ\lambda_1\circ \pi$
and $\psi$ is the natural projection.  By construction, $C_J''$ is contained in
$\ker(n\lambda_J')$, so $n\lambda_J'$ factors through an $\CO_E$-antilinear isogeny
$\xi:A_J \to (A_J')^\vee$.  Furthermore since $n\lambda_J' = \psi^\vee\circ \xi^\vee$,
$\psi^\vee$ induces an isomorphism $A_J^\vee[\prod_v \widetilde{v}^c] \to
(A_J')^\vee[\prod_v \widetilde{v}^c]$ and $\xi^\vee$ is $\CO_E$-antilinear, it follows
that $C_J''$ is contained in the kernel of $\xi^\vee$, and so
$\xi^\vee = \lambda\circ \psi$ for some polarization $\lambda$ of $A_J$.
Since $\pi$ and $\psi$ each have degree $p^{2|J|}$ and $n\lambda_1$ has
degree prime to $p$, it follows that $\lambda$ has degree prime to $p$, and
we let $\lambda_J = n^{-1}\lambda$.  Note that the associated Rosati
involution is compatible with the anti-involution $*$ on $D$.

\subsubsection{Relation of Dieudonn\'e modules}\label{sss:splice.dieu}
We now explain the relation between the Dieudonn\'e modules of $A_1$, $A_2$
and $A_J$ at closed points of $S_J'$ which motivates the construction (and
name) of the $J$-splice.  First note that since $H' = \ker(f)$ is totally isotropic
with respect to $\lambda_1$ (by part (2) of Proposition~\ref{prop:raynaud})
 the projection $A_1 \to A_J'$ induced by multiplication
by $p$ factors through $f:A_1 \to A_2$, so we obtain a diagram of isogenies
\begin{equation} \label{eqn:splice}\xymatrix{&& A_1 \ar[rd]&&\\
&A_J' \ar[ru]^{\pi}\ar[rd]_{\psi}&&  A_1/C_J' \ar[rd]&\\
A_2 \ar[ru] && A_J \ar[ru] && A_2 } \end{equation}
such that the square commutes, the composite 
down the top right is $f:A_1 \to A_2$, and
along the top left is the morphism $A_2 \to A_1$
whose composite with $f$ is multiplication by $p$.
For any $\overline{s}:\Spec{\Fpbar} \to S_J'$,
we have the morphisms of Dieudonn\'e modules induced by the
diagram (\ref{eqn:splice}) over $S=\Spec(\Fpbar)$:
\begin{equation}\label{eqn:splice2}\xymatrix{&& \D(A_1[p^\infty]) \ar[ld]_{\pi^*}&&\\
&\D(A_J'[p^\infty]) \ar[ld]&&  \D(A_1[p^{\infty}]/C_J')\ar[ld]\ar[lu]&\\
\D(A_2[p^\infty]) && \D(A_J[p^{\infty}])\ar[lu]^{\psi^*} && \D(A_2[p^\infty]).\ar[lu]}\end{equation}
All the morphisms become isomorphisms after inverting $p$,
so $(\pi^*)^{-1}\circ\psi^*$ identifies $\D(A_J[p^{\infty}])$ with an $\CO_D$-stable
$W$-lattice in $\D(A_1[p^\infty])\otimes \Q$ whose components switch between
those arising from $A_1$ and $A_2$ as dictated by the first two formulas in
following proposition:
\begin{proposition}  \label{prop:splice}  If $\theta \in \Theta$, then
$$
(\pi^*)^{-1}\psi^*(\D(A_J[p^{\infty}])_{\widetilde{\theta}} ) = \left\{  \begin{array}{cl}
\D(A_1[p^\infty])_{\widetilde{\theta}} , &  \mbox{if $\theta \not\in J$,}\\
f^*\D(A_2[p^\infty])_{\widetilde{\theta}} , &  \mbox{if $\theta \in J$;} \end{array} \right.$$
$$(\pi^*)^{-1}\psi^*(\D(A_J[p^{\infty}])_{\widetilde{\theta}^c} ) = \left\{  \begin{array}{cl}
\D(A_1[p^\infty])_{\widetilde{\theta}^c},  &  \mbox{if $\theta \not\in J$,}\\
p^{-1}f^*\D(A_2[p^\infty])_{\widetilde{\theta}^c} , &  \mbox{if $\theta \in J$;} \end{array}\right.$$
$$
\dim_{\Fpbar}  (\Lie(A_J)_{\widetilde{\theta}}) = 
 \left\{  \begin{array}{cl}
4, & \mbox{if $\theta \in J$ and $\phi\circ\theta \not\in J$,}\\
0, & \mbox{if $\theta \not\in J$ and $\phi\circ\theta \in J$,}\\
2,  &  \mbox{otherwise;}\end{array}  \right. $$
$$\mbox{and}\quad
\dim_{\Fpbar}  (\Lie(A_J)_{\widetilde{\theta}^c})  = 4 - \dim_{\Fpbar}  (\Lie(A_J)_{\widetilde{\theta}}).$$  
\end{proposition}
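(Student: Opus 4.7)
The plan is to work at a fixed geometric point $\bar s\in S_J'$ and compute $D_J:=\D(A_{J,\bar s}[p^\infty])$ as an explicit $\CO_D\otimes W$-stable lattice in $D_1\otimes\Q$ by tracing the diagram~\eqref{eqn:splice} through contravariant Dieudonn\'e theory. Since $\pi\circ q=[p]$, the identification $q^*:\D(A_J'[p^\infty])\hookrightarrow D_1$ makes $\pi^*$ correspond to multiplication by $p$, so that $(\pi^*)^{-1}\psi^*(D_J)=p^{-1}q^*\psi^*(D_J)$ inside $D_1[1/p]$; hence the content of the proposition is an identification of the sublattice $M:=q^*\psi^*(D_J)\subset D_1$.

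I would first translate the stratum conditions defining $S_J'$ into constraints on $f^*D_2\subset D_1$. By Prop~\ref{prop:raynaud}(3), $f^*D_{2,\tau}^0$ has $W$-index $p$ in $D_{1,\tau}^0$ for every $\tau\in\Theta_E$; a $W$-index count then converts $\Lie(f)^0_{\tilde\theta}=0$ (for $\theta\notin J$) into the equality $f^*(VD_{2,\phi\tilde\theta}^0)=pD_{1,\tilde\theta}^0$. Combining this with the polarization identity $p\lambda_1=f^\vee\lambda_2 f$ and the $*$-sesquilinearity of the Weil pairing (which pairs $\tau$-eigenspaces with $\tau^c$-eigenspaces), the vanishing $\Lie(f^\vee)^0_{\tilde\theta^c}=0$ (for $\theta\in J$) translates similarly into a dual identity on the $\tilde\theta^c$-components of $f^*D_2$.

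Next I would compute the Dieudonn\'e modules of the finite group schemes entering the construction. The Raynaud description of $C_J$ given by Lemma~\ref{lem:slicing} determines $\D(C_J')$ as a $\tau$-graded module, and the $\lambda_1$-Weil pairing then gives $\D((C_J')^\perp)$; this identifies $L:=q^*\D(A_J')$ as an explicit sublattice of $D_1$ whose $\tau$-components are forced to equal either $pD_{1,\tau}^0$ or $f^*D_{2,\tau}^0$, the latter because the inclusion $C_J'\subset H'$ (together with $(H')^\perp=H'$) implies $L\subset f^*D_2$ and matches $W$-indices. Separately, the short exact sequence $0\to(C_J')^\vee\to\pi^{-1}(C_J')\to C_J'\to 0$ combined with $C_J''=\pi^{-1}(C_J')[\prod_v\tilde v]$ and the snake lemma applied to the $\tilde v$-torsion functor expresses $\D(C_J'')$ as an explicit quotient of $L$.

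Letting $L_J$ denote the lattice defined componentwise by the formulas of the proposition, the verification then amounts to checking $M=pL_J$: the inclusion $pL_J\subset L$ follows from the Step~1 translation of the stratum conditions, and a short case analysis (according to whether each of $\theta$ and $\phi\theta$ lies in $J$) identifies $L^0_\tau/(pL_J)^0_\tau$ with the previously computed $\D(C_J'')^0_\tau$ in every component. Dividing by $p$ yields $(\pi^*)^{-1}\psi^*(D_J)=L_J$. The Lie algebra dimensions then follow from $\Lie(A_J)_\tau\cong\hom_{\Fpbar}(VL_{J,\phi\tau}/pL_{J,\tau},\Fpbar)$ together with a brief $W$-index computation using that $VD_{1,\phi\tau}^0$ is an index-$p$ sublattice of $D_{1,\tau}^0$. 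I expect the main obstacle to be the identification of $\D(C_J'')$: while its defining sequence is clean, taking $\tilde v$-torsion interacts subtly with both the $\CO_E$-decomposition and the Frobenius structure, and the resulting componentwise description must align precisely with the lattice $L$ coming from the Weil-pairing analysis; the stratum conditions from Step~1 are precisely what is needed to force this alignment.
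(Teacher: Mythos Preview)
Your plan is correct and close in spirit to the paper's proof: both work at a closed point, compute $\D(C_J')$ from the Raynaud description of $C_J$ (via the isomorphism $\Lie(C_J^\vee)^{(p)}\cong\D(C_J)$), and then trace the diagram~\eqref{eqn:splice2} to pin down $D_J$ as a lattice in $D_1[1/p]$.  The organizational difference is that the paper does not compute the intermediate lattice $L=q^*\D(A_J')$ or $\D(C_J'')$ explicitly; instead, for each $\tau$ it simply records which arrows in~\eqref{eqn:splice2} are isomorphisms on the $\tau$-component (this is immediate once one knows $\dim\D(C_J')_\tau$ and $\dim\D(H')_\tau$), and then composes those isomorphisms to read off the answer.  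Your lattice-by-lattice approach reaches the same destination but with more bookkeeping.

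One point you are overcomplicating: the computation of $\D(C_J'')$ does not require the snake lemma or the stratum conditions.  In your exact sequence $0\to(C_J')^\vee\to\pi^{-1}(C_J')\to C_J'\to 0$, the $\CO_E$-action on $\ker\pi\cong(C_J')^\vee$ is twisted by $*$, so $(C_J')^\vee$ is killed by $\prod_v\tilde v^c$; hence its $\prod_v\tilde v$-torsion vanishes and the map $C_J''\to C_J'$ induced by $\pi$ is an isomorphism (equivalently: the paper already observed that $\pi$ restricts to an isomorphism on $[\prod_v\tilde v]$-torsion).  So $\D(C_J'')_\tau\cong\D(C_J')_\tau$ directly, and your ``main obstacle'' dissolves.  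Relatedly, the stratum conditions from your Step~1 are \emph{not} needed for the first two formulas --- they are already encoded in the construction of $C_J$ via Lemma~\ref{lem:vanishing}; the paper's proof of those formulas uses only the rank of $\D(C_J')_\tau$ and of $\D(H')_\tau$.  Your Step~1 is still useful, but only as input to the $\Lie$-dimension computation, and even there the paper phrases it as a direct $W$-index comparison using that $\dim\Lie(A_j)_\tau=2$ for $j=1,2$.
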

\begpf  Comparing dimensions shows the natural inclusion $\Lie(C_J^\vee)^{(p)} \to \D(C_J)$
is an isomorphism, so $\dim_{\Fpbar} \D(C_J)_\tau = 1$ if $\tau = \widetilde{\theta}$ for some
$\theta \in J$, and $\D(C_J)_\tau = 0$ otherwise; therefore $\dim_{\Fpbar} \D(C'_J)_\tau = 2$
if $\tau = \widetilde{\theta}$ for some $\theta \in J$, and $\D(C'_J)_\tau = 0$ otherwise.
By part (3) of Proposition~\ref{prop:raynaud}, we have $\dim_{\Fpbar}\D(H')_\tau = 2$ for all $\tau$,
so if $\theta \in J$, then $\D(H'/C'_J)_{\widetilde{\theta}} = 0$, and hence
$$\D(A_2[p^\infty])_{\widetilde{\theta}}  \longrightarrow  \D(A_1[p^{\infty}]/C_J')_{\widetilde{\theta}} $$
is an isomorphism.  On the other hand, if $\tau \not\in \{\,\widetilde{\theta}\,|\,\theta \in J\,\}$,
then 
$$\D(A_1[p^{\infty}]/C_J')_\tau \longrightarrow \D(A_1[p^\infty])_\tau \quad\mbox{and}\quad
\D(A_J[p^{\infty}])_\tau \longrightarrow \D(A_J'[p^\infty])_\tau$$
are isomorphisms.  A similar analysis shows that if $\theta \in J$, then
$$\D(A_J'[p^\infty])_{\widetilde{\theta}^c}  \longrightarrow  \D(A_2[p^{\infty}])_{\widetilde{\theta}^c} $$
is an isomorphism, but if $\tau \not\in \{\,\widetilde{\theta}^c\,|\,\theta \in J\,\}$,
then 
$$\D(A_1[p^{\infty}])_\tau \longrightarrow \D(A_J'[p^\infty])_\tau \quad\mbox{and}\quad
\D(A_1[p^{\infty}]/C_J')_\tau \longrightarrow \D(A_J[p^\infty])_\tau$$
are isomorphisms.  The first two formulas in the proposition then follow from composing
isomorphisms arising in each case.

For $\tau \in \Theta_E$  and $A = A_1$, $A_2$ or $A_J$, we have
$$\begin{array}{rl}
\dim_{\Fpbar}(\Lie(A)_\tau) &= \dim_{\Fpbar}(\Lie(A[p])_\tau)
 = \dim_{\Fpbar}({\Lie(A[p])^{(p)}})_{\phi\circ\tau} \\
 &=  \dim_{\Fpbar}(\hom_{\Fpbar}({\Lie(A[p])^{(p)}},\Fpbar)_{\phi\circ\tau})
 = \dim_{\Fpbar}(D_{\phi\circ\tau}/\Phi(D_\tau))\end{array}$$
where $D = \D(A[p^\infty])$.  This dimension is two for $A= A_1$, $A_2$,
and the morphisms $f^*$, $\pi^*$ and $\psi^*$ commute with $\Phi$.
Thus if $\tau|_F = \theta$ with $\theta,\phi\circ\theta \not\in J$, it follows
from the first two formulas of the proposition that 
$$\D(A_J[p^\infty])_{\phi\circ\tau}/\Phi \D(A_J[p^\infty])_\tau
    \cong \D(A_1[p^\infty])_{\phi\circ\tau}/\Phi \D(A_1[p^\infty])_\tau$$
has dimension two.  Similarly using $A_2$ instead of $A_1$,
we find the dimension is two if $\theta,\phi\circ\theta \in J$.

Suppose now that $\theta \in J$ and $\phi\circ\theta\not\in J$.
Applying $(\pi^*)^{-1}\psi^*$ to the inclusion
$p\D(A_J[p^\infty])_{\phi\circ\tau} \subset \Phi(\D(A_J[p^\infty])_\tau)$
implies that
$$p\D(A_1[p^\infty])_{\phi\circ\widetilde{\theta}}
  \subset \Phi(f^*\D(A_2[p^\infty])_{\widetilde{\theta}}),$$
but both spaces have codimension two in 
$f^*\D(A_2[p^\infty])_{\phi\circ\widetilde{\theta}}$, so
equality holds, and it follows that
$$\D(A_J[p^\infty])_{\phi\circ\widetilde{\theta}}/\Phi \D(A_J[p^\infty])_{\widetilde{\theta}}
    \cong \D(A_1[p^\infty])_{\phi\circ\widetilde{\theta}}/p \D(A_1[p^\infty])_{\phi\circ\widetilde{\theta}}$$
has dimension four.  On the other hand comparing codimensions
arising from the inclusions
$$\Phi(p^{-1}f^*\D(A_2[p^\infty])_{\widetilde{\theta}^c}) \subset
\D(A_1[p^\infty])_{\phi\circ\widetilde{\theta}^c}
  \subset p^{-1}f^*\D(A_2[p^\infty])_{\phi\circ\widetilde{\theta}^c})$$
shows that $\Phi(p^{-1}f^*\D(A_2[p^\infty])_{\widetilde{\theta}^c}) =
\D(A_1[p^\infty])_{\phi\circ\widetilde{\theta}^c}$,
and hence that 
$$\D(A_J[p^\infty])_{\phi\circ\widetilde{\theta}^c}/\Phi \D(A_J[p^\infty])_{\widetilde{\theta}^c}
   = 0.$$
(Alternatively, this can be deduced using duality from the formula in the case of 
$\tau = \widetilde{\theta}$.)

We omit the proof in the case $\theta \not\in J$ and $\phi\circ\theta\in J$,
which is similar.

\epf

We have the following immediate
consequence for the de Rham cohomology and Lie algebra sheaves of the
universal  $J$-splice over $S_J'$, and hence for the pull-back to any base $S$:
\begin{corollary}  \label{cor:splice}  Let $A_J$ denote the $J$-splice of an isogeny $f:A_1 \to A_2$
corresponding to an $S$-point of $S_J'$.
\begin{enumerate}
\item If $\theta\in J$, then the isogenies $A_2 \to A_J$ and  $A_J \to A_2$ (along
the bottom of diagram (\ref{eqn:splice})) induce isomorphisms:
$$\CH^1_{\dr}(A_J/S)_{\widetilde{\theta}^c} \stackrel{\sim}{\longrightarrow}  
\CH^1_{\dr}(A_2/S)_{\widetilde{\theta}^c}   \quad\mbox{and} \quad
\CH^1_{\dr}(A_J/S)_{\widetilde{\theta}} \stackrel{\sim}{\longleftarrow}  
\CH^1_{\dr}(A_2/S)_{\widetilde{\theta}}.$$
\item If $\tau \in \Theta_E$ and $\tau|_F \not\in J$, then $\pi$ and $\psi$ induce isomorphisms
$$\CH^1_{\dr}(A_J/S)_\tau \stackrel{\sim}{\longrightarrow}  
\CH^1_{\dr}(A_J'/S)_\tau\stackrel{\sim}{\longleftarrow}  
\CH^1_{\dr}(A_1/S)_\tau.$$
\item If $\tau \in \Theta_E$, then $\Lie(A_J/S)_{\tau}$ is locally free of rank $2s_\tau$
over $\CO_S$, where 
$$s_{\widetilde{\theta}} = \left\{  \begin{array}{cl}
2, & \mbox{if $\theta \in J$ and $\phi\circ\theta \not\in J$,}\\
0, & \mbox{if $\theta \not\in J$ and $\phi\circ\theta \in J$,}\\
1,  &  \mbox{otherwise,}\end{array}  \right. $$
and $s_{\widetilde{\theta}^c} = 2 - s_{\widetilde{\theta}}$.
\end{enumerate}

\end{corollary}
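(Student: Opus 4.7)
The plan is to reduce each of the three assertions to the pointwise Dieudonn\'e-module computations already carried out in the proof of Proposition~\ref{prop:splice}. All the sheaves appearing are obtained by base change from the universal data on $S_J'$, so it suffices to treat the universal case $S = S_J'$; the general case follows by pullback, since de Rham cohomology and Lie algebras of abelian schemes commute with flat base change, and the morphisms induced by the isogenies in diagram~(\ref{eqn:splice}) are functorial. For parts~(1) and~(2), the maps in question are morphisms between locally free $\CO_{S_J'}$-modules of rank four (the $\tau$-components of $\CH^1_{\dr}$, which are direct summands of a locally free sheaf of the correct rank), so by Nakayama's Lemma it suffices to check that they become isomorphisms at each geometric point $\overline{s}: \Spec \Fpbar \to S_J'$. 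There, the canonical identification $\CH^1_{\dr}(A_{\overline{s}}/\Fpbar) \cong \D(A_{\overline{s}}[p])$, compatible with morphisms induced by isogenies, reduces the task to checking that certain Dieudonn\'e-module morphisms are isomorphisms modulo $p$.

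The proof of Proposition~\ref{prop:splice} already establishes the required Dieudonn\'e-module isomorphisms. Indeed, that proof shows that for $\theta \in J$ the maps $\D(A_2)_{\widetilde{\theta}} \to \D(A_1/C_J')_{\widetilde{\theta}}$ and $\D(A_J')_{\widetilde{\theta}^c} \to \D(A_2)_{\widetilde{\theta}^c}$ are isomorphisms; that for $\tau \not\in \{\widetilde{\theta}: \theta \in J\}$ the maps $\D(A_1/C_J')_\tau \to \D(A_1)_\tau$ and $\D(A_J)_\tau \to \D(A_J')_\tau$ are isomorphisms; and that for $\tau \not\in \{\widetilde{\theta}^c : \theta \in J\}$ the maps $\D(A_1)_\tau \to \D(A_J')_\tau$ and $\D(A_1/C_J')_\tau \to \D(A_J)_\tau$ are isomorphisms. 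For part~(1), composing in accordance with the factorizations $A_J \to A_1/C_J' \to A_2$ and $A_2 \to A_J' \to A_J$ visible in diagram~(\ref{eqn:splice}) yields the desired isomorphisms on the components $\widetilde{\theta}$ and $\widetilde{\theta}^c$ respectively (the auxiliary exclusions on $\tau$ being automatic, since the sets $\{\widetilde{\theta'}: \theta' \in J\}$ and $\{\widetilde{\theta'}^c: \theta' \in J\}$ lie in complementary halves of $\Theta_E$). For part~(2), the hypothesis $\tau|_F \not\in J$ places $\tau$ outside both exceptional sets, so the claimed $\pi^*: \D(A_1)_\tau \to \D(A_J')_\tau$ and $\psi^*: \D(A_J)_\tau \to \D(A_J')_\tau$ are directly among the isomorphisms established in the proof of Proposition~\ref{prop:splice}.

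For part~(3), the sheaf $\Lie(A_J/S_J')$ is locally free of rank $4d$ over $\CO_{S_J'}$, and the action of $\CO_E$ decomposes it as a direct sum of locally free summands $\Lie(A_J/S_J')_\tau$. Since the rank of a locally free sheaf is locally constant on $S_J'$, and Proposition~\ref{prop:splice} computes this rank at every geometric point to be $2 s_\tau$ via its formulas for $\dim_{\Fpbar} \Lie(A_J)_\tau$, we conclude~(3).

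The main obstacle has already been overcome in Proposition~\ref{prop:splice}; this corollary amounts to repackaging the pointwise Dieudonn\'e-module data on geometric points of $S_J'$ into sheaf-theoretic statements on $S_J'$, and hence, by pullback, on any $S$.
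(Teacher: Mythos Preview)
Your proof is correct and follows essentially the same approach as the paper, which simply declares the corollary an ``immediate consequence'' of Proposition~\ref{prop:splice} and leaves the passage from pointwise Dieudonn\'e-module statements to sheaf-theoretic ones implicit. You have correctly spelled out the standard fibrewise argument (Nakayama for maps between locally free sheaves of equal rank in parts~(1)--(2), local constancy of rank in part~(3)) and traced through the diagram~(\ref{eqn:splice}) to match each required isomorphism with one established in the proof of the proposition.
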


\subsection{Unitary analogue of the theorem}
We continue to work with a fixed $J \subset \Theta$.  In this section we will use the
$J$-splice to construct an isomorphism analogous to the one in Theorem~\ref{thm:JLiso},
but in the context of the related unitary Shimura varieties.

\subsubsection{Preliminaries}\label{sss:JL.prel}
Let $\Sigma = \Sigma_J = \{\theta\in J\,|\,\phi\circ\theta\not\in J\} \cup \{\theta\not\in J\,|\,\phi\circ\theta\in J\}$,
and let\footnote{We apologize that in the exceptional situation where $\Sigma_J = \Theta$ (i.e., $\phi(J)$ is the
complement of $J$, which can occur only when all primes over $p$ have even degree), we have different
meanings for $\widetilde{\Theta}$ and $\widetilde{\Sigma}$.}
 $$\widetilde{\Sigma} = \{\widetilde{\theta}^c\,|\,\theta\in J,\phi\circ\theta\not\in J\} \cup
 \{\widetilde{\theta}\,|\,\theta\not\in J,\phi\circ\theta\in J\}.$$
Note that the cardinality of $\Sigma$ is even.

Recall from \S\ref{sec:usv} that $B_\Sigma$ denotes
the quaternion algebra over $F$ ramified at precisely the places in $\Sigma$, $G_\Sigma$
is the algebraic group over $\Q$ defined by $G_\Sigma(R) = (B_\Sigma\otimes R)^\times$,
$G_\Sigma'$ denotes $(G_\Sigma \times T_E)/T_F$, and $D_\Sigma = B_\Sigma \otimes_F E$.
We continue to write $B$ for $B_\emptyset = \GL_2(F)$ and similarly
$G = G_\emptyset = \Res_{F/\Q}\GL_2$, $G' = G'_\emptyset$ and $D = D_\emptyset = M_2(E)$.

We need to make some choices of algebraic data in order to define
the Shimura variety to which we will relate $\overline{Y}'_0(p)_J$.
We first choose an $\A_{F,\f}$-algebra isomorphism 
$$\xi:  B_\Sigma \otimes\widehat{\Z}   \stackrel{\sim}{\longrightarrow}B \otimes \widehat{\Z} = M_2(\A_{F,\f}),$$
and we let $\CO_{B_\Sigma}$ denote the corresponding maximal order in $B_\Sigma$,
i.e., $B_\Sigma \cap \xi(M_2(\widehat{\CO}_F))$, and let 
$\CO_{D_\Sigma} = \CO_{B_\Sigma} \otimes_{\CO_F} \CO_E$.
Thus $\CO_{D_\Sigma}$ is a maximal order in $D_\Sigma$, and
we choose an $\CO_E$-algebra isomorphism
$$\vartheta:   \CO_{D_\Sigma}\stackrel{\sim}{\longrightarrow}\CO_D = M_2(\CO_E) .$$
Finally, by Lemma~5.4 of \cite{TX}, we may choose an element $\delta_\Sigma \in D_\Sigma^\times$
satisfying the conditions in \S\ref{sss:usv.mp} with $D$ replaced by $D_\Sigma$ (for the
above choice of $\widetilde{\Sigma}$) and such that
the resulting anti-involution is compatible with $*$ under $\vartheta$;
more precisely:
\begin{itemize}
\item $\delta_\Sigma \in \CO_{D_\Sigma,p}^\times$,
\item $\overline{\delta}_\Sigma = - \delta_\Sigma$,
\item the bilinear form on $D_\Sigma \otimes \R$ defined by
  $$(v,w) \mapsto \Tr_{E/\Q}(\tr_{D_\Sigma/E}(v\overline{h}'_{\widetilde{\Sigma}}(i)  \overline{w} \delta_\Sigma))$$
is positive definite,
\item $\vartheta (\delta_{\Sigma}^{-1}\overline{\alpha}\delta_\Sigma )
  = \delta^{-1}\overline{\vartheta(\alpha)}\delta$ for all $\alpha \in D_\Sigma$.
\end{itemize}
Recall that we have chosen $\delta = \begin{psmallmatrix}0&1 \\ -1 & 0 \end{psmallmatrix}$,
and that our notation differs slightly from that of \cite{TX}, but the preceding assertion
is nonetheless immediate from their Lemma~5.4.  Having fixed such a choice of $\delta_\Sigma$,
we will abuse notation by writing $*$ for the corresponding anti-involution
 $\delta_{\Sigma}^{-1}\overline{\vartheta(\alpha)}\delta_\Sigma$ on $D_\Sigma$.

Note that the choices above give rise to two $\widehat{\CO}_E$-algebra isomorphisms
$$\xi_E,\,\widehat{\vartheta}:  \widehat{\CO}_{D_\Sigma}  \stackrel{\sim}{\longrightarrow}  \widehat{\CO}_{D},$$
so there exists $h \in \widehat{\CO}_D^\times$ such that $h\xi_E(u)h^{-1} = \widehat{\vartheta}(u)$
for all $u \in  \widehat{\CO}_{D_\Sigma}$.  Using the fact that $\xi_E(\overline{u}) = \overline{\xi_E(u)}$
for all $u$, it is straightforward to check that the element
$$\varepsilon := \overline{h}\delta h \xi_E(\delta_\Sigma^{-1})  \in \GL_2(\A_{E,\f})$$
is central and satisfies $\varepsilon = \overline{\varepsilon}$, so in fact 
$\varepsilon \in \A_{F,\f}^\times$; moreover $\varepsilon_p \in \CO_{F,p}^\times$.

\subsubsection{Construction of the morphism}\label{sss:JL.con}
For an open compact subgroup $U'$ of $G'(\A_\f)$, we let $U_\Sigma'$ denote the 
corresponding open compact subgroup of $G_\Sigma'(\A_\f)$ under the
isomorphism $G'(\A_\f) \cong G_\Sigma'(\A_\f)$ induced by $\xi$.
We assume that $U'$ is sufficiently small for both $U'$ and $U_\Sigma'$ to be sufficiently small
in the usual sense, and that $U'$ is of level prime to $p$, so that $U'_\Sigma$ is necessarily so as well.

In \S\ref{sss:splice.univ}, we defined the universal $J$-splice $A_J$ on the closed
subscheme $S_J'$ of $\widetilde{Y}_{U_0'(p)}(G')_\F$.  We will now endow it
with the additional structure needed to define a morphism 
$$\widetilde{\Psi}_J:  S_J' \to \widetilde{Y}_{U_\Sigma'}(G_\Sigma')_\F.$$
Firstly we define the action $\iota_J$ of $\CO_{D_\Sigma}$ by composing
$\vartheta:\CO_{D_\Sigma} \to \CO_D$ with the unique $\CO_D$-action on $A_J$
compatible with the quasi-isogeny $\psi\circ\pi^{-1} \in \hom(A_1,A_J) \otimes \Z[1/p]$.
Part (3) of Corollary~\ref{cor:splice}
then implies that for $\alpha \in \CO_{D_\Sigma}$, the action of $\iota_J(\alpha)$
on $\Lie(A_J/S_J')$ has characteristic
polynomial $\prod (x - \overline{\tau}(\alpha))^{2s_\tau} \in \F[x]$ as required.
Next recall that we also defined the prime-to-$p$ quasi-polarization $\lambda_J$ on $A_J$
to be compatible with $\psi\circ\pi^{-1}$, so the condition on the associated Rosati involution
follows from the compatibility of $\vartheta$ with the anti-involutions denoted $*$.
Finally we define $\eta_J$ as the composite of the isomorphisms
$$\widehat{\CO}_{D_\Sigma}^{(p)} \stackrel{\widehat{\xi}_E^{(p)}}{\longrightarrow}
 \widehat{\CO}_{D}^{(p)} \stackrel{h^{(p)}\cdot}{\longrightarrow}
 \widehat{\CO}_{D}^{(p)} \stackrel{\eta_{1,\overline{s}_i}}{\longrightarrow}
 \widehat{T}^{(p)}(A_{1,\overline{s}_i}) \stackrel{\psi\circ\pi^{-1}}{\longrightarrow}
 \widehat{T}^{(p)}(A_{J,\overline{s}_i})$$
for each $\overline{s}_i$, and we let $\epsilon_J = \varepsilon^{(p)} \epsilon_1$.
The fact that $(\eta_J,\epsilon_J)$ is a level $U_\Sigma'$-structure on
$(A_J,\iota_J,\lambda_J)$ then follows from the relations between $\vartheta$,
$\xi$, $h$ and $\varepsilon$.

Recall that in \S\ref{sss:avb.umf} we defined the rank two vector bundle $\widetilde{\CV}_\tau^0$
on $S=\widetilde{Y}_{U_\Sigma'}(G_\Sigma')_\F$ for  each $\tau \in \Theta_E$ as $\CH^1_\dr(A/S)_\tau^0$,
where $A$ is the universal abelian scheme on $S$ and $\cdot^0$ denotes the image under pull-back by
the idempotent $e_0$, defined as the element of
$\CO_{D_\Sigma} \otimes \F_p$ corresponding to the usual
$e_0 = \begin{psmallmatriks} 1 & 0 \\ 0 & 0 \end{psmallmatriks} \in M_2(\CO_E/p\CO_E)
= \CO_D \otimes \F_p$
under an isomorphism which we may take to be $\xi_E\otimes \F_p$.

We will now define an $S$-morphism $S_J' \to \P_S(\widetilde{\CV}_{\widetilde{\theta}}^0)$
for each $\theta \in \Sigma$.  To do so amounts to defining a surjective morphism from
$\widetilde{\Psi}_J^*\widetilde{\CV}_{\widetilde{\theta}}^0$ to a line bundle on $S_J'$, but this sheaf is canonically
identified with $\CH^1_\dr(A_J/S_J')^0$.  Recall from Part~(1) of Corollary~\ref{cor:splice} that if $\theta\in J$,
then the isogeny $A_J \to A_2$ induces an isomorphism 
$\CH^1_\dr(A_2/S_J')_{\widetilde{\theta}} \to \CH^1_\dr(A_J/S_J')_{\widetilde{\theta}}$
which is compatible with the actions of $\CO_D \cong \CO_{D_\Sigma}$ under the
isomorphism $\vartheta$.  Precomposing with the morphism induced by 
$h_p^{-1} \in \GL_2(\CO_E/p\CO_E)$ yields an isomorphism compatible with
the actions of $\CO_D \otimes \F_p \cong \CO_{D_\Sigma} \otimes \F_p$
under the isomorphism induced by $\xi$, and hence restricts to an isomorphism
$$\CH^1_\dr(A_2/S_J')^0_{\widetilde{\theta} }\to \CH^1_\dr(A_J/S_J')^0_{\widetilde{\theta}}
       = \widetilde{\Psi}_J^*\widetilde{\CV}_{\widetilde{\theta}}^0.$$
If $\theta \in J$ and $\phi\circ\theta \not\in J$, then we
define $S_J' \to \P_S(\widetilde{\CV}_{\widetilde{\theta}}^0)$ corresponding to
the surjective morphism in the exact sequence
$$0 \to (s_* \Omega^1_{A_2/S_J'})_{\widetilde{\theta}}^0 \to
            \CH^1_\dr(A_2/S_J')_{\widetilde{\theta}}^0 \to (R_1s_*\CO_{A_2})_{\widetilde{\theta}}^0 \to 0$$
arising from the Hodge filtration on $\CH^1_\dr(A_2/S_J')$, where $s:A_2 \to S_J'$ is the
structure morphism.
Similarly if $\theta \not\in J$ and $\phi\circ\theta \in J$, then we define 
 $S_J' \to \P_S(\widetilde{\CV}_{\widetilde{\theta}}^0)$ using Part~(2) of Corollary~\ref{cor:splice}
 and the Hodge filtration on $\CH^1_\dr(A_1/S_J')$.
 Taking the fibre product over $S$ of the morphisms just defined for $\theta \in \Sigma$, we
 obtain the morphism
 $$\widetilde{\Xi}_J:  S_J' \to \prod_{\theta \in \Sigma}  \P_S(\widetilde{\CV}_{\widetilde{\theta}}^0).$$
 
 \subsubsection{Proof of isomorphism}  \label{sss:JL.proof}
 We now prove the key result:
 \begin{theorem}  \label{thm:bigJL}  The morphism $\widetilde{\Xi}_J$ is an isomorphism.
\end{theorem}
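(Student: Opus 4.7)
The plan is to prove that $\widetilde{\Xi}_J$ is an isomorphism by verifying three properties: (a) source and target are both smooth over $\F$ of dimension $d$; (b) $\widetilde{\Xi}_J$ is bijective on $\Fpbar$-points; (c) $\widetilde{\Xi}_J$ is \'etale at every closed point. Any bijective \'etale morphism between smooth $\F$-schemes of the same dimension is automatically an isomorphism, since it is an open immersion on each connected component and the bijection forces it to be surjective. For (a), $S_J'$ is smooth of dimension $d$ by Corollary~\ref{cor:lci} applied with $I = \Theta - J$ (so $I \cup J = \Theta$ and $|I \cap J| = 0$), as noted at the end of \S\ref{sss:strata.usv}; the target is a tower of $|\Sigma|$ projective line bundles over the smooth $|\Theta - \Sigma|$-dimensional base $S = \widetilde{Y}_{U_\Sigma'}(G_\Sigma')_\F$, so has dimension $|\Theta - \Sigma| + |\Sigma| = d$.

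For property (b), I would argue via Dieudonn\'e theory using Proposition~\ref{prop:splice} and Corollary~\ref{cor:splice}. Given $(A_1, A_2, f) \in S_J'(\Fpbar)$ with image $(A_J, \{\ell_\theta\}_{\theta \in \Sigma})$, the proposition describes $\D(A_J[p^\infty])$ as an explicit $\CO_D \otimes W$-sublattice of $\D(A_1[p^\infty]) \otimes \Q$ that agrees with $\D(A_1[p^\infty])$ except at the components indexed by $\widetilde{\Sigma}$, where it instead comes from $f^*\D(A_2[p^\infty])$ (or its shift by $p^{-1}$). The corollary then identifies each $\ell_\theta$ canonically with the Hodge line in $\CH^1_{\dr}(A_2/\Fpbar)^0_{\widetilde{\theta}}$ (if $\theta \in \Sigma \cap J$) or in $\CH^1_{\dr}(A_1/\Fpbar)^0_{\widetilde{\theta}}$ (if $\theta \in \Sigma \smallsetminus J$). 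Combining these two facts, $(A_J, \{\ell_\theta\})$ determines $\D(A_1[p^\infty])$ (and hence $A_1$) by restoring the $\widetilde{\Sigma}$-indexed components using the lines $\ell_\theta$, determines $\ker(f) \subset A_1[p]$ via Lemma~\ref{lem:vanishing}, and thus determines $A_2 = A_1/\ker(f)$ and $f$. This reasoning gives injectivity (uniqueness of the reconstruction) and surjectivity (given $(C, \{\ell_\theta\})$ on the target, perform the same reconstruction, transport all PEL data from $C$, and verify that the $J$-splice of the resulting triple is canonically $(C, \{\ell_\theta\})$).

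For property (c), I would compare tangent spaces. By Theorem~\ref{thm: coordinates} (in the unitary setting; see the end of \S\ref{sss:strata.usv}), at a closed point $Q \in S_J'$ the tangent space has a natural $\F$-basis $\{\partial/\partial x_\theta\}_{\theta \not\in J} \cup \{\partial/\partial y_\theta\}_{\theta \in J}$ indexed by $\Theta$. On the target, the tangent space at $\widetilde{\Xi}_J(Q)$ decomposes as $T_{\widetilde{\Psi}_J(Q)} S \oplus \bigoplus_{\theta \in \Sigma} \F$, where via the Kodaira--Spencer isomorphism of \S\ref{sec:KS} the first summand is $\bigoplus_{\theta \not\in \Sigma} \Shom(\upsilon^0_{\widetilde{\theta}}, \omega^0_{\widetilde{\theta}})$. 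One then checks componentwise that the derivative of $\widetilde{\Xi}_J$ sends the source basis vectors indexed by $\theta \in \Sigma$ isomorphically onto the projective bundle fibre directions, and those indexed by $\theta \not\in \Sigma$ onto the Kodaira--Spencer summand, by tracking how a first-order deformation of $(A_1, A_2, f)$ induces a deformation of $A_J$ and its Hodge filtration under the splicing construction.

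The main obstacle will be this last tangent space matching: propagating the componentwise description of $\D(A_J)$ from Proposition~\ref{prop:splice} through to deformations of Hodge filtrations and Kodaira--Spencer data requires some care, and one must verify that the ``extra'' tangent directions on the target (the projective bundle fibres for $\theta \in \Sigma$) correspond precisely to the components missing from the Kodaira--Spencer description of $T_{\widetilde{\Psi}_J(Q)}S$, so that together they account for the full $d$-dimensional tangent space on $S_J'$ dictated by Theorem~\ref{thm: coordinates}.
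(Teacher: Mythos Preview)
Your overall strategy matches the paper's: reduce to bijectivity on $\Fpbar$-points plus a tangent-space condition, using that source and target are both smooth of dimension $d$. (The paper phrases the reduction slightly differently, deducing flat from miracle flatness, then \'etale from injectivity on tangent spaces, then finite via \cite[18.2.8]{ega4}.)

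For (b), your idea is correct but the reconstruction is not quite as you describe. From $\D(A_J)$ alone, Proposition~\ref{prop:splice} recovers $\D(A_1)_\tau$ only for $\tau|_F \not\in J$ and (essentially) $\D(A_2)_\tau$ only for $\tau|_F \in J$; the remaining components are obtained by applying $\Phi$ or $p^{-1}\Phi$ to the preceding component, invoking the preimage $\Lambda_{\widetilde\theta}$ of $L_\theta$ (and its dual) precisely when $\phi^{-1}\circ\theta$ and $\theta$ lie on opposite sides of $J$. The paper writes down explicit case-by-case formulae for lattices $\Delta_{1,\tau},\Delta_{2,\tau} \subset \D(A_J[p^\infty])_\tau \otimes \Q$ and checks directly that the resulting triple lies in $S_J'$ and that both composites are the identity. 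Your appeal to Lemma~\ref{lem:vanishing} for recovering $\ker(f)$ is misplaced: that lemma concerns the vanishing of the Raynaud morphisms $s_\tau,t_\tau$ and does not determine $H$; the paper instead constructs $\D(A_2)$ directly rather than first finding $A_1$ and then $\ker(f)$.

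For (c), the paper takes a simpler route than your Kodaira--Spencer proposal and proves only \emph{injectivity} on tangent spaces, via Grothendieck--Messing. Given two lifts $(\widetilde{A}_1,\widetilde{A}_2,\widetilde{f})$, $(\widetilde{A}'_1,\widetilde{A}'_2,\widetilde{f}')$ to $T=\Fpbar[\epsilon]/(\epsilon^2)$ with the same image, one must show the Hodge lines $\widetilde{L}_{j,\widetilde\theta}$ and $\widetilde{L}'_{j,\widetilde\theta}$ agree. For $\theta\not\in J$, Corollary~\ref{cor:splice}(2) identifies $\widetilde{L}_{1,\widetilde\theta}$ with the Hodge line on the lift of $A_J$ (if $\theta\not\in\Sigma$) or with the given $\widetilde{L}^0_\theta$ (if $\theta\in\Sigma$), so the two lifts agree there; symmetrically Corollary~\ref{cor:splice}(1) handles $\widetilde{L}_{2,\widetilde\theta}$ for $\theta\in J$. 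The remaining filtrations ($\widetilde{L}_{2,\widetilde\theta}$ for $\theta\not\in J$ and $\widetilde{L}_{1,\widetilde\theta}$ for $\theta\in J$) are forced to be the trivial lifts $H^0(A_j,\Omega^1)_{\widetilde\theta}\otimes_\F T$ by the defining equations $\Lie(\widetilde f)_{\widetilde\theta}=0$ resp.\ $\Lie(\widetilde f^\vee)_{\widetilde\theta}=0$ of $S_J'$. This completely sidesteps the Kodaira--Spencer bookkeeping you flag as the main obstacle; your approach would likely also work, but is considerably more laborious.
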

 \begpf  
 Since the schemes are smooth of the same (constant) dimension over $\F$, it suffices to prove
 that the morphism is bijective on $\Fpbar$-points and injective on tangent spaces at such points.
 To see this, note that the condition on tangent spaces implies that 
 $\Omega^1_{S_J' /\prod_{\theta \in \Sigma}  \P_S(\widetilde{\CV}_{\widetilde{\theta}}^0)} = 0$.
 Furthermore since the schemes are regular of the same dimension and the fibres at all closed
 points have dimension zero, it follows that $\widetilde{\Xi}_J$ is flat, so in fact it is \'etale.  
 By \cite[18.2.8]{ega4}, we have that $\widetilde{\Xi}_J$ is finite, so the condition at closed points
 implies that it is in fact an isomorphism.

 To prove bijectivity on points, we will construct an inverse map.  Let $\underline{A} = 
 (A,\iota,\lambda,\eta,\epsilon)$ correspond to an $\Fpbar$-point of $\widetilde{Y}_{U_\Sigma'}(G_\Sigma')$
 and for each $\theta \in \Sigma$, let $L^0_\theta$ be a line in $\P(H^1_\dr(A/\Fpbar)^0_{\widetilde{\theta}})$,
 so the datum $(\underline{A},\{L^0_\theta\}_{\theta\in \Sigma})$  corresponds to an $\Fpbar$-point of 
 the target of $\widetilde{\Xi}_J$.  Let $\Delta$ denote the Dieudonn\'e module  $\D(A[p^\infty])$, with
 its right $M_2(\CO_{E,p})$-action obtained from the left action of $\CO_{D_\Sigma}$ on $A$
 and the isomorphism $\xi_p$.   Decomposing $\Delta = \bigoplus_{\tau\in \Theta_E} \Delta_\tau$
 where $\CO_{E,p}$ acts on $\Delta_\tau$ via $\tau$, each $\Delta_\tau$ is a right $M_2(W)$-module
 of rank $4$ over $W$ and the endomorphism $\Phi$ of $\Delta$ restricts to define injective
maps $\Delta_\tau \to \Delta_{\phi\circ\tau}$ which are $\phi$-semilinear
 with respect to the actions of $M_2(W)$.  Considering the inclusions
 $$p\Delta_{\phi\circ\tau}  \subset \Phi(\Delta_\tau) \subset \Delta_{\phi\circ\tau},$$
 the condition on $\Lie(A)$ means that the successive quotients are two-dimensional
 over $\Fpbar$ unless $\theta = \tau|_F \in \Sigma$, in which case:
 \begin{itemize}
 \item $\Phi(\Delta_{\widetilde{\theta}}) = \Delta_{\phi\circ\widetilde{\theta}}$ and
    $\Phi(\Delta_{\widetilde{\theta}^c}) = p\Delta_{\phi\circ\widetilde{\theta}^c}$
     if $\theta\not\in J$, $\phi\circ\theta \in J$;
 \item  $\Phi(\Delta_{\widetilde{\theta}}) = p\Delta_{\phi\circ\widetilde{\theta}}$ and
    $\Phi(\Delta_{\widetilde{\theta}^c}) = \Delta_{\phi\circ\widetilde{\theta}^c}$
     if $\theta\in J$, $\phi\circ\theta\not\in J$.
  \end{itemize}
 
 For each $\theta \in \Sigma$, we let $L_\theta = L_\theta^0  \otimes_{\Fpbar} \Fpbar^2$,
 viewed as a right $M_2(\Fpbar)$-submodule of $H^1_{\dr}(A/\Fpbar)_{\widetilde{\theta}}$,
 and let $\Lambda_{\widetilde{\theta}}$ denote
 the preimage of $L_\theta$ under the natural projection
 $$\Delta_{\widetilde{\theta}} \longrightarrow
    \D(A[p])_{\widetilde{\theta}}  \cong H^1_\dr(A/\Fpbar)_{\widetilde{\theta}}.$$
Recall that the prime-to-$p$ quasi-polarization $\lambda$ gives rise to a perfect
$W$-linear pairing on $\Delta$, inducing an isomorphism
$$\Delta_{\widetilde{\theta}} \longrightarrow \hom_W(\Delta_{\widetilde{\theta}^c},W),$$
and we let $\Lambda_{\widetilde{\theta}^c}$ denote the dual lattice to 
$\Lambda_{\widetilde{\theta}}$ in $\Delta_{\widetilde{\theta}^c}\otimes_{\Z_p} \Q_p$
under the pairing.  We thus have the $M_2(W)$-equivariant inclusions
 $$p\Delta_{\widetilde{\theta}} 
   \subset \Lambda_{\widetilde\theta}  \subset \Delta_{\widetilde{\theta}}\quad\mbox{and}\quad
    \Delta_{\widetilde{\theta}^c} 
   \subset \Lambda_{\widetilde{\theta}^c}  \subset p^{-1}\Delta_{\widetilde{\theta}^c}$$
 for each $\theta \in \Sigma$, where all the successive quotients are two-dimensional
 over $\Fpbar$.
 
 For each $\tau \in \Theta_E$, we define a $W$-lattice $\Delta_{1,\tau} \subset p^{-1}\Delta_\tau$
 as follows:
  \begin{itemize}
\item if $\theta\not\in J$, then $\Delta_{1,\widetilde{\theta}} =  \Delta_{\widetilde{\theta}}$
  and  $\Delta_{1,\widetilde{\theta}^c} =  \Delta_{\widetilde{\theta}^c}$;
\item if $\phi^{-1}\circ\theta\in J$ and $\theta\in J$, then $\Delta_{1,\widetilde{\theta}} =      
     p^{-1} \Phi(\Delta_{\phi^{-1}\circ\widetilde{\theta}})$ and $\Delta_{1,\widetilde{\theta}^c} =      
     \Phi(\Delta_{\phi^{-1}\circ\widetilde{\theta}^c})$;
\item if $\phi^{-1}\circ\theta\not\in J$ and $\theta\in J$, then $\Delta_{1,\widetilde{\theta}} =      
     p^{-1} \Phi(\Lambda_{\phi^{-1}\circ\widetilde{\theta}})$ and $\Delta_{1,\widetilde{\theta}^c} =      
     \Phi(\Lambda_{\phi^{-1}\circ\widetilde{\theta}^c})$;
\end{itemize}
and a $W$-lattice $\Delta_{2,\tau} \subset \Delta_{\tau}$ as follows:
\begin{itemize}
\item if $\theta\in J$, then $\Delta_{2,\widetilde{\theta}} =  \Delta_{\widetilde{\theta}}$
  and  $\Delta_{2,\widetilde{\theta}^c} =  p\Delta_{\widetilde{\theta}^c}$;
\item if $\phi^{-1}\circ\theta\not\in J$ and $\theta\not\in J$, then $\Delta_{2,\widetilde{\theta}} =      
     \Phi(\Delta_{\phi^{-1}\circ\widetilde{\theta}})$ and $\Delta_{2,\widetilde{\theta}^c} =      
     \Phi(\Delta_{\phi^{-1}\circ\widetilde{\theta}^c})$;
\item if $\phi^{-1}\circ\theta\in J$ and $\theta\not\in J$, then $\Delta_{2,\widetilde{\theta}} =      
     p^{-1} \Phi(\Lambda_{\phi^{-1}\circ\widetilde{\theta}})$ and $\Delta_{2,\widetilde{\theta}^c} =      
     p\Phi(\Lambda_{\phi^{-1}\circ\widetilde{\theta}^c})$.
\end{itemize}
Setting $\Delta_j = \bigoplus_{\tau\in \Theta_E} \Delta_{j,\tau}$ for $j = 1,2$, it is straightforward to
check that 
$$p\Delta_j \subset \Phi(\Delta_j) \subset \Delta_j.$$ It follows that $\Delta_j$ can be
identified with $\D(A_j[p^\infty])$ for an abelian variety $A_j$ isogenous to $A$.  More precisely,
since $\Delta_2$ contains $p\Delta$ and is stable under the action of $\Phi$ and $V = p\Phi^{-1}$,
the quotient $\Delta/\Delta_2$ of $\Delta/p\Delta = \D(A[p])$ corresponds to a finite flat subgroup
scheme $C$ of $A[p]$.  Letting $A_2 = A/C$, the projection $g:A \to A_2$ induces an inclusion
$\D(A_2[p^{\infty}]) \to \D(A[p^\infty]) = \Delta$ with image $\Delta_2$.  Similarly since 
$\Delta_2 \subset \Delta_1 \subset p^{-1}\Delta_2$, the quotient $\Delta_2/p\Delta_1$
corresponds to a finite flat subgroup scheme $C'$ of $A_2[p]$. Letting $A_1 = A_2/C'$,
the projection $A_2 \to A_1$ induces an inclusion $\D(A_1[p^{\infty}]) \to \D(A_2[p^\infty]) 
= \Delta_2$ with image $p\Delta_1$, and the isogeny $f:A_1 \to A_2$ induced by multiplication
by $p$ identifies $\D(A_1[p^\infty])$ with $\Delta_1$.

Since the $\Delta_j$ are, by construction, stable under the action of 
$\CO_{D_\Sigma} \otimes {\Z_p}$, the abelian varieties $A_j$ inherit an action
of $\CO_{D_\Sigma}$ from the action on $A$.  (Note for example that $C$ is 
stable, so $\CO_{D_\Sigma}$ acts on $A_2$, and that $C'$ is stable
under the resulting action, which therefore carries over to $A_1$.)
We then define the actions $\iota_j$ of $\CO_D$ on $A_j$ by composing
with the isomorphism $\vartheta^{-1}: \CO_D \to \CO_{D^\Sigma}$.
The resulting action of $\CO_{D,p} = M_2(\CO_{E,p})$ on 
$\Delta_j = \D(A_j[p^\infty])$ is obtained by composing the conjugation
$u \mapsto h_p^{-1}u h_p$ with the action inherited from the inclusion
$\Delta_j \subset p^{-1}\Delta$.  In particular, the inclusion is compatible
with the action of $\CO_{E,p}$ induced by $\iota_j$, so this action yields
the same decomposition $\Delta_j = \bigoplus_{\tau\in \Theta_E} \Delta_{j,\tau}$
as in the definition of $\Delta_j$.  It follows that
$\dim_{\Fpbar} \Lie(A_j)_\tau = \dim_{\Fpbar} (\Delta_{j,\phi\circ\tau}/\Phi(\Delta_{j,\tau}))$,
which it is straightforward to check is equal to two for all $\tau \in \Theta_E$.

Recall that the pairing on $\Delta$ induced by the quasi-polarization $\lambda$ 
decomposes over the $\tau \in \Theta_E$ to define isomorphisms
$$\Delta_\tau \stackrel{\sim}{\longrightarrow} \hom_W(\Delta_{\tau^c},W)$$
under which $\Phi$ is ($\phi$-semi-linearly) adjoint to $V = p\Phi^{-1}$.
It is straightforward to check from the definitions of the $\Delta_{j,\tau}$ that
$\Delta_{1,\tau}$ is the dual lattice to $\Delta_{1,\tau^c}$ and that $\Delta_{2,\tau}$
is the dual lattice to $p^{-1}\Delta_{2,\tau^c}$ for all $\tau \in \Theta_E$.  It follows
that the quasi-isogenies $\lambda_j \in \hom(A_j,A_j^\vee)\otimes \Q$ defined by 
$$\lambda_1 = (g^{-1}\circ f)^\vee \circ \lambda \circ (g^{-1}\circ f)\quad\mbox{and}
\quad\lambda_2 = p(g^{-1})^\vee \circ \lambda \circ g^{-1}$$
induce isomorphisms $A_j[p^{\infty}] \cong A_j^\vee[p^{\infty}]$,
and therefore define prime-to-$p$ quasi-polarizations.
The compatibility under $\iota_j$ between the anti-involution $*$ on $D$ and
the Rosati involution induced by $\lambda_i$ then follows from the compatibility
of $*$ on $D_\Sigma$ with the $\lambda$-Rosati involution (under $\iota$)
and that of $\vartheta$ with the anti-involutions.

For $j=1,2$, we define $\eta_j$ as the composite of the isomorphisms
$$\widehat{\CO}_{D}^{(p)} \stackrel{(h^{(p)}\cdot)^{-1}}{\longrightarrow}
 \widehat{\CO}_{D}^{(p)} \stackrel{(\widehat{\xi}_E^{(p)})^{-1}}{\longrightarrow}
\widehat{\CO}_{D_\Sigma}^{(p)} 
 \stackrel{\eta}{\longrightarrow}
 \widehat{T}^{(p)}(A) {\longrightarrow}
 \widehat{T}^{(p)}(A_j),$$
 where the last isomorphism is induced by $f^{-1}\circ g$ for $j=1$
 and by $g$ for $j=2$.  Letting $\epsilon_1 = (\varepsilon^{(p)})^{-1}\epsilon$
 and $\epsilon_2 = p\epsilon_1$, it is straightforward to check that
 $(\eta_j,\epsilon_j)$ defines a level $U'$-structure on $A_j$ for $j=1,2$.
 
 We have now shown that the data $\underline{A}_j = (A_j,\iota_j,\lambda_j,\eta_j,\epsilon_j)$
 define $\Fpbar$-points of $\widetilde{Y}_{U'}(G')$ for $j=1,2$.  Furthermore the isogeny $f:A_1 \to A_2$
 is compatible with the $\CO_D$-action, and the equations $p\lambda_1 = f^\vee \circ \lambda_2\circ f$,
 $\eta_2= f\circ \eta_1$ and $\epsilon_2 = p\epsilon_1$ are immediate from the definitions.
 It is also straightforward to check from the definitions of the $\Delta_{j,\tau}$ that
 $\Delta_{1,\tau}/\Delta_{2,\tau}$ is two-dimensional over $\Fpbar$ for all $\tau \in \Theta_E$,
 so $\D(\ker f) \cong \Delta_1/\Delta_2$ is free of rank two over $\CO_E \otimes \Fpbar$.
 It follows that the triple $(\underline{A}_1,\underline{A}_2,f)$ defines an $\Fpbar$-point of
 $\widetilde{Y}_{U'_0(p)}(G')$.
 
Next we show that the ($\Fpbar$-point defined by the)
triple $(\underline{A}_1,\underline{A}_2,f)$ lies in the closed subscheme
$S_J'$, i.e., that $\Lie(f)_{\widetilde{\theta}} = 0$ (or equivalently $\Lie(f)_{\widetilde{\theta}^c} = 0$)
if $\theta \not\in J$, and $\Lie(f^\vee)_{\widetilde{\theta}} = 0$ (or equivalently 
$\Lie(f^\vee)_{\widetilde{\theta}^c} = 0$) if $\theta \in J$.  Identifying $\D(A_j[p^\infty])$
with $\Delta_j$ for $j=1,2$, we see that the vanishing of $\Lie(f)_\tau$ is equivalent to
the inclusion $V(\Delta_{2,\phi\circ\tau}) \subset p\Delta_{1,\tau}$, i.e., 
$\Delta_{2,\phi\circ\tau} \subset \Phi(\Delta_{1,\tau})$.
In fact the equality $\Delta_{2,\phi\circ\tau} = \Phi(\Delta_{1,\tau})$ if $\theta = \tau|_F\not\in J$ is
immediate from the definitions of the $\Delta_{i,\tau}$, using the equations
$\Phi(\Delta_{\widetilde{\theta}}) = \Delta_{\phi\circ\widetilde{\theta}}$ and 
$\Phi(\Delta_{\widetilde{\theta}^c}) = p\Delta_{\phi\circ\widetilde{\theta}^c}$
if also $\phi\circ\theta \in J$.  Similarly the vanishing of $\Lie(f^\vee)_\tau$ is equivalent to
the inclusion $\Delta_{2,\tau} \subset V(\Delta_{1,\phi\circ\tau})$, i.e.,
$\Phi(\Delta_{2,\tau}) \subset p\Delta_{1,\phi\circ\tau}$, and if $\theta = \tau|_F\in J$,
then in fact the equality $\Phi(\Delta_{2,\tau}) = p\Delta_{1,\phi\circ\tau}$ follows from
the definitions, using that
$\Phi(\Delta_{\widetilde{\theta}}) = p\Delta_{\phi\circ\widetilde{\theta}}$ and 
$\Phi(\Delta_{\widetilde{\theta}^c})= \Delta_{\phi\circ\widetilde{\theta}^c}$
if also $\phi\circ\theta \not\in J$. 

We now check that $(\underline{A},\{L^0_\theta\}_{\theta\in \Sigma}) \mapsto 
(\underline{A}_1,\underline{A}_2,f)$ defines an inverse of $\widetilde{\Xi}_J$
on $\Fpbar$-points.  Suppose first that $(\underline{A},\{L^0_\theta\}_{\theta\in \Sigma})$
and $(\underline{A}_1,\underline{A}_2,f)$ are as above, and let $A_J$ denote the
$J$-splice of $f$.  Letting $\pi$ and $\psi$ be as in the construction of $A_J$,
note that Proposition~\ref{prop:splice} and the formulas
 \begin{itemize} 
 \item $\Delta_{1,\widetilde{\theta}} = \Delta_{\widetilde{\theta}}$ and
 $\Delta_{1,\widetilde{\theta}^c} = \Delta_{\widetilde{\theta}^c}$ if $\theta\not\in J$,
\item $\Delta_{2,\widetilde{\theta}} = \Delta_{\widetilde{\theta}}$ and
 $\Delta_{2,\widetilde{\theta}^c} = p\Delta_{\widetilde{\theta}^c}$ if $\theta\in J$,
\end{itemize} 
imply that the lattice $(\pi^*)^{-1}\psi^*(\D(A_J[p^{\infty}]) \subset 
\D(A_1[p^\infty]) \otimes \Q = \Delta_1 \otimes \Q$
coincides with $\Delta$.  It follows that there is an isomorphism 
$\sigma:A_J \stackrel{\sim}{\longrightarrow} A$ compatible with
the quasi-isogenies $\psi\circ\pi^{-1}$ and $g^{-1}\circ f$ from $A_1$.
Unravelling the definitions of the auxiliary data for $\underline{A}_J$ in terms of
that for $\underline{A}_1$, and in turn in terms of that for $\underline{A}$, one finds
that these are compatible with $\sigma$.  Furthermore if $\theta \not\in J$ and $\phi\circ\theta\in J$,
then $\Lambda_{\widetilde{\theta}} \subset \Delta_{\widetilde{\theta}}$ coincides
with $V(\Delta_{1,\phi^{-1}\circ\widetilde{\theta}})$, so that 
$L_\theta \subset \Delta_{\widetilde{\theta}}/p\Delta_{\widetilde{\theta}}
   = H^1_{\dr}(A/\Fpbar)_{\widetilde{\theta}}$
corresponds to $H^0(A_1,\Omega^1_{A_1/\Fpbar})$ under the isomorphism
 $H^1_{\dr}(A/\Fpbar)_{\widetilde{\theta}} \cong   H^1_{\dr}(A_1/\Fpbar)_{\widetilde{\theta}}$
 induced by the quasi-isogeny $g^{-1}\circ f$.   It follows that 
 $\sigma^*L_\theta \subset H^1_{\dr}(A_J/\Fpbar)$ corresponds to 
 $H^0(A_1,\Omega^1_{A_1/\Fpbar})$ under the isomorphism
  $H^1_{\dr}(A_J/\Fpbar)_{\widetilde{\theta}} \cong   H^1_{\dr}(A_1/\Fpbar)_{\widetilde{\theta}}$
  of Corollary~\ref{cor:splice}(2), and hence that $\sigma^*L_\theta^0$ defines the same line
  in $H^1_{\dr}(A_J/\Fpbar)_{\widetilde{\theta}}^0$ as 
  $\widetilde{\Xi}_J(\underline{A}_1,\underline{A}_2,f)$.
If $\theta \in J$ and $\phi\circ\theta\not\in J$, then one similarly finds that the same
conclusion holds, now using that 
$\Lambda_{\widetilde{\theta}} = V(\Delta_{2,\phi^{-1}\circ\widetilde{\theta}})$
and that the isogeny $g \circ \sigma$ induces the isomorphism 
$H^1_{\dr}(A_J/\Fpbar)_{\widetilde{\theta}} \cong   H^1_{\dr}(A_2/\Fpbar)_{\widetilde{\theta}}$
of Corollary~\ref{cor:splice}(1).  We now shown that
 $\widetilde{\Xi}_J(\underline{A}_1,\underline{A}_2,f)$ defines the same
 point as the data $(\underline{A},\{L^0_\theta\}_{\theta\in \Sigma})$.
 
Suppose now that $(\underline{A}_1,\underline{A}_2,f) \in S_J'(\Fpbar)$, and let
 $(\underline{A},\{L^0_\theta\}_{\theta\in \Sigma}) = \widetilde{\Xi}_J(\underline{A}_1,\underline{A}_2,f)$,
 so in particular $A = A_J$ is the $J$-splice of $f$ and the $L_\theta^0$ are lines in 
 $H^1_\dr(A/\Fpbar)_{\widetilde{\theta}}^0$.  We must show that the triple
 $(\underline{A}_1',\underline{A}_2',f')$ arising from the above construction is isomorphic
 to $(\underline{A}_1,\underline{A}_2,f)$.
 
 With $\pi$ and $\psi$ as in diagram (\ref{eqn:splice}), we prove that
 $\Delta_1= (\psi^*)^{-1}\pi^*\D(A_1)$ as lattices in $\Delta\otimes \Q = \D(A)\otimes\Q$ by showing  that
 $$\Delta_{1,\tau}= (\psi^*)^{-1}\pi^*\D(A_1)_\tau$$
 for all $\tau \in \Theta_E$.
 For $\tau$ such that $\theta = \tau|_F \not\in J$, this is immediate from the definition of $\Delta_{1,\tau}$
 and Proposition~\ref{prop:splice}.  Suppose now that $\theta$ is such that $\phi^{-1}\circ\theta,\theta \in J$.
 The vanishing of $(\Lie f^\vee)_\tau$ means that $\Phi(f^*\D(A_2)_{\phi^{-1}\circ\tau}) = p\D(\A_1)_\tau$,
 so Proposition~\ref{prop:splice} implies that
 $$(\psi^*)^{-1}\pi^*\D(A_1)_\tau = p^{-1}\Phi((\psi^*)^{-1}\pi^*f^*\D(A_2)_{\phi^{-1}\circ\tau}) 
  = p^\delta \Phi(\D(A)_{\phi^{-1}\circ\tau}) = \Delta_{1,\tau},$$
 where $\delta =  0$ (resp.~$-1$) if $\tau = \widetilde{\theta}$ (resp.~$\widetilde{\theta}^c$).
 Finally if $\phi^{-1}\circ\theta \not\in J$ and $\theta \in J$, then the line $L_{\phi^{-1}\circ\theta}^0$
 given by the construction of $\widetilde{\Xi}_J$ is $((\psi^*)^{-1}\pi^*H^0(A_1,\Omega_{A_1/\Fpbar}))^0$,
 so that $L_{\phi^{-1}\circ\theta}=(\psi^*)^{-1}\pi^*H^0(A_1,\Omega_{A_1/\Fpbar})$.  It follows that
 $\Lambda_{\phi^{-1}\circ\widetilde{\theta}} = (\psi^*)^{-1}\pi^*V(\D(A_1)_{\widetilde{\theta}})$, and the compatibility
 of $\pi\circ\psi^{-1}$ with the quasi-polarizations $\lambda_1$ and $\lambda=\lambda_J$ inducing
 the perfect pairings on $\D(A_1)$ and $\Delta$ then implies that its dual lattice
 $\Lambda_{\phi^{-1}\circ\widetilde{\theta}^c}$ is $\Phi^{-1}((\psi^*)^{-1}\pi^*\D(A_1)_{\widetilde{\theta}^c})$.
 We then conclude the desired equation from the definition of $\Delta_{1,\tau}$ 
  for $\tau = \widetilde{\theta}$ and $\tau = \widetilde{\theta}^c$.  It follows that there
  is an isomorphism $\sigma_1: A_1 \to A_1'$ compatible with the quasi-isogenies
  $\psi\circ\pi^{-1}$ and $g^{-1}\circ f'$, where $g:A \to A_2'$ is the quotient map in
  the construction of  $(\underline{A}_1',\underline{A}_2',f')$.  
  Similarly one checks
  that $\Delta_2= (\psi^*)^{-1}\pi^*f^*\D(A_2)$ and deduces that there is an
  isomorphism $\sigma_2: A_2 \to A_2'$ compatible with $g$ and the isogeny
  $A \to A_2$ of diagram (\ref{eqn:splice}).  It follows that $\sigma_2\circ f = 
  f'\circ \sigma_1$, and it is straightforward to check that the $\sigma_i$ respect
  the auxiliary data for $\underline{A}_i$ for $i=1,2$, and hence define an
  isomorphism  between $(\underline{A}_1,\underline{A}_2,f)$ and $(\underline{A}'_1,\underline{A'}_2,f')$.
  
Recall that crystalline deformation theory, in particular the Grothendieck--Messing Theorem, gives an  equivalence of categories between 
 abelian varieties over $T=\Fpbar[\epsilon]/\epsilon^2$ and pairs $(A,\widetilde{L})$ where $A$ is an abelian variety over $\Fpbar$
 and $\widetilde{L}$ is a free $T$-submodule of $H^1_\dr(A/\Fpbar)\otimes_{\Fpbar} T$ such that
 $\widetilde{L} \otimes_T \Fpbar = H^0(A,\Omega^1_{A/\Fpbar})$. 
The equivalence is defined
 by the functor sending $\widetilde{A}$ to $(A,\widetilde{L})$, where
 $A = \widetilde{A}\otimes_T\Fpbar$ and $\widetilde{L} = H^0(\widetilde{A},\Omega^1_{\widetilde{A}/T})$
 is identified with a submodule of  $H^1_\dr(A/\Fpbar)\otimes_{\Fpbar} T$
 via its canonical isomorphisms with $H^1_\crys(A/T) \cong H^1_\dr(\widetilde{A}/T)$; in particular
 the functor $\cdot \otimes_T \Fpbar$ is faithful.  
 
Suppose that $(\underline{A}_1,\underline{A}_2,f)$ is an element
 of $S_J'(\Fpbar)$ with image $(\underline{A},\{L_\theta^0\}_{\theta \in \Sigma})$
 under $\widetilde{\Xi}_J$, and that 
 $(\underline{\widetilde{A}}_1,\underline{\widetilde{A}}_2,\widetilde{f})$
 and $(\underline{\widetilde{A}}'_1,\underline{\widetilde{A}}'_2,\widetilde{f}')$ are
 two elements of $S_J'(T)$ lifting $(\underline{A}_1,\underline{A}_2,f)$ and
having the same image $(\underline{\widetilde{A}},\{\widetilde{L}_\theta^0\}_{\theta\in \Sigma})$
under $\widetilde{\Xi}_J$.   Letting $\widetilde{L}_j = H^0(\widetilde{A}_j,\Omega^1_{\widetilde{A}_j/T})$
and $\widetilde{L}'_j = H^0(\widetilde{A}'_j,\Omega^1_{\widetilde{A}'_j/T})$, it suffices to prove
that $\widetilde{L}_j = \widetilde{L}_j'$ as submodules of 
$H^1_\dr(A_j/\Fpbar) \otimes_{\Fpbar} T$ for $j=1,2$.  Since $\widetilde{L}_j$
is stable under the action of $\CO_D\otimes T \cong \displaystyle\prod_{\tau \in \Theta_E} M_2(T)$
induced by $\iota_j$, we may decompose  $\widetilde{L}_j = \oplus_{\tau \in \Theta_E} \widetilde{L}_{j,\tau}$,
and similarly $\widetilde{L}'_j = \oplus_{\tau \in \Theta_E} \widetilde{L}'_{j,\tau}$.
Furthermore since $\widetilde{L}_j$ is totally isotropic with respect to the perfect $T$-valued pairing on
$H^1_\dr(A_j/\Fpbar)\otimes_{\Fpbar} T$ induced by $\lambda_j$, we have $\widetilde{L}_{j,\tau^c} = 
\widetilde{L}_{j,\tau}^\perp$, and similarly $\widetilde{L}'_{j,\tau^c} = 
(\widetilde{L}'_{j,\tau})^\perp$, so it suffices to prove that $\widetilde{L}_{j,\widetilde{\theta}}
= \widetilde{L}'_{j,\widetilde{\theta}}$ for all $\theta \in \Theta$ and $j=1,2$.

 If $\theta\not\in J$, then note that
 $H^0(\widetilde{A}_1,\Omega^1_{\widetilde{A}_1/T})_{\widetilde{\theta}}$ and 
 $H^0(\widetilde{A}_1',\Omega^1_{\widetilde{A}_1'/T})_{\widetilde{\theta}}$ have the
 same image in $H^1_{\dr}(\widetilde{A}/T)_{\widetilde{\theta}}$ under the isomorphism of
 part (2) of Corollary~\ref{cor:splice}(2), this being  
 $H^0(\widetilde{A},\Omega^1_{\widetilde{A}/T})_{\widetilde{\theta}}$
 if $\theta\not\in \Sigma$ and $\widetilde{L}_\theta^0 \otimes_T T^2$
 if $\theta \in \Sigma$, and it follows that 
 $\widetilde{L}_{1,\widetilde{\theta}} = \widetilde{L}'_{1,\widetilde{\theta}}$ in this case.
 On the other hand since $\Lie(\widetilde{f})_{\widetilde{\theta}} = 0$ if
 $\theta \not\in J$,  we have that $\widetilde{L}_{2,\widetilde{\theta}}$ is the kernel of 
 $H^1_{\dr}(\widetilde{A}_2/T)_{\widetilde{\theta}} \to H^1_{\dr}(\widetilde{A}_1/T)_{\widetilde{\theta}}$,
 and this is identified with $H^0(A_2,\Omega_{A_2/\Fpbar})_{\widetilde{\theta}}  \otimes_{\Fpbar}T$.
 Similarly the fact that $\Lie(\widetilde{f}')_{\widetilde{\theta}} = 0$ implies
 that $\widetilde{L}'_{2,\widetilde{\theta}} = 
 H^0(A_2,\Omega_{A_2/\Fpbar})_{\widetilde{\theta}}  \otimes_{\Fpbar}T$ and we conclude that
  $\widetilde{L}_{2,\widetilde{\theta}} = \widetilde{L}'_{2,\widetilde{\theta}}$.
 
 If $\theta \in J$, then we similarly find that 
 $\widetilde{L}_{2,\widetilde{\theta}} = \widetilde{L}'_{2,\widetilde{\theta}}$ coincide
 since they have the same images under the isomorphism of
 Corollary~\ref{cor:splice}(1).  Finally the vanishing of $\Lie(\widetilde{f}^\vee)_{\widetilde{\theta}}$
 implies that $\widetilde{L}_{2,\widetilde{\theta}}$ is the image of 
 $H^1_{\dr}(\widetilde{A}_2/T)_{\widetilde{\theta}}  \to H^1_{\dr}(\widetilde{A}_1/T)_{\widetilde{\theta}} $,
 and this is $H^0(A_1,\Omega_{A_1/\Fpbar})_{\widetilde{\theta}} \otimes_{\Fpbar}T$, and the same holds for
 $\widetilde{L}'_{2,\widetilde{\theta}}$.
 
 We thus have in all cases that $\widetilde{L}_{j,\widetilde{\theta}}
= \widetilde{L}'_{j,\widetilde{\theta}}$, from which it follows that 
$(\underline{\widetilde{A}}_1,\underline{\widetilde{A}}_2,\widetilde{f})$
 and $(\underline{\widetilde{A}}'_1,\underline{\widetilde{A}}'_2,\widetilde{f}')$
 define the same point of $S_J'(T)$.  This completes the proof of injectivity on
 tangent spaces, and hence of the theorem.

 \epf

\subsubsection{Descent and Hecke equivariance}\label{sss:JL:descent}
Recall from \S\ref{sss:Iw.dha} and \S\ref{sss:usv.dha}
that the reductions mod $p$ of the Shimura varieties associated to $G'$ (of level $U'_0(p)$)
and $G_\Sigma'$ (of level $U_\Sigma'$) are obtained as quotients of the schemes considered above
by the action of $\CO_{F,(p),+}^\times$ on the quasi-polarizations $\lambda$ (and multipliers $\epsilon$)
appearing in the moduli problems.  Furthermore for sufficiently small $U_\Sigma'$, the vector bundles
$\widetilde{\CV}_\tau^0$ on $\widetilde{Y}_{U_\Sigma'}(G_\Sigma')_\F$ descend to the vector bundles
denoted $\CV_\tau^0$ on the quotient $Y_{U_\Sigma'}(G_\Sigma')_\F$, which we denote
as $\overline{Y}_\Sigma'$.  The evident compatibility of the isomorphism $\widetilde{\Xi}_J$ 
with the action on $\CO_{F,(p),+}^\times$ therefore implies that it descends to an isomorphism\footnote{We remark
that since $(U_\Sigma' \cap F)^2$ acts via scalars on $\widetilde{\CV}_\tau^0$, the additional condition
imposed on $U_\Sigma'$ to ensure the vector bundles descend to the quotient by 
$\CO_{F,(p),+}/(U_\Sigma' \cap F)^2$ is not actually needed in order to descend the associated
projective bundles.}
$$\Xi'_J:  \overline{Y}_0(p)'_J   \longrightarrow \prod_{\theta\in \Sigma} 
 \P_{\overline{Y}'_\Sigma}  (\CV_{\widetilde{\theta}}^0),$$
where the product is a fibre product over $\overline{Y}'_{\Sigma}$.

Furthermore the isomorphisms $\Xi'_J$ for varying $U'$ are compatible with the Hecke action
in a sense we make precise as follows.   Recall that for  sufficiently small open
compact subgroups $U_1'$ and $U_2'$
of $G'(\A_\f)$ of level prime to $p$ and $g \in G'(\A_\f^{(p)})$
such that $g^{-1}U_1'g \subset U_2'$, we defined the finite \'etale morphism 
$\widetilde{\rho}_g : \widetilde{Y}_{U_{1,0}'(p)}(G')_\F \to \widetilde{Y}_{U_{2,0}'(p)}(G')_\F$,
restricting to such a morphism of the closed subschemes $S'_{1,J} \to S_{2,J}'$.
Letting $g_\Sigma = \xi_E^{-1}(g)$, we have $g_\Sigma^{-1}U_{1,\Sigma}'g_\Sigma \subset U'_{2,\Sigma}$,
hence also morphisms 
$$\widetilde{\rho}_{g_\Sigma} : \widetilde{Y}_{U_{1,\Sigma}}(G_\Sigma')_\F
\to \widetilde{Y}_{U_{2,\Sigma}}(G_\Sigma')_\F  \quad\mbox{and}\quad
\pi^*_{g_\Sigma}: \widetilde{\rho}_{g_\Sigma}^*\widetilde{\CV}^0_{2,\tau} 
\stackrel{\sim}{\longrightarrow} \widetilde{\CV}^0_{1,\tau},$$
where $\widetilde{\CV}^0_{i,\tau}$ denotes the automorphic bundle $\widetilde{\CV}^0_\tau$ on
$S_i := \widetilde{Y}_{U_{i,\Sigma}}(G_\Sigma')_\F$ for $i=1,2$.  These in turn induce a morphism
$$\prod_{\theta\in \Sigma} \P_{S_1} (\widetilde{\CV}_{1,\widetilde{\theta}}^0)  \to 
\prod_{\theta\in \Sigma} \P_{S_2} (\widetilde{\CV}_{2,\widetilde{\theta}}^0)$$
which we also denote by $\widetilde{\rho}_{g_\Sigma}$, and it is straightforward to check
that the resulting diagram
$$\xymatrix{
S_{1,J}' \ar[r]^-{\widetilde{\Xi}_{1,J}} \ar[d]_{\widetilde{\rho}_g} &
\displaystyle\prod_{\theta \in \Sigma}  \P_{S_1}(\widetilde{\CV}_{1,\widetilde{\theta}}^0)
\ar[d]^{\widetilde{\rho}_{g_\Sigma}} \\
S_{2,J}' \ar[r]^-{\widetilde{\Xi}_{2,J}} &
\displaystyle\prod_{\theta \in \Sigma}  \P_{S_2}(\widetilde{\CV}_{2,\widetilde{\theta}}^0) }
$$
commutes.  Taking quotients by the action of $\CO_{F,(p),+}$ then yields a commutative
diagram
\begin{equation}\label{eqn:heckeJL1}
\xymatrix{
\overline{Y}_{1,0}(p)'_J \ar[r]^-{{\Xi}'_{1,J}} \ar[d]_{{\rho}_g} &
\displaystyle\prod_{\theta \in \Sigma}  \P_{\overline{Y}'_{1,\Sigma}}({\CV}_{1,\widetilde{\theta}}^0)
\ar[d]^{{\rho}_{g_\Sigma}} \\
\overline{Y}_{2,0}(p)'_J \ar[r]^-{{\Xi}'_{2,J}} &
\displaystyle\prod_{\theta \in \Sigma}  \P_{\overline{Y}'_{2,\Sigma}}({\CV}_{2,\widetilde{\theta}}^0) }
\end{equation}

We have now proved the analogue of Theorem~\ref{thm:JLiso} with $G$ replaced by $G'$:
\begin{theorem} \label{thm:uJL}
For each sufficiently small open compact subgroup $U'$ of $G'(\A_\f)$ of level prime to $p$,
there is an isomorphism 
$$\Xi'_J:  \overline{Y}_0(p)'_J   \longrightarrow \prod_{\theta\in \Sigma} 
 \P_{\overline{Y}'_\Sigma}  (\CV_{\widetilde{\theta}}^0).$$
The morphisms $\Xi'_J$ are compatible with the Hecke action in the sense that 
if $g \in G'(\A_\f^{(p)})$ is such that $g^{-1}U_1'g \subset U_2'$,
then the diagram (\ref{eqn:heckeJL1}) commutes.
\end{theorem}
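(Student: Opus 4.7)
The plan is to deduce the theorem directly from its pre-descent analogue, Theorem~\ref{thm:bigJL}, by verifying two functorialities of the isomorphism $\widetilde{\Xi}_J$: equivariance for the $\CO_{F,(p),+}^\times$-action used to pass to the Shimura-variety quotients, and compatibility with the prime-to-$p$ Hecke correspondences. All the genuinely geometric content has already been established in Theorem~\ref{thm:bigJL} via the splicing construction and the Dieudonn\'e-theoretic lattice calculations of Proposition~\ref{prop:splice} and the Grothendieck--Messing tangent-space analysis; what remains is essentially bookkeeping.

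For the descent step, I would verify that $\widetilde{\Xi}_J$ intertwines the natural actions of $\CO_{F,(p),+}^\times$ on source and target. On $S_J'$, $\mu$ acts by replacing $(\lambda_1,\epsilon_1)$ with $(\mu\lambda_1,\mu\epsilon_1)$ and leaves the underlying abelian scheme, $\CO_D$-action, isogeny $f$, and prime-to-$p$ level structure unchanged. Since the splicing construction in \S\ref{sss:splice.univ} builds $A_J$, $\iota_J$ and $\eta_J$ purely out of $(A_1,A_2,f)$ and the canonical quasi-isogeny $\psi\circ\pi^{-1}$, these data are unaffected; meanwhile $\lambda_J$ and $\epsilon_J=\varepsilon^{(p)}\epsilon_1$ scale by $\mu$, matching the action on $\widetilde{Y}_{U_\Sigma'}(G_\Sigma')_\F$. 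The projections to $\P_S(\widetilde{\CV}_{\widetilde{\theta}}^0)$ are manufactured from the Hodge filtration on $\CH^1_{\dr}(A_j/S_J')$ via the isomorphisms of Corollary~\ref{cor:splice}, none of which sees the polarization scaling. Hence $\widetilde{\Xi}_J$ descends to the claimed isomorphism $\Xi'_J$.

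For Hecke equivariance, I would establish the commutative diagram first at the level of $\widetilde{Y}$-schemes and then descend. Given $g$ with $g^{-1}U_1'g\subset U_2'$ and $g_\Sigma=\xi_E^{-1}(g)$, the canonical quasi-isogeny $\pi_g:A_{1,U_1'}\to\widetilde{\rho}_g^*A_{1,U_2'}$ is an isomorphism on $p^\infty$-torsion, hence induces an isomorphism $H_{U_1'}\to\widetilde{\rho}_g^*H_{U_2'}$ identifying the subgroup schemes $C_{J,U_1'}$ and $\widetilde{\rho}_g^*C_{J,U_2'}$ of Lemma~\ref{lem:slicing}. It follows that $\pi_g$ descends to a prime-to-$p$ quasi-isogeny between the $J$-splices, compatible with the $\CO_{D_\Sigma}$-actions, quasi-polarizations, and level structures built via $\vartheta$, $h$, and $\varepsilon$; this yields the identification $\widetilde{\rho}_g^*\widetilde{\Psi}_{2,J}=\widetilde{\Psi}_{1,J}\circ\widetilde{\rho}_{g_\Sigma}$. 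On the projective bundle factors, compatibility follows because $\pi_g$ pulls back the Hodge filtration and intertwines the isomorphisms of Corollary~\ref{cor:splice}(1)--(2). Descending through the $\CO_{F,(p),+}^\times$-quotients produces diagram (\ref{eqn:heckeJL1}).

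The only real obstacle is the underlying Theorem~\ref{thm:bigJL}, which has been proved; the steps above are routine verifications of functoriality, with the minor care required to track the auxiliary choices $h$, $\vartheta$, $\varepsilon$, $\delta_\Sigma$ through both the splicing construction and the identification of $\CO_D\otimes\F_p$-modules with $\CO_{D_\Sigma}\otimes\F_p$-modules.
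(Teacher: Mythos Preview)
Your proposal is correct and follows essentially the same approach as the paper: the paper likewise deduces Theorem~\ref{thm:uJL} from Theorem~\ref{thm:bigJL} by noting the ``evident compatibility'' of $\widetilde{\Xi}_J$ with the $\CO_{F,(p),+}^\times$-action (giving descent) and then checking Hecke equivariance at the pre-descent level before passing to quotients. Your write-up is in fact more detailed than the paper's rather terse treatment; the only minor slip is the notation ``$\widetilde{\rho}_g^*\widetilde{\Psi}_{2,J}=\widetilde{\Psi}_{1,J}\circ\widetilde{\rho}_{g_\Sigma}$'', where the intended commutativity should read $\widetilde{\rho}_{g_\Sigma}\circ\widetilde{\Psi}_{1,J} = \widetilde{\Psi}_{2,J}\circ\widetilde{\rho}_g$.
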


\subsection{Proof of Theorem~\ref{thm:JLiso}}
We now explain how to deduce Theorem~\ref{thm:JLiso} from Theorem~\ref{thm:uJL}.
The task at hand is similar to that of proving that Theorem~5.2 of \cite{TX} follows from their Theorem~5.8 and Corollary~5.9,
but we were not able to supply a proof of this
using the ingredients provided in \cite{TX}\footnote{The proof of the implication in \cite{TX} seems to be non-trivial, but is omitted from the paper.
In any case, our situation is different in that we also need to transfer the vector bundles whose projectivizations are involved.},
so instead we give an argument based on our
Lemmas~\ref{lem:restriction} and~\ref{lem:cart2} (similar to Construction~2.12 of \cite{TX2}), 

\subsubsection{Compatibility on components}\label{sss:trans.comp}
Note that Lemmas~\ref{lem:restriction} and~\ref{lem:cart2} (and the discussion at the end of \S\ref{sss:strata.usv})
describe both varieties in Theorem~\ref{thm:JLiso}
(over $\F = \Fpbar$) as fibre products of those of Theorem~\ref{thm:uJL} with $C$ over $C'$, where $C = C_{\det(U)} = C_{\det(U_\Sigma)}$
indexes the set of components of $\overline{Y} = Y_U(G)_{\Fpbar}$ (and $\overline{Y}_\Sigma = Y_{U_\Sigma}(G_\Sigma)_{\Fpbar}$
if $\Sigma \neq \emptyset$) and $C' = C_{\nu'(U')} = C_{\nu'(U_\Sigma')}$ indexes the set of components of 
$\overline{Y}' = Y'_{U'}(G')_{\Fpbar}$ (and $\overline{Y}'_\Sigma = Y'_{U'_\Sigma}(G'_\Sigma)_{\Fpbar}$
if $\Sigma \neq \emptyset$).  Thus if the isomorphism of Theorem~\ref{thm:uJL} were compatible with the natural projections to $C'$,
we could obtain Theorem~\ref{thm:JLiso} by pulling back along the inclusion $C\to C'$.  Unfortunately we do not know
this compatibility, but it will suffice for our purposes to prove that the diagram
\begin{equation}\label{eqn:components}  \xymatrix{
\overline{Y}_{0}(p)'_J   \ar[d]_{{\Xi}'_{J}} \ar[rr]  && \overline{Y}_0(p)' \ar[rr] &&  \overline{Y}' \ar[rr] && C'  \\
\displaystyle\prod_{\theta \in \Sigma}  \P_{\overline{Y}'_{\Sigma}}({\CV}_{\widetilde{\theta}}^0)
\ar[rrr] &&&  \overline{Y}'_\Sigma \ar[rrr]  &&& C'  \ar[u]  }
\end{equation}
commutes for some automorphism of $C'$
defined by multiplication by an element $t \in T'(\A_\f^{(p)})$ independent of $U'$.
Let $\pi$ denote the composite along the top row
of the diagram, let $\Psi_J$ denote the composite $\overline{Y}_0(p)'_J \to \overline{Y}'_\Sigma$,
and let $\pi_\Sigma$ denote the projection $ \overline{Y}'_\Sigma \to C'$.

Suppose first that $\Sigma \neq \Theta$.  In this case $\pi_\Sigma$ has connected fibres, and since
$\Psi_J$ is closed and has connected fibres, it follows that $\pi_\Sigma\circ \Psi_J$ has connected fibres.
Thus if $y_1,y_2 \in \overline{Y}_0(p)'_J(\Fpbar)$ have the same image under $\pi_\Sigma \circ \Psi_J$,
then $\pi(y_1) = \pi(y_2)$.  Since $\pi_\Sigma\circ \Psi_J$ is surjective, it follows that there is a unique
endomorphism $\delta = \delta_{U'}$ of $C'$ such that $\pi = \delta\circ \pi_\Sigma \circ \Psi_J$.
Furthermore if $U_1', U_2' \subset G'(\A_\f)$ and $g \in G'(\A_\f^{(p)})$
are such that $g^{-1}U_1'g \subset U_2'$, then the compatibility of $\Psi_J$, $\pi$ and $\pi_\Sigma$
with the Hecke action (via $\xi_E$ and $\nu'$) implies that the diagram
$$\xymatrix{C_1 \ar[r]^{\delta_1}\ar[d]_{\nu'(g)} & C_1 \ar[d]^{\nu'(g)} \\
C_2 \ar[r]^{\delta_2} & C_2}$$
commutes, where $C_j = C_{\nu'(U'_j)}$ and $\delta_j = \delta_{U_j'}$ for $j=1,2$.
We may therefore take the limit over sufficiently small $(U')^p\subset G'(\A_\f^{(p)})$ to obtain
an endomorphism $\delta$ of
$$\varprojlim C_{\nu'(U')}
  = \varprojlim  T'(\Q)^{(p)}_+\backslash T'(\A_\f^{(p)}) / \nu'(U')^p
   = \left.\overline{T'(\Q)^{(p)}_+}\right\backslash T'(\A_\f^{(p)}),$$
 compatible with the action of $T'(\A_\f^{(p)}) = \nu'(G'(\A_\f^{(p)}))$
 (where $T'(\Q)^{(p)}$ denotes the subgroup of $p$-integral elements of $T'(\Q)$
 and $\overline{\cdot}$ denotes its closure).
 It follows that $\delta$ is defined by multiplication by $t$ for some
 $t \in T'(\A_\f^{(p)})$ independent of $U'$.  

Suppose on the other hand that $\Sigma = \Theta$, so that $B_\Sigma$ is a totally definite quaternion algebra,
$\overline{Y}'_\Sigma$ is the finite set 
$$G'_\Sigma(\Q)\backslash G'_\Sigma(\A_\f) / U_\Sigma' = G'_\Sigma(\Q)^{(p)}\backslash G'_\Sigma(\A_\f^{(p)}) / (U'_\Sigma)^p$$
viewed as a scheme over $\Fpbar$  (where $G_\Sigma'(\Q)^{(p)}$, and the morphism
$$\pi_\Sigma: \overline{Y}'_\Sigma \longrightarrow C' = T'(\Q)_+^{(p)}\backslash T'(\A_\f^{(p)}) / (\nu'(U'_\Sigma)^p)$$
is induced by $\nu':G_\Sigma' \to T'$.  The same argument as above still yields a unique morphism
 $\delta = \delta_{U'}: \overline{Y}'_\Sigma \longrightarrow C'$ such that $\pi = \delta \circ \Psi_J$.
 Furthermore if $U_1', U_2' \subset G'(\A_\f)$ and $g \in G'(\A_\f^{(p)})$
are such that $g^{-1}U_1'g \subset U_2'$, then the diagram
$$\xymatrix{\overline{Y}'_{1,\Sigma}  \ar[r]^{\delta_1}\ar[d]_{\rho_{g_\Sigma}} & C_1 \ar[d]^{\nu'(g)} \\
\overline{Y}'_{2,\Sigma} \ar[r]^{\delta_2} & C_2}$$
commutes (where now $\overline{Y}'_{j,\Sigma} = G'_\Sigma(\Q)\backslash G'_\Sigma(\A_\f) / U_{j,\Sigma}'$ for $j=1,2$).
Taking the limit over sufficiently small $(U')^p\subset G'(\A_\f^{(p)})$ now yields a map
$$\delta:  \left.\overline{G'_\Sigma(\Q)^{(p)}}\right\backslash G'_\Sigma(\A_\f^{(p)}) 
\longrightarrow
    \left.\overline{T'(\Q)^{(p)}_+}\right\backslash T'(\A_\f^{(p)})$$
such that $\delta(yg) = \nu'(g)\delta(y)$ for all $g \in G'_\Sigma(\A_\f^{(p)})$,
and it follows just the same that $\delta(y) = t\pi_\Sigma(y)$ for some 
$t \in T'(\A_\f^{(p)})$ independent of $U'$.  

\subsubsection{Construction of the isomorphism}\label{sss:trans.iso}
We have now shown that the diagram (\ref{eqn:components}) commutes, where the
right vertical arrow is multiplication by $t^{-1}$
for some $t \in T'(\A_\f^{(p)})$ independent of $U$.  (Note from the proof that such a $t$ is only
unique up to multiplication by an element of $\overline{T'(\Q)^{(p)}_+}$.)
Recall that $T' = (T_F \times T_E)/T_F$ where $T_F$ is embedded in the product via
$x \mapsto (x^2,x^{-1})$, so we may choose $u \in T_E(\A_\f^{p)})$ such that 
$(1,u)t^{-1}$ is in the image of the embedding $T_F(\A_\f^{(p)}) \to T'(\A_\f^{(p)})$
defined by $v \mapsto (v,1)$.  We then have a commutative diagram
\begin{equation}\label{eqn:tweaked} \xymatrix{\overline{Y}_{0}(p)'_J   \ar[r]^-{\rho_{u}}  \ar[d] &
\overline{Y}_{0}(p)'_J   \ar[r]^-{{\Xi}'_{J}}  \ar[d] &
\displaystyle\prod_{\theta \in \Sigma}  \P_{\overline{Y}'_{\Sigma}}({\CV}_{\widetilde{\theta}}^0) \ar[d] \\
C' \ar[r]_{\cdot (1,u)} & C' \ar[r]_{\cdot t^{-1}} & C'}
\end{equation}
where the downward arrows are the natural maps defined by the rows of (\ref{eqn:components}).

Suppose now that $U$ is an open compact subgroup of $G(\A_\f)$ of level prime to $p$,
and let $U_\Sigma = \xi^{-1}(U) \subset G_\Sigma(\A_\f)$.  Suppose that $V_E$ is an
open compact subgroup of $\A_{E,\f}^\times$ of level prime to $p$, sufficiently small
relative to $U$ (and hence $U_\Sigma$) in the sense of \S\ref{sss:qsv.tori}, and let $U'$
denote the image of $U \times V_E$ in $G'(\A_\f)$; note that $U'_\Sigma = \xi^{-1}(U')$
is also the image of $U_\Sigma \times V_E$ in $G'_\Sigma(\A_\f)$.  We assume that
$U$ is sufficiently small that so are $U_\Sigma$, $U'$ and $U'_\Sigma$.

By Lemma~\ref{lem:restriction}, the natural maps give rise to a Cartesian diagram
$$\begin{array}{ccc} \overline{Y}_\Sigma & \stackrel{i}{\longrightarrow} & \overline{Y}'_\Sigma \\
\downarrow && \downarrow \\
C_{\det(U)}  & \longrightarrow & C_{\nu'(U')}, \end{array}$$
where $\overline{Y}_\Sigma = Y_{U_\Sigma}(G_\Sigma)_{\Fpbar}$ and 
where $\overline{Y}'_\Sigma = Y_{U'_\Sigma}(G'_\Sigma)_{\Fpbar}$.
Furthermore the automorphic vector bundle $\CV_\theta$ is defined as
$i^*\CV^0_{\widetilde{\theta}}$, so we obtain from this a Cartesian diagram
$$\begin{array}{ccc} {\displaystyle\prod_{\theta\in \Sigma}} \P_{\overline{Y}_\Sigma}(\CV_\theta)
& \longrightarrow &{\displaystyle\prod_{\theta\in \Sigma}} \P_{\overline{Y}'_\Sigma}(\CV^0_{\widetilde{\theta}}) \\
\downarrow && \downarrow \\
C_{\det(U)}  & \longrightarrow & C_{\nu'(U')} .\end{array}$$
On the other hand, by the discussion at the end of \S\ref{sss:strata.usv},
Lemma~\ref{lem:cart2} gives rise to a Cartesian diagram
$$\begin{array}{ccc} \overline{Y}_0(p)_J
& \longrightarrow &\overline{Y}_0(p)'_J \\
\downarrow && \downarrow \\
C_{\det(U)}  & \longrightarrow & C_{\nu'(U')} .\end{array}$$
Note that the bottom row of (\ref{eqn:tweaked}) restricts to
multiplication by $v$ on $C_{\det(U)}$ for some $v \in T_F(\A_\f^{(p)})$ independent of $U$.
It follows that the fibre product of the isomorphisms 
$$\Xi'_J\circ \rho_{u}:  \overline{Y}_0(p)'_J \to {\displaystyle\prod_{\theta\in \Sigma}}\P_{\overline{Y}'_\Sigma}(\CV^0_{\widetilde{\theta}})
\quad\mbox{and}\quad \cdot{v}: C_{\det(U)} \to C_{\det(U)}$$
over $\cdot v: C_{\nu'(U')} \to C_{\nu'(U')}$ defines an isomorphism
$$\Xi_J :  \overline{Y}_0(p)_J \to {\displaystyle\prod_{\theta\in \Sigma}} \P_{\overline{Y}_\Sigma}(\CV_{{\theta}}).$$

Furthermore if $U_1$ and $U_2$ are sufficiently small open compact subgroups of
$g \in G(\A_\f^{(p)})$ is such that $g^{-1} U_1 g \subset U_2$, then letting $g_\Sigma = \xi^{-1}(g) \in G_\Sigma(\A_\f^{(p)})$
and assuming $V_{E,1} \subset V_{E,2}$,  the morphisms
$$ \overline{Y}_{1,0}(p)_J \stackrel{\rho_g}{\longrightarrow} \overline{Y}_{2,0}(p)_J
\quad\mbox{and}\quad
\P_{\overline{Y}_{1,\Sigma}}(\CV_{{1,\theta}}) 
\stackrel{\rho_{g_\Sigma}}{\longrightarrow} \P_{\overline{Y}_{2,\Sigma}}(\CV_{{2,\theta}})$$
are the restrictions of the ones obtained from the images of $g$ in $G'(\A_\f^{(p)})$
and $g_\Sigma \in G'_\Sigma(\A_\f^{(p)})$.  Since $u$ is central in $G'(\A_\f^{(p)})$,
$\rho_u$ commutes with $\rho_g$, and the commutativity of (\ref{eqn:heckeJL1})
implies that of
\begin{equation} \label{eqn:heckeJL2}
\xymatrix{
\overline{Y}_{1,0}(p)_J \ar[r]^-{{\Xi}_{1,J}} \ar[d]_{{\rho}_g} &
\displaystyle\prod_{\theta \in \Sigma}  \P_{\overline{Y}_{1,\Sigma}}({\CV}_{1,\theta})
\ar[d]^{{\rho}_{g_\Sigma}} \\
\overline{Y}_{2,0}(p)_J \ar[r]^-{{\Xi}_{2,J}} &
\displaystyle\prod_{\theta \in \Sigma}  \P_{\overline{Y}_{2,\Sigma}}({\CV}_{2,\theta}). }
\end{equation}
In particular taking $U_1 = U_2$ and $g=1$, we see that $\Xi_J$ is independent of the
choice of $V_E$.  (Note however that several choices, namely $\vartheta$, $\xi$ and $u$,
were made in the construction of $\Xi_J$, and the choice of $\widetilde{\Theta}$ is even
implicit in the definition of $\CV_\theta$, but these were all independent of $U$.)

This completes the proof of Theorem~\ref{thm:JLiso}:
\begin{theorem} \label{thm:qJL}
For each sufficiently small open compact subgroup $U$ of $G(\A_\f)$ of level prime to $p$,
there is an isomorphism 
$$\Xi_J:  \overline{Y}_0(p)_J   \longrightarrow \prod_{\theta\in \Sigma} 
 \P_{\overline{Y}_\Sigma}  (\CV_{{\theta}}).$$
The morphisms $\Xi_J$ are compatible with the Hecke action in the sense that 
if $g \in G(\A_\f^{(p)})$ is such that $g^{-1}U_1g \subset U_2$,
then the diagram (\ref{eqn:heckeJL2}) commutes.
\end{theorem}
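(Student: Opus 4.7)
The plan is to deduce Theorem~\ref{thm:qJL} from the unitary analogue Theorem~\ref{thm:uJL} by transferring the isomorphism along the natural morphisms comparing the quaternionic and unitary Shimura varieties. First I would observe that under the hypotheses needed to invoke Lemmas~\ref{lem:restriction} and~\ref{lem:cart2} (choosing $V_E$ sufficiently small relative to $U$ and $U_\Sigma$, and letting $U'$ and $U'_\Sigma$ be the images of $U\times V_E$ and $U_\Sigma \times V_E$), we have Cartesian descriptions
$$\overline{Y}_0(p)_J \;\cong\; \overline{Y}_0(p)'_J \times_{C'} C \quad\mbox{and}\quad
\prod_{\theta\in\Sigma}\P_{\overline{Y}_\Sigma}(\CV_\theta) \;\cong\;
\prod_{\theta\in\Sigma}\P_{\overline{Y}'_\Sigma}(\CV^0_{\widetilde{\theta}}) \times_{C'} C,$$
with $C = C_{\det(U)}$ and $C' = C_{\nu'(U')}$, using that $\CV_\theta = i^*\CV^0_{\widetilde{\theta}}$. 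Thus if $\Xi'_J$ were known to commute with the projections to $C'$, we could simply pull back.

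The main obstacle is that the isomorphism $\Xi'_J$ provided by Theorem~\ref{thm:uJL} is not \emph{a priori} compatible with the projections to $C'$. To measure the discrepancy, consider the composite
$$\pi: \overline{Y}_0(p)'_J \xrightarrow{\Xi'_J} \prod_{\theta\in\Sigma} \P_{\overline{Y}'_\Sigma}(\CV^0_{\widetilde{\theta}}) \longrightarrow \overline{Y}'_\Sigma \xrightarrow{\pi_\Sigma} C'.$$
When $\Sigma \ne \Theta$, the composite $\pi_\Sigma \circ \Psi_J$ (where $\Psi_J = \overline{Y}_0(p)'_J \to \overline{Y}'_\Sigma$) is surjective with connected fibres, and when $\Sigma = \Theta$ one argues similarly with zero-dimensional varieties. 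Either way there exists a unique endomorphism $\delta = \delta_{U'}$ of $C'$ satisfying $\pi = \delta \circ \pi_\Sigma \circ \Psi_J$. Invoking Hecke-equivariance of $\Xi'_J$ (diagram~(\ref{eqn:heckeJL1})) for varying $U'$ and taking an inverse limit over $(U')^p$, I would conclude that $\delta$ is multiplication by a single element $t \in T'(\A_\f^{(p)})$, independent of $U'$.

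The correction step uses that $T' = (T_F \times T_E)/T_F$ splits as $T_F \times T_E^1$, so there exists $u \in T_E(\A_\f^{(p)})$ such that $(1,u)t^{-1}$ lies in the image of $T_F(\A_\f^{(p)}) \hookrightarrow T'(\A_\f^{(p)})$, say equal to the image of some $v \in T_F(\A_\f^{(p)})$. The Hecke operator $\rho_u$ acts on $\overline{Y}_0(p)'_J$ (lying in the centre of $G'(\A_\f^{(p)})$), and the composite $\Xi'_J \circ \rho_u$ now fits in a commutative square with the map $\cdot v$ on $C$ over $\cdot v$ on $C'$. Taking the fibre product with $C$ over $C'$ yields the desired isomorphism $\Xi_J$.

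Finally I would verify that $\Xi_J$ is independent of the auxiliary choice of $V_E$, and is Hecke-equivariant in the sense of diagram~(\ref{eqn:heckeJL2}). This follows because $u$ is central in $G'(\A_\f^{(p)})$, so $\rho_u$ commutes with $\rho_g$ for any $g$ coming from $G(\A_\f^{(p)})$, and the Hecke-compatibility at the unitary level (Theorem~\ref{thm:uJL}) descends accordingly. The hard part of the whole argument is really the unitary statement itself—which is handled by the splicing construction and Theorem~\ref{thm:bigJL}—while the transfer step here is essentially formal once the component-mixing issue is isolated and corrected by a single central Hecke twist.
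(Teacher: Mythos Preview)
Your approach is essentially identical to the paper's, and the overall strategy is correct. There is, however, one notational slip that makes the key step circular as written: you define $\pi$ as the composite $\overline{Y}_0(p)'_J \xrightarrow{\Xi'_J} \prod_{\theta\in\Sigma}\P_{\overline{Y}'_\Sigma}(\CV^0_{\widetilde{\theta}}) \to \overline{Y}'_\Sigma \xrightarrow{\pi_\Sigma} C'$, but this composite \emph{is} $\pi_\Sigma \circ \Psi_J$, so the equation $\pi = \delta \circ \pi_\Sigma \circ \Psi_J$ is trivially satisfied with $\delta = \mathrm{id}$ and measures nothing. What you need (and what the paper does) is to take $\pi$ to be the \emph{other} projection, namely the composite $\overline{Y}_0(p)'_J \hookrightarrow \overline{Y}_0(p)' \to \overline{Y}' \to C'$ via the forgetful map on the $G'$-side; it is the comparison of \emph{this} map with $\pi_\Sigma \circ \Psi_J$ that produces the nontrivial discrepancy $\delta$. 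Once $\pi$ is corrected, the rest of your argument goes through exactly as in the paper.
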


\begin{remark}  It is natural to expect that, like $\overline{Y}_0(p)_J$, the product
 $\prod_{\theta\in \Sigma} \P_{\overline{Y}_\Sigma}  (\CV_{{\theta}})$ admits canonical
 descent data to $\F_J$ (and even to $\F_\Sigma$), where $\F_J$ denotes
 the fixed field of the stabilizer of $J$ in $\gal(\Fpbar/\F_p)$.
 However one easily sees that if $\Sigma \neq \emptyset$, then $\Xi_J$ is not
 compatible with the natural Galois action on the sets of components, and hence
 does not descend to a morphism over $\F_J$.  One can then consider whether there
 is a natural description of the obstruction, for example viewed
 as a class in $H^1(\gal(\Fpbar/\F_J), \Aut_{\Fpbar}(\overline{Y}_0(p)_J))$.
 \end{remark}

\subsection{Comparison of vector bundles}  \label{sec:compbuns}
In this section, we will relate the Raynaud bundles on $\overline{Y}_0(p)_J$ to the tautological line
bundles on $\prod_{\theta \in \Sigma}  \P_{\overline{Y}_{\Sigma}}({\CV}_{\theta})$
and automorphic vector bundles on $\overline{Y}_\Sigma$.

\subsubsection{The unitary setting}\label{sss:buns.usv}
We first prove an analogous result in the context of Theorem~\ref{thm:uJL}.
As in the construction of the morphisms $\widetilde{\Psi}_J$ and $\widetilde{\Xi}_J$ in \S\ref{sss:JL.con}, we let
$A_J$ denote the $J$-splice of the universal isogeny
$f:A_1 \to A_2$ over the closed subscheme $S_J'$ of $\widetilde{Y}_{U'_0(p)}(G')_\F$.
We write $H$ for the Raynaud group scheme $e_0\ker(f)$ on
$S_J'$, and let $\pi:H \to S_J'$, $s_J:A_J \to S_J'$ and $s_j:A_j \to S_J'$ for $j=1,2$ denote
the structure morphisms.    

Recall that for $\tau \in \Theta_E$, the rank two automorphic bundle $\widetilde{\CV}_\tau^0$
on $ \widetilde{Y}_{U'_\Sigma}(G_\Sigma')_\F$ is defined in \S\ref{sss:avb.umf} in terms of the
de Rham cohomology of the universal abelian scheme; its determinant bundle is denoted 
$\widetilde{\delta}_\tau$, and if $\tau|_F \not\in \Sigma$, then the Hodge filtration on $\widetilde{\CV}_\tau^0$
is given by the line bundles $\widetilde{\omega}_{\widetilde{\theta}}^0$ and $\widetilde{\upsilon}_{\widetilde{\theta}}^0$.

The Raynaud bundles $\CL_\tau$ on $S_J'$ are defined by the identification
$$H = \SPEC\left(\left(\sym_{\CO_{S_J'}} \left( \oplus_{\tau\in \Theta_E}  \CL_\tau  \right) \right) /
 \langle (s_\tau - 1)\CL_\tau^p \rangle_{\tau \in \Theta_E} \right).$$
Recall also from Lemma~\ref{lem:vanishing} and its proof that if $\theta \not\in J$, then
$s_{\phi^{-1}\circ{\widetilde{\theta}}} = 0$ and $\Lie(H/S_J')_{\widetilde{\theta}} = \Lie(A_1/S_J')_{\widetilde{\theta}}^0$, so we have
canonical isomorphisms
$$\CL_{\widetilde{\theta}} = (\pi_*\Omega^1_{H/S_J'})_{\widetilde{\theta}} = (s_{1,*} \Omega^1_{A_1/S_J'})_{\widetilde{\theta}}^0.$$
Similarly if $\theta \in J$, then $t_{\phi^{-1}\circ{\widetilde{\theta}}} = 0$ and $\Lie(H^\vee/S_J')_{\widetilde{\theta}} = \Lie(A_2^\vee/S_J')_{\widetilde{\theta}}^0$,
giving canonical isomorphisms
$$\CL_{\widetilde{\theta}} = \Lie(H^\vee/S_J')_{\widetilde{\theta}} = (R^1s_{2,*} \CO_{A_2})_{\widetilde{\theta}}^0.$$

Next recall from the construction of the morphisms
$$\xymatrix{ S_J'  \ar[r]^-{\widetilde{\Xi}_J}  \ar[rd]_-{\widetilde{\Psi}_J} & 
  {\displaystyle\prod_{\theta\in \Sigma}  \P_S (\widetilde{\CV}^0_{\widetilde{\theta}} )}\ar[d]\\
& S =  \widetilde{Y}_{U'_\Sigma}(G_\Sigma')_\F,}$$
that Corollary~\ref{cor:splice} yields an isomorphism
\begin{equation}\label{eqn:DRcomp}
\CH^1_{\dr}(A_j/S_J')^0_{\widetilde{\theta}}
  \longrightarrow \CH^1_{\dr}(A_J/S_J')^0_{\widetilde{\theta}} = 
   \widetilde{\Psi}_J^*\widetilde{\CV}^0_{\widetilde{\theta}},\end{equation}
where $j=2$ or $1$ according to whether or not $\theta \in J$.
 Furthermore, if $\theta \not\in \Sigma$, then the isomorphism
is compatible with the Hodge filtration and so induces isomorphisms
$$(s_{j,*} \Omega^1_{A_j/S_J'})_{\widetilde{\theta}}^0  \cong \widetilde{\Psi}_J^*\widetilde{\omega}^0_{\widetilde{\theta}}
     \quad\mbox{and} \quad (R^1s_{j,*} \Omega^1_{A_j/S_J'})_{\widetilde{\theta}}^0  \cong \widetilde{\Psi}_J^*\widetilde{\upsilon}^0_{\widetilde{\theta}}
              \cong \widetilde{\Psi}_J^*\widetilde{\delta}_{\widetilde{\theta}} \otimes_{\CO_{S_J'}} \widetilde{\Psi}_J^*(\widetilde{\omega}^0_{\widetilde{\theta}})^{-1}  .$$
On the other hand if $\theta \in \Sigma$, then the Hodge filtration on $\CH^1_{\dr}(A_j/S_J')^0_{\widetilde{\theta}}$
defines the morphism $S_J' \to \P_S(\widetilde{\CV}^0_{\widetilde{\theta}} )$
and hence induces isomorphisms 
$$(s_{j,*} \Omega^1_{A_j/S_J'})_{\widetilde{\theta}}^0  \cong \widetilde{\Psi}_J^*\widetilde{\delta}_{\widetilde{\theta}} \otimes_{\CO_{S_J'}} \widetilde{\Xi}_J^*\CO(-1)_\theta
     \quad\mbox{and} \quad (R^1s_{j,*} \Omega^1_{A_j/S_J'})_{\widetilde{\theta}}^0  \cong \widetilde{\Xi}_J^*\CO(1)_\theta.$$
We thus obtain an isomorphism
\begin{equation}\label{eqn:bundling}  \CL_{\widetilde{\theta}}  \stackrel{\sim}{\longrightarrow}  \widetilde{\Xi}^*_J\CM_\theta,\quad \mbox{$\CM_\theta :=$}
 \left\{ \begin{array}{cl}  \widetilde{\omega}_{\widetilde{\theta}},&  \mbox{if $\theta \not\in J$, $\theta \not\in \Sigma$;} \\
\widetilde{\delta}^0_{\widetilde{\theta}} (-1)_\theta ,&  \mbox{if $\theta \not\in J$, $\theta \in \Sigma$;} \\
\widetilde{\delta}^0_{\widetilde{\theta}}(\widetilde{\omega}_{\widetilde{\theta}})^{-1} ,& 
         \mbox{if $\theta \in J$, $\theta \not\in \Sigma$;} \\
\CO(1)_\theta,&  \mbox{if $\theta \in J$, $\theta \in \Sigma$,}\end{array}
  \right.\end{equation}
where we write $\widetilde{\delta}_{\widetilde{\theta}}$ and $\widetilde{\omega}^0_{\widetilde{\theta}}$ for their pull-back to
$\prod_{\theta \in \Sigma} \P_S (\widetilde{\CV}^0_{\widetilde{\theta}} )$, $\CO(1)_\theta$ for the pull-back of the twisting sheaf
on the $\theta$-component of the product, and $\CF(n)_\theta$ for the twist of $\CF$ by $\CO(n)_\theta = \CO(1)_\theta^n$.

The isomorphism of (\ref{eqn:bundling}) is Hecke-equivariant in the following sense.  Suppose as usual
that $U'_1, U_2' \subset G'(\A_\f)$ are as above and $g \in G'(\A_\f^{(p)})$ is such that $g^{-1}U'_1 g \subset U'_2$,
It is then straightforward to check that the resulting diagram
\begin{equation}\label{eqn:JLHbundle0} \xymatrix{
\widetilde{\rho}_g^*\CL_{2,\widetilde{\theta}}  \ar[r]^-{\widetilde{\rho}_g^*(\sigma_2)} \ar[d]_{\pi_g^*} &
*+[r]{ \widetilde{\rho}_g^* \widetilde{\Xi}^*_{2,J} \CM_{2,\theta} = }& 
  \widetilde{\Xi}^*_{1,J} \widetilde{\rho}_{g_\Sigma}^* \CM_{2,\theta} \ar[d]^{  \widetilde{\Xi}^*_{1,J}( \pi_{g_\Sigma}^* )} \\
\CL_{1,\widetilde{\theta}}  \ar[rr]_-{\sigma_1}  && 
  \widetilde{\Xi}^*_{1,J} \CM_{1,\theta},}\end{equation}
  where $\sigma_i: \CL_{i,\widetilde{\theta}} \to 
  \widetilde{\Xi}^*_{i,J} \CM_{\theta,i}$ is the isomorphism of (\ref{eqn:bundling}) with $U' = U_i'$ for $i=1,2$,
$\pi_g^*: \widetilde{\rho}_g^*\CL_{2,\widetilde{\theta}}   \stackrel{\sim}{\to}  \CL_{1,\widetilde{\theta}}$ is 
defined at the end of \S\ref{sss:Iw.dha}, and $\pi^*_{g_\Sigma}:
\widetilde{\rho}_{g_\Sigma}^*\CM_{2,\widetilde{\theta}}   \stackrel{\sim}{\to} \CM_{1,\widetilde{\theta}}$
is induced by the isomorphisms 
$\widetilde{\rho}_{g_\Sigma}^*\widetilde{\CV}_{2,\widetilde{\theta}}^0 \to 
  \widetilde{\CV}_{1,\widetilde{\theta}}^0$, with determinant 
  $\widetilde{\rho}_{g_\Sigma}^*\widetilde{\delta}_{2,\widetilde{\theta}} \to 
  \widetilde{\delta}_{1,\widetilde{\theta}}$ and restriction
  $\widetilde{\rho}_{g_\Sigma}^*\widetilde{\omega}_{2,\widetilde{\theta}}^0 \to 
  \widetilde{\omega}_{1,\widetilde{\theta}}^0$ if $\theta\not\in \Sigma$, all
  denoted $\pi^*_{g_\Sigma}$ in \S\ref{sss:avb.umf}.

The isomorphism of (\ref{eqn:bundling}) is also compatible with the descent data associated to 
the action of $\CO_{F,(p),+}$ (for sufficiently small $U'$), and hence descends to an isomorphism
$\sigma: \CL_{\widetilde{\theta}} \to (\Xi_J')^* \CM_\theta$ on $\overline{Y}_0(p)'_J$ in the context of 
Theorem~\ref{thm:uJL}, where the line
bundle $\CM_\theta$ on $\prod_{\theta\in \Sigma} \P_{\overline{Y}_\Sigma'} (\CV_{\widetilde{\theta}}^0)$
is defined by the same formula, but using the automorphic bundles $\omega_{\widetilde{\theta}}^0$
and $\delta_{\widetilde{\theta}}$ on $\overline{Y}_\Sigma'$.  Furthermore the resulting isomorphism
is Hecke equivariant in the sense that the diagram analogous to (\ref{eqn:JLHbundle0}) commutes.

\subsubsection{The Hilbert setting}  \label{sss:buns.hmv}
To obtain the desired relation on $\overline{Y}_0(p)_J$, recall that the isomorphism $\Xi_J$ of
Theorem~\ref{thm:qJL} is defined as the restriction of the composite $\Xi_J'\circ \rho_u$
for suitably chosen $u \in (\A_{E,\f}^{(p)})^\times$ (independent of $U$) and $U'$.
We thus obtain an isomorphism
\begin{equation}\label{eqn:bundling2}  \CL_{{\theta}}  \stackrel{\sim}{\longrightarrow}  {\Xi}^*_J\CM_\theta,\quad \mbox{$\CM_\theta :=$}
 \left\{ \begin{array}{cl}  \omega_{{\theta}},&  \mbox{if $\theta \not\in J$, $\theta \not\in \Sigma$;} \\
{\delta}_{{\theta}} (-1)_\theta ,&  \mbox{if $\theta \not\in J$, $\theta \in \Sigma$;} \\
{\delta}_{{\theta}}{\omega}_{{\theta}}^{-1} ,& 
         \mbox{if $\theta \in J$, $\theta \not\in \Sigma$;} \\
\CO(1)_\theta,&  \mbox{if $\theta \in J$, $\theta \in \Sigma$}\end{array}\right.\end{equation}
as the restriction to $\overline{Y}_0(p)_J$ of the composite of the isomorphisms
$$\CL_\theta \stackrel{(\pi_u^*)^{-1}}{\longrightarrow} \rho_u^* \CL_\theta 
\stackrel{\rho_u^*\sigma}{\longrightarrow } \rho_u^* (\Xi_J')^* \CM_\theta.$$
Furthermore if $U_1$ and $U_2$ are sufficiently small open compact subgroups of $G(\A_\f)$ and $g \in G(\A_\f^{(p)})$
is such that $g^{-1}U_1 g \subset U_2$, then it is straightforward to check that the resulting diagram
\begin{equation}\label{eqn:JLHbundles} \xymatrix{
\rho_g^*\CL_{2,{\theta}}  \ar[r]^-{{\rho}_g^*(\sigma_2)} \ar[d]_{\pi_g^*} &
*+[r]{ {\rho}_g^* {\Xi}^*_{2,J} \CM_{2,\theta} = }& 
  {\Xi}^*_{1,J} {\rho}_{g_\Sigma}^* \CM_{2,\theta} \ar[d]^{  {\Xi}^*_{1,J}( \pi_{g_\Sigma}^*) } \\
\CL_{1,{\theta}}  \ar[rr]_-{\sigma_1}  && 
  {\Xi}^*_{1,J} \CM_{1,\theta}}\end{equation}
commutes.

\subsubsection{Relation of determinant bundles}\label{sss:buns.delta}
We also relate the bundles denoted $\delta_\theta$ on the varieties $\ol{Y}$ and $\ol{Y}_\Sigma$.
Again we first make the comparison in the context of the unitary Shimura varieties $\ol{Y}'$ and
$\ol{Y}'_\Sigma$.   Note that (\ref{eqn:DRcomp}) yields an isomorphism
$$\wedge^2_{\CO_{S_J'}}\CH^1_\dr(A_j/S_J')^0_{\wt{\theta}}
  \longrightarrow \wt{\Psi}_J^*\wt{\delta}_{\wt{\theta}}$$
where $j=2$ or $1$ according to whether or not $\theta\in J$, and that the first line bundle
is simply the pull-back of $\wt{\delta}_{\wt{\theta}}$ on $\wt{Y}_{U'}(G')_{\F}$ under the
forgetful morphism $\wt{\pi}_j$ sending $(\underline{A}_1,\underline{A}_2,f)$ to $\underline{A}_j$.
Furthermore the isomorphism descends to an isomorphism on $\ol{Y}_0(p)_J'$, restricting to
an isomorphism
$$\pi_j^*\delta_\theta \stackrel{\sim}{\longrightarrow} \Psi_J^*\delta_\theta$$
on $\ol{Y}_0(p)_J$, where the first $\delta_\theta$ is on $\ol{Y}$, the second is on $\ol{Y}_\Sigma$,
and $\pi_j$ is the morphism $\ol{Y}_0(p)_J$ obtained from $\wt{\pi}_j$.

On the other hand, letting $g$ denote the transpose of $f$, so $f\circ g = p$, we have the exact sequences
$$\begin{array}{cccccccccc}
&0 &\to& \ker(f^*)& \to & \CH^1_\dr(A_2/S_J')& \to &\ker(g^*) &\to& 0\\
\mbox{and}& 0 &\to& \ker(g^*)& \to &\CH^1_\dr(A_1/S_J')& \to& \ker(f^*) &\to& 0,\end{array}$$
of right $M_2(\CO_E)\otimes \CO_{S_J'}$-modules, to which we may apply idempotents to obtain
the exact sequences
$$\begin{array}{cccccccccc}
&0 &\to& \ker(f^*)_{\wt{\theta}}^0 & \to & \CH^1_\dr(A_2/S_J')_{\wt{\theta}}^0& \to &\ker(g^*)_{\wt{\theta}}^0 &\to& 0\\
\mbox{and}
& 0 &\to& \ker(g^*)_{\wt{\theta}}^0& \to &\CH^1_\dr(A_1/S_J')_{\wt{\theta}}^0& \to& \ker(f^*)_{\wt{\theta}}^0 &\to& 0.
\end{array}$$
Since $\ker(f^*)_{\wt{\theta}}^0$ and $\ker(g^*)_{\wt{\theta}}^0$ are invertible $\CO_{S_J'}$-modules, this
yields an isomorphism
$$\wt{\pi}_2^*\wt{\delta}_{\wt{\theta}}\quad \cong \quad
\ker(f^*)_{\wt{\theta}}^0 \otimes_{\CO_{S_J'}} \ker(g^*)_{\wt{\theta}}^0
\quad \cong \quad \wt{\pi}_1^*\wt{\delta}_{\wt{\theta}},$$
which in turn descends to $\ol{Y}_0(p)_J'$ and restricts to an isomorphism
$$\pi_2^*\delta_\theta \cong \pi_1^*\delta_\theta$$
on $\ol{Y}_0(p)_J$.   Again abusing notation and writing $\delta_\theta$ for its pull-back
to $\prod_{\theta\in \Sigma} \P_{\ol{Y}_\Sigma}(\CV_\theta)$, we have now constructed isomorphisms
$$\pi_j^*\delta_\theta \stackrel{\sim}{\longrightarrow} \Xi_J^*\delta_\theta$$
for all $\theta \in \Theta$,  $J \subset \Theta$ and $j=1,2$.  
Furthermore it is straightforward to check that the isomorphisms are Hecke-equivariant in the 
same sense as (\ref{eqn:JLHbundles}).

Finally, we note that just as in \cite[\S3.4]{DS}, the line bundle $\bigotimes_{\theta\in \Theta} \delta_\theta$
on $\wt{Y}_U(G)_\CO$  is equipped with a canonical trivialization which descends to $Y_U(G)_\CO$ and
transforms by $||\det g||^{-1}$ under $\pi_g^*$ (for $g \in G(\A_\f^{(p)})$ and varying $U$).  It follows that the
same holds for
$\bigotimes_{\theta\in \Theta} \delta_\theta$ on $\ol{Y}$, and this in turn pulls back to such
trivializations of $\pi_j^*\left(\bigotimes_{\theta\in\Theta}\delta_\theta\right)$ for $j=1$ and $2$,
which one can check  are compatible with the isomorphism
$\pi_2^*\left(\bigotimes_{\theta\in\Theta}\delta_\theta\right) \cong 
\pi_1^*\left(\bigotimes_{\theta\in\Theta}\delta_\theta\right)$
constructed above.  Applying $\Xi_{J,*}$, we obtain a trivialization of 
$\Psi_J^*\left(\bigotimes_{\theta\in\Theta}\delta_\theta\right)$ which
transforms by $||\det g_\Sigma||^{-1}$ under $\pi_{g_\Sigma}^*$ (for $g \in G_\Sigma(\A_\f^{(p)})$
varying $U_\Sigma$).  Since the natural map $\CO_{\ol{Y}_\Sigma} \to \Psi_{J,*}\CO_X$ is an
isomorphism for $X = \prod_{\theta\in \Sigma} \P_{\ol{Y}_\Sigma}(\CV_\theta)$, this in fact
yields a trivialization of $\bigotimes_{\theta\in\Theta}\delta_\theta$ on $\ol{Y}_\Sigma$
which transforms by $||\det g_\Sigma||^{-1}$ under $\pi_{g_\Sigma}^*$.

\section{The Serre filtration}  \label{sec:fil}
\subsection{Dualizing sheaves} \label{sec:dualizing}
Our aim now is to analyze the dualizing sheaf on the special fibre of $Y_{U_1(p)}(G)$.

\subsubsection{Generalities}\label{sss:dual.gen}
We begin with some general properties of dualizing sheaves, all of which are
recorded in \cite[Chapter~48]{stacks}.

First recall that if $f:Y \to S$ is a Cohen--Macaulay morphism of constant relative dimension $n$,
then $f$ admits a relative dualizing sheaf, which we denote $\CK_{Y/S}$.  We will write this as $\CK_{Y/R}$
if $S = \Spec R$, and simply as $\CK_Y$ if $R$ is a field evident from the context.  Then $\CK_{Y/S}$
is a coherent sheaf on $Y$, flat over $S$.  If moreover $f$ is Gorenstein (i.e., all fibres are Gorenstein),
then $\CK_{Y/S}$ is invertible.  If moreover $f$ is smooth, then $\CK_{Y/S}$ is identified with the canonical
sheaf $\Omega_{Y/S}^n = \wedge^n_{\CO_Y} \Omega^1_{Y/S}$.  We recall also that formation of the
dualizing sheaf commutes with base-change; i.e., if $f:Y \to S$ is Cohen-Macaulay of (constant relative)
dimension $n$, and $S' \to S$ is any morphism, then letting $Y'$ denote $Y \times_S S'$ and $\pi:Y' \to Y$
the projection, we have a canonical isomorphism $\pi^*\CK_{Y/S} \cong \CK_{Y'/S'}$.

If $h:X \to Y$ and $f:Y \to S$ are both Cohen-Macaulay of constant relative dimension
and either $f$ or $h$ is Gorenstein, then $\CK_{X/S}$ is canonically isomorphic to 
$\CK_{X/Y} \otimes_{\CO_X} h^*\CK_{Y/S}$. In particular if $h$ is \'etale, then
$\CK_{X/S} = h^*\CK_{Y/S}$.

Suppose now that $h:X \to Y$ is finite and that $f:Y \to S$ and $f\circ h: X \to S$ are Cohen--Macaulay
of the same relative dimension.  Then $h_*\CK_{X/S}$ is canonically isomorphic to 
$\Shom_{\CO_Y}(h_*\CO_X,\CK_{Y/S})$.
This can be seen as an immediate consequence of the Duality Theorem, or indeed as a special
case of the construction of dualizing sheaves.
We will apply this in \S\ref{sss:dice.dual} in the case where $h$ is a closed immersion and both
$f$ and $f\circ h$ are local complete intersections.  However we will first apply it in \S\ref{sss:dual.wt2}
in a setting where $h$ is finite flat and $f$ is Cohen--Macaulay; note that in this case $f\circ h$
is automatically Cohen--Macaulay of the same relative dimension as $f$.   

\subsubsection{The Kodaira--Spencer isomorphism}\label{sss:dual.KS}
Recall from \S\ref{sec:KS} that the Kodaira--Spencer isomorphism on Shimura varieties associated to
$G_\Sigma$ relates their dualizing sheaves to certain automorphic bundles.  We defined this isomorphism
(and indeed the integral models and automorphic bundles) via the Shimura varieties associated to the unitary
group $G_\Sigma'$, but for $G = \Res_{F/\Q}\GL_2$ (i.e., $\Sigma = \emptyset$), we can establish the
relation directly from the moduli problem considered in \S\ref{sss:hmv.mp}.  We briefly recall this now, but working
integrally instead of modulo $p$ as we did in \S\ref{sec:KS}.

Suppose that $L \subset \Qbar$ contains the Galois closure of $F$, and let $\CO$ be the localization of $\CO_L$
at the prime over $p$ determined by our fixed embedding $\Qbar \to \Qpbar$, and let $A$ be the universal
abelian variety over $S = \widetilde{Y}_U(G)_\CO$ where $U$ is a sufficiently small open compact
subgroup of $\GL_2(\A_{F,\f})$ of level prime to $p$.  Proceeding exactly as in \S\ref{sss:avb.hmf}
we can define line bundles $\widetilde{\omega}_\theta$, $\widetilde{\upsilon}_\theta$ and 
$\widetilde{\delta}_\theta$ on $S$ by the Hodge filtration and determinant on the $\theta$-component
of $\CH^1_\dr(A/S)$ for each $\theta \in \Theta$.  For $k,\ell \in \Z^\Theta$, we define
$$\widetilde{\CA}_{k,l} = \bigotimes_{\theta \in \Theta}  \wt{\delta}_\theta^{\ell_\theta} \wt{\omega}_\theta^{k_\theta}.$$
If $(k,\ell)$ is paritious in the sense of \cite[Def.~3.2.1]{DS}, i.e., $k_\theta + 2\ell_\theta$ is independent of $\theta$,
then $\widetilde{\CA}_{k,\ell}$ descends to a line bundle $\CA_{k,\ell}$ on $Y = Y_U(G)_\CO$;
in particular the line bundles $\wt{\upsilon}_\theta^{-1}\wt{\omega}_\theta  = \wt{\delta}_\theta^{-1}\wt{\omega}^2_\theta$
descend to $Y = Y_U(G)_\CO$.  The same construction as in \S\ref{sec:KS} then produces isomorphisms
$$ \bigoplus_{\theta \in \Theta}  \wt{\delta}_\theta^{-1}\wt{\omega}^2_\theta
\stackrel{\sim}{\longrightarrow}\Omega^1_{S/\CO}
\quad\mbox{and}\quad
\bigotimes_{\theta \in \Theta}  \wt{\delta}_\theta^{-1} \wt{\omega}^2_\theta 
\stackrel{\sim}{\longrightarrow}\CK_{S/\CO}$$
which descend to isomorphisms on $Y$.\footnote{In fact identifying $\oplus \widetilde{\upsilon}_\theta^{-1}\widetilde{\omega}_\theta$
with $\Shom_{\CO_S\otimes \CO_F}(R^1s_*\CO_A, s_*\Omega^1_{A/S})$, the vector bundles and isomorphisms 
descend to $Y_U(G)$ over $\Z_{(p)}$.}
We thus obtain the Kodaira--Spencer isomorphism $\CA_{2,-1} \cong \CK_{Y/\CO}$.
Using the commutativity of the diagram
$$\begin{array}{ccccccccc}
0 & \to & (s')^*\Omega^1_{S/\CO} & \to & \Omega^1_{A'/\CO}  & \to & \Omega^1_{A'/S} & \to & 0 \\
&&\downarrow&&\downarrow&&\downarrow&& \\
0 & \to & \Delta_*s^*\Omega^1_{S/\CO} & \to & \Delta_*\Omega^1_{A/\CO}  & \to & \Delta_*\Omega^1_{A/S} & \to & 0 \end{array}
$$
where $A' = A\otimes_{\CO_F} \CO_E^2$, $s:A\to S$, $s':A' \to S$ are the structure morphisms, $\Delta:A \to A'$ is the diagonal embedding
and the vertical arrows are adjoint to the canonical morphisms, one sees that the base-change to $\F = \Fpbar$ of the
Kodaira--Spencer isomorphism just defined coincides with the one already defined on $\overline{Y} = Y_U(G)_\F$ in \S\ref{sec:KS}.
On the other hand, working over $L$ instead of $\CO$, one can dispense with the hypothesis that the open compact subgroup has level prime to $p$.
For any sufficiently small open compact subgroup $U$, the analogous constructions yield automorphic bundles
$\CA_{k,\ell}$ on $Y_{U}(G)_L$ for paritious $(k,\ell)$ and the Kodaira--Spencer isomorphism $\CA_{2,-1} \cong \CK_{Y_{U}(G)_L}$.

\subsubsection{Hecke action}\label{sss:dual.Hecke}
Suppose now that $U_1$ and $U_2$ are sufficiently small open compact subgroups of $\GL_2(\A_{F,\f})$ of level prime to $p$
and that $g \in \GL_2(\A_{F,\f}^{(p)})$ is such that $g^{-1}U_1g \subset U_2$.  Letting $Y_j = Y_{U_j}(G)_\CO$ for $j=1,2$,
we have a morphism $\rho_g:Y_1 \to Y_2$ and a canonical isomorphism $\rho_g^*\CK_{Y_1/\CO} \to \CK_{Y_2/\CO}$,
satisfying the usual compatibility with $\rho_h^*\CK_{Y_2/\CO} \to \CK_{Y_3/\CO}$ if $h^{-1}U_2 h \subset U_3$.
We thus obtain a natural action of $\GL_2(\A_{F,\f}^{(p)})$ on
$$\varinjlim H^0(Y_{U}(G)_\CO , \CK_{Y_U(G)_\CO/\CO}),$$
where the direct limit is over open compact subgroups $U \subset \GL_2(\A_{F,\f})$ of level prime to $p$.

For any paritious $(k,\ell)$, we similarly define an action of $\GL_2(\A_{F,\f}^{(p)})$ on the space of Hilbert modular forms
(over $\CO$) of weight $(k,\ell)$ and level prime to $p$, i.e., the direct limit over such $U$ of the $\CO$-modules
$$M_{k,\ell}(U,\CO) := H^0(Y_{U}(G)_\CO , \CA_{k,\ell}),$$
incorporating a twist by the character $||\det||$ as in \cite[\S4.2]{DS}
for consistency with standard conventions.  As recalled in \S\ref{sss:buns.delta}, the line bundle 
$\widetilde{\CA}_{0,1} = \otimes_{\theta} \wt{\delta}_\theta$ on $\wt{Y}_U(G)_\CO$ 
is equipped with a canonical trivialization which descends to $Y_U(G)_\CO$ and transforms by
$||\det g||^{-1}$ under $\pi_g^*$.   We thus obtain an isomorphism $\CA_{2,0} \cong \CA_{2,-1}$
whose composite with the Kodaira--Spencer isomorphism $\CA_{2,-1} \cong \CK_{Y/\CO}$
is compatible with the Hecke action in the sense that the resulting diagram
$$ \begin{array}{ccc}  M_{2,0}(U_2,\CO)   & \stackrel{\sim}{\longrightarrow} &    H^0(Y_2,\CK_{Y_2/\CO}) \\
\downarrow && \downarrow \\
      M_{2,0}(U_1,\CO)  & \stackrel{\sim}{\longrightarrow} &      H^0(Y_1,\CK_{Y_1/\CO}) \end{array} $$
commutes, and hence the isomorphism
$$ \varinjlim_U M_{2,0}(U,\CO)  \cong  \varinjlim_U  H^0(Y_{U}(G)_\CO , \CK_{Y_U(G)_\CO/\CO})$$
is $\GL_2(\A_{F,\f}^{(p)})$-equivariant.

Similarly for any $k,\ell\in \Z^\Theta$ (not necessarily paritious) and $\F$ containing the residue field of $\CO$, 
we have an action of $\GL_2(\A_{F,\f}^{(p)})$ on the space
$$\varinjlim_U M_{k,\ell}(U,\F) = \varinjlim_U H^0(Y_{U}(G)_\F , \CA_{k,\ell})$$
 of Hilbert modular forms over $\F$ of weight $(k,\ell)$ and level prime to $p$.
Note that for any paritious $(k,\ell)$, the natural map
$$M_{k,\ell}(U,\CO) \otimes_{\CO} \F \to M_{k,\ell}(U,\F)$$
is Hecke-equivariant and injective, but not obviously surjective, the cokernel being
isomorphic to $\mathrm{Tor}_1^\CO(H^1(Y_U(G)_\CO, \CA_{k,\ell}),\F)$.
The Kodaira--Spencer isomorphism in this context gives an isomorphism
 $$ \varinjlim_U M_{2,0}(U,\F)  \cong  \varinjlim_U  H^0(Y_{U}(G)_\F , \CK_{Y_U(G)_\F}).$$
compatible with the one over $\CO$ and the action of $\GL_2(\A_{F,\f}^{(p)})$.

Similarly for paritious $(k,\ell)$ and fields $K$ containing $L$, we have an action of 
 $\GL_2(\A_{F,\f})$ on the space
$$\varinjlim_{U} M_{k,\ell}(U,K) = \varinjlim_{U} H^0(Y_{U}(G)_K , \CA_{k,\ell})$$
 of Hilbert modular forms over $K$ of weight $(k,\ell)$, where the limit is now over all sufficiently small
open compact $U \subset \GL_2(\A_{F,\f})$, and $M_{k,\ell}(U,\C)$ is identified with the usual space
of holomorphic Hilbert modular forms of weight $(k,\ell)$ and level $U$.  
The Kodaira--Spencer isomorphism in this context gives a $\GL_2(\A_{F,\f})$-equivariant isomorphism
 $$ \varinjlim_U M_{2,0}(U,K)  \cong  \varinjlim_U  H^0(Y_{U}(G)_K , \CK_{Y_U(G)_K}).$$
 If $U$ has level prime to $p$, then the natural map
$$M_{k,\ell}(U,\CO) \otimes_{\CO} K \to M_{k,\ell}(U,K)$$
is a Hecke-equivariant isomorphism, compatible with the Kodaira--Spencer isomorphism
for $(k,\ell) = (2,0)$.

\subsubsection{Forms of parallel weight $2$, level $U_1(p)$}\label{sss:dual.wt2}
Our main object of study for the rest of the paper will be the dualizing sheaf on the special
fibre of the Pappas model $Y_{U_1(p)}(G)$, defined in \S\ref{sss:U1p}.  Recall that
$U_1(p) = U^p U_{1,p}$ where $U^p$ is a sufficiently small open compact subgroup of
$\GL_2(\A_{F,\f}^{(p)})$ and $U_{1,p} \subset U_{0,p} \subset \GL_2(\CO_{F,p})$ with
$U_0(p)/U_1(p) \cong (\CO_F/p)^\times$.    We will write simply $Y_i(p)$ for 
$Y_{U_i(p)}(G)_\CO$ for $i=0,1$.  Since $Y_0(p)$ is a flat local complete
intersection over $\CO$ and $Y_1(p)$ is finite flat over $Y_0(p)$, 
it follows that $Y_1(p)$ is Cohen--Macaulay over $\CO$.  Therefore $Y_0(p)$
and $Y_1(p)$ admit dualizing sheaves $\CK_{Y_0(p)/\CO}$ and $\CK_{Y_1(p)/\CO}$,
and $\CK_{Y_0(p)/\CO}$ is invertible.

If $U$ and $U'$ are sufficiently small open compact subgroups of $\GL_2(\A_{F,\f})$ of level prime to $p$
and that $g \in \GL_2(\A_{F,\f}^{(p)})$ is such that $g^{-1}Ug \subset U'$, then we have the finite \'etale
morphisms $\rho_g:Y_i(p) \to Y'_i(p) =Y_{U'_i(p)}(G)_\CO$ and canonical isomorphisms
$\rho_g^*\CK_{Y_i'(p)/\CO} \to \CK_{Y_i(p)/\CO}$ for $i=0,1$ satisfying the usual compatibilities.
We thus obtain a natural action of $\GL_2(\A_{F,\f}^{(p)})$ on
$$\varinjlim H^0(Y_i(p) , \CK_{Y_i(p)/\CO}),$$
where the direct limit is over open compact subgroups $U \subset \GL_2(\A_{F,\f})$ of level prime to $p$.
Since
$$H^0(Y_i(p) , \CK_{Y_i(p)/\CO}) \otimes_{\CO} L
  = H^0(Y_{U_i(p)}(G)_L , \CK_{Y_{U_i(p)}(G)_L}) \cong M_{2,0}(U_i(p),L),$$
 we may identify $H^0(Y_i(p) , \CK_{Y_i(p)/\CO})$ with an $\CO$-lattice
 in $M_{2,0}(U_i(p),L)$ which we denote $M_{2,0}(U_i(p),\CO)$; the identification is Hecke-equivariant
 in the sense that it is compatible with the $\GL_2(\A_{F,\f}^{(p)})$-action on direct limits.
 Similarly we may consider the dualizing sheaf $\CK_{\overline{Y}_i(p)}$ on 
 $\overline{Y}_i(p) := Y_{U_i(p)}(G)_\F$ for $\F$ containing the residue field of $\CO$.  Letting 
 $$M_{2,0}(U_i(p),\F) = H^0(\overline{Y}_i(p), \CK_{\overline{Y}_i(p)}),$$
 we obtain a natural action of $\GL_2(\A_{F,\f}^{(p)})$ on
 $\varinjlim M_{2,0}(U_i(p),\F)$ and Hecke-equivariant injective homomorphisms
 $$M_{2,0}(U_i(p),\CO) \otimes_\CO \F  \to M_{2,0}(U_i(p),\F).$$

Recall also that we have a natural action of $(\CO_F/p)^\times \cong U_0(p)/U_1(p)$ on
$Y_1(p)$, hence on $M_{2,0}(U_1(p),\CO) = H^0(Y_1(p), \CK_{Y_1(p)/\CO})$.
Furthermore the action is compatible with the Hecke action (in the usual sense) and with
the natural action of $(\CO_F/p)^\times$ on $M_{2,0}(U_1(p),L)$.  If $L$ is
sufficiently large as in \S\ref{sss:U1p} (i.e., $L$ contains the $(q-1)$-roots of unity
where $q-1$ is divisible by the exponent of $(\CO_F/p)^\times$), then we may decompose
$$M_{2,0}(U_1(p),\CO) = \bigoplus_{\chi}  M_{2,0}(U_1(p),\CO)^\chi$$
as the direct sum of $\chi$-eigenspaces over the characters 
$\chi:(\CO_F/p)^\times \to \CO^\times$.    Thus $M_{2,0}(U_1(p),\CO)^\chi$
is an $\CO$-lattice in the space $M_{2,0}(U_1(p),L)^\chi$ of Hilbert modular
forms of parallel weight $2$, level $U_1(p)$ and character $\chi$, and $\GL_2(\A_{F,\f}^{(p)})$
acts on $\varinjlim M_{2,0}(U_1(p),\CO)^\chi$ where the direct limit is over sufficiently small
open compact $U$ of level prime to $p$.  Similarly we may decompose 
$$M_{2,0}(U_1(p),\F)= \bigoplus_{\chi}  M_{2,0}(U_1(p),\F)^\chi,$$
and we obtain an action of $\GL_2(\A_{F,\f}^{(p)})$ on $\varinjlim M_{2,0}(U_1(p),\F)^\chi$
compatible with the injective maps
$$M_{2,0}(U_1(p),\CO)^\chi  \otimes_{\CO} \F \to M_{2,0}(U_1(p),\F)^\chi.$$

Since the morphism $h: Y_1(p) \to Y_0(p)$ is finite, we have 
$$h_*\CK_{Y_1(p)/\CO} = \Shom_{\CO_{Y_0(p)}}(h_*\CO_{Y_1(p)},\CK_{Y_0(p)/\CO})$$
and $R^jh_*\CK_{Y_1(p)/\CO} = 0$ for $j > 0$.  Recall that the explicit description of $Y_1(p)$
 in terms of a Raynaud $(\CO_F/p)$-module scheme on $Y_0(p)$ shows that
 $$h_*\CO_{Y_1(p)} = (\sym_{\CO_{Y_0(p)}} \CL)/\CI'$$
 where $\CL$ is the direct sum of the Raynaud line bundles $\CL_\theta$ on $Y_0(p)$ and
 $\CI'$ is the ideal generated by the $\CO_S$-submodules $(s_\theta - 1)\CL_\theta^p$ for
 $\theta \in \Theta$ and $(s_v - 1) \otimes_{\theta \in \Theta_v} \CL_\theta^{p-1}$ for $v|p$.
 We therefore have
 $$h_*\CO_{Y_1(p)} = \bigoplus \left(\bigotimes_{\theta\in \Theta} \CL_\theta^{m_\theta} \right)$$
 where the direct sum is over $m \in  \Z^\Theta$ such $0 \le m_\theta \le p-1$ for all $\theta$ and
 $m_\theta < p-1$ for some $\theta$ in each $\Theta_v$.  Since $(\CO_F/p)^\times$ acts on $\CL_\theta$
 via the Teichmuller lift of $\overline{\theta}$ and each character $\chi$ of $(\CO_F/p)^\times$
 is the Teichmuller lift of $\prod_{\theta \in \Theta} \overline{\theta}^{m_{\chi,\theta}}$
 for a unique such $m = m_\chi$, we may rewrite this decomposition as
$$h_*\CO_{Y_1(p)} = \bigoplus_\chi  \CL_\chi$$
where $(\CO_F/p)^\times$ acts on the line bundle $\CL_\chi : = \bigotimes_{\theta\in \Theta} \CL_\theta^{m_{\chi,\theta}}$
via $\chi$.  We therefore have
$$ h_*\CK_{Y_1(p)/\CO} =  \bigoplus_\chi  \CL_{\chi^{-1}}^{-1}\CK_{Y_0(p)/\CO} $$
so that $H^i(Y_1(p),\CK_{Y_1(p)/\CO})^\chi = H^i (Y_0(p), \CL_{\chi^{-1}}^{-1}\CK_{Y_0(p)/\CO})$ for $i \ge 0$,
and in particular
$$M_{2,0}(U_1(p),\CO)^\chi = H^0(Y_0(p), \CL_{\chi^{-1}}^{-1}\CK_{Y_0(p)/\CO}).$$
Furthermore these identifications are compatible with the natural Hecke action arising
from the identifications $\rho_g^*\CK_{Y'_j(p)/\CO} = \CK_{Y_j(p)/\CO}$ for $j=0,1$
and the isomorphism $\pi_g^*:\rho_g^*\CL_\theta' \cong \CL_\theta$ defined at the end of \S\ref{sss:U1p}
(where $g \in \GL_2(\A_{F,\f}^{(p)})$, $g^{-1}U'g \subset U$, $Y'_j(p) =Y_{U'_j(p)}(G)_\CO$ and 
$\CL_\theta'$ is the Raynaud bundle on $Y'_0(p)$).

Similarly if we let $\overline{Y}_j(p) = Y_{U_j(p)}(G)_\F$ for $j=0,1$ and $\overline{h}:\ol{Y}_1(p) \to \ol{Y}_0(p)$,
then we obtain
\begin{equation}
\label{eqn:chars}\ol{h}_*\CK_{\ol{Y}_1(p)} =  \bigoplus_\chi  \CL_{\chi^{-1}}^{-1} \CK_{\ol{Y}_0(p)}
\end{equation}
where $\CL_\chi = \bigotimes_{\theta\in \Theta} \CL_\theta^{m_{\chi,\theta}}$ as above, but now $\chi$ is viewed as a character
$(\CO_F/p\CO_F)^\times \to \F^\times$ and $\CL_\theta$ is the Raynaud
bundle on $\ol{Y}_0(p)$.  It follows that
$H^i(\ol{Y}_1(p),\CK_{\ol{Y}_1(p)})^\chi = H^i (\ol{Y}_0(p), \CL_{\chi^{-1}}^{-1}\CK_{\ol{Y}_0(p)})$
and in particular
$$M_{2,0}(U_1(p),\F)^\chi = H^0(\ol{Y}_0(p), \CL_{\chi^{-1}}^{-1}\CK_{\ol{Y}_0(p)}),$$
compatibly with the Hecke action and the corresponding identifications over $\CO$.
Our analysis of the space $M_{2,0}(U_1(p),\F)$, and more generally the cohomology
of $\CK_{\ol{Y}_1(p)}$, thus reduces to the study of the line bundles
$\CL_{\chi^{-1}}^{-1}\CK_{\ol{Y}_0(p)}$ on $\ol{Y}_0(p)$.

\subsection{Dicing} \label{sec:dicing}
In this section, we describe the dualizing sheaf $\CK_{\ol{Y}_0(p)}$ on $\ol{Y}_0(p)$ in terms of
those on the subschemes $\ol{Y}_0(p)_J$.  Recall that $\ol{Y}_0(p)_J$
is defined as the vanishing locus of
$$\{\,\Lie(f)_\theta\,|\,\theta\not\in J \,\}  \cup \{\,\Lie(f^\vee)_\theta\,|\,\theta\in J \,\}$$
where $f:A_1 \to A_2$ is the universal isogeny on $\widetilde{Y}_{U_0(p)}(G)_\F$;  the schemes $\ol{Y}_0(p)_J$
are smooth over $\F$, and every irreducible component of $\ol{Y}_0(p)$ is a connected
component of $\ol{Y}_0(p)_J$ for a unique $J$.  We will define a filtration on $\CK_{\ol{Y}_0(p)}$
whose associated graded is a direct sum of sheaves supported on the subschemes $\ol{Y}_0(p)_J$.
We refer to the process of analyzing the sheaf by dividing the scheme into smaller ones of the
same dimension as {\em dicing}.

\subsubsection{Dicing the structure sheaf}\label{sss:dice.str}
We first define a filtration on $\CO_{\ol{Y}_0(p)}$ by the sheaves of ideals
$$\CI_j := \langle \,  \prod_{\theta \in J}  \Lie(f)_\theta  \, \rangle_{|J|=j}$$
for $j \ge 0$, where $\Lie(f)_\theta$ is viewed locally as a section of $\CO_{\ol{Y}_0(p)}$ via
any choice of trivialization of the descent of 
$$\Shom_{\CO_S}(\Lie(A_1/S)_\theta,\Lie(A_2/S)_\theta)$$
to $\ol{Y}_0(p)$.  Note that
$$\CO_{\ol{Y}_0(p)} = \CI_0 \supset  \CI_1 \supset \cdots \supset \CI_d \supset \CI_{d+1} = 0.$$

For each $J$, let $i_J$ denote the closed immersion $\ol{Y}_0(p)_J \to \ol{Y}_0(p)$.  We let
$\CP_J$ denote the ideal sheaf on $\ol{Y}_0(p)$ defining $\ol{Y}_0(p)_J$, and let
$\CI_J$ denote the ideal sheaf on $\ol{Y}_0(p)_J$ generated by $\prod_{\theta \in J} \Lie(f)_\theta$.  
We see from Theorem~\ref{thm: coordinates} that the ideal generated by
$\prod_{\theta \in J} \Lie(f)_\theta$ is non-zero in the regular local ring obtained by completing 
$\ol{Y}_0(p)_J$ at each closed point,
and it follows that $\CI_J$ is an invertible sheaf.
Similarly we let $\CJ_J$ denote the invertible sheaf of ideals on $\ol{Y}_0(p)_J$ generated by 
$\prod_{\theta \not\in J} \Lie(f^\vee)_\theta$.

Note that if $|J| = |J'|$ and $J' \neq J$, then the section $\prod_{\theta \in J'}  \Lie(f)_\theta$ vanishes on 
$\ol{Y}_0(p)_J$, so we have a natural
map $\CI_j \to \bigoplus_{|J| = j} i_{J,*} \CI_J$.
Furthermore if $|J'| = j+1$, then $\prod_{\theta \in J'}  \Lie(f)_\theta$ vanishes on $\ol{Y}_0(p)_J$, so 
$\CI_{j+1}$ is contained in the kernel.  

\begin{lemma} \label{lem:diceO}
The natural map $\gr^j\CO_{\ol{Y}_0(p)} \to \bigoplus_{|J|= j} i_{J,*} \CI_J$ is an isomorphism.
\end{lemma}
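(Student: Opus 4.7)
The plan is to verify the isomorphism on completed stalks, using the explicit local description afforded by Theorem~\ref{thm: coordinates}. Fix a closed point $Q \in \ol{Y}_0(p)$ and set $T := I_Q \cap J_Q$. After trivialising the relevant line bundles near $Q$, one finds that $\Lie(f)_\theta$ is (up to units) the coordinate $x_\theta$ for $\theta \in T$, is a unit for $\theta \in J_Q \setminus I_Q$, and vanishes identically in a neighbourhood of $Q$ for $\theta \in I_Q \setminus J_Q$ (the latter because $\Lie(f)_\theta \cdot \Lie(f^\vee)_\theta = 0$ on $\ol{Y}_0(p)$ while $\Lie(f^\vee)_\theta$ is a unit near $Q$ in this case). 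A parallel analysis of $\Lie(f^\vee)_\theta$ shows that the strata through $Q$ are precisely those of the form $J_K := (\Theta\setminus I_Q)\sqcup K$ with $K \subset T$, and that $\CI_{J_K}$ is the principal ideal of the (smooth) local ring of the stratum generated by $\prod_{\theta\in K}x_\theta$.

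Setting $R := \widehat{\CO}_{\ol{Y}_0(p),Q}$ and $k_0 := \max(0, j-(d-|I_Q|))$, I then verify that $\CI_j$ in $R$ is precisely the ideal generated by the elements $\prod_{\theta\in K}x_\theta$ as $K$ ranges over subsets of $T$ of cardinality $k_0$: products corresponding to larger $K$ are redundant, while products involving any $\theta \in I_Q \setminus J_Q$ contribute nothing. Exploiting the canonical $k_Q$-basis of $R$ given by monomials $m = \prod_\theta x_\theta^{a_\theta}y_\theta^{b_\theta}$ (subject to $a_\theta b_\theta = 0$ for $\theta \in T$), this translates into the statement that $\CI_j$ is spanned by those $m$ with $|K(m)| \ge k_0$, where $K(m) := \{\theta \in T : a_\theta \ge 1\}$. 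Consequently $\gr^j\CO_{\ol{Y}_0(p)}$ is locally spanned by the monomials with $|K(m)| = k_0$, which decompose naturally into blocks indexed by the value of $K(m)$.

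To conclude, I match these blocks with the summands of the right-hand side. For a monomial $m$ in the $K$-block, its restriction to $\ol{Y}_0(p)_{J_K}$ leaves $m$ formally unchanged and exhibits it as an element of $\CI_{J_K} = (\prod_{\theta \in K} x_\theta)$ in the local ring of the stratum; conversely every monomial in $\CI_{J_K}$ arises uniquely in this way. For any $K' \subset T$ of size $k_0$ with $K' \neq K$, the restriction of $m$ to $\ol{Y}_0(p)_{J_{K'}}$ vanishes, since some $x_\theta$ with $\theta \in K \setminus K'$ is killed by the defining equations of that stratum. This yields an explicit bijection between the monomial bases on the two sides, proving the local isomorphism, and hence the global one by passing to stalks.

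The main obstacle is correctly interpreting Theorem~\ref{thm: coordinates} at points $Q$ where $I_Q$ or $J_Q$ is strictly smaller than $\Theta$, in particular the identical vanishing of $\Lie(f)_\theta$ in a whole neighbourhood of $Q$ when $\theta \in I_Q \setminus J_Q$; keeping track of which strata pass through $Q$ and which generators of $\CI_j$ survive requires some care. Once this local picture is set up, the remainder is an elementary combinatorial verification on monomials in the completed tensor product model.
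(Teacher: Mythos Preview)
Your proof is correct and takes essentially the same approach as the paper: both verify the isomorphism on completed stalks using the explicit coordinates of Theorem~\ref{thm: coordinates}. The paper's execution of the local check is a bit different---rather than matching monomial (topological) bases as you do, it argues algebraically that the evident surjection has trivial kernel by exploiting that each $P_J$ (for $Q$ on the $J$-stratum) is a minimal prime not containing $\overline{c}_J = \prod_{\theta\in J}\Lie(f)_\theta$, so $r_J\overline{c}_J \in P_J$ forces $r_J \in P_J$, and then $\overline{c}_J\overline{d}_\theta = 0$ for $\theta\in J$ gives $r_J\overline{c}_J \in \overline{c}_J P_J \subset I_{j+1}$---but the underlying content is the same. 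One small point of precision: your ``canonical $k_Q$-basis'' is a topological basis (elements are convergent, not finite, sums), and the step from $\CI_j$ spanned by monomials with $|K(m)|\ge k_0$ to $\gr^j$ spanned by $|K(m)|=k_0$ tacitly assumes $j\ge d-|I_Q|$; when $j<d-|I_Q|$ both sides vanish at $Q$ since no stratum of size $j$ passes through it, so this edge case is harmless.
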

\begpf  It suffices to prove the morphism is an isomorphism on completions of stalks at each closed point
$Q$ of $\ol{Y}_0(p)$.  We do this using the description
$$\widehat{\mathcal{O}}_{S,Q} \cong  
\widehat{\bigotimes_{\theta\in \Theta}}
k_Q[[x_\theta,y_\theta]]/\langle c_\theta d_\theta \rangle$$
provided by Theorem~\ref{thm: coordinates}.  Using the notation of that theorem
and writing $c_J$ for $\prod_{\theta \in J} c_\theta$ and $\overline\cdot$ for images in the quotient
$k_Q[[x_\theta,y_\theta]]/\langle c_\theta d_\theta \rangle$, the above
morphism corresponds to the natural map
\begin{equation}\label{eqn:dicingmap} \begin{array}{ccc}
I_j/I_{j+1}  &\to &\bigoplus_{|J|=j}  (\langle \overline{c}_J \rangle + P_J)/P_J\\
\left(\displaystyle\sum_{|J|=j} r_J\overline{c}_J \right) + I_{j+1} & \mapsto & \left(r_J \overline{c}_J + P_J \right)_J,
\end{array}\end{equation}
where $I_m = \langle \overline{c}_J\rangle_{|J| = m}$ and 
$P_J = \langle \overline{c}_\theta \rangle_{\theta\not\in J} + \langle \overline{d}_\theta \rangle_{\theta\in J}$.
Note that $Q \in \overline{Y}_0(p)_J$ if and only if $J \subset J_Q$ and $I_Q \cup J = \Theta$, in which case
$P_J$ corresponds to a minimal prime of $\widehat{\mathcal{O}}_{S,Q}$; otherwise
$P_J$ corresponds to $\widehat{\mathcal{O}}_{S,Q}$.

Since (\ref{eqn:dicingmap}) is obviously surjective, we just need to prove that if $r_J \overline{c}_J \in P_J$,
then $r_J\overline{c}_J \in I_{j+1}$.  If $\theta \not\in I_Q$ for some $\theta \not\in J$, then 
$\overline{c}_\theta$ is a unit, so $r_J\overline{c}_J \in \langle \overline{c}_{J\cup \{\theta\}} \rangle \subset I_{j+1}$,
and if $\theta \not\in J_Q$ for some $\theta \in J$, then $\overline{c}_\theta = 0$, so
$r_J\overline{c}_J = 0 \in I_{j+1}$.  We may therefore assume that $P_J$ 
corresponds to a prime ideal of  $\widehat{\mathcal{O}}_{\overline{Y}_0(p),Q}$.  Note that 
since $\overline{c}_\theta \not\in P_J$ for each $\theta \in J$, we have $\overline{c}_J \not\in P_J$,
and therefore $r_J \in P_J$.  Furthermore $\overline{c}_J \overline{d}_\theta = 0$ for $\theta\in J$, so
$$r_J \overline{c}_J \in \overline{c}_J P_J = \overline{c}_J \langle \overline{c}_\theta \rangle_{\theta\not\in J}
   \subset I_{j+1}.$$
This completes the proof that (\ref{eqn:dicingmap}) is an isomorphism.
\epf

Note that the proof that (\ref{eqn:dicingmap}) is an isomorphism
shows that $I_j = \bigcap_{|J| < j} P_J$, which translates to the statement that $\CI_j = \bigcap_{|J|<j} \CP_J$,
hence defines $\bigcup_{|J|<j} \overline{Y}_0(p)_J$ with its reduced induced subscheme structure.  Note in
particular that $\langle \overline{c}_\Theta \rangle = I_d = \bigcap_{J\neq \Theta} P_J$;
furthermore the same argument shows that if $J \subset \Theta$, then
\begin{equation}\label{eqn:ann} \langle \overline{c}_J \overline{d}_{\Theta - J} \rangle = \bigcap_{J' \neq J}  P_{J'}.\end{equation}
Since the ring is reduced and all its minimal primes are of the form $P_J$, it follows
that $\langle \overline{c}_J \overline{d}_{\Theta - J} \rangle$ is the annihilator of $P_J$ and
that $\langle \overline{c}_J \overline{d}_{\Theta - J} \rangle \cap P_J = \{0\}$.

\subsubsection{Dicing the dualizing sheaf}\label{sss:dice.dual}
Recall from \S\ref{sss:dual.gen} that since $i_J:\overline{Y}_0(p)_J \to \ol{Y}_0(p)$ is a finite morphism of Cohen--Macaulay
schemes of the same dimension, we have a canonical isomorphism
$$
i_{J,*}  \CK_{\ol{Y}_0(p)_J} \cong\Shom_{\CO_{\ol{Y}_0(p)}} ( i_{J,*}  \CO_{\ol{Y}_0(p)_J}, \CK_{\ol{Y}_0(p)} ).$$
Moreover since $\overline{Y}_0(p)_J$ and $\ol{Y}_0(p)$ are local complete intersections, and hence Gorenstein,
their dualizing sheaves are line bundles; in particular we have
$$ \Shom_{\CO_{\ol{Y}_0(p)}} ( i_{J,*}  \CO_{\ol{Y}_0(p)_J}, \CK_{\ol{Y}_0(p)} )
= \Shom_{\CO_{\ol{Y}_0(p)}} ( i_{J,*}  \CO_{\ol{Y}_0(p)_J}, \CO_{\ol{Y}_0(p)} ) \otimes_{ \CO_{\ol{Y}_0(p)}  } \CK_{\ol{Y}_0(p)}.$$
The surjective morphism $ \CO_{\ol{Y}_0(p)}  \to  \CO_{\ol{Y}_0(p)}/\CP_J = i_{J,*}  \CO_{\ol{Y}_0(p)_J}$ induces an
injective morphism
$$ \Shom_{\CO_{\ol{Y}_0(p)}} ( i_{J,*}  \CO_{\ol{Y}_0(p)_J}, \CO_{\ol{Y}_0(p)} ) \to \CO_{\ol{Y}_0(p)}$$
whose image is the sheaf of ideals $\mathcal{A}nn_{\CO_{\ol{Y}_0(p)}}\CP_J$.  It follows from (\ref{eqn:ann}) and
the subsequent discussion that $\mathcal{A}nn_{\CO_{\ol{Y}_0(p)}}\CP_J$ is generated by
$\prod_{\theta\in J} \Lie(f)_\theta \prod_{\theta\not\in J} \Lie(f^\vee)_\theta$, so that
$$i_J^* \mathcal{A}nn_{\CO_{\ol{Y}_0(p)}} \CP_J= \CI_J\CJ_J;$$
moreover since $\CP_J \cap \mathcal{A}nn_{\CO_{\ol{Y}_0(p)}}\CP_J = 0$, the natural map
$$\mathcal{A}nn_{\CO_{\ol{Y}_0(p)}} \CP_J \to i_{J,*}i_J^* \mathcal{A}nn_{\CO_{\ol{Y}_0(p)}} = i_{J,*}(\CI_J\CJ_J)$$
is an isomorphism.    Tensoring with  $\CK_{\ol{Y}_0(p)}$ therefore yields a canonical isomorphism
\begin{equation}
\label{eqn:pushK}
 i_{J,*} \CK_{\ol{Y}_0(p)_J} \stackrel{\sim}{\longrightarrow} i_{J,*}(\CI_J \CJ_J) \otimes_{\CO_{\ol{Y}_0(p)}}   \CK_{\ol{Y}_0(p)}.
\end{equation}

Now consider the {\em dicing filtration} on $\CK_{\ol{Y}_0(p)}$ defined by 
$$\fil^j \CK_{\ol{Y}_0(p)} = \CI_j \CK_{\ol{Y}_0(p)} = \CI_j \otimes_{\CO_{\ol{Y}_0(p)}}  \CK_{\ol{Y}_0(p)},$$
where the latter equality follows from the invertibility of $\CK_{\ol{Y}_0(p)}$.
Lemma~\ref{lem:diceO} thus yields a canonical isomorphism
$$\gr^j\CK_{\ol{Y}_0(p)} \stackrel{\sim}{\longrightarrow}  \bigoplus_{|J|= j} \, i_{J,*} \CI_J \otimes_{\CO_{\ol{Y}_0(p)}}  \CK_{\ol{Y}_0(p)}.$$
We may then apply (\ref{eqn:pushK}) to compute the summands as
$$i_{J,*} \CJ_J^{-1} \otimes_{\CO_{\ol{Y}_0(p)}} i_{J,*}(\CI_J\CJ_J) \otimes_{\CO_{\ol{Y}_0(p)}}   \CK_{\ol{Y}_0(p)}
    =  i_{J,*}( \CJ_J^{-1}\CK_{\ol{Y}_0(p)_J}).$$
We therefore have a canonical isomorphism
\begin{equation} \label{eqn:diceK}
\gr^j\CK_{\ol{Y}_0(p)}  \cong  \bigoplus_{|J|= j} i_{J,*}( \CJ_J^{-1}\CK_{\ol{Y}_0(p)_J}).
\end{equation}
It is straightforward to check that the dicing filtration is compatible with the Hecke action in the usual sense,
as is the isomorphism (\ref{eqn:diceK}).  Combining this isomorphism with the conclusion of \S\ref{sss:dual.wt2},
we see that the analysis of the cohomology of $\CK_{\overline{Y}_1(p)}$ reduces to that of the line bundles
$\CJ_J^{-1}\CK_{\ol{Y}_0(p)_J} i_J^*\CL_{\chi^{-1}}^{-1}$ on the smooth varieties $\ol{Y}_0(p)_J$.

More precisely, we define a filtration on each component of $\overline{h}_*\CK_{\ol{Y}_1(p)}$
under the decomposition (\ref{eqn:chars}) by setting 
$$\fil^j(\CK_{\ol{Y}_0(p)}\CL_{\chi^{-1}}^{-1}) = \CI_j\CK_{\ol{Y}_0(p)}\CL_{\chi^{-1}}^{-1}.$$
The filtration induces one on $H^i(\ol{Y}_1(p),\CK_{\ol{Y}_1(p)})^\chi = H^i(\ol{Y}_0(p),\CK_{\ol{Y}_0(p)}\CL_{\chi^{-1}}^{-1})$
defined by $\fil^j (H^i(\ol{Y}_0(p),\CK_{\ol{Y}_0(p)}\CL_{\chi^{-1}}^{-1}) = $
$$\im\left(H^i(\ol{Y}_0(p),\fil^j(\CK_{\ol{Y}_0(p)}\CL_{\chi^{-1}}^{-1})) \to H^i(\ol{Y}_0(p),\CK_{\ol{Y}_0(p)}\CL_{\chi^{-1}}^{-1})\right),$$
for which the graded pieces $E_\infty^{j,i} = E_{d+1}^{j,i} = \gr^j \left(H^{i+j}(\ol{Y}_0(p),\CK_{\ol{Y}_0(p)}\CL_{\chi^{-1}}^{-1})\right)$
are computed by the spectral sequence
$$
E_1^{j,i} = H^{i+j}(\ol{Y}_0(p),\gr^j(\CK_{\ol{Y}_0(p)}\CL_{\chi^{-1}}^{-1})) \Longrightarrow H^{i+j}(\ol{Y}_0(p),\CK_{\ol{Y}_0(p)}\CL_{\chi^{-1}}^{-1}).
$$
Note that (\ref{eqn:diceK}) gives a canonical isomorphism
$$\gr^j(\CK_{\ol{Y}_0(p)}\CL_{\chi^{-1}}^{-1})  \cong  \bigoplus_{|J|= j}\left( i_{J,*}( \CJ_J^{-1}\CK_{\ol{Y}_0(p)_J}) \otimes_{\CO_{\ol{Y}_0(p)}}\CL_{\chi^{-1}}^{-1}\right),$$
from which it follows that
$$E_1^{j,i} \cong \bigoplus_{|J|= j} H^{i+j}(\ol{Y}_0(p)_J, \;\CJ_J^{-1}\CK_{\ol{Y}_0(p)_J}i_J^*\CL_{\chi^{-1}}^{-1}).$$

\subsubsection{Hecke equivariance}\label{sss:dice.Hecke}
The above spectral sequence and isomorphisms are compatible with the Hecke action in the following sense.
Suppose as usual that $U$ and $U'$ are sufficiently small open compact subgroups of $G(\A_\f)$ of level prime to $p$,
and $g  \in G(\A_\f^{(p)})$ is such that $g^{-1}Ug \subset U'$.  Using $'$ to denote the relevant objects defined
with $U$ replaced by $U'$, we have the corresponding spectral sequence
$$E_1'^{j,i} = H^{i+j}(\ol{Y}'_0(p),\gr^j(\CK_{\ol{Y}'_0(p)}\CL_{\chi^{-1}}'^{-1})) \Longrightarrow H^{i+j}(\ol{Y}'_0(p),\CK_{\ol{Y}'_0(p)}\CL_{\chi^{-1}}'^{-1})$$
and isomorphism
$$E_1'^{j,i} \cong \bigoplus_{|J|= j} H^{i+j}(\ol{Y}'_0(p)_J, \;\CJ_J'^{-1}\CK_{\ol{Y}'_0(p)_J}i_J'^*\CL_{\chi^{-1}}'^{-1}).$$
The canonical isomorphism $\rho_g^*\CK_{\ol{Y}'_1(p)} \cong \CK_{\ol{Y}_1(p)}$ is compatible with the decompositions
(\ref{eqn:chars}) and corresponding isomorphisms of summands (with $\rho_g^*\CL_{\chi^{-1}}'^{-1} \cong \CL_{\chi^{-1}}^{-1}$
defined by $\pi_g^*$).  As $\rho_g^*\CI'_j = \CI_j$, the isomorphisms of summands preserve the filtrations and
hence induce morphisms $E_r'^{j,i} \to E_r^{j,i}$ compatible with the differentials $d^{j,i}_r:E_r^{j,i} \to E_r^{j+r,i-r+1}$,
$d'^{j,i}_r:E_r'^{j,i} \to E_r'^{j+r,i-r+1}$ and identifications $E_{r+1}^{j,i} = \ker(d_r^{j,i})/\im(d_r^{j-r,i+r-1})$,
$E_{r+1}'^{j,i} = \ker(d_r'^{j,i})/\im(d_r'^{j-r,i+r-1})$.
Furthermore the descriptions of $\gr^j\CK_{\ol{Y}_0(p)}$ and $\gr^j\CK_{\ol{Y}'_0(p)}$ are compatible with $\rho_g^*$
in the obvious sense, so the resulting diagram
$$\begin{array}{ccc}
E_1'^{j,i} & \stackrel{\sim}{\longrightarrow} & \displaystyle\bigoplus_{|J|= j} H^{i+j}(\ol{Y}'_0(p)_J, \;\CJ_J'^{-1}\CK_{\ol{Y}'_0(p)_J}i_J'^*\CL_{\chi^{-1}}'^{-1})\\
\downarrow&&\downarrow \\
E_1^{j,i} & \stackrel{\sim}{\longrightarrow} &\displaystyle\bigoplus_{|J|= j} H^{i+j}(\ol{Y}_0(p)_J, \;\CJ_J^{-1}\CK_{\ol{Y}_0(p)_J}i_J^*\CL_{\chi^{-1}}^{-1})\end{array}$$
commutes, where the arrow on the right is induced by the canonical isomorphisms
$\rho_g^*\CJ'_J \cong \CJ_J$, $\rho_g^*\CK_{\ol{Y}'_0(p)_J} \cong \CK_{\ol{Y}_0(p)_J}$ and
$\pi_g^*:\rho_g^*\CL'_{\chi^{-1}} \stackrel{\sim}{\to} \CL_{\chi^{-1}}$.  To sum up so far, we have
constructed a Hecke-equivariant spectral sequence with
\begin{equation}\label{eqn:SS1}
E_1^{j,i} \cong \bigoplus_{|J|= j} H^{i+j}(\ol{Y}_0(p)_J, \;\CJ_J^{-1}\CK_{\ol{Y}_0(p)_J}i_J^*\CL_{\chi^{-1}}^{-1})
\Longrightarrow H^{i+j}(\ol{Y}_1(p),\CK_{\ol{Y}_1(p)})^\chi.\end{equation}

\subsection{Proof of Theorem~\ref{thm:JLss}}  \label{sec:fun}
We now proceed to prove Theorem~\ref{thm:JLss}, relating the cohomology of $\CK_{\overline{Y}_1(p)}$ to
that of automorphic sheaves on the quaternionic Shimura varieties $\overline{Y}_\Sigma$.  We do this by
determining the line bundles corresponding to the $\CJ_J^{-1}\CK_{\ol{Y}_0(p)_J} i_J^*\CL_\chi^{-1}$ under
the isomorphism $\Xi_J$ of Theorem~\ref{thm:qJL}, and in turn their (higher) direct images on the $\ol{Y}_\Sigma$.
(Recall that $\CJ_J$ and $i_J$ were defined in \S\ref{sss:dice.str}, and $\CL_\chi$ in \S\ref{sss:dual.wt2}.)
We now assume $\F = \Fpbar$ to ensure that $\Xi_J$ is defined.

\subsubsection{The factors $\CJ$, $\CK$ and $\CL$}\label{sss:fun.JKL}
We first describe the line bundle corresponding to $\CK_{\ol{Y}_0(p)_J}$ under 
$$\Xi_J:  \overline{Y}_0(p)_J \stackrel{\sim}{\longrightarrow} \prod_{\theta\in \Sigma}\P_{\overline{Y}_\Sigma}(\CV_\theta).$$
We will then describe the factors $i_J^*\CL_{\chi^{-1}}^{-1}$ and $\CJ_J^{-1}$
in terms of the Raynaud bundles $\CL_\theta$ on
$\overline{Y}_0(p)_J$, for which we already determined the corresponding bundles under $\Xi_J$
in \S\ref{sss:buns.hmv}.

Note that $\Xi_{J,*}\CK_{\ol{Y}_0(p)_J}$ is canonically isomorphic to the dualizing sheaf on
$\prod_{\theta\in \Sigma}\P_{\overline{Y}_\Sigma}(\CV_\theta)$.
Furthermore if $X = \P_S(\CV)$ where $\CV$ is a rank two vector bundle on a scheme $S$, then
the dualizing sheaf $\CK_{X/S}$ relative to the projection $\psi:X \to S$ is canonically isomorphic to
\begin{equation}\label{eqn:calculus}\Omega^1_{X/S}  \cong \psi^*(\wedge_{\CO_S}\CV)(-2).\end{equation}
It follows that if $X = \prod_{\theta\in \Sigma}\P_{\overline{Y}_\Sigma}(\CV_\theta)$ and 
$\Psi_J: X \to \ol{Y}_\Sigma$ denotes the projection, then $\CK_X$ is
canonically isomorphic to
$$\Psi_J^*\CK_{\ol{Y}_\Sigma}  \bigotimes \left(\bigotimes_{\theta\in \Sigma}
        \Psi_J^*(\delta_\theta)(-2)_\theta\right),$$
 where $\delta_\theta = \wedge^2_{\CO_{\ol{Y}_\Sigma}}\CV_\theta$, all tensor products are over $\CO_X$,
 and as usual, $(n)_\theta$ denotes the twist by $\CO(n)_\theta = \CO(1)^n_\theta$ in the $\theta$-component.
 Combining this with the Kodaira--Spencer isomorphism from \S\ref{sec:KS}, we conclude that
 $\Xi_{J,*}\CK_{\ol{Y}_0(p)_J}$ is isomorphic to
\begin{equation}\label{eqn:Kfactor}
\CK_X \cong
\left(\bigotimes_{\theta\not\in \Sigma}
        \delta_\theta^{-1}\omega_\theta^2\right)\bigotimes \left(\bigotimes_{\theta\in \Sigma}
        \delta_\theta(-2)_\theta\right),\end{equation}
 where we have again written $\delta_\theta$ and $\omega_\theta$ for their pull-back via $\Psi_J$.
 Furthermore the compatibility of (\ref{eqn:calculus}) with isomorphisms of vector bundles and
 that of the Kodaira--Spencer isomorphism with the Hecke action ensures that
 (\ref{eqn:Kfactor}) is compatible with the Hecke action in the usual sense.

Turning now to $i_J^*\CL_{\chi^{-1}}$, note that we may write $\chi = \prod_{\theta\in \overline{\Theta}_p}  \theta^{m_\theta}$ for a unique
$m = (m_\theta)_\theta \in \Z^{\Theta}$ such that $0 \le m_\theta \le p-1$ for all $\theta$ and $m_\theta > 0$
for some $\theta$ in each $\Theta_v$.  (Recall that we identify  $\overline{\Theta}_p$ with
$\Theta = \coprod_{v|p} \Theta_v$ via our fixed choices of embeddings, and note the slight
difference with the choice made in \S\ref{sss:dual.wt2} in that we take $m_\theta = p-1$, instead
of $0$, for all $\theta \in \Theta_v$ if $\chi$ is trivial on $(\CO_F/v)^\times$.)   We then
have $\chi^{-1} = \prod_{\theta\in \overline{\Theta}_p}  \theta^{p-1-m_\theta}$, so that
\begin{equation}  \label{eqn:Lfactor}
\CL_{\chi^{-1}}^{-1}  = \bigotimes_{\theta\in \Theta}  \CL_\theta^{m_\theta - p + 1}.
\end{equation}

Finally we write the factor $\CJ_J^{-1}$ in terms of the Raynaud bundles on $\ol{Y}_0(p)_J$.
More precisely, we will define a Hecke-equivariant isomorphism
\begin{equation}\label{eqn:Jfactor}
\CJ_J^{-1} \stackrel{\sim}{\to} \bigotimes_{\theta\not\in J}  i_J^* (\CL_\theta^{-1}\CL^p_{\phi^{-1}\circ\theta}),\end{equation}
under which the canonical section of $\CJ_J^{-1}$ corresponds to the restriction to $\ol{Y}_0(p)_J$ of the morphism
$t_{\phi^{-1}\circ\theta}:\CL_\theta \to \CL_{\phi^{-1}\circ\theta}^p$ induced by $\Ver:H^{(p)} \to H = \ker(f)$.

Recall that $\CJ_J$ is the ideal sheaf on $\overline{Y}_0(p)_J$ defined by 
$\prod_{\theta\not\in J} \Lie(f^\vee)_\theta$.  Though we make no direct use of the following
description, we remark that $\CJ_J^{-1}$ can be identified with the line bundle
$\CO_{\overline{Y}_0(p)_J}(D)$ where
$$D =  \sum_{\theta\not\in J}\left(\overline{Y}_0(p)_{J} \cap \overline{Y}_0(p)_{J \cup \{\theta\}}\right)
 = \sum_{\theta\not\in J}  \overline{Y}_0(p)_{\Theta - \phi(J), J \cup \{\theta\}},$$
viewed as a Weil divisor on $\overline{Y}_0(p)_J$.  Recall that $\Lie(f^\vee)_\theta$ is a section
of the line bundle defined by 
$$\Shom_{\CO_S}(\Lie(A_2^\vee/S)_\theta,\Lie(A_1^\vee/S)_\theta)$$
with its canonical descent data, where $f:A_1 \to A_2$ is the universal isogeny on 
$S = \widetilde{Y}_{U_0(p)}(G)_\F$.  Letting $s_j:A_j \to S$ denote the structure morphism for $j=1,2$,
we have canonical isomorphisms
$$\Lie(A_j^\vee/S)_\theta \cong   \wt{\upsilon}_{j,\theta} \cong \wt{\delta}_{j,\theta}\wt{\omega}_{j,\theta}^{-1},$$
where $\wt{\upsilon}_{j,\theta} = R^1s_{j,*}\CO_{A_j}$, $\wt{\omega}_{j,\theta} = s_{j,*}(\Omega^1_{A_j/S})_\theta$
and $\wt{\delta}_{j,\theta}= \wedge^2_{\CO_S}\CH^1_{\dr}(A_j/S)_\theta$. As in \S\ref{sss:buns.delta}, the exact sequences
$$\begin{array}{cccccccccc}
&0 &\to& \ker(f^*)& \to & \CH^1_\dr(A_2/S)& \to &\ker(g^*) &\to& 0\\
\mbox{and}& 0 &\to& \ker(g^*)& \to &\CH^1_\dr(A_1/S)& \to& \ker(f^*) &\to& 0\end{array}$$
(where $g:A_2 \to A_1$ denotes the transpose of $f$, i.e., $f\circ g = p$)
yield an isomorphism $\wt{\delta}_{1,\theta} \cong \wt{\delta}_{2,\theta}$.  We thus obtain an isomorphism
$$\begin{array}{rl} \Shom_{\CO_S}(\wt{\upsilon}_{2,\theta},\wt{\upsilon}_{1,\theta})
 &\cong \Shom_{\CO_S}(\wt{\delta}_{2,\theta}\wt{\omega}_{2,\theta}^{-1},\wt{\delta}_{1,\theta}\wt{\omega}_{1,\theta}^{-1})\\
& \cong  \Shom_{\CO_S}(\wt{\delta}_{1,\theta}\wt{\omega}_{2,\theta}^{-1},\wt{\delta}_{1,\theta}\wt{\omega}_{1,\theta}^{-1})  \cong 
 \Shom_{\CO_S}(\wt{\omega}_{1,\theta},\wt{\omega}_{2,\theta})\end{array}$$
under which $\Lie(f^\vee)$ corresponds to $g^*$.

Suppose now that $\theta\not\in J$, let $\tilde{i}_J$ denote the closed immersion $S_J \to S$,
and consider the restriction of $g^*$ to $S_J$.  Recall from the proof of Lemma~\ref{lem:vanishing}
that the inclusion $H = \ker(f) \subset A_1$ induces an isomorphism $\Lie(H/S_J)_\theta \cong \Lie(A_1/S_J)_\theta$
and hence 
$$\tilde{i}_J^*\wt{\omega}_{1,\theta}  \cong \tilde{i}_J^* \CL_\theta.$$
Furthermore if also $\phi^{-1}\circ\theta \not\in J$, then we similarly have 
$\tilde{i}_J^*\wt{\omega}_{1,\phi^{-1}\circ\theta}  \cong \tilde{i}_J^* \CL_{\phi^{-1}\circ\theta}$, and since
$\Ver^*(f^*\CH^1_\dr(A_2/S_J)_\theta) = f^*(\tilde{i}_J^*\wt{\omega}^p_{2,\phi^{-1}\circ\theta}) = 0$ in this case, we
have the exact sequence
$$0 \longrightarrow f^*\CH^1_\dr(A_2/S_J)_\theta  \longrightarrow \CH^1_\dr(A_1/S_J)_\theta  \stackrel{\Ver^*}{\longrightarrow} \tilde{i}_J^*\wt{\omega}^p_{1,\phi^{-1}\circ\theta} \longrightarrow 0,$$
which combined with the exact sequence
$$0 \longrightarrow f^*\CH^1_\dr(A_2/S_J)_\theta  \longrightarrow \CH^1_\dr(A_1/S_J)_\theta  \stackrel{g^*}{\longrightarrow} \tilde{i}_J^*\wt{\omega}_{2,\theta} \longrightarrow 0$$
yields an isomorphism
$$  \tilde{i}_J^*\wt{\omega}_{2,\theta}  \cong   \tilde{i}_J^*\wt{\omega}^p_{1,\phi^{-1}\circ\theta}  \cong \tilde{i}_J^* \CL^p_{\phi^{-1}\circ\theta}.$$
On the other hand if $\phi^{-1}\circ\theta \in J$, then the proof of Lemma~\ref{lem:vanishing} shows that 
the inclusion $H^\vee \subset A_2^\vee$ induces an isomorphism $\Lie(H^\vee/S_J)_{\phi^{-1}\circ\theta} \cong \Lie(A_2^\vee/S_J)_{\phi^{-1}\circ\theta}$, and
hence $\tilde{i}_J^*\CL_{\phi^{-1}\circ\theta} \cong \tilde{i}_J^*\wt{\upsilon}_{2,\phi^{-1}\circ\theta}$.  In this case however, the fact that
$\phi^{-1}\circ \theta \in J$ and $\theta\not\in J$ implies that
$$\Phi(\D(A_{2,\overline{s}})_{\phi^{-1}\circ\theta})  = g^*(\D(A_{1,\overline{s}}))_\theta = V(\D(A_{2,\overline{s}})_{\phi\circ\theta})$$
for all $\overline{s} \in S_J(\Fpbar)$.  It follows that $\Frob^*: \CH^1_\dr(A_2/S_J)^{(p)}_{\phi^{-1}\circ\theta} \to \CH^1_\dr(A_2/S_J)_\theta$
induces an isomorphism $\tilde{i}_J^*\wt{\upsilon}^p_{2,\phi^{-1}\circ\theta}  \to \tilde{i}_J^*\wt{\omega}_{2,\theta}$, so that now
$$\tilde{i}_J^*\CL^p_{\phi^{-1}\circ\theta} \cong \tilde{i}_J^*\wt{\upsilon}^p_{2,\phi^{-1}\circ\theta} \cong  \tilde{i}_J^*\wt{\omega}_{2,\theta}.$$
Therefore in both cases we obtain an isomorphism
$$ \tilde{i}_J^*(\wt{\omega}_{1,\theta}^{-1}\wt{\omega}_{2,\theta}) \cong \tilde{i}_J^*(\CL_{\theta}^{-1}\CL_{\phi^{-1}\circ\theta}^p).$$

We now show that the section corresponding to $g^*$ under this isomorphism is simply the one
induced on the Raynaud line bundles by $\Ver: H^{(p)} \to H$; i.e., the diagram
$$\xymatrix{ \tilde{i}_J^*\wt{\omega}_{1,\theta} \ar[rr]^-{\sim} \ar[d]_{g^*} && \tilde{i}_J^* \CL_\theta \ar[d]_{\Ver^*} \\
\tilde{i}_J^*\wt{\omega}_{2,\theta} &\cong& \tilde{i}_J^* \CL_{\phi^{-1}\circ\theta}^p}$$
commutes, where the top arrow is induced by the inclusion $H \subset A_1$ and bottom arrow is the isomorphism defined above.
It suffices to check the commutativity on fibres at all $\overline{s} \in S_J(\Fpbar)$, which we do by writing the resulting diagram
in terms of the Dieudonn\'e modules of $H_{\overline{s}}$, $A_{1,\overline{s}}[p]$ and $A_{2,\overline{s}}[p]$.    More precisely,
under the canonical descriptions of cotangent and tangent spaces in terms of Dieudonn\'e modules, the diagram on fibres becomes
$$\xymatrix{ \D(A^{(p^{-1})}_{1,\ol{s}}[p])_{\theta}/\Phi(\D(A^{(p^{-1})}_{1,\ol{s}}[p])_{\phi^{-1}\circ\theta} )
\ar[rrrr]^-{\sim}  \ar[d]_{g^*} &&&& \D(H^{(p^{-1})}_{\ol{s}})_{\theta} \ar[d]^{\Ver^*} \\
 \D(A^{(p^{-1})}_{2,\ol{s}}[p])_{\theta}/\Phi(\D(A^{(p^{-1})}_{2,\ol{s}}[p])_{\phi^{-1}\circ\theta})  & \cong & \Delta & \cong & \D(H_{\ol{s}})_{\theta}},$$
where the top arrow corresponds to the isomorphism\footnote{Note that $\Lie(H_{\ol{s}})_\theta$ and $\D(H^{(p^{-1})}_{\ol{s}})_{\theta}$ are
both one-dimensional, so $\Phi(\D(H^{(p^{-1})}_{\ol{s}})_{\phi^{-1}\circ\theta}) = 0$.}
$\Lie(H_{\ol{s}})_\theta \cong \Lie(A_{1,\ol{s}}[p])_\theta$ given by the inclusion $H_{\ol{s}} \subset A_{1,\ol{s}}[p]$
and the bottom isomorphisms are the ones induced
by those defined above, where
\begin{itemize}
\item $\Delta = \D(A_{1,\ol{s}}[p])_{\theta}/\Phi(\D(A_{1,\ol{s}}[p])_{\phi^{-1}\circ\theta})$ if $\phi^{-1}\circ\theta \not\in J$, and
\item $\Delta = \ker(\Phi) = \ker(V)$ on $\D(A_{2,\ol{s}}[p])_\theta$ if $\phi^{-1}\circ\theta \in J$.
\end{itemize}
In the first case, the desired compatibility is immediate from the fact that 
the isomorphism $\Delta \to \D(H_{\ol{s}})_{\theta}$ is again defined by the inclusion $H_{\ol{s}} \subset A_{1,\ol{s}}[p]$
and  the isomorphism $\D(A^{(p^{-1})}_{2,\ol{s}}[p])_{\theta}/\Phi(\D(A^{(p^{-1})}_{2,\ol{s}}[p])_{\phi^{-1}\circ\theta}) \cong  \Delta$ is 
the one arising from the commutative diagram
$$\xymatrix{ &  H^0(A_{1,\ol{s}},\Omega^1_{A_{1,\ol{s}}/\Fpbar})_\theta\ar[ld]_{g^*}\ar[rd]^{\Ver^*}              &\\
H^0(A_{2,\ol{s}},\Omega^1_{A_{2,\ol{s}}/\Fpbar})_\theta & \cong & H^0(A^{(p)}_{1,\ol{s}},\Omega^1_{A^{(p)}_{1,\ol{s}}/\Fpbar})_\theta}$$
and the canonical isomorphisms $\D(A[p])/\Phi(\D(A[p]))  \cong H^0(A^{(p)},\Omega^1_{A^{(p)}/\Fpbar})$ induced by $\Ver^*$ on
$\D(A[p]) \cong H^1_\dr(A/\Fpbar)$ for $A = A^{(p^{-1})}_{1,\ol{s}}$, $A^{(p^{-1})}_{2,\ol{s}}$ and $A_{1,\ol{s}}$.
In the second case, one finds that $\D(H_{\ol{s}})_{\theta}\stackrel{\sim}{\to} \Delta$ is induced by
$g:A_{2,\ol{s}}[p] \to H_{\ol{s}} \subset A_{1,\ol{s}}[p]$, whereas
$ \Delta \stackrel{\sim}{\to} \D(A^{(p^{-1})}_{2,\ol{s}}[p])_{\theta}/\Phi(\D(A^{(p^{-1})}_{2,\ol{s}}[p])_{\phi^{-1}\circ\theta})$
is the composite of
\begin{itemize}
\item the inverse of the canonical isomorphism 
$$H^1(A_{2,\ol{s}}^{(p)},\CO_{A_{2,\ol{s}}^{(p)}})_\theta \cong \Delta \subset \D(A_{2,\ol{s}}[p])_\theta \cong H^1_\dr(A_{2,\ol{s}}/\Fpbar)_\theta$$
induced by $\Frob^*:H^1_\dr(A_{2,\ol{s}}^{(p)}/\Fpbar)_\theta \to H^1_\dr(A_{2,\ol{s}}/\Fpbar)_\theta$,
\item  the isomorphism $H^1(A_{2,\ol{s}}^{(p)},\CO_{A_{2,\ol{s}}^{(p)}})_\theta \to H^0(A_{2,\ol{s}},\Omega^1_{A_{2,\ol{s}}/\Fpbar})_\theta$
induced by $\Frob^*$ (in view of the fact that this is indeed the image),
\item and the inverse of the canonical isomorphism 
$$\D(A^{(p^{-1})}_{2,\ol{s}}[p])_{\theta}/\Phi(\D(A^{(p^{-1})}_{2,\ol{s}}[p])_{\phi^{-1}\circ\theta})
 \cong H^0(A_{2,\ol{s}},\Omega^1_{A_{2,\ol{s}}/\Fpbar})_\theta$$
induced by $\Ver^*$ on $\D(A_{2,\ol{s}}^{(p^{-1})}[p]) \cong H^1_\dr(A_{2,\ol{s}}^{(p^{-1})}/\Fpbar)$.
\end{itemize}
It follows that this composite is simply the inverse of an isomorphism induced by $\Ver:A_{2,\ol{s}}[p] \to A^{(p^{-1})}_{2,\ol{s}}[p]$,
and the desired compatibility is an immediate consequence.

The isomorphisms
$$\Shom_{\CO_{S_J}}(\Lie(A_2^\vee/S_J)_\theta,\Lie(A_1^\vee/S_J)_\theta)
\cong \tilde{i}_J^*(\wt{\omega}_{1,\theta}^{-1}\wt{\omega}_{2,\theta}) \cong \tilde{i}_J^*(\CL_{\theta}^{-1}\CL_{\phi^{-1}\circ\theta}^p)$$
are compatible with the canonical descent data relative to the cover $S_J \to \overline{Y}_0(p)_J$, and hence descend to
isomorphisms of line bundles on $\overline{Y}_0(p)_J$.  Furthermore the section defined by the morphism $\Ver^*$
(or equivalently $\Lie(f^\vee)_\theta$ or $g^*$) descends to a section over $\overline{Y}_0(p)_J$, and taking the product over
$\theta\not\in J$ yields, by the very definition of $\CJ_J$, the desired isomorphism (\ref{eqn:Jfactor}).
Furthermore the isomorphism is compatible with the Hecke action in the usual sense:  if $U$ and $U'$ are sufficiently small
open compact subgroups of $G(\A_\f)$ of level prime to $p$ and $g \in G(\A_\f^{(p)})$ is such that $g^{-1}Ug \subset U'$,
then the resulting diagram
$$\xymatrix{ \rho_g^*\CJ_{J}'^{-1} \ar[r] \ar[d] & \rho_g^*i_{J}'^* (\CL_{\theta}'^{-1}\CL'^p_{\phi^{-1}\circ\theta})  \ar[d]^{\pi_g^*}\\
\CJ_{J}^{-1} \ar[r]  & i_{J}^* (\CL^{-1}_{\theta}\CL^p_{\phi^{-1}\circ\theta})}$$
with the obvious notation commutes.

\subsubsection{Jordan-H\"older factors and automorphic bundles}\label{sss:fun.JH}
Before completing our description of the bundles $\Xi_{J,*}(\CJ_J^{-1}\CK_{\ol{Y}_0(p)_J} i_J^*\CL_\chi^{-1})$, 
we recall the explicit formula of Bardoe and Sin~\cite{BS} (as presented in \cite{BP}) for the Jordan-H\"older factors of the right representation
$\Ind_P^{\GL_2(\CO_F/p\CO_F)}(1\otimes\chi)$, where $P$ is the subgroup of upper-triangular matrices and $1\otimes\chi$
denotes the character $\begin{psmallmatriks} a & b \\ 0 & d \end{psmallmatriks} \mapsto \chi(d)$. 
\begin{theorem} \label{thm:JHfactors}   There exists a decreasing filtration
$$V_\chi = \fil^0V_\chi \supset \fil^1V_\chi \supset \cdots \supset \fil^{d-1}V_\chi \supset \fil^dV_\chi \supset \fil^{d+1}V_\chi = 0$$
on $V_\chi = \Ind_P^{\GL_2(\CO_F/p\CO_F)}(1\otimes\chi)$ such that $\gr^j V_\chi \cong \oplus_{|J| = j} V_{\chi,J}$ with
$$V_{\chi,J} = \bigotimes_{\theta \in \Theta} (\theta\circ\det)^{\ell_{J,\theta}} \sym^{n_{J,\theta}} (V_\st \otimes_{\CO_F,\theta} \F),$$
where $V_\st$ denotes the standard representation of $\GL_2(\CO_F/p\CO_F)$ on $(\CO_F/p\CO_F)^2$ and
$$(\ell_{J,\theta},n_{J,\theta})  = \left\{\begin{array}{ll}  
(0,m_\theta),& \mbox{if $\theta\not\in J$, $\phi\circ\theta\not\in J$;}\\
(0,m_\theta-1),& \mbox{if $\theta\in J$, $\phi\circ\theta\not\in J$;}\\
(m_\theta+1,p-2-m_\theta),& \mbox{if $\theta\not\in J$, $\phi\circ\theta\in J$;}\\
(m_\theta,p-1-m_\theta),& \mbox{if $\theta\in J$, $\phi\circ\theta\in J$.}
\end{array}\right.$$
\end{theorem}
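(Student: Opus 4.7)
The plan is to prove this by reducing to the case of a single prime above $p$ and then appealing directly to the Bardoe--Sin description of Jordan--H\"older factors of principal series, as recalled in \cite[\S2]{BP}. Since the paper cites this formula, the ``proof'' is really a matter of organizing the reduction and matching indexing conventions.

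First I would decompose along places above $p$. The ring $\CO_F/p\CO_F$ splits as $\prod_{v|p} \CO_F/v$, so $\GL_2(\CO_F/p\CO_F) = \prod_{v|p} \GL_2(\CO_F/v)$, and the character $\chi = \prod_v \chi_v$ correspondingly factors. Since parabolic induction commutes with finite products of groups in an evident sense, we obtain
\[ V_\chi \;\cong\; \bigotimes_{v|p} V_{\chi_v}, \qquad V_{\chi_v} := \Ind_{P_v}^{\GL_2(\CO_F/v)}(1\otimes\chi_v). \]
Any choice of decreasing filtrations $\fil^\bullet V_{\chi_v}$ with graded pieces indexed by subsets of $\Theta_v$ combines into a decreasing filtration of $V_\chi$ of total length $d+1$ whose $j$-th graded piece is $\bigoplus_{|J|=j} \bigotimes_v \gr^{J\cap\Theta_v} V_{\chi_v}$, as $J$ ranges over subsets of $\Theta = \coprod_v\Theta_v$ with $|J|=j$. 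This reduces the statement to the case of a single prime $v$, i.e.\ to describing a filtration on the principal series $V_{\chi_v}$ of $\GL_2(\F_{q_v})$ with graded pieces indexed by subsets of $\Theta_v$.

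Next I would invoke the Bardoe--Sin theorem as formulated in \cite[\S2]{BP}: for $\chi_v$ a character of the Borel $P_v$, the representation $V_{\chi_v}$ admits a filtration whose graded pieces are the simple Serre weights
\[ \bigotimes_{\theta\in\Theta_v}(\det)^{a_{J_v,\theta}}\sym^{b_{J_v,\theta}}(V_\st\otimes_{\CO_F,\theta}\F), \]
indexed by subsets $J_v\subset\Theta_v$, where the exponents $(a_{J_v,\theta},b_{J_v,\theta})$ are determined by the ``characteristic function'' rule recalled in \cite[\S2]{BP}, written in terms of $m_\theta$ and of whether $\theta,\phi\circ\theta$ lie in $J_v$. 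The task is then purely combinatorial: specialize the Bardoe--Sin/Breuil--Pa\v{s}k\=unas formula to our normalization of $\chi_v = \prod_{\theta\in\Theta_v}\theta^{m_\theta}$ (as chosen in \S\ref{sss:fun.JKL}) and check the four cases in the statement. Namely, one unwinds the case split based on the two bits $(\mathbf{1}_{\theta\in J},\mathbf{1}_{\phi\circ\theta\in J})$, and matches the resulting pair $(\ell_{J,\theta},n_{J,\theta})$ with the table in the theorem.

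The main obstacle, such as it is, lies entirely in this bookkeeping: normalizations of principal series, of the Frobenius direction (whether one writes $\phi$ or $\phi^{-1}$), and of the labelling of Jordan--H\"older factors by subsets of $\Theta_v$ differ among references. I would therefore fix conventions once and for all by using the specific form stated in \cite[Lem.~2.3]{BP} (or its analogue), verify the four cases $\theta\in J$ vs.\ $\theta\notin J$ crossed with $\phi\circ\theta\in J$ vs.\ $\phi\circ\theta\notin J$, and note that the irreducibility conditions ($0\le n_{J,\theta}\le p-1$, not all $m_\theta=p-1$) are guaranteed by our choice $0\le m_\theta\le p-1$ with some $m_\theta>0$ in each $\Theta_v$. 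No new ideas beyond \cite{BS,BP} are needed.
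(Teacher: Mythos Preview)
Your proposal is correct and follows essentially the same route as the paper: decompose $V_\chi$ as a tensor product over primes $v|p$, then invoke \cite[Lem.~2.3, Thm.~2.4]{BP} for each $\GL_2(\CO_F/v)$-factor and match conventions. The only point the paper makes explicit that you leave implicit in ``bookkeeping'' is the use of the standard anti-involution $\iota$ to pass between the right representations considered here and the left representations treated in \cite{BP} (so that $V_\chi^\iota \cong \bigotimes_v \Ind_{P_v}^{\GL_2(\CO_F/v)}(\chi_v\otimes 1)$).
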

Note that the integers $n_{J,\theta}$ in the statement of the theorem satisfy $-1 \le n_{J,\theta} \le p-1$;
thus each non-zero $V_{\chi,J}$ is irreducible.  (We adopt the usual convention that $\sym^{-1}\F^2 = 0$.)
Theorem~\ref{thm:JHfactors} is immediate from Lemma~2.3 and Theorem~2.4 of \cite{BP}, which treats similarly
defined left representations of each $\GL_2(\CO_F/v)$.  To obtain Theorem~\ref{thm:JHfactors}, simply write
$\chi = \prod_{v|p} \chi_v$ where $\chi_v$ is a character of $(\CO_F/v)^\times$, so that
$V^\iota_\chi \cong \bigotimes_{v|p} \Ind_{P_v}^{\GL_2(\CO_F/v)}(\chi_v\otimes 1)$ as a left representation of 
$\GL_2(\CO_F/p\CO_F) = \prod_{v|p} \GL_2(\CO_F/v)$, where $P_v$ denotes the subgroup of upper-triangular
matrices of $\GL_2(\CO_F/v)$, and $\cdot^\iota$ is the equivalence of categories between right and left
representations of $\GL_2(\CO_F/p\CO_F)$ associated to its standard anti-involution.
Furthermore the filtration in the theorem is in fact the socle, as well as co-socle, filtration on $V_\chi$
provided $m_\theta < p-1$ for some $\theta$ in each $\Theta_v$.  (More generally, the socle and co-socle
filtrations of $V_\chi$ coincide and have length $1 + |\Theta_\chi|$ where $\Theta_\chi$ is the union of the $\Theta_v$
for which $\chi_v$ is non-trivial, and the constituents of the $j^\th$ graded piece of the co-socle filtration are the 
non-zero $V_{\chi,J}$ such that $|J \cap \Theta_\chi| = j$.)

We let $\CA_{\chi,J}$ denote the automorphic vector bundle
\begin{equation}\label{eqn:AchiJ}
\left(\displaystyle\bigotimes_{\theta\not\in \Sigma}\delta_\theta^{\ell_{J,\theta}}\omega_\theta^{n_{J,\theta}+2}\right)
\left(\displaystyle\bigotimes_{\theta\in \Sigma}\delta_\theta^{\ell_{J,\theta}+1}\sym_{\CO_{\ol{Y}_\Sigma}}^{n_{J,\theta}}\CV_\theta \right)
\end{equation}
on $\ol{Y}_\Sigma$, where $\ell_J$, $n_J \in \Z^\Theta$
are as in the statement of Theorem~\ref{thm:JHfactors}, the tensor products are over $\CO_{\ol{Y}_\Sigma}$, and
we suppress the central tensor product symbols here and below.   We view $H^0(\ol{Y}_\Sigma,\CA_{\chi,J})$ as the
space of automorphic forms for $G_\Sigma$ over $\F$ of weight $(n_J+2,\ell_J)$ and level $U_\Sigma$; indeed
for paritious $(n_J,\ell_J)$, the same definition with $\F$ replaced by $\C$ yields the usual space of automorphic
forms for $G_\Sigma$ of weight $(n_J+2,\ell_J)$ and level $U_\Sigma$.

Recall that if $U_{\Sigma}$ and $U'_{\Sigma}$ are sufficiently small
open compact subgroups of $G_\Sigma(\A_\f)$ of level prime to $p$, and $g_\Sigma \in G_\Sigma(\A^{(p)}_\f)$ is such that
$g_\Sigma^{-1} U_{\Sigma} g_\Sigma \subset U_{\Sigma}'$, then we have morphisms
$$\rho_{g_\Sigma}:\overline{Y}_{\Sigma} \to \overline{Y}'_{\Sigma}\quad\mbox{and}\quad
\pi_{g_\Sigma}^*: \rho_{g_\Sigma}^*\CA_{\chi,J}' \stackrel{\sim}{\to} \CA_{\chi,J}$$
satisfying the usual compatibilities, where $\CA'_{\chi,J}$ is the automorphic vector
bundle defined on $\overline{Y}_{\Sigma}' := Y_{U_{\Sigma}'}(G_\Sigma)_\F$ by (\ref{eqn:AchiJ}).
Letting $g_\Sigma$ act as the composite
$$\xymatrix{H^i(\ol{Y}_{\Sigma}',\CA'_{\chi,J}) \ar[r]^{\rho_{g_\Sigma}^*} &
H^i(\ol{Y}_{\Sigma},\rho_{g_\Sigma}^*\CA_{\chi,J}')\ar[rr]^{||\det(g_\Sigma)|| \pi_{g_\Sigma}^*}&&
H^i(\ol{Y}_{\Sigma},\CA_{\chi,J})}$$
we obtain a left action of $G_\Sigma(\A_\f^{(p)})$ on $\varinjlim  H^i(\ol{Y}_\Sigma,\CA_{\chi,J})$,
where the direct limit is over open compact $U_\Sigma \subset G_\Sigma(\A_\f)$ of level prime to $p$.

\subsubsection{Completion of the proof}\label{sss:fun.proof}
We now return to the task of computing the bundles $\Xi_{J,*}(\CJ_J^{-1}\CK_{\ol{Y}_0(p)_J} i_J^*\CL_\chi^{-1})$, with a view
to relating them to the bundles $\CA_{\chi,J}$.  First note that the isomorphisms
(\ref{eqn:Lfactor}) and (\ref{eqn:Jfactor}) immediately give
$$\begin{array}{c} \CJ_J^{-1}i_J^*\CL_{\chi^{-1}}^{-1}\cong
\left(\displaystyle\bigotimes_{\theta\in\Theta}i_J^*\CL_\theta^{m_\theta-p+1}\right)\left(\displaystyle\bigotimes_{\theta\not\in J}i_J^*\CL_\theta^{-1}\right)
\left(\displaystyle\bigotimes_{\phi\circ\theta\not\in J}i_J^*\CL_\theta^{p}\right)\\
=
\left(\displaystyle\bigotimes_{\theta\not\in J,\phi\circ\theta\not\in J}i_J^*\CL_\theta^{m_\theta}\right)
\left(\displaystyle\bigotimes_{\theta\not\in J,\phi\circ\theta\in J}i_J^*\CL_\theta^{m_\theta-p}\right)
\left(\displaystyle\bigotimes_{\theta\in J,\phi\circ\theta\in J}i_J^*\CL_\theta^{m_\theta+1}\right)
\left(\displaystyle\bigotimes_{\theta\in J,\phi\circ\theta\in J}i_J^*\CL_\theta^{m_\theta-p+1}\right).
\end{array}$$
Applying (\ref{eqn:bundling2}) then gives an isomorphism
$$\begin{array}{ll} \Xi_{J,*}(\CJ_J^{-1} i_J^*\CL_\chi^{-1}) \cong  &
\left(\displaystyle\bigotimes_{\theta\not\in J,\phi\circ\theta\not\in J}\omega_\theta^{m_\theta}\right)
\left(\displaystyle\bigotimes_{\theta\not\in J,\phi\circ\theta\in J}\delta_\theta^{m_\theta -p}(p-m_\theta)_\theta \right)\\
&\ \quad\cdot\left(\displaystyle\bigotimes_{\theta\in J,\phi\circ\theta\in J}\CO(m_\theta+1)_\theta\right)
\left(\displaystyle\bigotimes_{\theta\in J,\phi\circ\theta\in J}\delta_\theta^{m_\theta-p+1}\omega_\theta^{p-1-m_\theta}\right).\end{array}$$
Combining this with (\ref{eqn:Kfactor}) and the isomorphism $\delta_\theta \cong \delta_{\phi^{-1}\circ\theta}^p$ of \S\ref{sec:frob}
therefore yields an isomorphism
\begin{align}
\nonumber \Xi_{J,*}(\CJ_J^{-1} \CK_{\ol{Y}_0(p)_J} i_J^*\CL_\chi^{-1}) \cong  &
\left(\displaystyle\bigotimes_{\theta\not\in J,\phi\circ\theta\not\in J}\delta_\theta^{-1}\omega_\theta^{m_\theta+2}\right)
\left(\displaystyle\bigotimes_{\theta\not\in J,\phi\circ\theta\in J}\delta_\theta^{m_\theta -p+1}(p-2-m_\theta)_\theta \right)\\
\label{eqn:JKL} &
\ \quad\cdot\left(\displaystyle\bigotimes_{\theta\in J,\phi\circ\theta\in J}\delta_\theta(m_\theta-1)_\theta\right)
\left(\displaystyle\bigotimes_{\theta\in J,\phi\circ\theta\in J}\delta_\theta^{m_\theta-p}\omega_\theta^{p+1-m_\theta}\right)\\
\cong  &
\nonumber \left(\displaystyle\bigotimes_{\theta\not\in \Sigma}\delta_\theta^{\ell_{J,\theta}-1}\omega_\theta^{n_{J,\theta}+2}\right)
\left(\displaystyle\bigotimes_{\theta\in \Sigma}\delta_\theta^{\ell_{J,\theta}}(n_{J,\theta})_\theta \right)
\end{align}
of line bundles on $\prod_{\theta\in \Sigma}\P_{\overline{Y}_\Sigma}(\CV_\theta)$, where $\ell_J$, $n_J \in \Z^\Theta$
are as in the statement of Theorem~\ref{thm:JHfactors}.  Furthermore the isomorphism is
Hecke-equivariant in the sense that the diagram analogous to (\ref{eqn:JLHbundles}) commutes.
More precisely, suppose that $U$ and $U'$ are sufficiently small open compact subgroups of $G(\A_\f)$ of level prime to $p$
and $g \in G(\A_\f^{(p)})$ is such that $g^{-1}U g \subset U'$.  Let
$$\CF = \CJ_J^{-1} \CK_{\ol{Y}_0(p)_J} i_J^*\CL_\chi^{-1},\quad
\CG = \left(\displaystyle\bigotimes_{\theta\not\in \Sigma}\delta_\theta^{\ell_{J,\theta}-1}\omega_\theta^{n_{J,\theta}+2}\right)
\left(\displaystyle\bigotimes_{\theta\in \Sigma}\delta_\theta^{\ell_{J,\theta}}(n_{J,\theta})_\theta \right)$$
and similarly define $\CF'$ and $\CG'$ with $U$ replaced by $U'$.  The isomorphisms
$\sigma:\Xi_{J,*}\CF \stackrel{\sim}{\to} \CG$ and $\sigma':\Xi'_{J,*}\CF' \stackrel{\sim}{\to} \CG'$ of (\ref{eqn:JKL}) are then
compatible with the isomorphisms $\pi_g^*: \rho_g^*\CF' \stackrel{\sim}{\to} \CF$
and $\pi_{g_\Sigma}^*: \rho_{g_\Sigma}^*\CG' \stackrel{\sim}{\to} \CG$, meaning that the diagram
$$\xymatrix{
\Xi_{J,*}\rho_g^*\CF' \ar[d]_{\Xi_{J,*}(\pi_g^*)} &
*+[l]{= \rho_{g_\Sigma}^*\Xi'_{J,*}\CF' } \ar[r]^-{\rho_{g_\Sigma}^*(\sigma')} & 
\rho_{g_\Sigma}^*\CG'  \ar[d]^{  \pi_{g_\Sigma}^* } \\
\Xi_{J,*}\CF  \ar[rr]_-{\sigma}  && 
\CG}$$
commutes.

Recall that for a rank two vector bundle $\CV$ on a scheme $S$, if $\psi:X = \P_S(\CV) \to S$ denotes the natural projection and $n \ge -1$,
then $R^i\psi_*\CO(n) = 0$ for all $i \ge 1$ and $\psi_*\CO(n)$ is canonically isomorphic to $\sym^n_{\CO_S}\CV$
(where $\sym^{-1}_{\CO_S}(V) = 0$ as usual).  Since $n_{J,\theta} \ge -1$ for all $\theta$, it follows from (\ref{eqn:JKL}) that
$$R^i(\Psi_{J,*}\Xi_{J,*})(\CJ_J^{-1} \CK_{\ol{Y}_0(p)_J} i_J^*\CL_\chi^{-1}) 
   = R^i\Psi_{J,*}(\Xi_{J,*}(\CJ_J^{-1} \CK_{\ol{Y}_0(p)_J} i_J^*\CL_\chi^{-1})) = 0$$
for all $i \ge 1$, and that
$$\Psi_{J,*}\Xi_{J,*}(\CJ_J^{-1} \CK_{\ol{Y}_0(p)_J} i_J^*\CL_\chi^{-1})  \cong 
\left(\displaystyle\bigotimes_{\theta\not\in \Sigma}\delta_\theta^{\ell_{J,\theta}-1}\omega_\theta^{n_{J,\theta}+2}\right)
\left(\displaystyle\bigotimes_{\theta\in \Sigma}\delta_\theta^{\ell_{J,\theta}}\sym^{n_{J,\theta}}_{\CO_{\ol{Y}_\Sigma}}(\CV_\theta)\right).$$
Note that the resulting vector bundle on $\ol{Y}_\Sigma$ is $\left(\bigotimes_{\theta\in \Theta} \delta^{-1}_\theta\right)\CA_{\chi,J}$.
Multiplying by the trivialization of $\bigotimes_{\theta\in \Theta} \delta_\theta$ defined in \S\ref{sss:buns.delta}, we obtain an
isomorphism
$$\varsigma: \Psi_{J,*}\Xi_{J,*}(\CJ_J^{-1} \CK_{\ol{Y}_0(p)_J} i_J^*\CL_\chi^{-1})  \stackrel{\sim}{\longrightarrow} \CA_{\chi,J}$$
whose compatibility with the Hecke action is expressed by the commutativity of the resulting diagram
$$\xymatrix{
\Psi_{J,*}\Xi_{J,*}\rho_g^*\CF' \ar[d]_{\Psi_{J,*}\Xi_{J,*}(\pi_g^*)} &
*+[l]{= \rho_{g_\Sigma}^*\Psi'_{J,*}\Xi'_{J,*}\CF' } \ar[r]^-{\rho_{g_\Sigma}^*(\varsigma')} & 
\rho_{g_\Sigma}^*\CA_{\chi,J}'  \ar[d]^{ ||\det g_\Sigma|| \pi_{g_\Sigma}^* } \\
\Psi_{J,*}\Xi_{J,*}\CF  \ar[rr]_-{\varsigma}  && 
\CA_{\chi,J}.}$$
In view of the vanishing of $R^i(\Psi_{J,*}\Xi_{J,*})(\CJ_J^{-1} \CK_{\ol{Y}_0(p)_J} i_J^*\CL_\chi^{-1})$ for all $i>0$,
we conclude that $\varsigma$ induces a Hecke-equivariant isomorphism
$$H^i(\ol{Y}_0(p)_J, \,\CJ_J^{-1} \CK_{\ol{Y}_0(p)_J} i_J^*\CL_\chi^{-1})  \stackrel{\sim}{\longrightarrow}  
    H^i(\ol{Y}_\Sigma, \CA_{\chi,J})$$
for all $i \ge 0$.  Combining this with (\ref{eqn:SS1}) completes the proof of Theorem~\ref{thm:JLss}.

\subsubsection{Hecke equivariance}\label{sss:fun.Hecke}
For clarity and convenience, we review the sense in which the resulting spectral sequence
\begin{equation}\label{eqn:SS2} E_1^{j,i} = \bigoplus_{|J| = j} H^{i+j}(\Ybar_{\Sigma} , \CA_{\chi,J})
         \Longrightarrow H^{i+j}(\Ybar_1(p), \CK_{\ol{Y}_1(p)})^\chi\end{equation}
is Hecke-equivariant.  First recall that for each $\Sigma = \Sigma_J$ appearing in the direct sum
we have fixed an isomorphism
$$G(\A_\f)= \GL_2(\A_{F,\f}) \cong G_\Sigma(\A_\F),$$
and for open compact subgroups $U$ (resp.~elements $g$) of $G(\A_\f)$, we write
$U_\Sigma$ (resp.~$g_\Sigma$) for its image in $G_\Sigma(\A_\f)$.
Thus if $U$ and $U'$ are sufficiently small open compact subgroups of $G(\A_\f)$ of level prime to $p$
and $g \in G(\A_\f^{(p)})$ is such that $g^{-1}Ug \subset U'$, we have $\F$-linear maps we denote
$$\begin{array}{lclll}
&[g] = [g]_{U,U'} &:H^{i+j}(\Ybar'_1(p), \CK_{\ol{Y}'_1(p)})^\chi &\to &H^{i+j}(\Ybar_1(p), \CK_{\ol{Y}_1(p)})^\chi \\
\mbox{and} & [g_\Sigma] = [g_\Sigma]_{U_\Sigma,U'_\Sigma} &:  
H^{i+j}(\Ybar'_\Sigma,\CA'_{\chi,J})&\to& H^{i+j}(\Ybar_\Sigma,\CA_{\chi,J}),\end{array}$$
where as usual $\cdot'$ denotes the object defined with $U$ replaced by $U'$, and
if $g'$ and $U''$ are as above with $g'^{-1}U'g' \subset U''$, then 
$[g]\circ [g']= [gg']$ and $[g_\Sigma]\circ [g'_\Sigma]= [g_\Sigma g'_\Sigma]$.

The existence of the spectral sequence in (\ref{eqn:SS2}) means that
\begin{itemize}
\item there are $\F$-linear differentials $d_r^{j,i}: E_r^{j,i}  \to E_r^{j+r,i-r+1}$ 
for all $r \ge 1$, $j \ge 0$, $i \ge -j$, where $E_{r+1}^{j,i}$ is defined inductively for $r \ge 1$ by
$$E_{r+1}^{j,i} = \ker(d_r^{j,i})/\im(d_r^{j-r,i+r-1});$$
\item there is a decreasing filtration of length $d+1$ on
 $H^{i+j}(\Ybar_1(p), \CK_{\Ybar_1(p)})^\chi$ and $\F$-linear isomorphisms
 $$\alpha^{j,i} : E_{\infty}^{j,i} = E_{d+1}^{j,i} \stackrel{\sim}{\longrightarrow}\gr^j\left(H^{i+j}(\Ybar_1(p), \CK_{\Ybar_1(p)})^\chi\right).$$
\end{itemize}

The Hecke-equivariance of the spectral sequence means that for $g$, $U$, $U'$ as above
(again using $\cdot'$ to denote the corresponding objects with $U$ replaced by $U'$), we have
$$[g]\left(\fil^j\left(H^{i+j}(\Ybar'_1(p), \CK_{\ol{Y}'_1(p)})^\chi)\right)\right) \subset \fil^j\left(H^{i+j}(\Ybar_1(p), \CK_{\ol{Y}_1(p)})^\chi)\right),$$
and for all $r,j,i$ as above there are $\F$-linear $g_r^{j,i}: E_r'^{j,i} \to E_r^{j,i}$ such that
\begin{itemize}
\item $g_1^{j,i} = ([g_{\Sigma_J}])_{|J|=j} : \displaystyle\bigoplus_{|J|=j} H^{i+j}(\Ybar'_\Sigma,\CA'_{\chi,J}) \to  \bigoplus_{|J|=j} H^{i+j}(\Ybar_\Sigma,\CA_{\chi,J})$,
\item the diagram
$$\xymatrix{  E_r'^{j,i} \ar[d]_{g_r^{j,i}} \ar[r]^-{d_r'^{j,i}} & E_r'^{j+r,i-r+1}\ar[d]^{g_r^{j+r,i-r+1}} \\
E_r^{j,i} \ar[r]^-{d_r^{j,i}}  & E_r^{j+r,i-r+1} }$$
commutes,
\item $g_{r+1}^{j,i}$ is induced by $g_r^{j,i}$,
\item and the diagram
$$\xymatrix{  E_\infty'^{j,i} =E_{d+1}'^{j,i} \ar@<3ex>[d]_{g_{d+1}^{j,i}} \ar[r]^-{\alpha'^{j,i}} & \gr^j\left(H^{i+j}(\Ybar'_1(p), \CK_{\Ybar'_1(p)})^\chi\right) \ar[d]^{\gr^j([g])}\\
E_\infty^{j,i} = E_{d+1}^{j,i} \ar[r]^-{\alpha^{j,i}}  &\gr^j\left(H^{i+j}(\Ybar_1(p), \CK_{\Ybar_1(p)})^\chi\right) }$$
commutes.
\end{itemize}

We remark that we have not given an intrinsic description of the differentials $d_1^{j,i}$
in terms of the spaces $H^{i+j}(\ol{Y}_\Sigma,  \CA_{\chi,j})$;
rather they are instead defined via the isomorphisms induced by $\varsigma$ for varying $J$.

Taking the direct limit over sufficiently small $U$ of level prime to $p$
of the spectral sequences in Theorem~\ref{thm:JLss} and defining the
action of $\GL_2(\A_{F,\f}^{(p)})$ on each $\varinjlim H^{i+j}(\ol{Y}_\Sigma,  \CA_{\chi,j})$ via the isomorphism
with $G_\Sigma(\A_\f)$ gives the following:
\begin{corollary} \label{cor:JLss}
There is a spectral sequence of smooth $\F$-representations of $\GL_2(\A_{F,\f}^{(p)})$
$$ E_1^{j,i} = \bigoplus_{|J| = j} \varinjlim \left(H^{i+j}(\Ybar_{\Sigma_J} , \CA_{\chi,J})\right)
         \Longrightarrow \varinjlim\left( H^{i+j}(\Ybar_1(p), \CK_{\Ybar_1(p)})^\chi\right).$$
\end{corollary}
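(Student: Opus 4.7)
The plan is to take the direct limit of the spectral sequences from Theorem~\ref{thm:JLss} over the filtered system of sufficiently small open compact subgroups $U \subset \GL_2(\A_{F,\f})$ of level prime to $p$, using the Hecke-equivariance structure formalized in \S\ref{sss:fun.Hecke}. For each pair $U,U'$ and $g \in \GL_2(\A_{F,\f}^{(p)})$ with $g^{-1}Ug \subset U'$, the maps $g_r^{j,i} : E_r'^{j,i} \to E_r^{j,i}$ commute with the differentials $d_r^{j,i}$, induce $g_{r+1}^{j,i}$ on the next page, and at $E_1$ coincide with the direct sum of the $[g_{\Sigma_J}]$ for $|J|=j$; moreover these maps satisfy the cocycle relation $g_r^{j,i} \circ g_r'^{j,i} = (gg')_r^{j,i}$, so passing to limits produces an action of $\GL_2(\A_{F,\f}^{(p)})$ on each $\varinjlim E_r^{j,i}$.

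First I would check that direct limit commutes with each construction defining the spectral sequence. Since filtered colimits of $\F$-vector spaces are exact, they preserve the kernels and images used to form $E_{r+1}^{j,i}$ from $E_r^{j,i}$, and they preserve the dicing filtration on each $H^{i+j}(\Ybar_1(p),\CK_{\ol{Y}_1(p)})^\chi$ together with the isomorphisms $\alpha^{j,i}$ onto the associated graded. Hence the direct limit of the $E_r^{j,i}$ assembles into a spectral sequence with the claimed $E_1$ term converging to $\varinjlim \left(H^{i+j}(\Ybar_1(p),\CK_{\ol{Y}_1(p)})^\chi\right)$, and the transition maps $g_r^{j,i}$ endow every page, every differential, and the limit filtration with a $\GL_2(\A_{F,\f}^{(p)})$-action compatible with all of this structure.

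Finally I would verify that the resulting representations are smooth: any class in the limit is represented at some finite level $U$, and is therefore fixed by the image of $U^p$ in $\GL_2(\A_{F,\f}^{(p)})$, an open compact subgroup. The identification of the $E_1$ page with $\bigoplus_{|J|=j} \varinjlim H^{i+j}(\Ybar_{\Sigma_J},\CA_{\chi,J})$ as smooth $\GL_2(\A_{F,\f}^{(p)}) \cong G_{\Sigma_J}(\A_\f^{(p)})$-representations is immediate from the definition of the action on each summand via the fixed isomorphism and the $[g_\Sigma]$ recalled in \S\ref{sss:fun.JH}. There is no serious obstacle here: the content of the corollary is entirely formal, given Theorem~\ref{thm:JLss} and the explicit Hecke-equivariance statements compiled in \S\ref{sss:fun.Hecke}; the only mild care needed is to keep track of the normalization $||\det g_\Sigma||\,\pi_{g_\Sigma}^*$ so that the $G_{\Sigma_J}$-action on the $E_1$ summands matches the convention adopted in \S\ref{sss:fun.JH}.
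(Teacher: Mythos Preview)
Your proposal is correct and follows the same approach as the paper, which derives the corollary in a single sentence by taking the direct limit over sufficiently small $U$ of the spectral sequences in Theorem~\ref{thm:JLss} and defining the $\GL_2(\A_{F,\f}^{(p)})$-action on each summand via the fixed isomorphism with $G_\Sigma(\A_\f)$. Your write-up simply fills in the routine details (exactness of filtered colimits, smoothness, matching the normalization) that the paper leaves implicit.
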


\subsubsection{The Serre filtration}\label{sss:fun.Serre}
Specializing the preceding corollary to the case of $i+j=0$ immediately gives:
\begin{corollary} \label{cor:JLss2}
There is a filtration on $\varinjlim H^0(\ol{Y}_1(p),  \CK_{\Ybar_1(p)})^\chi$ of length $d+1$ by 
$\GL_2(\A_{F,\f}^{(p)})$-subrepresentations such that for each $j = 0,1,\ldots,d+1$, there is a 
$\GL_2(\A_{F,\f}^{(p)})$-equivariant injection
$$\gr^j\left(\varinjlim H^0(\ol{Y}_1(p),  \CK_{\Ybar_1(p)})\right)^\chi \longrightarrow
\bigoplus_{|J| = j} \varinjlim \left(H^0(\Ybar_{\Sigma_J} , \CA_{\chi,J})\right).$$
\end{corollary}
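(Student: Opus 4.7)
The plan is to derive Corollary~\ref{cor:JLss2} as a direct specialization of Corollary~\ref{cor:JLss} to total cohomological degree zero. Concretely, I would set $i+j=0$ in the spectral sequence
$$ E_1^{j,i} = \bigoplus_{|J| = j} \varinjlim H^{i+j}(\Ybar_{\Sigma_J} , \CA_{\chi,J})
         \Longrightarrow \varinjlim H^{i+j}(\Ybar_1(p), \CK_{\Ybar_1(p)})^\chi,$$
and read off the filtration on the abutment at degree zero. This gives a decreasing filtration of length $d+1$ on $\varinjlim H^0(\Ybar_1(p),\CK_{\Ybar_1(p)})^\chi$ whose graded pieces are $E_\infty^{j,-j}$ for $j=0,1,\ldots,d$. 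Since the spectral sequence is $\GL_2(\A_{F,\f}^{(p)})$-equivariant (Corollary~\ref{cor:JLss}), this filtration consists of $\GL_2(\A_{F,\f}^{(p)})$-subrepresentations and the identifications of the graded pieces with $E_\infty^{j,-j}$ are $\GL_2(\A_{F,\f}^{(p)})$-equivariant.

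Next I would produce the desired injection by comparing $E_\infty^{j,-j}$ to $E_1^{j,-j}$. For each $r\ge 1$, the incoming differential at the $(j,-j)$-position is
$$d_r^{j-r,\, -j+r-1} : E_r^{j-r,\, -j+r-1} \longrightarrow E_r^{j,-j},$$
whose source sits in total degree $-1$. Since $E_1^{j-r,\,-j+r-1}$ is a direct sum of groups of the form $H^{-1}(\Ybar_{\Sigma_{J}},\CA_{\chi,J})$, these all vanish, and so does every subquotient $E_r^{j-r,\, -j+r-1}$. Consequently, at each page $E_{r+1}^{j,-j}$ is a subobject of $E_r^{j,-j}$, and iterating gives a $\GL_2(\A_{F,\f}^{(p)})$-equivariant injection
$$E_\infty^{j,-j} = E_{d+1}^{j,-j} \hookrightarrow E_1^{j,-j} = \bigoplus_{|J| = j} \varinjlim H^0(\Ybar_{\Sigma_J} , \CA_{\chi,J}).$$
Composing with the identification of $\gr^j(\varinjlim H^0(\Ybar_1(p),\CK_{\Ybar_1(p)})^\chi)$ with $E_\infty^{j,-j}$ delivers the required injection.

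There is really no obstacle to surmount here, since both ingredients, namely the existence and Hecke-equivariance of the spectral sequence and the vanishing of negative-degree sheaf cohomology, are already in place. The only point that warrants a brief sanity check is the compatibility of $\varinjlim$ with the formation of the spectral sequence, but this is ensured by exactness of the direct limit functor over the filtered system of sufficiently small $U$, which preserves both the filtrations on the abutments and the kernels/images that define the higher pages. With these observations recorded, the corollary follows in a few lines from Corollary~\ref{cor:JLss}.
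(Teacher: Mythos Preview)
Your proposal is correct and matches the paper's approach exactly: the paper states that specializing Corollary~\ref{cor:JLss} to the case $i+j=0$ immediately gives Corollary~\ref{cor:JLss2}, and your argument spells out precisely why this specialization yields the filtration and the equivariant injections (via the vanishing of the incoming differentials from negative total degree).
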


We immediately obtain Corollary~\ref{cor:JLfil} by specializing Theorem~\ref{thm:JLss} (or alternatively by taking 
$U^p$-invariants in the preceding corollary).  With the notation from the discussion above, the Hecke-equivariance
in the statement of Corollary~\ref{cor:JLfil} means that 
$$[g]\fil^j\left(H^0(\ol{Y}'_1(p),  \CK_{\Ybar'_1(p)})^\chi\right)\subset \fil^j\left(H^0(\ol{Y}_1(p),  \CK_{\Ybar_1(p)})^\chi\right)$$
and that the diagram
$$\xymatrix{   \gr^j\left(H^0(\Ybar'_1(p), \CK_{\Ybar'_1(p)})^\chi\right) \ar[d]^{\gr^j([g])} \ar[r] &
\displaystyle\bigoplus_{|J| = j}H^0(\Ybar'_{\Sigma_J} , \CA'_{\chi,J})
 \ar[d]^{([g_{\Sigma_J}])_{|J|=j}} \\
\gr^j\left(H^0(\Ybar_1(p), \CK_{\Ybar_1(p)})^\chi\right)  \ar[r]
 &
\displaystyle\bigoplus_{|J| = j} H^0(\Ybar_{\Sigma_J} , \CA_{\chi,J})  }$$
commutes.

\begin{remark}  The representations in Corollary~\ref{cor:JLss2} are furthermore admissible, provided $F \neq \Q$.
For $F = \Q$, working instead with the compactified modular curves yields admissible representations, and taking 
$U_1(N)$-invariants recovers the exact sequence described in the Introduction.  We note also that in the case
$F = \Q$, the spectral sequences of Theorems~\ref{thm:JLss} and Corollary~\ref{cor:JLss} (and slightly less
obviously their analogues for the compact curves) degenerate at $E_1$.
\end{remark}

\begin{remark} The obstruction to the injection in Corollary~\ref{cor:JLfil} being an isomorphism is measured
by the image of the differentials $d_r^{j,-j}$, and these take values in subquotients of direct sums of spaces
of the form $H^1(\Ybar_{\Sigma_J}, \CA_{\chi,J})$ for $|J| = j + r > j$.  We note that these spaces need not
vanish.  For example, suppose $d=3$ and $p\CO_F = v_1v_2$ with $|\CO_F/v_i| = p^i$, and write
$\ol{\Theta}_p = \Theta_{v_1} \coprod \Theta_{v_2}$ where $\Theta_{v_1} = \{\theta_1\}$ and
$\Theta_{v_2} = \{\theta_2,\theta_3\}$.  Note that for $J = \{\theta_1,\theta_2\}$, we have 
$\Sigma = \Theta_{v_2}$, so $\ol{Y}_{\Sigma}$ is a curve.  If $\chi = \theta_2\theta_3^{p-2}$,
then we have
$$m_{\theta_1} = p-1,\quad m_{\theta_2} = 1,\quad m_{\theta_3} = p-2,$$
so the formula for $(\ell_J,n_J)$ and the Kodaira--Spencer isomorphism give
$$\CA_{\chi,J} = \omega_{\theta_1}^2\delta_{\theta_2}\delta_{\theta_3}^{p}
         \cong \delta_{\theta_1}\delta_{\theta_2}^2\CK_{\ol{Y}_\Sigma}.$$
 For sufficiently small $U$, the line bundles $\delta_{\theta_i}$ on $\ol{Y}$ are
 (non-canonically)  trivializable (see \cite[\S4.5]{DS}), and arguing as we did in \S\ref{sss:buns.delta}
 for $\otimes_{\theta\in\Theta}\delta_\theta$, we see that the $\delta_{\theta_i}$ on $\ol{Y}_\Sigma$ are
 (non-canonically) trivializable, from which it follows that $H^1(\Ybar_{\Sigma_J}, \CA_{\chi,J}) \neq 0$.  Note that
 this does not preclude the vanishing of the differentials $d_r^{j,-j}$.  We remark also that even
 if the differentials do not vanish, so the injections are not isomorphisms, it may be the case that their
 cokernels are in some sense ``Eisenstein.''
 \end{remark}

\section{Degeneracy fibres}   \label{sec:fibre}

\subsection{Frobenius factorization} \label{sec:frobfactor}

In the next two sections we study the fibres of the natural degeneracy map $\overline{\psi}:\overline{Y}_0(p) \to \overline{Y}$ over $\F = \Fpbar$,
where $\overline{Y}_0(p) = Y_{U_0(p)}(G)_\F$, $\overline{Y} = Y_U(G)_\F$ and $\overline{\psi}$ is induced
by the forgetful morphism $\widetilde{Y}_{U_0(p)}(G) \to \widetilde{Y}_U(G)$.  In particular, we will improve on results  of
\cite[\S2.6]{GK} and \cite[\S4.9]{ERX}\footnote{There is however a serious gap in the argument in \cite[\S4.9]{ERX}, specifically 
in the claim about rearranging choices of local parameters in the paragraph after (4.9.3).}
(see also \cite[\S5]{HU2} in the unitary setting) with a view to proving Theorem~\ref{thm:H1}. 

\subsubsection{Preliminaries}\label{sss:frob.prel}
First recall that $\overline{Y}$ is equipped with a stratification defined by the vanishing of the partial Hasse invariants
$h_\theta \in H^0(\overline{Y}, \overline{\omega}_\theta^{-1}\overline{\omega}_{\phi^{-1}\circ\theta}^p)$.  Recall (see for example~\cite[\S5.1]{DS})
that these are defined by descent from sections $\widetilde{h}_\theta$ on $T := \widetilde{Y}_U(G)_\F$ induced by the Verschiebung morphism
$A^{(p)} \to A$, where $A$ is the universal abelian scheme over $T$, and that for each $J \subset \Theta$, the closed subscheme
$T_J$ of $T$ (resp.~$\overline{Y}_J$ of $\overline{Y}$) defined by the vanishing of the $\widetilde{h}_\theta$ (resp.~$h_\theta$)
for $\theta\in J$ is smooth of dimension $d - |J|$.  By \cite[Thm.~2.6.4]{GK}, the restriction of the forgetful morphism in 
$S = \widetilde{Y}_{U_0(p)}(G)_\F \to T$ to $S_J$ factors through $T_{J'}$ where $J' := \{\,\theta \in J \,|\,\phi^{-1}\circ \theta \not\in J\,\}$.
Furthermore letting $J'' := \{\,\theta \not\in J \,|\,\phi^{-1}\circ \theta \in J\,\}$, the morphisms
$$\CH^1_\dr(A_2/S)_\theta \to \CH^1_\dr(A_1/S)_\theta$$
induced by the universal isogeny $f:A_1 \to A_2$ on $S_J$ 
give rise to a morphism
\begin{equation}\label{eqn:xiJ} \widetilde{\xi}_J:  S_J \to  P_J:= \prod_{\theta \in J''}  \P_{T_{J'}}(\CH^1_\dr(A/T_{J'})_\theta)\end{equation}
which is bijective on geometric closed points (the injectivity is clear and the surjectivity follows from the calculation in
\cite[Lemma~2.6.6]{GK}).  Since $S \to T$ is projective, so is $\widetilde{\xi}_J$, and since
$S_J$ and $P_J$ are locally of finite type of the same dimension over $\F$, it follows that $\widetilde{\xi}_J$
is finite.  Since the two schemes are smooth over $\F$, it follows by ``miracle flatness'' that $\widetilde{\xi}_J$ is flat,
and hence $S_J \to T_{J'}$ is Cohen--Macaulay.  

Recall that $\CH^1_\dr(A/T_{J'})_\theta$ is the (restriction to $T_{J'}$) of the vector bundle
$\widetilde{\CV}_\theta$ defined in \S\ref{sss:avb.hmf}, and that these descend
to the automorphic bundles on $\overline{Y}_{J'}$ denoted $\CV_\theta$.
Taking quotients in (\ref{eqn:xiJ}) by the action of $\CO_{F,(p),+}^\times$,
we see that the restriction of $\overline{\psi}$ to $\overline{Y}_0(p)_J$ similarly factors
as a composite
$$ \overline{Y}_0(p)_J \stackrel{{\xi}_J}{\longrightarrow} \prod_{\theta \in J''}  \P_{T_{J'}}(\CV_\theta)
    \to \overline{Y}_{J'}  \to \overline{Y}$$
where $\xi_J$ is finite flat (and bijective on geometric closed points) and the next two maps are the canonical
projection and closed immersion, so that $\overline{Y}_0(p)_J \to \overline{Y}_{J'}$ is Cohen--Macaulay.

\subsubsection{Relative Dieudonn\'e theory}\label{sss:frob.dieu}
The study of the fibres of $\overline{Y}_0(p)_J \to \overline{Y}$ at closed points reduces to understanding the
fibres of $S_J \to T_{J'}$, whose sets of closed points are seen from the above discussion to be in bijection with
products of projective lines; however we must deal with the fact that these fibres are not in general reduced.
In order to proceed, we will first analyze the morphism $\widetilde{\xi}: S_J \to P_J$ more carefully, for which
we will make use of Dieudonn\'e theory for finite flat group schemes over more general bases than perfect fields,
as developed in \cite{BBM}.  More precisely, if $H$ is a finite flat group scheme over a scheme $S$ 
of characteristic $p$, then \cite[D\'ef.~3.1.5]{BBM}  associates to $H$ a Dieudonn\'e crystal $\D(H)$ over $S$,
i.e., a crystal of $\CO_{S/\Z_p}$-modules equipped with morphisms\footnote{Recall we use $\Phi$ instead of $F$
since $F$ denotes our fixed totally real field; here however we use the linearized versions of $\Phi$ and $V$.} 
$\Phi:\D(H)^{(p)} \to \D(H)$ and
$V:\D(H) \to \D(H)^{(p)}$ such that $\Phi\circ V$ and $V\circ \Phi$ are multiplication by $p$.
The functor $\D$ is contravariant in $H$ and the morphisms $\Phi$ and $V$ are induced by 
$\Frob_H:H \to H^{(p)}$ and $\Ver_H:H^{(p)} \to H$; furthermore $\D$ is compatible
with base change $S \to S'$ in the obvious sense, and reduces to the usual Dieudonn\'e theory when
$S = \Spec k$ for a perfect field $k$ (see\cite[Thm.~4.2.14]{BBM}; note that we have already used
$\D(H)$ to denote $\varprojlim_n \Gamma(\Spec(W_n),\D(H))$ when $k = \Fpbar$, and this coincides
with $\Gamma(S,\D(H))$ when $H$ is killed by $p$).  
We shall in fact only consider group schemes $H$ killed by $p$, on which we may view $\D$ as an
exact functor to the category of crystals of locally free $\CO_{S/\F_p}$-modules equipped with $\Phi$ and
$V$ such that $\Phi \circ V$ and $V \circ \Phi$ are trivial (Prop.~4.3.1 and Lemma 4.3.5 of \cite{BBM}).

Recall also from \cite[4.3.4]{BBM} that if $\CF$ is a quasi-coherent sheaf of $\CO_S$-modules, then pull-back via Frobenius
endomorphisms on divided power thickenings over $\F_p$ yields a crystal of $\CO_{S/\F_p}$-modules, which we denote
$\Pi^*\CF$ (rather than $\Phi^*\CF$).  The proof of \cite[Prop.~4.3.6]{BBM} yields a natural morphism
$$\Pi^*\Lie(H^\vee) \to \D(H)$$
which is an isomorphism if $\Ver_H = 0$.  On the other hand \cite[Prop.~4.3.10]{BBM} yields a natural morphism
$$\D(H) \to \Pi^*\omega_H$$
where $\omega_H$ denotes the sheaf of invariant differentials, and this is an isomorphism if $\Frob_H = 0$.
Furthermore recall that $\omega_H$ is canonically isomorphic to $e^*\Omega^1_{H/S}$ where
$e:S \to H$ is the zero section, and we have a natural map $\Lie(H^\vee) \to \omega_H$,
defined for example by identifying $\Lie(H^\vee)$ with $\Shom_S(H,{\bf G}_a)$ and sending
a section $\eta$ to the invariant differential $\eta^*(dX)$.  We remark also that in the case $S=\Spec k$
for a perfect field $k$, the two propositions in \cite{BBM} yield the isomorphisms
$$\Lie(H^\vee)^{(p)} = \ker(V)_S\qquad\mbox{and}\qquad \omega_H^{(p)} = \coker (\Phi)_S$$
which appeared in our previous discussion in the case $k = \Fpbar$.

\begin{lemma}  \label{lem:BBMcomp} Suppose that $k$ is a field and $H$ is a finite flat group scheme 
over $S = \Spec k$ killed by $p$, then the composite 
$$\Pi^*\Lie(H^\vee) \to \D(H) \to \Pi^*\omega_H$$
is $\Pi^*\delta_H$, where $\delta_H$ is the natural map $\Lie(H^\vee) \to \omega_H$.
\end{lemma}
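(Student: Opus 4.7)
The plan is to prove Lemma~\ref{lem:BBMcomp} by leveraging the naturality in $H$ of both the composite in question and of $\Pi^{*}\delta_{H}$. The two maps $\Pi^{*}\Lie(H^\vee)\to \D(H)$ and $\D(H)\to \Pi^{*}\omega_{H}$ of \cite[Prop.~4.3.6 and 4.3.10]{BBM} are natural in $H$ (as are $\Pi^{*}$, $\Lie((\cdot)^\vee)$, $\omega_{(\cdot)}$ and $\delta_{(\cdot)}$), so both sides of the claimed equality define natural transformations of functors from the category of finite flat commutative $k$-group schemes killed by $p$ to the category of $\CO_{\Spec k/\F_p}$-modules.

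Next I would exploit the interpretation of $\Lie(H^\vee)$ as $\Hom_{k\text{-grp}}(H,\G_{a})$, under which $\delta_{H}$ sends a character $\chi : H \to \G_{a}$ to $\chi^{*}(dX) \in \omega_{H}$.  Since $H$ is killed by $p$, each such $\chi$ factors through the $p$-torsion of $\G_{a}$, and conversely every section of $\Lie(H^\vee)$ is obtained in this way.  By naturality, both natural transformations are determined by their values on a single universal character, so it suffices to verify the identity after pullback via an arbitrary $\chi : H\to H_{0}$ where $H_{0}$ is a fixed ``test'' finite flat $p$-torsion group scheme accommodating all characters (for example $H_{0}=\alpha_{p}$ when $\Frob_{H}=0$, or $H_{0}=\mu_{p}$ when $\Ver_{H}=0$, handled separately, and then patched using the decomposition of $H$ into its local-local, local-\'etale, and \'etale-local parts or a resolution by such).

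The verification in the test case is a direct unwinding: for $H_{0}=\alpha_{p}$, the hypothesis $\Ver_{H_{0}}=0$ ensures that $\Pi^{*}\Lie(H_{0}^\vee)\to\D(H_{0})$ is an isomorphism (by \cite[Prop.~4.3.6]{BBM}), and the hypothesis $\Frob_{H_{0}}=0$ ensures that $\D(H_{0})\to\Pi^{*}\omega_{H_{0}}$ is an isomorphism (by \cite[Prop.~4.3.10]{BBM}).  Comparing with the standard identifications $\Lie(\alpha_{p}^{\vee})\cong k\cdot\partial/\partial X$ and $\omega_{\alpha_{p}}\cong k\cdot dX$, one checks directly that the composite is the same as $\Pi^{*}\delta_{H_{0}}$; the analogous explicit check goes through for $\mu_{p}$ (where $\delta$ vanishes) and for constant \'etale $p$-torsion groups (where both sides vanish).

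The main obstacle will be the careful tracking of sign and Frobenius-twist conventions in the BBM construction, and the organizational work of packaging the reduction so that naturality in all characters $\chi : H \to \G_{a}$ is enough to pin down the transformation on $\Pi^{*}\Lie(H^\vee)$; this is delicate because $\G_{a}$ is not of finite type as a $p$-torsion group scheme and must be approached through its finite flat subgroup schemes $\alpha_{p}$ and $\mu_{p}$ (and their \'etale counterparts), with the verification performed component by component.
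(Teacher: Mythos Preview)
Your approach shares the paper's endpoint---reduction to $H=\alpha_p$---but the reduction step has a genuine gap. The connected--\'etale decomposition of a $p$-torsion finite group scheme over a perfect field yields \'etale, multiplicative, and local-local pieces; the first two are trivial as you note, but a local-local $H$ killed by $p$ need \emph{not} satisfy $\Frob_H=0$ or $\Ver_H=0$. For instance, take the Dieudonn\'e module $k^3$ with $F(e_1)=e_2$, $V(e_3)=e_2$, and all other values of $F,V$ zero: then $FV=VF=0$ and both $F,V$ are nilpotent, yet neither vanishes. So your case split ``when $\Frob_H=0$ use $\alpha_p$, when $\Ver_H=0$ use $\mu_p$'' does not cover the local-local piece, and the $\mu_p$ suggestion is in any case unhelpful since $\Lie(\mu_p^\vee)=\Lie(\Z/p\Z)=0$. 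Your character-based idea (factoring $\alpha\in\hom(H,\G_a)$ through a finite subgroup of $\G_a$) does buy you $\Ver_{H_0}=0$, since $\Ver_{\G_a}=0$, but you still need a further reduction to kill $\Frob$ before you reach $\alpha_p$, and the proposal does not supply one.

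The paper's reduction is sharper and avoids any decomposition: it passes to $H_0=\ker(\Frob_H)$, using that the closed immersion $H_0\hookrightarrow H$ induces an isomorphism $\omega_H\stackrel{\sim}{\to}\omega_{H_0}$ (as Frobenius has zero differential), to reduce to $\Frob_H=0$; then passes to $H_1=\coker(\Ver_H)$, using $\Lie(H_1^\vee)\stackrel{\sim}{\to}\Lie(H^\vee)$ (dually, $H_1^\vee=\ker(\Frob_{H^\vee})$), to further reduce to $\Ver_H=0$. With both vanishing one has $H\cong\alpha_p^m$, hence $H=\alpha_p$. Finally, the verification for $\alpha_p$ is not a mere unwinding but an explicit computation with the descriptions in \cite[4.3.7, 4.3.12]{BBM}: one writes down a specific extension class $(f,\omega)$ representing a basis of $\D(\alpha_p)_S$, computes its image in $\Lie(\alpha_p^\vee)^{(p)}$ via the factorization of $\Ver_E$, and its image in $\omega_{\alpha_p}^{(p)}$ via the Cartier operator $C(\omega)$. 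You should not gloss over this step.
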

\begpf  First note that the desired compatibility amounts to the claim that 
the composite $\Lie(H^\vee)^{(p)} \to \D(H)_S \to \omega_H^{(p)}$
is $\delta_H^{(p)}$, and we may extend scalars so as to assume $k$ is perfect.  
Letting $H_0 = \ker(\Frob_H)$, the commutative
diagram
$$\xymatrix{\Lie(H^\vee)^{(p)} \ar@{^{(}->}[r]\ar[d] & \D(H)_S \ar@{->>}[r]\ar@{->>}[d] & \omega_H^{(p)}  \ar[d]^-{\wreath} \\
\Lie(H_0^\vee)^{(p)} \ar@{^{(}->}[r]  & \D(H_0)_S \ar[r]^-{\sim} & \omega_{H_0}^{(p)}  }$$
shows that the compatibility for $H_0$ implies it for $H$, so we may assume $\Frob_H = 0$.
Similarly letting $H_1= \coker(\Ver_H)$, the diagram 
$$\xymatrix{\Lie(H_1^\vee)^{(p)}  \ar[r]^-{\sim} \ar[d]^-{\wreath} & \D(H_1)_S \ar[r]^-{\sim} \ar@{_{(}->}[d]& \omega_{H_1}^{(p)}   \ar@{_{(}->}[d] \\
\Lie(H^\vee)^{(p)} \ar@{^{(}->}[r]  & \D(H)_S \ar[r]^-{\sim} & \omega_{H}^{(p)}  }$$
shows we may also assume $\Ver_H = 0$.  Since $\Frob_H$ and $\Ver_H$
are both trivial, we have $H \cong \alpha_p^m$ for some $m \ge 0$, so we may further assume
$H = \alpha_p$.  Finally in the case $H = \alpha_p = \Spec k[Y]/(Y^p)$ with $\mu(Y) = Y \otimes 1 + 1 \otimes Y$,
we can compute the isomorphisms
$\Lie(H^\vee)^{(p)} \cong \D(H)_S$ and $\D(H)_S \cong \omega_H^{(p)}$ of Prop.~4.3.6 and
Prop.~4.3.10 of \cite{BBM} using their descriptions in \cite[4.3.7, 4.3.12]{BBM}.  More precisely,
take the basis element of $\D(H)_S \cong \Ext_S^\natural(H,{\bf G}_a)$ defined by the
extension $E$ associated in \cite[4.3.12(i)]{BBM} to the pair
$(f,\omega)$ where 
$$f = - \sum_{i=1}^{p-1} \left(\begin{array}{c} p \\ i \end{array}\right) Y^i \otimes Y^{p-i} \quad\mbox{and}\quad
\omega = Y^{p-1}dY.$$
 According to \cite[4.3.7]{BBM}, the image of the class in $\Lie(H^\vee)^{(p)}$
is defined by the homomorphism $H^{(p)} \to {\mathbf G}_a = \Spec k[X]$ through which $\Ver_E:E^{(p)} \to E$
factors, which one can check directly from the definition in \cite[Exp.~VII, 4.3]{SGA3} is given by $X \mapsto Y$.
On the other hand according to \cite[4.3.12(ii)]{BBM}, its image in $\omega_{H^{(p)}}$ is 
given by $C(\omega) = dY$.
\epf

\begin{remark}  The preceding lemma would also follow easily from knowing that the relations between
(co)tangent spaces and Dieudonn\'e modules provided by Propositions~4.3.6 and~4.3.10 of \cite{BBM}
were compatible (via their Theorem~4.2.14) with the ones given by Propositions~III.3.2 and~III.4.3 of \cite{fontaine}.
However we were not able to verify the commutativity of the resulting diagram.
\end{remark}

We now determine the Dieudonn\'e crystal of a Raynaud $(\CO_F/p)$-module scheme $H$ over a suitable
base $S$ in characteristic $p$.  Let $q$ be a power of $p$ such that $q-1$ is divisible by the exponent of 
$(\CO_F/p)^\times$.
\begin{proposition} \label{prop:DRaynaud}
Suppose that $S$ is a smooth scheme over a field containing $\F_q$ and $H$ is the Raynaud $(\CO_F/p)$-module scheme over
$S$ associated to the data $(\CL_\theta,s_\theta,t_\theta)_{\theta\in \Theta}$ (see~\S\ref{sss:U1p}).  Then $\D(H)$ is canonically isomorphic to
$\Pi^*\CL$ with $\Phi = \Pi^*(s)$ and $V = \Pi^*(t)$, where $\CL = \bigoplus_{\theta \in \Theta} \CL_\theta$,
$s = \bigoplus_{\theta \in \Theta} s_\theta$ and $t = \bigoplus_{\theta\in \Theta} t_\theta$.
\end{proposition}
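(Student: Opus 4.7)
The plan is to construct a canonical isomorphism of Dieudonn\'e crystals $\alpha: \D(H) \xrightarrow{\sim} \Pi^*\CL$ intertwining $\Phi$ with $\Pi^*(s)$ and $V$ with $\Pi^*(t)$, by d\'evissage down to the case of rank-$p$ Raynaud group schemes. First I would use the compatibility of $\D$ with products, together with the decomposition $H = \prod_{v|p} H[v]$ into $(\CO_F/v)$-module schemes, to reduce to the case where $H$ is a Raynaud $(\CO_F/v)$-module scheme for a single prime $v$ of $\CO_F$ dividing $p$. Since $\F_q$ contains the residues of all characters $\theta \in \Theta_v$, the $\CO_F/v$-action decomposes both $\D(H)$ and $\Pi^*\CL$ into rank-one crystal summands $\D(H)_\theta$, $\Pi^*\CL_\theta$ indexed by $\theta \in \Theta_v$, and $\Phi$ sends $\D(H)_\theta^{(p)}$ to $\D(H)_{\phi\circ\theta}$ (similarly for $V$). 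Verifying the claim then reduces to exhibiting the isomorphism on each summand and checking that the induced maps match $\Pi^*s_\theta$ and $\Pi^*t_\theta$.

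Next I would establish the basic rank-$p$ case: for a line bundle $\CM$ on $S$ together with morphisms $\sigma:\CM^{\otimes p} \to \CM'$ and $\tau:\CM' \to \CM^{\otimes p}$ with $\sigma\tau = 0$ (automatic in characteristic $p$), the associated rank-$p$ Raynaud scheme $G = \Spec(\sym_{\CO_S}\CM/(\ell^{\otimes p} - \sigma(\ell^{\otimes p})))$ has $\D(G) \cong \Pi^*\CM$ with $\Phi = \Pi^*\sigma$ and $V = \Pi^*\tau$. Since both sides are Zariski-local on $S$ and line bundles are locally trivial, this reduces to $\CM \cong \CO_S$, where $G$ is a twist of $\mu_p$, $\alpha_p$, or $\underline{\Z/p\Z}$ (the three cases being determined by whether each of $\sigma,\tau$ is zero or a unit), and the Dieudonn\'e crystals of these standard group schemes are known by direct computation in \cite{BBM}.

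To go from the basic case to the general one, I would use Lemma~\ref{lem:slicing} to filter $H$, Zariski-locally on $S$ where the vanishing patterns of the $s_\theta, t_\theta$ allow it, by subgroup schemes $C_I$ with rank-$p$ successive quotients of Raynaud type. Applying the exact functor $\D$ and the basic case yields a filtration of $\D(H)$ whose graded pieces match the $\Pi^*\CL_\theta$; the $(\CO_F/v)$-action canonically splits this filtration and gives the identification $\D(H)_\theta \cong \Pi^*\CL_\theta$, while the intertwining of $\Phi,V$ with $\Pi^*s$ and $\Pi^*t$ follows from the functoriality of $\D$ with respect to the relative Frobenius and Verschiebung on $H$, combined with Lemma~\ref{lem:BBMcomp} applied to the natural maps $\Pi^*\Lie(H^\vee) \to \D(H) \to \Pi^*\omega_H$.

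The main obstacle will be producing the isomorphism $\alpha$ globally and canonically, independent of the Zariski-local choices in the d\'evissage. The subgroup schemes $C_I$ of Lemma~\ref{lem:slicing} are defined only when certain $s_\theta, t_\theta$ vanish, a condition that holds Zariski-locally but not globally on $S$; consequently, the local filtrations must be patched in a canonical way. I expect this to be handled by characterizing $\alpha$ intrinsically on each isotypic piece via the natural maps from $\Pi^*\Lie(H^\vee)$ and to $\Pi^*\omega_H$ of \cite{BBM}, whose explicit descriptions in terms of $\CL_\theta$, $s_\theta$, $t_\theta$ follow from the proof of Lemma~\ref{lem:vanishing}; the content of Lemma~\ref{lem:BBMcomp} then pins down $\alpha$ uniquely, and its isomorphism property is verified fibrewise via the basic case.
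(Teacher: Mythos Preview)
Your overall strategy---filter $H$ using Lemma~\ref{lem:slicing}, apply the results of \cite{BBM} to identify the Dieudonn\'e crystals of the pieces, split by characters to build $\alpha$, and invoke Lemma~\ref{lem:BBMcomp} for the $\Phi,V$ compatibility---is correct and matches the paper's in outline. But your d\'evissage is more elaborate than necessary and, as stated, does not quite work.

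First, Lemma~\ref{lem:slicing} does not give a full flag with rank-$p$ quotients; it produces a \emph{single} subgroup $C = C_I \subset H$ of rank $p^{|I|}$, and the hypotheses on $I$ are rigid enough that you cannot iterate it to refine further. The paper uses exactly this one short exact sequence $0 \to C \to H \to C' \to 0$: the choice of $I$ forces $\Ver_C = 0$ and $\Frob_{C'} = 0$, so Propositions~4.3.6 and~4.3.10 of \cite{BBM} give $\D(C) \cong \Pi^*\Lie(C^\vee) \cong \bigoplus_{\theta\in I}\Pi^*\CL_\theta$ and $\D(C') \cong \Pi^*\omega_{C'} \cong \bigoplus_{\theta\notin I}\Pi^*\CL_\theta$ directly, with no rank-$p$ case needed. (Your formulation of that case is also garbled: you introduce two bundles $\CM,\CM'$ but then define $G$ using only $\CM$; for a standalone rank-$p$ scheme the Frobenius must map $\CM^p \to \CM$, not to a different bundle, and once $|\Theta_v|>1$ the rank-$p$ subquotients of $H$ are not individually Raynaud schemes in any useful sense.)

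Second, for the compatibility of $\alpha$ with $\Phi$ and $V$, and for its independence of the choice of $I$, the paper exploits the smoothness hypothesis in a way you do not mention: by \cite[Cor.~1.3.4]{BM3}, morphisms of locally free crystals over a smooth $\F_p$-base are determined at the generic point, so one may reduce to $S = \Spec k$ for a field $k$. Over a field the verification becomes a finite case analysis, and Lemma~\ref{lem:BBMcomp} handles precisely the one non-obvious case (where $\phi^{-1}\circ\theta \in I$ but $\theta \notin I$). This same reduction to a field also disposes of your concern about canonicality: one shows over $k$ that the two candidate descriptions of $\alpha_{\phi\circ\theta}$ (via $\Lie(H^\vee)$ when $\phi^{-1}\circ\theta\in I$, via $\omega_H$ when $\phi^{-1}\circ\theta\notin I$) agree when both apply, again by Lemma~\ref{lem:BBMcomp}. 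Your instinct to pin down $\alpha$ via the natural maps $\Pi^*\Lie(H^\vee)\to\D(H)\to\Pi^*\omega_H$ is exactly right, but the cleanest way to execute it is this reduction to a field rather than a direct argument over $S$.
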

\begpf First note that the action of $\CO_F$ on $H$ induces an action of $\CO_F \otimes \F_q$ on
the crystal, and hence a decomposition $\D(H) = \bigoplus_{\theta \in \Theta} \D(H)_\theta$
under which $\Phi$ and $V$ restrict to morphisms
$$\Phi:\D(H)_\theta^{(p)} \to \D(H)_{\phi\circ\theta} \qquad \mbox{and}\qquad  V:\D(H)_{\phi\circ \theta} \to \D(H)_\theta^{(p)}.$$
To prove the proposition, we may assume $S$ is connected and hence integral, so the equation $s_\theta t_\theta = 0$
implies that either $s_\theta = 0$ or $t_\theta = 0$, and we may therefore choose a set $I$ such
that $s_\theta = 0$ (resp.~$t_\theta = 0$) for all $\theta \not\in I$ (resp.~$\theta\in I$).
Applying Lemma~\ref{lem:slicing} to $H$ and $H^\vee$ now yields an exact sequence
of finite flat $(\CO_F/p)$-module schemes
$$0 \to C \to H \to C' \to 0$$
such that $\Ver_C = 0$, $\Frob_{C'} = 0$, and the natural maps
$\Lie(H^\vee) \to \CL \to \omega_H$ induce isomorphisms
$$\Lie(C^\vee) \cong \bigoplus_{\theta \in I} \CL_\theta\quad\mbox{and}\quad \bigoplus_{\theta\not\in I} \CL_\theta \cong \omega_{C'} .$$
The $(\CO_F/p)$-equivariant exact sequence 
$$0 \to \D(C') \to \D(H) \to \D(C) \to 0$$
and isomorphisms $\D(C) \cong \Pi^*\Lie(C^\vee)$, $\D(C') \cong \Pi^*\omega_{C'}$ thus yield
an $(\CO_F/p)$-equivariant isomorphism
$$\alpha:  \D(H) \stackrel{\sim}{\longrightarrow}   \bigoplus_{\theta \in I} \Pi^*\CL_\theta$$
of crystals of locally free $\CO_{S/\F_p}$-modules, which we write as $\bigoplus \alpha_\theta$.

We must prove that $\alpha$ is compatible with $\Phi$ and $V$.  
Since $S$ is smooth and the constructions are compatible with base-change, we
can apply \cite[Cor.~1.3.4]{BM3} to replace $S$ by its generic point and hence
assume $S = \Spec k$ for a field $k$.  We just give the argument for $\Phi$; the proof
for $V$ is similar\footnote{As will be clear from the argument, the compatibility of $\Phi$
(resp.~$V$) on $\D(H)_{\theta}^{(p)}$ (resp.~$\D(H)_{\phi\circ\theta}$) is straightforward if
$\phi^{-1}\circ\theta\not\in I$ or $\theta\in I$ (resp.~$\phi^{-1}\circ\theta \in I$ or $\theta\not\in I$); the appeal to
faithfulness under base-change is only needed to address the remaining case.}.

First note that since $\Frob_{C'} = 0$, it follows that
$\Phi$ annihilates $\D(C')^{(p)}$ and $\Frob_H$ factors through $C^{(p)}$.
In particular if $\phi^{-1}\circ\theta \not\in I$, then $\Phi$ is trivial on 
$$\D(H)_\theta^{(p)} \cong \D(C')_\theta^{(p)} \cong \Pi^*\CL^p_{\phi^{-1}\circ\theta},$$
as is $\Pi^*s_{\phi^{-1}\circ\theta}$.  On the other hand if $\phi^{-1}\circ\theta \in I$,
then $t_{\phi^{-1}\circ\theta} =0$, so that $\omega_{H^\vee,\theta} = \CL_\theta^{-1}$
and $\Lie(H^\vee)_\theta = \CL_\theta$.  Furthermore the right-most square of the diagram
\begin{equation} \label{eqn:crystalPhi}
\xymatrix{\D(H)_\theta^{(p)} \ar[rd]^-{\Phi}  \ar[r]^-{\sim} & \D(C)_\theta^{(p)}\ar[d] & \Pi^*\Lie(C^\vee)_{\phi^{-1}\circ\theta}^{(p)} \ar[d]
 \ar[r]^-{\sim}  \ar[l]_-{\sim} & \Pi^*\CL_{\phi^{-1}\circ\theta}^p \ar[d]^-{\Pi^*s_{\phi^{-1}\circ\theta}} \\
 &\D(H)_{\phi\circ\theta} & \Pi^*\Lie(H^\vee)_{\theta}  
 \ar[r]^-{\sim}  \ar[l] & \Pi^*\CL_{\theta}  }\end{equation}
commutes since $\Frob_H: H \to C^{(p)} \to H^{(p)}$ is defined by the morphism of
$\CO_S$-algebras induced by the $s_{\theta}$, and the rest of the diagram commutes
by functoriality.   Since the top row is $\alpha_\theta^{(p)}$, it just remains to show that
the bottom row of (\ref{eqn:crystalPhi}) is $\alpha_{\phi\circ\theta}^{-1}$.  If $\theta \in I$, this is clear
from the definitions.
On the other hand if $\theta\not\in I$ and $\phi^{-1}\circ\theta \in I$, then we find that $\Lie({C'}^\vee)_\theta \cong \Lie(H^\vee)_\theta = \CL_\theta$,
and by functoriality and Lemma~\ref{lem:BBMcomp} the squares in the diagram
$$\xymatrix{ \D(H)_{\phi\circ\theta}   &    \ar[l]_-{\sim} \D(C')_{\phi\circ\theta}   \ar[r]^-{\sim} &  \Pi^*\omega_{C',\theta} \\
\Pi^*\Lie(H^\vee)_{\theta} \ar[u] &    \ar[l]_-{\sim} \Pi^*\Lie({C'}^\vee)_{\theta}  \ar[r]^-{\sim}  \ar[u] &  \Pi^*\CL_\theta  \ar[u]_-{\wreath} 
   }$$ 
commute.  The composite along the top and right is $\alpha_{\phi\circ\theta}$, and along the bottom and left is 
the bottom row of (\ref{eqn:crystalPhi}).

Finally we note that the isomorphism $\alpha$ is independent of choice of $I$.  To prove this we can again apply
\cite[Cor.~1.3.4]{BM3} to reduce to the case of $S = \Spec k$ for a field $k$, and then the above argument shows
that if $\phi^{-1}\circ\theta \in I$, then $\alpha^{-1}_{\phi\circ\theta}$ is the composite
 $\Pi^*\CL_\theta \stackrel{\sim}{\longrightarrow}
 \Pi^*\Lie(H^\vee)_\theta \to \D(H)_{\phi\circ\theta}$, and similarly if $\phi^{-1}\circ\theta \not\in I$, then
 $\alpha_{\phi\circ\theta}$ is the composite
$\D(H)_{\phi\circ\theta} \to  \Pi^*\omega_{H,\theta}  \stackrel{\sim}{\longrightarrow}
  \Pi^*\CL_\theta$.  If $s_{\phi^{-1}\circ\theta}$ and $t_{\phi^{-1}\circ\theta}$ are
both trivial, then Lemma~\ref{lem:BBMcomp} implies that the composite of these maps is the identity,
so $\alpha_{\phi\circ\theta}$ is independent of whether or not $\phi^{-1}\circ\theta \in I$.
\epf
\begin{remark}  It is natural to expect that the proposition in fact holds over an arbitrary (locally Noetherian) $\F_q$-scheme $S$.
In fact our proof only required that locally $\CO_S$ have a $p$-basis in the sense of \cite[1.1.1]{BM3},
but the result stated above will suffice for our purpose and avoids having to recall the definition of a $p$-basis.
(Note however that the notion of a $p$-basis in \cite{BM3} is
more restrictive than the one in \cite[21.1.9]{ega4}; in particular any $\F_p$-algebra having a $p$-basis is formally smooth
over $\F_p$.)
\end{remark}

\subsubsection{Construction of an isogeny over $P_J$}\label{sss:frob.isog}
We first construct Raynaud data on the variety $P_J$ (defined in \S\ref{sss:frob.prel})
to which we will apply Proposition~\ref{prop:DRaynaud}.  
Let $u:A\to P_J$ denote the pull-back of the universal abelian scheme over $T_{J'}$ and
consider the $\CO_F \otimes \CO_S$-linear morphisms
$$\Frob^*:\CH^1_\dr(A^{(p)}/P_J) \to \CH^1_\dr(A/P_J)\quad\mbox{and}\quad 
    \Ver^*: \CH^1_\dr(A/P_J) \to \CH^1_\dr(A^{(p)}/P_J).$$
For each $\theta \in \Theta$ we define line bundles $\CM_\theta$ and $\CL_\theta$ fitting in an exact sequence
\begin{equation}  \label{eqn:filoverPJ}  0 \to \CM_\theta \to \CH^1_\dr(A/P_J) \to \CL_\theta \to 0\end{equation}
as follows:
\begin{itemize}
\item if $\theta \in J$, then $\CM_\theta = (u_*\Omega^1_{A/P_J})_\theta$, $\CL_\theta = (R^1u_*\CO_A)_\theta$ and
 (\ref{eqn:filoverPJ}) is the Hodge filtration;
\item if $\phi^{-1}\circ\theta \not\in J$, then $\CM_\theta = \Frob^*(\CH^1_\dr(A^{(p)}/P_J)_\theta) \cong (R^1u_*\CO_A)_{\phi^{-1}\circ\theta}^p$ and
 $$\CL_\theta = \CH^1_\dr(A/P_J)_\theta /\CM_\theta \cong \Ver^*(\CH^1_\dr(A/P_J)_\theta) = (u_*\Omega^1_{A/P_J})_{\phi^{-1}\circ\theta}^p;$$
\item if $\theta \in J''$, then (\ref{eqn:filoverPJ}) is the tautological filtration defining the projection $P_J \to \P_{T_{J'}}(\CH^1_\dr(A/P_J)_\theta)$,
so $\CL_\theta = \CO(1)_\theta$ and $\CM_\theta \cong \wedge^2(\CH^1_\dr(A/P_J))_\theta(-1)_\theta$.
\end{itemize}
Note the first two conditions are not exclusive, but if $\theta \in J$ and $\phi^{-1}\circ\theta \not\in J$, then $\theta \in J'$,
in which case $\Frob^*(\CH^1_\dr(A^{(p)}/P_J)_\theta) = (u_*\Omega^1_{A/P_J})_\theta$ by the definition of $T_{J'}$.
Furthermore note that $\Frob^*(\CM_{\phi^{-1}\circ\theta}^p) \subset \CM_\theta$ for all $\theta$ (since 
$\Frob^*(\CM_{\phi^{-1}\circ\theta}^p) = 0$ if $\phi^{-1}\circ\theta \in J$), and 
similarly $\Ver^*(\CM_\theta) \subset \CM_{\phi^{-1}\circ\theta}^p$ for all $\theta$ (since
$\Ver^*(\CH^1_\dr(A/P_J)_\theta) = \CM_{\phi^{-1}\circ\theta}^p$ if $\phi^{-1}\circ\theta \in J$, and
$\Ver^*(\CM_\theta) = 0$ if $\phi^{-1}\circ\theta \not\in J$).  We can therefore define 
$s_\theta:\CM_\theta^p \to \CM_{\phi\circ\theta}$ and $t_\theta: \CM_{\phi\circ\theta} \to \CM_\theta^p$
as the restrictions of $\Frob^*$ and $\Ver^*$ and so obtain Raynaud data $(\CM_\theta,s_\theta,t_\theta)$
on $P_J$, and we let $C$ denote the corresponding $(\CO_F/p)$-module scheme.  Similarly the morphisms
induced by $\Frob^*$ on $\CL_\theta^p$ and $\Ver^*$ on $\CL_{\phi\circ\theta}$
give rise to Raynaud data on $P_J$, and we let $H$ denote the corresponding $(\CO_F/p)$-module scheme.

Letting $\CM = \bigoplus \CM_\theta$ and $\CL = \bigoplus \CL_\theta$, we obtain
from (\ref{eqn:filoverPJ}) an exact sequence of Dieudonn\'e crystals
$$0 \to \Pi^*\CM  \to \Pi^* \CH^1_\dr(A/P_J) \to \Pi^*\CL \to 0,$$
where $\Phi$ (resp.~$V$) is defined on the terms by $\Pi^*\Frob^*$ (resp.~$\Pi^*\Ver^*$),
and according to Proposition~\ref{prop:DRaynaud}, we have canonical isomorphisms
$\Pi^*\CM \cong \D(C)$ and $\Pi^*\CL \cong \D(H)$.   On the other hand we also have
the canonical isomorphisms 
$$\Pi^*\CH^1_\dr(A/P_J) \cong \Pi^*\D(A[p])_{P_J}  \cong \D(A[p])^{(p)} \cong \D(A^{(p)}[p])$$
of Dieudonn\'e crystals provided by \cite[(3.3.7.3),(4.3.7.1)]{BBM}, allowing us to interpret the above exact sequence as
$$0 \to \D(C)   \to \D(A^{(p)}[p]) \to \D(H) \to 0.$$
Since $P_J$ is smooth over $\F$, the functor $\D$ is fully faithful by \cite[Thm.~4.1.1]{BM3}, so we obtain
an exact sequence
$$0 \to H \to A^{(p)}[p] \to C \to 0$$
of finite flat group schemes on $P_J$.
Since $\D(H)_{P_J,\theta}$ is locally free of rank one for each $\theta \in \Theta$, we see as in the proof of
Lemma~\ref{lem:cart2} that $H$ is totally isotropic with respect to the $\lambda$-Weil pairing on $A^{(p)}$,
and hence the pair $(\underline{A}^{(p)},H)$ defines a morphism $P_J \to S = \widetilde{Y}_{U_0(p)}(G)_\F$.
Letting $(\underline{A}_1,\underline{A}_2,f)$ denote the universal triple on $S_J$, note that
the pull-back of $\underline{A}^{(p)}$ to $S_J$ via $\widetilde{\xi}_J$ is $\underline{A}_1^{(p)}$.
Furthermore, the definition of $\CM$ ensures that $\widetilde{\xi}^*_J\CM_\theta$ is the image of
$\CH^1_\dr(A_2/S_J)_\theta$ for each $\theta$.  (If $\theta \in J$, this is immediate from the
vanishing of $\Lie(f^\vee)_\theta$, if $\theta \in J''$, this is part of the definition of $\widetilde{\xi}_J$,
and if $\phi^{-1}\circ\theta \not\in J$, note that 
$$\Ver_{A_1}^*(f^*\CH^1_\dr(A_2/S_J)_\theta) = 
    f^{(p),*}(\Ver_{A_2}^*\CH^1_\dr(A_2/S_J)_\theta) =  0$$
since $\Ver_{A_2}^*\CH^1_\dr(A_2/S_J)_\theta = (u_{2,*}\Omega^1_{A_2/S_J})_{\phi^{-1}\circ\theta}^p$.)
It follows that the composite 
$$\Pi^*\D(A_2[p])_{S_J} \to \Pi^*\D(A_1[p])_{S_J} \to \Pi^*(\widetilde{\xi}_J^*\CL)$$
is trivial, but this is the same as
$$\D(A^{(p)}_2[p])_{S_J} \to \D(A^{(p)}_1[p])_{S_J} \to \D(\widetilde{\xi}_J^*H).$$
Therefore $H$ is contained in $\ker(f^{(p)})$, and comparing ranks, we see that
$H = \ker(f^{(p)})$.  It follows that the composite 
$S_J \stackrel{\widetilde{\xi}_J}{\longrightarrow} P_J \to S$ is the Frobenius morphism on $S_J$,
or more precisely the composite
$$S_J \to S_J^{(p)} \cong S_{\phi(J)} \subset S$$
where $S_J \to S_J^{(p)}$ is the Frobenius morphism relative to $\Spec \F$ and
$S_J^{(p)} \cong S_{\phi(J)}$ is the canonical isomorphism induced by the Frobenius
automorphism of $\F$.  We record this as a lemma:
\begin{lemma} \label{lem:frobfactor}  There exists a morphism $P_J \to S_J^{(p)}$
whose composite with $\xi_J:S_J \to P_J$ is the Frobenius morphism $S_J \to S_J^{(p)}$.
\end{lemma}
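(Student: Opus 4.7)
The plan is to realize the desired morphism $P_J\to S_J^{(p)}$ via the moduli description of $S=\widetilde{Y}_{U_0(p)}(G)_\F$ as parametrizing pairs $(\underline{A},H)$ with $H$ a totally isotropic finite flat $(\CO_F/p)$-submodule scheme of $A[p]$. Concretely, letting $A$ denote the pull-back to $P_J$ of the universal abelian scheme from $T_{J'}$, I would construct a canonical totally isotropic subgroup scheme $H\subset A^{(p)}[p]$ over $P_J$, and then verify that the resulting classifying morphism $P_J\to S$ lands in $S_{\phi(J)}\cong S_J^{(p)}$ and pulls back to Frobenius along $\widetilde{\xi}_J$.

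The construction of $H$ will proceed through crystalline Dieudonn\'e theory. I would split $\CH^1_\dr(A/P_J)$ over $\theta\in\Theta$ and, for each $\theta$, single out a canonical line subbundle $\CM_\theta$ with quotient $\CL_\theta$: the Hodge filtration if $\theta\in J$, the image of $\Frob^*$ if $\phi^{-1}\circ\theta\not\in J$ (consistent with the previous case thanks to the definition of $T_{J'}$), and the tautological line bundle of the $\theta$-factor if $\theta\in J''$. The restrictions of $\Frob^*$ and $\Ver^*$ then endow $\bigoplus\CM_\theta$ and $\bigoplus\CL_\theta$ with Raynaud data $(s_\theta,t_\theta)$. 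Applying Proposition~\ref{prop:DRaynaud} and combining with the canonical isomorphism $\Pi^*\CH^1_\dr(A/P_J)\cong\D(A^{(p)}[p])$ from \cite{BBM}, one obtains a short exact sequence of Dieudonn\'e crystals
$$ 0\to \D(C)\to \D(A^{(p)}[p])\to \D(H)\to 0 $$
over $P_J$. Since $P_J$ is smooth over $\F$, the full faithfulness of $\D$ (\cite[Thm.~4.1.1]{BM3}) promotes this to an honest exact sequence $0\to H\to A^{(p)}[p]\to C\to 0$ of finite flat group schemes, and a rank count as in the proof of Lemma~\ref{lem:cart2} shows that $H$ is totally isotropic under the $\lambda$-Weil pairing. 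This yields the classifying morphism $P_J\to S$.

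Finally, I would identify the composite $S_J\xrightarrow{\widetilde{\xi}_J}P_J\to S$ with Frobenius by computing $\widetilde{\xi}_J^*H$. Case by case on $\theta$, one checks that $\widetilde{\xi}_J^*\CM_\theta$ coincides with the image of $\CH^1_\dr(A_2/S_J)_\theta\to\CH^1_\dr(A_1/S_J)_\theta$: for $\theta\in J''$ this is tautological, for $\theta\in J$ it follows from the vanishing of $\Lie(f^\vee)_\theta$ on $S_J$, and for $\phi^{-1}\circ\theta\not\in J$ it follows from $\Ver_{A_2}^*\CH^1_\dr(A_2/S_J)_\theta=0$. These identifications force $\widetilde{\xi}_J^*H=\ker(f^{(p)})$, so the composite classifies $(\underline{A}_1^{(p)},\ker(f^{(p)}))=(\underline{A}_1,\ker(f))^{(p)}$, i.e., it is the relative Frobenius of $S_J$ landing in $S_{\phi(J)}\cong S_J^{(p)}$.

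The main obstacle is the step that passes from the Dieudonn\'e crystal $\D(H)$ back to the subgroup $H$ itself: a priori one only has an abstract quotient crystal, and it is the Berthelot--Messing full faithfulness over the smooth base $P_J$ that guarantees it arises from an actual finite flat subgroup scheme of $A^{(p)}[p]$. Everything else is essentially bookkeeping of the Hodge, Frobenius, and tautological filtrations on $\CH^1_\dr(A/P_J)$.
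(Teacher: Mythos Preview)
Your proposal is correct and follows essentially the same approach as the paper: the construction of the Raynaud data $(\CM_\theta,\CL_\theta,s_\theta,t_\theta)$ on $P_J$, the appeal to Proposition~\ref{prop:DRaynaud} and the Berthelot--Messing full faithfulness over the smooth base $P_J$ to produce $H\subset A^{(p)}[p]$, and the case-by-case verification that $\widetilde{\xi}_J^*\CM_\theta = f^*\CH^1_\dr(A_2/S_J)_\theta$ all match the paper's argument. One small slip: in the case $\phi^{-1}\circ\theta\not\in J$, what you need is not $\Ver_{A_2}^*\CH^1_\dr(A_2/S_J)_\theta=0$ (which is false) but rather $\Ver_{A_1}^*(f^*\CH^1_\dr(A_2/S_J)_\theta)=f^{(p),*}(\Ver_{A_2}^*\CH^1_\dr(A_2/S_J)_\theta)=0$, which holds because the image of $\Ver_{A_2}^*$ is $(u_{2,*}\Omega^1_{A_2/S_J})_{\phi^{-1}\circ\theta}^p$ and $\Lie(f)_{\phi^{-1}\circ\theta}=0$.
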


\subsubsection{Local structure}\label{sss:frob.loc}
We now study the local structure of the degeneracy fibres by computing the effect of the
morphisms $S_J \to P_J \to T_{J'}$ on tangent spaces at closed points.    As in the proof of Theorem~\ref{thm:bigJL},
we apply the equivalence of categories between 
 abelian schemes over $T = \F[\epsilon]/(\epsilon^2)$ and pairs $(A,\widetilde{L})$ where $A$ is an abelian variety over $\F$
 and $\widetilde{L}$ is a free $T$-submodule of $H^1_\dr(A/\F)\otimes_{\F} T$ such that
 $\widetilde{L} \otimes_T \F = H^0(A,\Omega^1_{A/\F})$. 
 
Let $(\underline{A}_1,\underline{A}_2,f)$ be (the data corresponding to) an element
of $z \in S_J(\F)$, and let $y$ (resp.~$x$) denote its image in $P_J(\F)$ (resp.~$T_{J'}(\F)$).
Thus $x$ corresponds to $\underline{A}_1$ and $y$ corresponds to 
$(\underline{A}_1, (L_\theta)_{\theta \in J''})$ where $L_\theta$ is the image of 
$H^1_\dr(A_2/\F)_\theta \to H^1_\dr(A_1/\F)_\theta$.  Note that if
$(\underline{\widetilde{A}}_1,\underline{\widetilde{A}}_2,\widetilde{f})$ is a lift of $z$ to $S_J(T)$, then
$H^0(\widetilde{A}_2,\Omega^1_{\widetilde{A}_2/T})_\theta$ is the kernel of
$$\widetilde{f}_\theta^*: H^1_\dr(\widetilde{A}_2/T)_\theta \to H^1_\dr(\widetilde{A}_1/T)_\theta$$
for $\theta\not\in J$,  and this corresponds to
$H^0(A_2,\Omega^1_{A_2/\F})_\theta \otimes_\F T$ under the isomorphisms with
$H^1_\crys(A_2/T)_\theta$.  Similarly if $\theta \in J$, then $H^0(\widetilde{A}_1,\Omega^1_{\widetilde{A}_1/T})_\theta$
is the image of $\widetilde{f}_\theta^*$, and this 
corresponds to $H^0(A_1,\Omega^1_{A_1/\F})_\theta \otimes_\F T$.  It follows from
a standard argument that the tangent space of $S_J$ at $z$ is in canonical bijection with
the set of tuples 
$$\left((\widetilde{L}_{1,\theta})_{\theta \not \in J} ,\,(\widetilde{L}_{2,\theta})_{\theta \in J}\right)$$
where  each $\widetilde{L}_{j,\theta}$  is a free $T$-submodule of $H^1_\dr(A_j/\F)\otimes_{\F} T$ lifting
$H^0(A_j,\Omega^1_{A_j/\F})$.  Similarly if $\underline{\widetilde{A}}_1$ is a lift of $x$ to $T_{J'}(T)$, then
we find that $H^0(\widetilde{A}_1,\Omega^1_{\widetilde{A}_1/T})_\theta$ corresponds to
$H^0(A_1,\Omega^1_{A_1/\F})_\theta \otimes_\F T$ if $\theta \in J'$, so the tangent space of $T_{J'}$ at 
$x$ is identified with 
$$(\widetilde{L}_{1,\theta})_{\theta \not \in J'}$$
where  each $\widetilde{L}_{1,\theta}$ is a lift of $H^0(A_i,\Omega^1_{A_1/\F})$ to
$H^1_\dr(A_1/\F)\otimes_{\F} T$.  Finally it follows from the definition of $P_J$ that 
its tangent space at 
$y$ is identified with 
$$\left((\widetilde{L}_{1,\theta})_{\theta \not \in J'} ,\,(\widetilde{L}_{\theta})_{\theta \in J''}\right)$$
where each $\widetilde{L}_{1,\theta}$ (resp.~$\widetilde{L}_{\theta}$) is a lift of $H^0(A_1,\Omega^1_{A_1/\F})$ 
(resp.~$L_\theta$) to $H^1_\dr(A_1/\F)\otimes_{\F} T$.  Furthermore the map on tangent spaces
induced by $P_J \to T_{J'}$ corresponds to the natural projection under the above descriptions.
As for the map on tangent spaces induced by $\xi_J$, suppose that $\left((\widetilde{L}_{1,\theta})_{\theta \not \in J} ,\,(\widetilde{L}_{2,\theta})_{\theta \in J}\right)$
corresponds to $(\underline{\widetilde{A}}_1,\underline{\widetilde{A}}_2,\widetilde{f})$ and let 
$\left((\widetilde{L}_{1,\theta})_{\theta \not \in J'} ,\,(\widetilde{L}_{\theta})_{\theta \in J''}\right)$
correspond to its image in $P_J(T)$.    Note that if $\theta \in J$ but $\theta \not\in J'$ (i.e., $\phi^{-1}\circ\theta$ is also in $J$),
then $\widetilde{L}_{1,\theta}$, being the image of $\widetilde{f}_\theta^*$, is
$H^0(A_1,\Omega^1_{A_1/F})_\theta \otimes_\F T$, and similarly if $\theta\in J''$, then 
$\widetilde{L}_\theta = L_\theta \otimes_\F T$.

Recall that letting $\Spec(R_1)$ denote the first infinitesimal neighborhood of $x$ in $T_{J'}$ and choosing trivializations
$R^2 \cong H^1_\dr(A_1^\univ/R)_\theta$ in a Zariski neighborhood $\Spec R$ compatible with the canonical isomorphism
$$H^1_\dr(A_1^\univ/R_1)_\theta \cong H^1_\crys(A_1/R_1)_\theta \cong H^1_\dr(A_1/\F)_\theta \otimes_\F R_1$$
yields a regular system of parameters $\{\,t_\theta\,|\,\theta\not\in J'\,\}$ at $x$, so that
$$\F[[t_\theta]]_{\theta\not\in J'} \stackrel{\sim}{\longrightarrow} \widehat{\CO}_{T_{J'},x}.$$
More precisely, for a sufficiently small neighborhood of $x$ the trivialization may be chosen so that
$H^0(A_1^\univ,\Omega^1_{A_1^\univ/R})_\theta \subset H^1_\dr(A_1^\univ/R)_\theta$ corresponds to 
$R(1,t_\theta) \subset R^2$ for some $t_\theta \in R$ vanishing at $x$.  The resulting parameters
are then compatible with the above description of the tangent space in the sense that the span of each $t_{\sigma}$ in the
cotangent space $\gm_x/\gm_x^2$ is orthogonal to the set of $(\widetilde{L}_{1,\theta})_{\theta \not \in J'}$
such that $\widetilde{L}_{1,\sigma} = L_{1,\sigma} \otimes_\F T$.  We similarly obtain isomorphisms
$$\begin{array}{rcl}  \F[[t_\theta]]_{\theta\not\in J'}\, \widehat{\otimes}\, \F[[u_\theta]]_{\theta \in J''}
&\stackrel{\sim}{\longrightarrow} &\widehat{\CO}_{P_{J},y}\\
\mbox{and}\quad
  \F[[t_\theta]]_{\theta\not\in J}\, \widehat{\otimes}\, \F[[v_\theta]]_{\theta \in J}
&\stackrel{\sim}{\longrightarrow} &\widehat{\CO}_{S_{J},z}\end{array}$$
for which the parameters are compatible with the above descriptions of the tangent spaces
of $P_J$ at $y$ and $S_J$ at $z$.  The homomorphisms of completed local rings induced
by the morphisms $S_J \to P_J \to T_{J'}$ then take the form
$$\F[[t_\theta]]_{\theta\not\in J'} \hookrightarrow
  \F[[t_\theta]]_{\theta\not\in J'}\, \widehat{\otimes}\, \F[[u_\theta]]_{\theta \in J''}  \hookrightarrow
    \F[[t_\theta]]_{\theta\not\in J}\, \widehat{\otimes}\, \F[[v_\theta]]_{\theta \in J},$$
where the first arrow is the natural inclusion and the second sends $t_\theta$ to $t_\theta$
for $\theta \not\in J$ and the remaining parameters to elements of $\gm_z^2$.
Furthermore Lemma~\ref{lem:frobfactor} implies that the image of the second map
contains $v_\theta^p$ for all $\theta \in J$.  Taking the quotient of each ring by the ideal
generated by the $t_\theta$ for $\theta\not\in J$, the resulting homomorphism
$$ \F[[t_\theta]]_{\theta\in J - J'}\, \widehat{\otimes}\, \F[[u_\theta]]_{\theta \in J''}  \rightarrow
     \F[[v_\theta]]_{\theta \in J}$$
is trivial on the cotangent space at the closed point and has image containing $v_\theta^p$
for all $\theta \in J$.  It therefore follows from \cite[Cor.~2]{KN} that the image is precisely
$\F[[v^p_\theta]]_{\theta \in J}$, so we may replace the parameters $v_\theta$ by parameters
$w_\theta$ in $\widehat{\CO}_{S_{J},z}$ such that 
$$\begin{array}{rcll}  t_\theta &\equiv &w_{\phi^{-1}\circ\theta}^p \bmod I & \quad \mbox{if $\theta\in J$, $\phi^{-1}\circ\theta \in J$}\\
\qquad \mbox{and}\quad u_\theta &\equiv& w_{\phi^{-1}\circ\theta}^p \bmod I&\quad  \mbox{if $\theta\not\in J$, $\phi^{-1}\circ\theta \in J$,}
\end{array}$$
where $I$ is the ideal generated by the $t_\theta$ for $\theta \not\in J$.  
We conclude that the quotient of $\widehat{\CO}_{S_{J},z}$ by the ideal generated by the $t_\theta$ for $\theta\not\in J'$
is isomorphic to
$$\F[[w_\theta]]_{\theta\in J} /  \langle w_\theta^p \rangle_{\theta,\phi\circ\theta \in J}.$$
\begin{lemma}  \label{lem:fibrecompletions}  Let $Z$ be the fibre of $S_J \to T_{J'}$ at a closed point
$x$ of $T_{J'}$, and let $Z_\red$ be its reduced subscheme.  If $z$ is a closed point of $Z$, then
$$\widehat{\CO}_{Z,z}  \cong \F[[w_\theta]]_{\theta\in J} /  \langle w_\theta^p \rangle_{\theta,\phi\circ\theta \in J}
\quad\mbox{and}\quad
\widehat{\CO}_{Z_\red,z}  \cong \F[[w_\theta]]_{\theta\in J,\phi\circ\theta \not\in J};$$
in particular $Z_\red$ is smooth of dimension $|J'| = |J''|$ over $\F$.
\end{lemma}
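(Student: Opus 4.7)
The plan is to read off both descriptions of $\widehat{\CO}_{Z,z}$ and $\widehat{\CO}_{Z_\red,z}$ directly from the parameter calculation already completed in the discussion preceding the lemma, so the argument is essentially bookkeeping.

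First I would record that the fibre $Z$ of $S_J \to T_{J'}$ over $x$ is, by definition, the vanishing locus on $S_J$ of the ideal pulled back from the maximal ideal $\gm_x \subset \widehat{\CO}_{T_{J'},x} \cong \F[[t_\theta]]_{\theta\not\in J'}$, so
$$\widehat{\CO}_{Z,z} \;\cong\; \widehat{\CO}_{S_J,z}\big/\langle t_\theta\rangle_{\theta \not\in J'},$$
where I view $t_\theta$ as its image under the inclusion $\F[[t_\theta]]_{\theta\not\in J'} \hookrightarrow \widehat{\CO}_{S_J,z}$ induced by $S_J \to T_{J'}$. The parameter analysis preceding the statement already identifies $\widehat{\CO}_{S_J,z}\cong \F[[t_\theta]]_{\theta\not\in J}\,\widehat\otimes\,\F[[v_\theta]]_{\theta\in J}$, with the inclusion sending $t_\theta\mapsto t_\theta$ for $\theta\not\in J$ and sending $t_\theta$ (for $\theta\in J - J'$, i.e. $\phi^{-1}\circ\theta\in J$) into $\gm_z^2$. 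Applying the change of parameters $v_\theta\leadsto w_\theta$ furnished at the end of the discussion, the cited congruences give
$$t_\theta \equiv w_{\phi^{-1}\circ\theta}^{\,p}\pmod{I}\quad\text{for all $\theta \in J-J'$},$$
where $I$ is the ideal of $\widehat{\CO}_{S_J,z}$ generated by the $t_\theta$ with $\theta\not\in J$.

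Second, I would take the quotient. Killing the $t_\theta$ for $\theta \not\in J$ first gives $\widehat{\CO}_{S_J,z}/I \cong \F[[w_\theta]]_{\theta\in J}$, and then killing the images of the $t_\theta$ for $\theta\in J-J'$ kills exactly $w_{\phi^{-1}\circ\theta}^{\,p}$ as $\theta$ ranges over $J-J'$. Reindexing $\theta'=\phi^{-1}\circ\theta$, the condition ``$\theta\in J$ and $\phi^{-1}\circ\theta\in J$'' becomes ``$\theta'\in J$ and $\phi\circ\theta'\in J$'', so
$$\widehat{\CO}_{Z,z}\;\cong\;\F[[w_\theta]]_{\theta\in J}\big/\langle w_\theta^{\,p}\rangle_{\theta,\,\phi\circ\theta\in J},$$
which is the first claimed isomorphism.

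Third, for the reduced structure I would simply observe that the nilradical of this complete local ring is generated by those $w_\theta$ with $\theta,\phi\circ\theta\in J$: these are nilpotent, and after quotienting by them one obtains the regular ring $\F[[w_\theta]]_{\theta\in J,\,\phi\circ\theta\not\in J}$, which is already reduced. Hence
$$\widehat{\CO}_{Z_\red,z}\;\cong\;\F[[w_\theta]]_{\theta\in J,\,\phi\circ\theta\not\in J}.$$
Finally, for the dimension/smoothness claim I would remark that the set $\{\theta\in J\,|\,\phi\circ\theta\not\in J\}$ maps bijectively to $J''$ via $\theta\mapsto\phi\circ\theta$, and that $|J'|=|J''|$ by the bijection $\theta\mapsto\phi\circ\theta$ from $J'$ onto $J''$; thus $Z_\red$ has regular completed local rings of dimension $|J'|=|J''|$ at every closed point, so it is smooth of that dimension over $\F$. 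There is no substantial obstacle here: all the hard work was packaged into Lemma~\ref{lem:frobfactor} (the Frobenius factorization) and its consequence, via the cited application of \cite[Cor.~2]{KN}, that one can arrange the $v_\theta$'s so that $t_\theta\equiv w_{\phi^{-1}\circ\theta}^{\,p}$ modulo $I$; the present lemma is the direct translation of that normal form into the fibre.
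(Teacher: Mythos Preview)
Your computation of $\widehat{\CO}_{Z,z}$ is correct and matches the paper, which simply records that this is ``immediate from the above discussion'' --- you have just spelled out that discussion again.

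For $\widehat{\CO}_{Z_\red,z}$, however, there is a small but genuine gap. What you actually compute in your third step is the \emph{reduction of the completion}, i.e.\ $(\widehat{\CO}_{Z,z})_\red$, and you then assert this equals $\widehat{\CO}_{Z_\red,z}$, the \emph{completion of the reduced local ring}. These two operations do not commute in general. The paper closes this gap by observing that $Z$, being locally of finite type over $\F$, is Nagata (equivalently, excellent), which is exactly the hypothesis under which $\widehat{\CO}_{Z_\red,z} = \widehat{\CO}^{\,\red}_{Z,z}$. You should insert this justification.

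One further small slip: the map $\theta\mapsto\phi\circ\theta$ does \emph{not} in general send $J'=\{\theta\in J\mid \phi^{-1}\circ\theta\notin J\}$ bijectively onto $J''=\{\theta\notin J\mid \phi^{-1}\circ\theta\in J\}$; for $\theta\in J'$ there is no reason that $\phi\circ\theta\notin J$. The equality $|J'|=|J''|$ is nonetheless true, since on each $\phi$-orbit $\Theta_v$ the number of transitions into $J$ equals the number of transitions out of $J$. (Alternatively, $\phi^{-1}$ gives a bijection from $J''$ onto $\{\theta\in J\mid \phi\circ\theta\notin J\}$, and the latter set has the same size as $J'$ by the transition-counting argument.)
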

\begpf  The assertion concerning $\widehat{\CO}_{Z,z}$ is immediate from the above discussion;
the assertions for $Z_\red$ follow since $Z$ is locally of finite type over $\F$, and in particular Nagata,
so that $\widehat{\CO}_{Z_\red,z}  = \widehat{\CO}^{\,\red}_{Z,z}$.
\epf

\subsection{Crystallization}  \label{sec:fibres}
We maintain the notation of the preceding section.

\subsubsection{A crystallization lemma}\label{sss:crys.lem}
Note that at each closed point of $S_J$ and $\theta \in J$, the map 
$$H^1_\dr(A_2/\F)_\theta \to H^1_\dr(A_1/\F)_\theta$$
induced by the fibre $A_1 \to A_2$ of the universal isogeny has image $H^0(A_1,\Omega^1_{A_1/\F})_\theta$,
which coincides with the image of the map induced by $\Ver:A_1 \to A_1^{(p^{-1})}$.
It follows that the maps 
$$\D(A_2[p^\infty])_\theta \to \D(A_1[p^\infty])_\theta \quad\mbox{and} \quad
\D(A_1^{(p^{-1})}[p^\infty])_\theta \to \D(A_1[p^\infty])_\theta$$
have the same image, and hence we obtain an isomorphism
$$\D(A_2[p^\infty])_\theta  \cong \D(A_1^{(p^{-1})}[p^\infty])_\theta,$$
which in turn induces an isomorphism 
$$H^1_\dr(A_2/\F)_\theta \to H^1_\dr(A_1^{(p^{-1})}/\F)_\theta.$$
We wish to prove this in fact arises from an isomorphism of sheaves
on each fibre of $S_J \to T_{J'}$, for which we appeal to
the following crystallization lemma:

\begin{lemma}     \label{lem:crys}  Suppose that $X$ is a smooth scheme over $\F$, 
$A$, $B_1$ and $B_2$ are abelian schemes over $X$
with $O_F$-action, and $\theta \in \Theta$.  Let $\alpha_i:A \to B_i$ for $i=1,2$ be $O_F$-linear isogenies such that
\begin{itemize}
\item $\ker(\alpha_i) \cap A[p^\infty] \subset A[p]$ for $i=1,2$, and
\item $\alpha_{1,x}^* \D(B_{1,x}[p^\infty])_\theta = \alpha_{2,x}^* \D(B_{2,x}[p^\infty])_\theta$ for all $x \in X(\F)$.
\end{itemize}
Then there is a unique isomorphism $\CH^1_{\dr}(B_1/X)_\theta \cong \CH^1_{\dr}(B_2/X)_\theta$
of coherent sheaves on $X$ whose fibres are compatible with the isomorphisms
$$\D(B_{1,x}[p])_\theta \cong \D(B_{2,x}[p])_\theta$$
 induced by $(\alpha_{2,x}^*)^{-1}\alpha_{1,x}^*$
for all $x \in X(\F)$.  Furthermore letting $j:\Spec(R_1) = X_1 \to X$ denote the first infinitesimal neighborhood of $x$,
the isomorphism is also compatible with the isomorphisms
$$H^1_\dr(B_{i,x}/\F) \otimes_\F R_1 \cong H^1_\dr(j^*B_i/R_1)$$
induced by their canonical isomorphisms with $H^1_\cris(B_{i,x}/R_1)$ for $i=1,2$.
\end{lemma}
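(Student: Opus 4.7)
The plan is to prove uniqueness via a standard reduced-scheme argument and then to construct the isomorphism at the level of contravariant Dieudonn\'e crystals, descending via the crystalline--de Rham comparison. For uniqueness, the sheaves $\CH^1_\dr(B_i/X)_\theta$ are locally free on the smooth (in particular reduced and Jacobson) scheme $X$, so any morphism between them that vanishes on fibres at every closed $\F$-point is identically zero; this reduces the task to constructing the isomorphism and verifying its values on fibres.

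For existence, I would work with the contravariant Dieudonn\'e crystals $\D_A := \D(A[p^\infty])$ and $\D_i := \D(B_i[p^\infty])$ on the crystalline site of $X$ over $\Z_p$, in the conventions of~\cite{BBM}. The $\CO_F \otimes \CO_X$-action decomposes each as a direct sum over $\theta \in \Theta$ into components $\D_{A,\theta}$ and $\D_{i,\theta}$, which are crystals of $\CO_X$-modules (though Frobenius and Verschiebung shift $\theta$, so the components are not themselves Dieudonn\'e crystals). Since $\ker(\alpha_i)\cap A[p^\infty] \subset A[p]$ is a finite flat group scheme over $X$ killed by $p$, \cite[Prop.~4.3.1]{BBM} provides a short exact sequence of Dieudonn\'e crystals
\[ 0 \to \D_i \stackrel{\alpha_i^*}{\longrightarrow} \D_A \to \D(\ker(\alpha_i)\cap A[p^\infty]) \to 0 \]
whose cokernel is locally free as a crystal of $\CO_X$-modules, and which remains exact on passing to $\theta$-components.

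The crux is to show that the sub-crystals $\alpha_1^*\D_{1,\theta}$ and $\alpha_2^*\D_{2,\theta}$ of $\D_{A,\theta}$ coincide. Consider the composite morphism of crystals of $\CO_X$-modules
\[ \psi: \D_{2,\theta} \stackrel{\alpha_2^*}{\longrightarrow} \D_{A,\theta} \twoheadrightarrow \D(\ker(\alpha_1)\cap A[p^\infty])_\theta. \]
The hypothesis says exactly that $\psi$ vanishes on fibres at every closed $\F$-point of $X$; since the target is a crystal of locally free $\CO_X$-modules and $X$ is reduced, the evaluation of $\psi$ on the trivial thickening $X \to X$ vanishes as a morphism of coherent sheaves. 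As $X$ is smooth over $\F$, the evaluation functor is an equivalence between crystals of $\CO_{X/\Z_p}$-modules and modules with integrable quasi-nilpotent connection on $X$, so $\psi$ itself vanishes as a morphism of crystals. By symmetry the reverse inclusion also holds, and since each $\alpha_i^*$ is injective, the composite $(\alpha_1^*)^{-1}\circ \alpha_2^*$ defines an isomorphism $\D_{2,\theta} \stackrel{\sim}{\to} \D_{1,\theta}$ of crystals; evaluating on $X$ and identifying with $\CH^1_\dr(B_i/X)_\theta$ via the canonical crystalline--de Rham comparison yields the desired isomorphism of sheaves.

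The required compatibility with the fibrewise isomorphisms holds by construction. Compatibility with the first infinitesimal thickening $j:X_1\to X$ is automatic from the crystalline nature of the construction: evaluating the crystal isomorphism on the thickening $\Spec\F \hookrightarrow \Spec R_1$ induced by $x$ identifies, via the crystalline comparison, with the canonical isomorphisms $H^1_\dr(B_{i,x}/\F)\otimes_\F R_1 \cong H^1_\cris(B_{i,x}/R_1) \cong H^1_\dr(j^*B_i/R_1)$. The main technical hurdles are invoking rigorously the crystal-to-connection equivalence to propagate the fibrewise vanishing of $\psi$ globally over $X$, and carefully matching conventions in the identification of the evaluation of $\D(B_i[p^\infty])$ on $X$ with $\CH^1_\dr(B_i/X)$ (which is standard in BBM's conventions, but requires attention to any Frobenius twists).
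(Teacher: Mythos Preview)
Your strategy is essentially the same as the paper's: establish uniqueness via reducedness, then upgrade the pointwise equality of images to an equality of sub-crystals of $\D_{A,\theta}$, and read off the de~Rham isomorphism and its infinitesimal compatibility from the resulting crystal isomorphism. There is, however, a genuine gap in your justification of the key step.

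You assert that for $X$ smooth over $\F$, evaluation on the trivial thickening gives an equivalence between crystals of $\CO_{X/\Z_p}$-modules and modules with integrable quasi-nilpotent connection on $X$. This is false as stated: Berthelot--Ogus gives such an equivalence only after choosing a smooth lift of $X$ over $W$ (and working with modules with connection on the lift); over $\F_p$ the statement is correct, i.e., crystals of $\CO_{X/\F_p}$-modules correspond to modules with connection on $X$ itself. Your argument is rescued precisely because the target $\D(\ker(\alpha_1)\cap A[p^\infty])_\theta$ is a crystal killed by $p$: the morphism $\psi$ therefore factors through a morphism of crystals on $(X/\F_p)_{\cris}$, and for these the evaluation-on-$X$ functor is faithful (indeed an equivalence), so the vanishing of $\psi$ on $(X,X)$ forces $\psi=0$. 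You should state this explicitly rather than invoke the incorrect $\Z_p$-version.

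For comparison, the paper handles this same step by passing to a local smooth lift $\widetilde{X}$ over $W$, translating the crystals to locally free sheaves $\CG_i, \CF$ with connection on $\widetilde{X}$ via \cite{BO}, proving injectivity of each $\widetilde{\alpha}_i^*$ using a transpose isogeny $\beta_i$ with $\alpha_i\beta_i = d$ and $p\| d$, and then observing that since $\im(\widetilde{\alpha}_i^*)\supset p\CF$, the images coincide as soon as they coincide modulo $p$, which is exactly the de~Rham statement already established. Your route avoids the explicit lift and the transpose-isogeny argument by exploiting the $p$-torsion of the cokernel directly at the crystal level; this is slightly more streamlined once the $\F_p$-versus-$\Z_p$ point is fixed. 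The paper's route has the virtue of making the integral (crystal-level) content completely explicit on the lift.
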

\begpf  First note that $\CH^1_{\dr}(A/X)_\theta$, being a direct summand of $\CH^1_{\dr}(A/X)$, is locally free,
and similarly for $\CH^1_{\dr}(B_i/X)_\theta$ for $i=1,2$.  Furthermore the rank of the cokernel of
$\alpha_{i,x}^*:H^1_{\dr}(B_{i,x}/\F) \to H^1_\dr(A_x/\F)$ is the $p$-part of the degree of $\alpha_i$,
hence is locally constant, and the rank of the cokernel of each summand
$H^1_\dr(B_{i,x}/\F)_{\theta'} \to H^1_\dr(A_x/\F)_{\theta'}$
is upper semi-continuous, so it follows that these ranks are also locally constant, and
hence that the cokernel of $\alpha_i^*: \CH^1_{\dr}(B_i/X)_\theta \to \CH^1_{\dr}(A/X)_\theta$
is locally free (since $X$ is reduced and its closed points are dense).  The hypothesis that
$\alpha_{1,x}^* \D(B_{1,x})_\theta = \alpha_{2,x}^* \D(B_{2,x})_\theta$
implies that the fibre at $x$ of the composite 
$$\CH^1_{\dr}(B_1/X)_\theta  \stackrel{\alpha_1^*}{\longrightarrow}  \CH^1_{\dr}(A/X)_\theta 
        \longrightarrow  \coker(\alpha_2^*)$$
is trivial.  Since $\CO_X(U)$ has trivial (Jacobson) radical for all open $U\subset X$,
morphisms to $\CO_X$, hence to any locally free coherent sheaf on $X$, are
determined by their fibres, so in fact the above composite is trivial.
Therefore $\im(\alpha_2^*)  \subset \im(\alpha_1^*)$ and similarly the
opposite inclusion, and hence equality, holds.

Again using the fact that morphisms to locally free coherent sheaves on $X$
are determined by fibres, we see that the desired isomorphism, if it exists, is
unique.  Furthermore we may work locally on $X$ and hence assume that there
is a smooth scheme $\widetilde{X}$ over $W = W(\F)$ such that $\widetilde{X}\times_W \F = X$
(see for example \cite[Lemma~10.135.19]{stacks}).

Letting $s: A \to X$ and $t_i:B_i \to X$ (for $i=1,2$) denote the structure morphisms, consider the
crystals of locally free $\CO_{X/\Z_p}$-modules $\CC:= R^1s_{\cris,*}\CO_{A/\Z_p}$ and $\CE_i := R^1_{t_i,\cris,*}\CO_{B_i/\Z_p}$
(see \cite[Cor.~2.5.5]{BBM}).
Since the functor sending an abelian scheme $u:C \to X$ to $R^1u_{\cris,*}\CO_{C/\Z_p}$
is additive, the crystals $\CC$ and $\CE_i$ inherit an action of $\CO_F\otimes W$, and hence decompose as
direct sums of crystals of locally free $\CO_{X/\Z_p}$-modules indexed by $\Theta$.
Since the isogenies $\alpha_i$ are $\CO_F$-linear,
they induce morphisms of crystals $\CE_{i,\theta} \to \CC_\theta$ for $i = 1,2$.

We claim that there is an isomorphism  $\CE_{1,\theta} \stackrel{\sim}{\longrightarrow} \CE_{2,\theta}$
such that the diagram 
\begin{equation} \label{eqn:3crystals}  \xymatrix{ \CE_{1,\theta}  \ar[rr] \ar[dr] && \CE_{2,\theta} \ar[dl] \\
&\CC_\theta&}
\end{equation}
commutes.
To prove this we use the equivalence of categories provided by \cite[Cor.~6.8]{BO} (with $S=W$ and $S_0 = \F$).
If $\CF$ denotes the quasi-coherent sheaf (with integrable quasi-nilpotent connection)
on $\widetilde{X}$ corresponding to $\CC_\theta$, then $\CF$ is locally free and its restriction to $X$ is
canonically isomorphic to $(\CC_{\theta})_X \cong \CH^1_\dr(A/X)_\theta$ (\cite[(3.3.7.3)]{BBM}).  
Similarly (for $i=1,2$) the underlying quasi-coherent
sheaf $\CG_i$ corresponding to $\CE_{i,\theta}$ is a locally free sheaf on $\widetilde{X}$ whose restriction to $X$
is identified with $\CH^1_\dr(B_i/X)_\theta$.  The claim is thus equivalent to the existence of an isomorphism
$\CG_1 \stackrel{\sim}{\longrightarrow}  \CG_2$ which is compatible with connections and makes the diagram
$$  \xymatrix{ \CG_{1}  \ar[rr] \ar[dr]_-{\widetilde{\alpha}_1^*} && \CG_{2} \ar[dl]^-{\widetilde{\alpha}_2^*} \\
&\CC_\theta&}$$
commute, where each $\widetilde{\alpha}_i^*$ is the morphism corresponding to $\CE_{i,\theta} \to \CC_\theta$.
Since the $\widetilde{\alpha}_i^*$ are compatible with connections, it suffices to prove that they are
injective and have the same image.

Now let $\beta_i: B_i \to A$ be an isogeny such that $\alpha_i \circ \beta_i = d$ with $p||d$, and let $\widetilde{\beta}_i^*$
be the morphism $\CF \to \CG_i$ corresponding to the one induced on crystals.  
Then the composite $\widetilde{\beta}_i^*\circ \widetilde{\alpha}_i^*$ is also multiplication by $d$, since it is so for the
corresponding morphism of crystals by additivity.  The composite is therefore injective, and therefore so is
$\widetilde{\alpha}_i^*$.  On the other hand considering the composite $\beta_i \circ \alpha_i = d$, we see that
the image of $\widetilde{\alpha}_i^*$ contains $p\CF$.  Therefore to conclude $\widetilde{\alpha}_1^*$ and
$\widetilde{\alpha}_2^*$ have the same image, it suffices to prove their reductions modulo $p$, or equivalently
their restrictions to $X$, have the same image, but this is precisely the fact that $\im(\alpha_1^*) = \im(\alpha_2^*)$.

The isomorphism of crystals now yields an isomorphism
$$\CH^1_\dr(B_1/X)_\theta \cong (\CE_{1,\theta})_X   \stackrel{\sim}{\longrightarrow} (\CE_{2,\theta})_X \cong \CH^1_\dr(B_2/X)_\theta$$
of coherent sheaves on $X$.  The desired compatibility with homomorphisms of Dieudonn\'e modules follows from the commutativity of
(\ref{eqn:3crystals}) and the fact
that $\alpha_{i,x}^*:\D(B_{i,x}[p^\infty])_\theta \to \D(A_x[p^\infty])_\theta$ is realized as the inverse limit of the morphisms
$$\CE_{i,\theta}(\{x\},\Spec(W_n)) \longrightarrow \CC_{\theta}(\{x\},\Spec(W_n))$$
under the canonical isomorphisms provided by \cite[(3.3.7.2), Thm.~4.2.14]{BBM}.
Finally, since the isomorphisms
$H^1_\dr(B_{i,x}/\F) \otimes_\F R_1 \cong H^1_\dr(j^*B_i/R_1)$
are realized as the composite of the crystalline transition maps
$$\CE_{i,\theta}(\{x\},\{x\}) \otimes_\F R_1 \stackrel{\sim}{\longrightarrow}
\CE_{i,\theta}(\{x\},X_1)  \stackrel{\sim}{\longleftarrow} \CE_{i,\theta}(X_1,X_1) 
$$
under \cite[(3.3.7.3)]{BBM}, the second desired compatibility follows from the fact that 
$\CE_{1,\theta} \to \CE_{2,\theta}$ is an isomorphism of crystals.  \epf

\subsubsection{The reduced degeneracy fibre}\label{sss:crys.red}
We now return to the setup of Lemma~\ref{lem:fibrecompletions}, so $Z$ is the 
fibre of $S_J \to T_{J'}$ at a closed point $x$ of $T_{J'}$, $Z_\red$ is its reduced subscheme,
and $z$ is a closed point of $Z$.  
Let $\underline{A}_0$ be the abelian scheme over $\F$ corresponding to $x$ and consider
$$A = A_0 \times_\F Z_\red  = A_1 \times_{S_J} Z_\red  \quad\mbox{and}\quad B = A_2 \times_{S_J} Z_\red,$$
where as usual $f:A_1 \to A_2$ is the universal isogeny on $S_J$.    We have already observed
that if $\theta \in J$, then
$$f^*: H^1_\dr(A_{2,z}/\F)_\theta \to H^1_\dr(A_0/\F)_\theta \quad
\mbox{and}\quad \Ver^*:  H^1_\dr(A_0^{(p^{-1})}/\F)_\theta \to H^1_\dr(A_0/\F)_\theta$$
have the same image, so we may apply Lemma~\ref{lem:crys} with $B_1 = B$, $\alpha_1$ as the restriction of $f$,
$B_2 = A_0^{(p^{-1})} \times_\F Z_\red$ and $\alpha_2$ as the pull-back of $\Ver$ to obtain an isomorphism
\begin{equation} \label{eqn:good}
\CH^1_\dr(B/Z_\red)_\theta \stackrel{\sim}{\longrightarrow} H^1_\dr(A_0^{(p^{-1})}/\F)_\theta \otimes_\F \CO_{Z_\red}
\end{equation}
whose fibre at each closed point $z$ is compatible with the isomorphism in the top row of the 
commutative diagram
\begin{equation} \label{eqn:Dieugood} \xymatrix{ \D(B_z[p^\infty])_\theta \ar[rr]^-{\sim} \ar@{_{(}->}[dr]_-{f^*} && \D(A_0^{(p^{-1})}[p^\infty])_\theta \ar@{^{(}->}[dl]^-{\Ver^*}\\
&\D(A_0[p^{\infty}])_{\theta}.&}
\end{equation}
Furthermore letting $j:\Spec R_1 \to Z_\red$ denote the  first infinitesimal neighborhood of $z$, the diagram
\begin{equation}\label{eqn:crysgood} 
\xymatrix{ H^1_\dr(B_z /\F)_\theta \otimes_\F R_1\ar[rr] \ar[dr] && H^1_\dr(j^*B/R_1)_\theta \ar[dl]\\
&   H^1_\dr(A_0^{(p^{-1})}/\F)_\theta \otimes_\F R_1&}
\end{equation}
also commutes, where the top arrow is given by the canonical isomorphisms with $H^1_\cris(B_z/R_1)$ and
the diagonal arrows by the restrictions of (\ref{eqn:good}) to $z$ and $\Spec R_1$.
Moreover note that if also $\phi\circ\theta \in J$, then the commutativity of (\ref{eqn:Dieugood}) implies that of
$$
\xymatrix{ \D(B_z[p^\infty])_\theta^{(p)} \ar[r]^-{\sim}\ar[d]^-{\wreath}
 &  \D(B_z^{(p)}[p^\infty])_{\phi\circ\theta} \ar[r]^-{\Frob^*} 
  &  \D(B_z[p^\infty])_{\phi\circ\theta} \ar[d]^{\wreath}  \\
\D(A_0^{(p^{-1})}[p^\infty])_\theta^{(p)}  \ar[r]^-{\sim}  
&  \D(A_0[p^\infty])_{\phi\circ\theta}   \ar[r]^-{\Frob^*} 
&  \D(A_0^{(p^{-1})}[p^\infty])_{\phi\circ\theta},  }
$$
and hence that (\ref{eqn:good}) restricts to
\begin{equation} \label{eqn:filgood}
(s_*\Omega^1_{B/Z_\red})_\theta \stackrel{\sim}{\longrightarrow} H^0(A_0^{(p^{-1})},\Omega^1_{A_0^{(p^{-1})}/\F})_\theta \otimes \CO_{Z_\red}
\end{equation}
since it does so on fibres (where $s:B \to Z_\red$ is the structure morphism).
On the other hand if $\phi\circ\theta \not\in J$, i.e. $\phi\circ\theta \in J''$, then the Hodge filtration on $H^1_\dr(B/Z_\red)_\theta$ defines
a morphism $Z_\red \to \P(H^1_\dr(A^{(p^{-1})}_0/\F)_\theta)$ under which $\CO(1)$ pulls back via (\ref{eqn:good}) to
$(R^1s_*\CO_B)_\theta$.

\begin{theorem} \label{thm:reducedfibres}
The morphism
$$\psi = \psi_{J,x} :  Z_\red \to \prod_{\phi\circ\theta \in J''}  \P(H^1_\dr(A_0^{(p^{-1})}/\F)_\theta)$$
is an isomorphism.
\end{theorem}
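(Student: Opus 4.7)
The plan is to verify that $\psi$ is a proper morphism between smooth $\F$-varieties of dimension $|J''|$, that it is bijective on closed points, and that it induces an isomorphism on tangent spaces at every closed point; these three conditions force $\psi$ to be an isomorphism.

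First I note that both source and target are smooth over $\F$ of dimension $|J''|$. For the target this is immediate, while for $Z_\red$ it follows from Lemma~\ref{lem:fibrecompletions}, upon observing that the map $\theta\mapsto\phi\circ\theta$ identifies $I:=\{\theta\in J\,:\,\phi\circ\theta\notin J\}$ with $J''$. Moreover $Z_\red$ is proper over $\F$ (it is closed in the fibre $Z$ of the projective morphism $S_J\to T_{J'}$ at $x$), so $\psi$ is proper. Bijectivity on $\F$-points can be extracted from the known bijectivity of $\xi_J:S_J\to P_J$ on geometric points \cite{GK}: this identifies closed points of $Z$ with $\prod_{\tau\in J''}\P(H^1_\dr(A_0/\F)_\tau)(\F)$, each tuple corresponding to a triple $(\underline{A}_0,\underline{B},f)$. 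The Dieudonn\'e-module reconstruction used in the proof of Theorem~\ref{thm:bigJL}, combined with the fibrewise version of~(\ref{eqn:good}), then shows that the tuple $(L_\theta)_{\theta\in I}$ produced by $\psi$ determines $(\underline{B},f)$ uniquely and can be prescribed arbitrarily, giving an explicit inverse to $\psi$ on closed points.

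The main step is the tangent space calculation. At any $z\in Z_\red(\F)$, Lemma~\ref{lem:fibrecompletions} gives $\widehat{\CO}_{Z_\red,z}\cong\F[[w_\theta]]_{\theta\in I}$, so $T_zZ_\red$ has $\F$-dimension $|I|=|J''|$. Tracing the change of parameters $v_\theta\leadsto w_\theta$ carried out in \S\ref{sss:frob.loc}, together with the description there of tangent vectors to $S_J$ via tuples $((\widetilde{L}_{1,\theta})_{\theta\notin J},(\widetilde{L}_{2,\theta})_{\theta\in J})$, I identify $T_zZ_\red$ with the $\F$-span of the deformations $\widetilde{L}_{2,\theta}$ for $\theta\in I$; the remaining parameters are killed either by restricting to the fibre $Z$ over $x$ (for $\widetilde{L}_{1,\theta}$, $\theta\notin J$) or by quotienting out the nilradical of $\widehat{\CO}_{Z,z}$ (for $\widetilde{L}_{2,\theta}$, $\theta\in J\setminus I$). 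On the target side, the tangent space at $\psi(z)$ has a basis indexed by $\theta\in I$, each direction being a lift of the line $L_\theta\subset H^1_\dr(A_0^{(p^{-1})}/\F)_\theta$ to $T=\F[\epsilon]/(\epsilon^2)$. By the crystalline compatibility~(\ref{eqn:crysgood}), the isomorphism~(\ref{eqn:good}) extends canonically over the first-order neighbourhood $\Spec T\to Z_\red$ to an identification $H^1_\dr(\widetilde{B}/T)_\theta\cong H^1_\dr(A_0^{(p^{-1})}/\F)_\theta\otimes_\F T$, under which $\widetilde{L}_{2,\theta}$ corresponds to the deformed line; hence $d\psi_z$ is manifestly an $\F$-linear isomorphism.

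Combining these inputs: the tangent-space isomorphism together with smoothness forces $\psi$ to be \'etale at every closed point, hence \'etale everywhere; combined with properness, $\psi$ is finite \'etale; bijectivity on $\F$-points then shows $\psi$ is a degree-one finite \'etale cover, i.e.\ an isomorphism. The main obstacle is the tangent-space computation of the third paragraph: one must trace the definition of $\psi$ through the non-canonical isomorphism~(\ref{eqn:good}) and exploit~(\ref{eqn:crysgood}) to verify that $d\psi$ acts on the deformation parameters in the expected way, thereby avoiding any direct engagement with the non-reduced ambient scheme~$Z$.
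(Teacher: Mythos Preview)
Your overall strategy (proper morphism between smooth $\F$-varieties of the same dimension, bijective on closed points, injective on tangent spaces $\Rightarrow$ isomorphism) matches the paper's. However, there is a genuine gap in the tangent space step.

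The claim that $T_zZ_\red$ is spanned by the deformations $\widetilde{L}_{2,\theta}$ for $\theta\in I$, with the directions $\widetilde{L}_{2,\theta}$ for $\theta\in J\setminus I$ ``killed by quotienting out the nilradical,'' is not justified. The change of parameters $v_\theta\leadsto w_\theta$ in \S\ref{sss:frob.loc} is obtained abstractly via \cite{KN}; it is not explicit, so you cannot trace it to conclude that the nilpotent parameters $w_\theta$ ($\theta\in J\setminus I$) correspond modulo $\gm_z^2$ to the moduli coordinates $v_\theta$. A priori the $|I|$-dimensional subspace $T_zZ_\red\subset T_zZ$ could sit obliquely with respect to the $v_\theta$-basis.

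The paper sidesteps this by not trying to identify $T_zZ_\red$ at all. Instead it takes two lifts to $Z_\red(T)$ with the same image under $\psi$ and shows, via Grothendieck--Messing, that their Hodge filtrations $L_\theta,L'_\theta\subset H^1_\dr(B_z/\F)_\theta\otimes_\F T$ agree for every $\theta$. For $\theta\in I$ this follows from the hypothesis together with (\ref{eqn:crysgood}), as you indicate. For $\theta\in J\setminus I$ the key input is (\ref{eqn:filgood}): on $Z_\red$ the $\theta$-Hodge filtration of $B$ corresponds under (\ref{eqn:good}) to the \emph{constant} line $H^0(A_0^{(p^{-1})},\Omega^1)_\theta\otimes\CO_{Z_\red}$, so combined with (\ref{eqn:crysgood}) both $L_\theta$ and $L'_\theta$ are forced to be the trivial lift. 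For $\theta\notin J$ the local analysis of \S\ref{sss:frob.loc} already shows $L_\theta$ is the trivial lift. The ingredient missing from your argument is precisely (\ref{eqn:filgood}); once you have it, you will see it also justifies (a posteriori) your description of $T_zZ_\red$.

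A smaller point: the paper's bijectivity argument is more direct than yours. Using (\ref{eqn:Dieugood}) one computes that if $\widetilde{\xi}_{J,x}(z)=(L_\sigma)_{\sigma\in J''}$, then $\psi(z)=(L_{\phi\circ\theta}^{(p^{-1})})_{\theta\in I}$; since $\widetilde{\xi}_{J,x}$ is bijective on $\F$-points and Frobenius twist is a bijection over $\Fpbar$, so is $\psi$. No appeal to the Dieudonn\'e reconstruction of Theorem~\ref{thm:bigJL} is needed.
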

\begpf
As in the proof of Theorem~\ref{thm:bigJL}, it suffices to prove
 that the morphism is bijective on $\Fpbar$-points and injective on tangent spaces at such points.

For the bijectivity on points, recall that the morphism $\widetilde{\xi}_J$ of (\ref{eqn:xiJ}) is bijective
on points, and therefore so is its restriction to fibres over $x$, and hence so is
$$\widetilde{\xi}_{J,x}:  Z_\red \to \prod_{\theta \in J''}  \P(H^1_\dr(A_0/\F)_\theta).$$
If the point $z \in Z_\red(\F)$ corresponds to the isogeny $f:A_0 \to B_z$, then the $\theta$-component 
of $\widetilde{\xi}_{J,x}(z)$ is the image of the $W$-submodule
$$f^*\D(B_z[p^\infty])_\theta \subset \D(A_0[p^\infty])_\theta$$
in $ \D(A_0[p^\infty])_\theta/p  \D(A_0[p^\infty])_\theta = \D(A_0[p])_\theta \cong H^1_\dr(A_0/\F)_\theta$.  
On the other hand, since $H^0(B_z,\Omega^1_{B_z/\F})_{\phi^{-1}\circ\theta}$
is the reduction mod $p$ of 
$$\Ver^*(\D(B_z^{(p^{-1})}[p^\infty])_{\phi^{-1}\circ\theta}  \subset \D(B_z[p^\infty])_{\phi^{-1}\circ\theta},$$
it follows from the commutativity of (\ref{eqn:Dieugood}) and the compatibility $f\circ \Ver = \Ver\circ f^{(p^{-1})}$
that the $\phi^{-1}\circ\theta$-component of $\psi(z)$ is the image of the  $W$-submodule
$$\left(f^{(p^{-1})}\right)^*\D(B_z^{(p^{-1})}[p^\infty])_{\phi^{-1}\circ\theta} \subset \D(A^{(p^{-1})}_0[p^\infty])_{\phi^{-1}\circ\theta}$$
in $H^1_\dr(A^{(p^{-1})}_0/\F)_{\phi^{-1}\circ\theta}= H^1_\dr(A/\F)_\theta^{(p^{-1})}$.  Thus if $\widetilde{\xi}_{J,x}(z) = (L_\theta)_{\theta\in J''}$, then
$\psi(z) = (L_\theta^{(p^{-1})})_{\theta\in J''}$, so $\psi$ is bijective on $\F$-points.

To prove the injectivity on tangent spaces, let $T = \F[\epsilon]/(\epsilon^2)$ and suppose
$(\underline{A}_{0,T},\widetilde{\underline{B}}_z,\widetilde{f})$ and 
$(\underline{A}_{0,T},\widetilde{\underline{B}}'_z,\widetilde{f}')$ are two lifts of $z \in Z_\red(\F)$
to $Z_\red(T)$ with the same image in $\prod_{\theta \in J''}  \P(H^1_\dr(A^{(p^{-1})}_0/\F)_\theta)(T)$.
This means that $L_\theta:= H^0(\widetilde{B}_z, \Omega^1_{\widetilde{B}_z/T})_\theta$
and $L_\theta':=H^0(\widetilde{B}'_z, \Omega^1_{\widetilde{B}'_z/T})_\theta$ have the same image
in $H^1_\dr(A^{(p^{-1})}_0/\F)_\theta\otimes_\F T$ under (\ref{eqn:good}) for all $\theta \in J$ such
that $\phi\circ\theta\not\in J$. On the other hand if $\theta,\phi\circ\theta \in J$, then the
same holds by (\ref{eqn:filgood}).  In either case, the commutativity of (\ref{eqn:crysgood}), tensored
over $R_1$ with $T$, implies that $L_\theta$ and $L_\theta'$
have the same image in $H^1_\dr(B_z/\F)_\theta \otimes_\F T$.
On the other hand if $\theta\not\in J$, then the same assertion follows from the local analysis
preceding Lemma~\ref{lem:fibrecompletions}.  Therefore the Grothendieck--Messing Theorem
implies that the identity on $B_z$ lifts to an isomorphism $\widetilde{B}_z \cong \widetilde{B}'_z$.
The compatibility of the isomorphism with the auxiliary data follows from the faithfulness of
$\cdot \otimes_T \F$ by a standard argument, so we conclude that 
$(\underline{A}_{0,T},\widetilde{\underline{B}}_z,\widetilde{f})$ and 
$(\underline{A}_{0,T},\widetilde{\underline{B}}'_z,\widetilde{f}')$ define the same element
of the tangent space.
\epf

\subsubsection{Thickening}\label{sss:crys.thick}
In order to extend this to obtain a complete description of the fibre $Z$, we recall the general version
of the Grothendieck--Messing Theorem (see \cite[Thm.~V.1.10]{mm}) in characteristic $p$ states that if $i:X_0\hookrightarrow X$ is a 
nilpotent thickening of $\F_p$-schemes whose defining ideal sheaf is equipped with a divided power structure,
then the category of abelian schemes over $X$ is equivalent (under the obvious functor) to the category of
pairs $(C,\CM)$ where $u:C\to X_0$ is an abelian scheme and 
$$\CM \subset (R^1u_{\cris,*}\CO_{C/\F_p})_{(X_0,X)}$$
is a locally free $\CO_X$-submodule such that $i^*\CM$ corresponds to the image of $u_*\Omega^1_{C/X_0}$
under the canonical isomorphism with $\CH^1_\dr(C/X_0)$.  

We apply this with $X = X_0 \times_{\F} X_1$, where $X_0 = Z_\red$
and $X_1$ is defined as the fibre product
$$\begin{array}{ccc}  X_1  & \longrightarrow &\displaystyle{ \prod_{\theta,\phi\circ\theta \in J}  \P(H^1_\dr(A_0^{(p^{-1})}/\F)_\theta)} \\
\downarrow && \downarrow \\
\Spec \F & \longrightarrow &\displaystyle{ \prod_{\theta,\phi\circ\theta\in J}  \P(H^1_\dr(A_0/\F)_{\phi\circ\theta}),} \end{array}$$
where the bottom arrow is defined by $H^0(A_0,\Omega^1_{A_0 /\F})_{\phi\circ\theta}$ 
and the right vertical arrow is the (relative Frobenius) morphism
sending $(M_\theta)_{\theta}$ to $(M_\theta^p)_\theta$, where we view
$$L_\theta^p \subset H^1_\dr(A_0^{(p^{-1})}/\F)_\theta^{(p)} \cong H^1_\dr(A_0/\F)_{\phi\circ\theta}.$$
Note that $X_1 \cong \prod \Spec \F[w_\theta]/(w_\theta^p)$, so the closed immersion $\Spec \F \hookrightarrow X_1$
is a nilpotent thickening with divided powers, and hence so is the resulting closed immersion $i:X_0 \hookrightarrow X$.

Letting $\pi:X \to X_0$ denote the projection, we have a canonical isomorphism
\begin{equation}\label{eqn:X0cris}   (R^1s_{\cris,*}\CO_{B/\F_p})_{(X_0,X)}  \cong \pi^*\CH^1_\dr(B/X_0),\end{equation}
and we let $\CM  = \bigoplus_{\theta\in \Theta} \CM_\theta$
where
\begin{itemize}
\item $\CM_\theta$ corresponds under (\ref{eqn:good}) to 
the pull-back of the tautological bundle on $\P(H^1_\dr(A_0^{(p^{-1})}/\F)_\theta)$ if $\theta,\phi\circ\theta\in J$;
\item $\CM_\theta$ corresponds under (\ref{eqn:X0cris}) to $\pi^*(s_*\Omega^1_{B/X_0})_\theta$ otherwise.
\end{itemize}
We then see  (using (\ref{eqn:filgood}) if $\theta,\phi\circ\theta \in J$) that $i^*\CM_\theta= (s_*\Omega^1_{B/X_0})_\theta$ for all $\theta$,
so that the data $(B,\CM)$ corresponds to an abelian scheme $\widetilde{B}$ over $X$ lifting $B$. 
Furthermore writing $f$ for the restriction to $Z_\red$ of the universal isogeny, and using that
$$f^*(\CH^1_\dr(B/X_0)_\theta) = H^0(A_0,\Omega^1_{A_0/\F})_\theta \otimes_\F \CO_{X_0}$$
if $\theta,\phi\circ\theta \in J$, we see that the image of $\CM$ under $f^*$ is contained
in $H^0(A_0,\Omega^1_{A_0/\F})_\theta \otimes_\F \CO_X$, from which it follows that
$f$ lifts (uniquely) to an isogeny $\widetilde{f}: A_0 \times_{\F}  X \to \widetilde{B}$ such that 
$\Lie (\widetilde{f})_\theta = 0$ if $\theta\not\in J$ and $\Lie (\widetilde{f}^\vee)_\theta = 0$
if $\theta \in J$.  Equipping $\widetilde{B}$ with the (unique) auxiliary data lifting that on $B$, we thus obtain
a triple $(\underline{A}_0\times_\F X, \widetilde{\underline{B}},\widetilde{f})$ corresponding to a morphism
$\widetilde{\psi}: X \to Z$ extending the inclusion $Z_\red \hookrightarrow Z$.

\begin{lemma} \label{lem:nilfibres} The morphism $\widetilde{\psi} : X \to Z$ is an isomorphism.
\end{lemma}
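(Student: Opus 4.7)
The strategy is to reduce the assertion to a tangent space computation at each closed point. First, $X_1$ has a unique closed point $*$, so closed points of $X = Z_\red \times_\F X_1$ correspond bijectively to closed points of $Z_\red$, and since $\widetilde{\psi}|_{X_0}$ is the closed immersion $Z_\red \hookrightarrow Z$, which is a bijection on closed points, $\widetilde{\psi}$ itself is bijective on closed points. Thus, letting $\tilde z$ denote the closed point of $X$ lying over a closed point $z$ of $Z$, it suffices to show that $\widetilde{\psi}^* : \widehat{\CO}_{Z,z} \to \widehat{\CO}_{X,\tilde z}$ is an isomorphism.

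Set $J_1 = \{\theta \in J : \phi\circ\theta \in J\}$ and $J_2 = J \setminus J_1$. By Lemma~\ref{lem:fibrecompletions} and the product description of $X$, both $\widehat{\CO}_{Z,z}$ and $\widehat{\CO}_{X,\tilde z}$ are canonically isomorphic to $\F[[w_\theta]]_{\theta\in J_2}\otimes_\F \bigotimes_{\theta\in J_1}\F[w_\theta]/(w_\theta^p)$; in particular both are free of rank $p^{|J_1|}$ over the subring $\widehat{\CO}_{Z_\red,z} \cong \F[[w_\theta]]_{\theta\in J_2}$. Since $\widetilde{\psi}|_{X_0}$ is $Z_\red \hookrightarrow Z$, the map $\widetilde{\psi}^*$ is $\widehat{\CO}_{Z_\red,z}$-linear and restricts to the identity on this subring. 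Applying Nakayama first to reduce modulo the maximal ideal of $\widehat{\CO}_{Z_\red,z}$, and then again on the resulting Artin local $\F$-algebra, the surjectivity (hence bijectivity) of $\widetilde{\psi}^*$ reduces to showing that its induced differential $d\widetilde{\psi}: T_{X_1,*} \to T_{Z,z}/T_{Z_\red,z}$ is an isomorphism between two $|J_1|$-dimensional $\F$-vector spaces.

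By the tangent space description of \S\ref{sss:frob.loc}, $T_{Z,z}/T_{Z_\red,z}$ has basis corresponding to the ``$\widetilde{L}_{2,\theta}$-directions'' for $\theta \in J_1$, while $T_{X_1,*}$ has basis $\{\partial_{w_\theta}\}_{\theta \in J_1}$ by the very definition of $X_1$. I would compute $d\widetilde{\psi}$ on these bases by tracing the Grothendieck--Messing construction of $\widetilde{\psi}$: a tangent vector in the $w_\theta$-direction of $X_1$ corresponds to a first-order deformation $\widetilde{L}_\theta$ of $H^0(A_0^{(p^{-1})},\Omega^1)_\theta$ in $H^1_\dr(A_0^{(p^{-1})}/\F)_\theta \otimes_\F \F[\epsilon]/(\epsilon^2)$ along the tautological direction of $\P(H^1_\dr(A_0^{(p^{-1})}/\F)_\theta)$. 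Via the crystalline isomorphism (\ref{eqn:good}) and its first-order compatibility (\ref{eqn:crysgood}), this uniquely determines a first-order deformation of $H^0(B_z,\Omega^1)_\theta$, namely $\widetilde{L}_{2,\theta}$, which by Grothendieck--Messing is precisely the datum describing $d\widetilde{\psi}(\partial_{w_\theta}) \in T_{Z,z}/T_{Z_\red,z}$.

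Because both the Grothendieck--Messing equivalence and the crystalline isomorphism (\ref{eqn:good}) decompose along the $\theta$-components, $d\widetilde{\psi}$ is diagonal in these bases with non-zero diagonal entries, and is therefore an isomorphism. The main obstacle is verifying carefully this diagonal compatibility --- in particular, that the identification between first-order deformations of the tautological lift of $H^0(A_0^{(p^{-1})},\Omega^1)_\theta$ and of $H^0(B_z,\Omega^1)_\theta$ afforded by Lemma~\ref{lem:crys}, through the commutativity of (\ref{eqn:crysgood}) restricted to the first infinitesimal neighbourhood of $z$ in $Z_\red$, is exactly the identification implicit in the construction of $\widetilde{\psi}$ via Grothendieck--Messing, so that no off-diagonal or higher-order contributions intervene.
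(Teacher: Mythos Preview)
Your tangent-space computation is headed in the right direction, but the Nakayama reduction that precedes it has a gap. You assert that $\widetilde{\psi}^*:\widehat{\CO}_{Z,z}\to\widehat{\CO}_{X,\tilde z}$ is $\widehat{\CO}_{Z_\red,z}$-linear and ``restricts to the identity on this subring.'' However, $\widehat{\CO}_{Z_\red,z}$ is a \emph{quotient} of $\widehat{\CO}_{Z,z}$, not canonically a subring. The compatibility $\widetilde{\psi}|_{X_0}=(Z_\red\hookrightarrow Z)$ only tells you that the two quotient maps to $\widehat{\CO}_{Z_\red,z}$ agree after composing with $\widetilde{\psi}^*$; it gives no section, and without one your first Nakayama step has no meaning. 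Any section you might choose from the explicit description in Lemma~\ref{lem:fibrecompletions} need not be respected by $\widetilde{\psi}^*$.

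The paper repairs this as follows. First, exactly as you do, it uses the Grothendieck--Messing description to show $T_{\tilde z}(X)\hookrightarrow T_z(S_J)$ is injective, whence $\widetilde{\psi}^*$ is surjective. The key observation is then that while $\widehat{\CO}_{Z,z}$ itself is not an $\widehat{\CO}_{Z_\red,z}$-module, each graded piece $\ga^n/\ga^{n+1}$ of the nilradical filtration \emph{is}, canonically, since it is a module over $\widehat{\CO}_{Z,z}/\ga=\widehat{\CO}_{Z_\red,z}$ (and likewise for $\gb^n/\gb^{n+1}$). The induced maps on graded pieces are then genuinely $\widehat{\CO}_{Z_\red,z}$-linear, and surjectivity of $\widetilde{\psi}^*$ forces surjectivity on each graded piece. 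Since both sides are free of the same rank over $\widehat{\CO}_{Z_\red,z}$ (from the explicit descriptions), these surjections are isomorphisms, and one climbs the filtration by the snake lemma. Thus your idea of exploiting freeness over $\widehat{\CO}_{Z_\red,z}$ is salvageable, but only at the level of the associated graded of the nilradical filtration, not on the rings themselves.
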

\begpf  Since the morphism is finite and bijective on closed points, it suffices to prove it induces
isomorphisms on the completed local rings at closed points.

Letting $y$ be a closed point of $X$ and $z = \widetilde{\psi}(y)$, 
we first note that $\widetilde{\psi}$ is injective on tangent spaces; indeed
the injectivity of the composite $T_y(X) \to T_z(Z) \subset T_z(S_J)$ is immediate from the construction
of $\widetilde{\psi}$ and the description of $T_z(S_J)$ preceding Lemma~\ref{lem:fibrecompletions}.
Therefore the homomorphism $\widetilde{\psi}^*: \widehat{\CO}_{Z,z}  \to \widehat{\CO}_{X,y} $
is surjective.  

Now we let $\ga$ and $\gb$ denote the respective nilradicals and show by induction that $\widetilde{\psi}^*$ induces an isomorphism
$\widehat{\CO}_{Z,z} /\ga^n \to \widehat{\CO}_{X,y} /\gb^n $ for all $n\ge 1$.  For $n=1$, this follows from the identification of 
each quotient with the regular local ring $\widehat{\CO}_{Z_\red,z}$.  For the induction step,
apply the snake lemma to the diagram
$$\xymatrix{ 0 \ar[r]   & \ga^n/\ga^{n+1}   \ar[r] \ar[d] & \widehat{\CO}_{Z,z} /\ga^{n+1}   \ar[r]\ar@{->>}[d] & \widehat{\CO}_{Z,z} /\ga^n  \ar[r]\ar[d]^-{\wreath} & 0   \\
0  \ar[r]  & \gb^n/\gb^{n+1}   \ar[r] & \widehat{\CO}_{X,y} /\gb^{n+1}    \ar[r]& \widehat{\CO}_{X,y} /\gb^n  \ar[r] & 0.}  $$
The definition of $X$ and the explicit description of $\widehat{\CO}_{Z,z}$ in Lemma~\ref{lem:fibrecompletions}
imply that $\ga^n/\ga^{n+1}$ and $\gb^n/\gb^{n+1}$ are finite free $\widehat{\CO}_{Z_\red,z}$-modules of the same rank,
so the surjectivity of the left vertical arrow implies its injectivity, and hence that of the middle arrow.
Taking $n$ sufficiently large (namely $p^{|J|-|J'|}$) yields the desired isomorphism, and hence the lemma.
\epf

Combining Theorem~\ref{thm:reducedfibres} and Lemma~\ref{lem:nilfibres} yields a canonical isomorphism
\begin{equation}  \label{eqn:fibre}  Z   \stackrel{\sim}{\longrightarrow} 
\prod_{\theta \in J,\phi\circ\theta\not\in J } \P(H^1_\dr(A_0^{(p^{-1})}/\F)_\theta)   \times 
 \prod_{\theta,\phi\circ\theta \in J}  T_\theta,\end{equation}
 where $T_\theta$ is a fibre of the Frobenius morphism on $\P(H^1_\dr(A_0^{(p^{-1})}/\F)_\theta)$
 defined by Frobenius on the relevant factors.     In particular, the degeneracy fibre $Z$ is isomorphic to a product of
 $|J'|$ copies of $\P^1$ and $|J|-|J'|$ copies of  $\Spec \F[x]/(x^p)$.  
 
 Note also that we may view (\ref{eqn:X0cris}) as defining an
 isomorphism
 $$\CH^1_\dr(\widetilde{B}/Z) \cong 
   (R^1s_{\cris,*}\CO_{B/\F_p})_{(Z_\red,Z)}  \cong \pi^*\CH^1_\dr(B/Z_\red)$$
(where we have written $s:\widetilde{B} \to Z$ for the restriction of the universal $A_2$ and
$\pi:Z \to Z_\red$ for $\pi\circ\widetilde{\psi}^{-1}$), which combined with (\ref{eqn:good}) yields an isomorphism\footnote{This isomorphism can in fact
be seen more directly as arising from the isomorphism of crystals obtained in the proof of Lemma~\ref{lem:crys}.}
\begin{equation}  \label{eqn:better} \CH^1_\dr(\widetilde{B}/Z)_\theta \cong H^1_\dr(A_0^{(p^{-1})}/\F)_\theta\otimes_\F \CO_Z \end{equation}
for $\theta \in J$.   Unravelling the definitions, one finds that (\ref{eqn:fibre}) is the morphism whose $\theta$-component is defined
by the Hodge filtration on $\CH^1_\dr(\widetilde{B}/Z)_\theta$ under (\ref{eqn:better}).  We record this as follows:
\begin{theorem}  \label{thm:fibres}  Let $x \in T_{J'}(\F)$ correspond to $\underline{A}_0$, let $Z$ be the fibre over $x$ of the projection
$S_J \to T_{J'}$, and let $s:\widetilde{B} \to Z$ denote the pull-back of $A_2 \to S_J$.  Then there is a canonical closed immersion
$$ j: Z   \longrightarrow  \prod_{\theta \in J} \P(H^1_\dr(A_0^{(p^{-1})}/\F)_\theta)$$
under which $j^*\CO(1)_\theta$ is identified with $(R^1s_*\CO_{\widetilde{B}})_\theta$ for all $\theta \in J$, and
$Z$ is identified with the fibre over $(H^0(A_0,\Omega^1_{A_0/F})_{\phi\circ\theta})_\theta$ of the morphism
$$\begin{array}{ccc} \displaystyle \prod_{\theta \in J} \P(H^1_\dr(A_0^{(p^{-1})}/\F)_\theta)  & \longrightarrow & \displaystyle
 \prod_{\theta,\phi\circ\theta \in J}  \P(H^1_\dr(A_0/\F)_{\phi\circ\theta}) \\
  (M_\theta)_\theta & \longmapsto & (M_\theta^p)_\theta.\end{array}$$
 \end{theorem}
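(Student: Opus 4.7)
The plan is to assemble the ingredients already in hand: Theorem~\ref{thm:reducedfibres} and Lemma~\ref{lem:nilfibres} together produce the isomorphism (\ref{eqn:fibre}), which identifies $Z$ with a product of $\P^1$'s (indexed by $\theta\in J$ with $\phi\circ\theta\notin J$) and infinitesimal schemes $T_\theta$ (indexed by $\theta,\phi\circ\theta\in J$), each $T_\theta$ being by construction a fibre of the relative Frobenius endomorphism of $\P(H^1_\dr(A_0^{(p^{-1})}/\F)_\theta)$ over $H^0(A_0,\Omega^1_{A_0/\F})_{\phi\circ\theta}\in\P(H^1_\dr(A_0/\F)_{\phi\circ\theta})$. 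The crystalline identification (\ref{eqn:better}) supplies an isomorphism $\CH^1_\dr(\widetilde B/Z)_\theta\cong H^1_\dr(A_0^{(p^{-1})}/\F)_\theta\otimes_\F\CO_Z$ for each $\theta\in J$, which is the object used to define $j$.

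First I would define $j$ as the product over $\theta\in J$ of the morphisms $Z\to\P(H^1_\dr(A_0^{(p^{-1})}/\F)_\theta)$ determined by the Hodge filtration $(s_*\Omega^1_{\widetilde B/Z})_\theta\subset\CH^1_\dr(\widetilde B/Z)_\theta$ transported via (\ref{eqn:better}). To see $j$ is a closed immersion, I would unravel the construction of $\widetilde B$ from \S\ref{sss:crys.thick}: the lifting datum $\CM_\theta$ used in the Grothendieck--Messing equivalence was chosen, for $\theta,\phi\circ\theta\in J$, to be the pullback via $Z\cong X\to\P(H^1_\dr(A_0^{(p^{-1})}/\F)_\theta)$ of the tautological subbundle, and for $\theta\in J$ with $\phi\circ\theta\notin J$ to be $\pi^*(s_*\Omega^1_{B/X_0})_\theta$, which corresponds under Theorem~\ref{thm:reducedfibres} to the tautological subbundle on the relevant projective factor. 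Hence the $\theta$-component of $j$ is the projection to that factor (respectively, its composition with $T_\theta\hookrightarrow\P(H^1_\dr(A_0^{(p^{-1})}/\F)_\theta)$), and taking the product over $\theta\in J$ gives a closed immersion. The identification $j^*\CO(1)_\theta\cong(R^1s_*\CO_{\widetilde B})_\theta$ is then immediate from the Hodge exact sequence and the convention that $\CO(1)$ is the tautological quotient.

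For the fibre description, I would simply recall that the scheme $X$ in \S\ref{sss:crys.thick} was defined as the fibre product cutting out exactly the condition $M_\theta^p=H^0(A_0,\Omega^1_{A_0/\F})_{\phi\circ\theta}$ for the pairs $\theta,\phi\circ\theta\in J$, with no constraint imposed on the $\theta$ with $\phi\circ\theta\notin J$; Lemma~\ref{lem:nilfibres} identifies $Z$ with $X$. In this sense there is no genuine obstacle remaining: the content of the theorem has already been established, and the proof reduces to observing that the Hodge-filtration definition of $j$ in the statement matches the inclusion $X\hookrightarrow\prod_{\theta\in J}\P(H^1_\dr(A_0^{(p^{-1})}/\F)_\theta)$ used in the construction of the isomorphism $Z\cong X$.
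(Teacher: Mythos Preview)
Your proposal is correct and follows exactly the paper's approach: the theorem is stated as a summary of what was already established, with the paper simply noting that ``unravelling the definitions, one finds that (\ref{eqn:fibre}) is the morphism whose $\theta$-component is defined by the Hodge filtration on $\CH^1_\dr(\widetilde{B}/Z)_\theta$ under (\ref{eqn:better}),'' and then recording this as Theorem~\ref{thm:fibres}. Your write-up makes the unravelling more explicit than the paper does, but the logic is identical.
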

 
 Finally we observe that the construction of the immersion $j$ and the identifications in the theorem are independent of
 the quasi-polarization in the data defining the closed point $x$ of $T_{J'}$, and hence is invariant under the action of $\CO_{F,(p),+}^\times$.
 It follows that if $x\in \overline{Y}_{J'}(\F)$, then the fibre over $x$ of the degeneracy map
 $\pi_1:\overline{Y}_0(p)_J \to \overline{Y}$ induced by $(\underline{A}_1,\underline{A}_2,f) \mapsto \underline{A}_2$
 is canonically isomorphic to a fibre of the morphism
 $$\begin{array}{ccc} \displaystyle \prod_{\theta \in J} \P(V_{\phi\circ\theta}^{(p^{-1})})  & \longrightarrow &
 \displaystyle\prod_{\theta,\phi\circ\theta \in J}  \P(V_{\phi\circ\theta}) \\ 
 (M_\theta)_\theta & \longmapsto & (M_\theta^p)_\theta,\end{array}
 $$
 where $V_{\phi\circ\theta}$ is the fibre at $x$ of the automorphic bundle $\CV_{\phi\circ\theta}$ on $\overline{Y}$.
 Furthermore the isomorphism identifies the pull-back of $\CO(1)_\theta$ for each $\theta \in J$ with the
 pull-back of the automorphic bundle $\upsilon_\theta = \omega_\theta^{-1}\delta_\theta$ on $\overline{Y}$ under the degeneracy map
$\pi_2:\overline{Y}_0(p)_J \to \overline{Y}$ induced by $(\underline{A}_1,\underline{A}_2,f) \mapsto \underline{A}_2$.

\subsection{Cohomological vanishing}
In this section we prove Theorem~\ref{thm:H1}.  First note that in the statement, we may replace $\Z_{(p)}$
by the faithfully flat extension $\CO$, so the task is to prove that $R^i\pi_*\CK_{Y_1(p)/\CO} = 0$ for all $i > 0$
and sufficiently small open compact subgroups $U \subset G(\A_\f)$ of level prime to $p$,
where $\pi$ is the natural projection from $Y_1(p) := Y_{U_1(p)}(G)_\CO$ to $Y := Y_U(G)_\CO$
and $G = \Res_{F/\Q}(\GL_2)$.  We proceed in a series of steps to reduce the proof to showing the vanishing of
the cohomology of certain line bundles on the fibres of $\pi$ which we have already described.

\subsubsection{Reduction steps}\label{sss:cv.red}
We first reduce the problem to the setting of the special fibres of $Y_1(p)$ and $Y$.  Note that 
since $\pi$ is projective, $R^i\pi_*\CK_{Y_1(p)/\CO}$ is a coherent sheaf on $Y$.  Since $\pi \otimes_{\CO} L$
is finite, it follows that $R^i\pi_*\CK_{Y_1(p)/\CO}$ is supported on $\overline{Y} = Y_\F$.  Letting $j_0: \overline{Y} \to Y$
and $j_1: \overline{Y}_1(p) \to Y_1(p)$ denote the immersions of special fibres and $\overline{\pi}$ the restriction of $\pi$,
the natural map
$$j_0^*R^i\pi_*\CK_{Y_1(p)/\CO} \longrightarrow R^i\overline{\pi}_*(j_1^*\CK_{Y_1(p)/\CO}) =  R^i\overline{\pi}_*\CK_{\overline{Y}_1(p)}$$
is injective, so it suffices to prove that $R^i\overline{\pi}_* \CK_{\overline{Y}_1(p)} = 0$ for all $i > 0$.
Furthermore we may extend scalars and assume $\F = \Fpbar$.

Next we reduce to the setting of the irreducible components of $\overline{Y}_0(p)$.
Recall that $\overline{\pi} = \overline{\psi}\circ\overline{h}$ where
$\overline{h}:\overline{Y}_1(p) \to \overline{Y}_0(p)$ is finite flat and $\overline{\psi}:\overline{Y}_0(p) \to \overline{Y}$
is the natural degeneracy map induced by $(\underline{A}_1,\underline{A}_2,f) \mapsto f$, so we have
$$R^i{\overline{\pi}_*} \CK_{\overline{Y}_1(p)} = R^i{\overline{\psi}_*}(\overline{h}_* \CK_{\overline{Y}_1(p)}).$$
Furthermore in \S\ref{sss:dice.dual}, we defined a filtration on
$\overline{h}_*\CK_{\overline{Y}_1(p)}$ whose graded pieces are direct sums of direct images of
line bundles on the strata $\overline{Y}_0(p)_J$.  Recall also from \S\ref{sss:frob.prel} that the restriction of $\psi$ to
$\overline{Y}_0(p)_J$ factors through a projective Cohen--Macaulay morphism $\psi_J:\overline{Y}_0(p)_J \to \overline{Y}_{J'}$,
where the stratum $\overline{Y}_{J'}$ is the smooth closed subscheme of $\overline{Y}$ defined
by the vanishing of the partial Hasse invariants $h_\theta$ for $\theta \in J' := \{\theta \in J\,|\,\phi^{-1}\circ\theta \not\in J\}$.
Thus from the description of $\gr(\overline{h}_*\CK_{\overline{Y}_1(p)})$ in \S\ref{sss:dice.dual}, we see that
it suffices to prove that
$$R^i\psi_{J,*}\left(\CJ_J^{-1}\CK_{\overline{Y}_0(p)_J}i_J^*\CL_{\chi^{-1}}^{-1}\right) = 0$$
for all characters $\chi:(\CO_F/p\CO_F)^\times \to \F^\times$, subsets $J \subset \Theta$ and 
integers $i > 0$.

Now we reduce to considering the fibres of the morphism from $\overline{Y}_0(p)_J$ to $\overline{Y}$ for
each $J$.  We let $s: \Spec \F \to \overline{Y}_{J'}$ be a closed point, let $Z$ be the fibre of $\psi_J$
at $s$, and let $j:Z \to \overline{Y}_0(p)_J$ be the inclusion.  Since $\psi_J$ is flat
and projective, it suffices to prove that 
$$H^i(Z, j^*(\CJ_J^{-1}\CK_{\overline{Y}_0(p)_J}i_J^*\CL_{\chi^{-1}}^{-1})) = 0$$
for all $\chi$, $J$, $s$ as above and $i > 0$.  Furthermore since $\psi_J$ is Cohen--Macaulay and $\overline{Y}_{J'}$ is smooth, we have
$$\CK_{\overline{Y}_0(p)_J} = \CK_{\overline{Y}_0(p)_J/\overline{Y}_{J'}} \otimes_{\CO_{{\overline{Y}_0(p)_J}}}\psi_J^*\CK_{\overline{Y}_{J'}},$$
so that $j^*\CK_{\overline{Y}_0(p)_J}$ is isomorphic to $\CK_Z$.  It therefore suffices to prove that
$$H^i(Z, \CK_Z \otimes_{\CO_Z} j^*(\CJ_J^{-1}i_J^*\CL_{\chi^{-1}}^{-1})) = 0$$
for all $\chi$, $J$, $s$ as above and $i > 0$.

\subsubsection{Completion of the proof}\label{sss:cv.end}
Recall that the fibre $Z$ was computed in Theorem~\ref{thm:fibres}, where it is shown to be isomorphic
to a product of projective lines and copies of $\Spec(\F[x]/(x^p))$.  More precisely, we have that $Z$
is isomorphic to 
\begin{equation}\label{eqn:thickprod}\prod_{\theta \in J \cap \Sigma} \P(V_{\phi\circ\theta}^{(p^{-1})})_\theta \times \prod_{\theta \in J - \Sigma} T_\theta\end{equation}
where $T_\theta$ is a finite subscheme of  $\P(V_{\phi\circ\theta}^{(p^{-1})})_\theta$.
Furthermore, this isomorphism identifies the pull-back of each $\CO(1)_\theta$ with the restriction to $Z$ of the
line bundle obtained by descent from $(R^1s_{2,*}\CO_{A_2})_\theta$, where
$f:A_1 \to A_2$ is the universal isogeny on $\widetilde{Y}_0(p)_{J,\F}$.
Recall also that for $\theta \in J$, we have a canonical identification of $(R^1s_{2,*}\CO_{A_2})_\theta$
with the Raynaud line bundle $\CL_\theta$ on $\widetilde{Y}_0(p)_{J,\F}$, and this identification
descends to $\overline{Y}_0(p)_J$. In particular, if $\theta \in J\cap \Sigma$,
then the restriction of $\CL_\theta$ to $Z$ corresponds under the above isomorphism
to $\CO(1)_\theta$.   Similarly if $\theta \in J - \Sigma$, then the restriction of $\CL_\theta$
corresponds to the restriction of $\CO(1)_\theta$, which is (non-canonically) isomorphic to $\CO_Z$.
On the other other hand, if $\theta\not\in J$, then we obtain a
trivialization of $\CL_\theta$ on $Z$ from its identification
with $s_{1,*}\Omega^1_{A_1/\F}$ on $\widetilde{Y}_0(p)_{J,\F}$.
Combining this with the formulas (\ref{eqn:Lfactor}) and (\ref{eqn:Jfactor}), we conclude that
$$j^*(\CJ_J^{-1}i_J^*\CL_{\chi^{-1}}^{-1})  \cong \bigotimes_{\theta \in J \cap \Sigma} \CO(n_\theta)_\theta$$
where $n_\theta = m_\theta + 1 > 0$ for all $\theta \in J \cap \Sigma$.
Since the dualizing sheaf of (\ref{eqn:thickprod}) is isomorphic to $\bigotimes_{\theta\in J \cap \Sigma} \CO(-2)_\theta$,
it follows that
$$H^i(Z,  \CK_{Z} \otimes_{\CO_{Z}} j^*(\CJ_J^{-1}i_J^*\CL_{\chi^{-1}}^{-1}))
 \cong H^i\left( \prod_{\theta \in J \cap \Sigma} \P^1_\F, \bigotimes_{\theta \in J \cap \Sigma} \CO(n_\theta - 2)_\theta\right)^{p^{|J-\Sigma|}} = 0$$
for all $i > 0$, and this completes the proof of Theorem~\ref{thm:H1}.

\bibliographystyle{amsalpha} 

% Edit MRrefs_JL.bib, not MRrefs_JL.tex
% Change [Con] to [StaX] in filename.bbl afer running bibtex

\bibliography{MRrefs_JL}

\end{document}